\documentclass[reqno]{amsart}

\usepackage{amsmath,amsthm, txfonts, graphicx}

\newtheorem{theorem}{Theorem}[section]
\newtheorem{lemma}[theorem]{Lemma}

\newtheorem{corollary}[theorem]{Corollary}
\newtheorem{conjecture}[theorem]{Conjecture}
\numberwithin{equation}{section}

\theoremstyle{definition}
\newtheorem{definition}[theorem]{Defintion}
\newtheorem{remark}[theorem]{Remark}
\newtheorem{example}[theorem]{Example}

\DeclareMathOperator{\dom}{dom}%
\DeclareMathOperator{\sppt}{Sppt}%
\DeclareMathOperator{\singsppt}{SingSppt}%
\DeclareMathOperator{\interior}{int}%

\newcommand{\DF}{\ensuremath{E}}
\newcommand{\domain}{\Omega}
\newcommand{\testfns}{\ensuremath{\mathcal{D}(\Omega)}}
\newcommand{\distribs}{\ensuremath{\mathcal{D'}(\Omega)}}
\newcommand{\smoothfns}{\ensuremath{\mathcal{E}(\Omega)}}
\newcommand{\compactdistribs}{\ensuremath{\mathcal{E'}(\Omega)}}
\newcommand{\testfn}{\phi}
\newcommand{\distrib}{T}
\newcommand{\vstr}[1][3]{\rule{0ex}{#1ex}}
\newcommand{\negsp}[1][20]{\mspace{-#1mu}}
\newcommand{\evald}[2][]{\ensuremath{\negsp[4]\left.\vstr[2.0] \right|_{#2}^{#1}}} 
\newcommand{\distribson}{\ensuremath{\mathcal{D}'}} 
\newcommand{\testfnson}{\ensuremath{\mathcal{D}}}
\newcommand{\smoothfnson}{\ensuremath{\mathcal{E}}}

\begin{document}

\title{Distribution Theory on P.C.F. Fractals}
\author{Luke G. Rogers}
\address{University of Connecticut, Storrs, CT 06269-3009 USA}
\email{rogers@math.uconn.edu}

\author{Robert S. Strichartz}
\address{Cornell University, Ithaca, NY 14850-4201 USA}
\email{str@math.cornell.edu}
\thanks{Second author supported in part by NSF grant DMS 062440.}

\date{\today}
\keywords{Analysis on fractals, post-critically finite self-similar fractal, distribution, PDE on
fractals}%
\subjclass[2000]{Primary 28A80, Secondary 46F05}

\begin{abstract}
We construct a theory of distributions in the setting of analysis on post-critically finite
self-similar fractals, and on fractafolds and products based on such fractals.   The results
include basic properties of test functions and distributions, a structure theorem showing that
distributions are locally-finite sums of powers of the Laplacian applied to continuous functions,
and an analysis of the distributions with point support.  Possible future applications to the study
of hypoelliptic partial differential operators are suggested.
\end{abstract}

\maketitle


\section{Introduction}

The prevalence of fractal-like objects in nature has led both physicists and mathematicians to
study dynamic processes on fractals.  One rigorous way to do this on post-critically finite
(p.c.f.) fractals is by studying differential equations in the natural analytic structure. A brief
description of this analytic structure will appear in Section \ref{setting_section} below, but we
emphasize that it is intrinsic to the fractal, and is not necessarily related to the analysis on a
space in which the fractal may be embedded. For example, the familiar Sierpinski gasket fractal SG
is often visualized as a subset of $\mathbb{R}^{2}$, but restricting a smooth function on
$\mathbb{R}^{2}$ to SG does not give a smooth function on the fractal \cite{MR1707752}. Similarly,
we should not expect the solutions of differential equations on fractals to be quite like the
solutions of their Euclidean analogues; for example, many fractals have Laplacian eigenfunctions
that vanish identically on large open sets \cite{MR1489140}, whereas eigenfunctions of the
Euclidean Laplacian are analytic.

Perhaps the most important tools for studying differential equations in the Euclidean context are
Fourier analysis and the theory of distributions.  Since the theory of analysis on fractals relies
on first constructing a Laplacian operator $\Delta$, it is unsurprising that quite a lot is known
about the fractal analogue of Fourier analysis. In some interesting cases the spectrum and
eigenfunctions of the Laplacian are known explicitly, and many results about Laplacian
eigenfunctions have also been derived by using probability theory to study the heat diffusion on
fractals. Fourier-type techniques have also been applied to treat smoothness in the fractal
setting: analogues of the Sobolev, H\"{o}lder-Zygmund and Besov spaces that are so important in
Euclidean analysis of differential equations were introduced and investigated in \cite{MR1962353}.
Analogues of other basic objects in Euclidean analysis are studied in~\cite{MR2150975,CalculusII}.
By contrast there has not previously been a theory of distributions on fractals, and it is the
purpose of the present work to provide one.

It is relatively elementary to define distributions on fractals; as usual they are dual to the
space of smooth functions with compact support, where a function $u$ is said to be smooth if
$\Delta^{k}u$ is continuous for all $k\in\mathbb{N}$.  The main theorems about distributions are
then really theorems about smooth functions, and the key to proving many of them is knowing how to
smoothly partition smooth functions. Partitions of unity are used to achieve this in the Euclidean
setting, but are not useful on fractals because products of smooth functions are not
smooth~\cite{MR1707752}. (This latter fact also implies that products of functions and
distributions are not distributions, so the distributions are not a module over the smooth
functions.) Instead we rely on a partitioning theorem for smooth functions proved in~\cite{RST},
see Theorem~\ref{setting_partitionthm} below. Using this partitioning result we are able to prove
analogues of the standard structure theorems describing distributions as derivatives of continuous
functions (Theorem \ref{structure_structuretheorem}), and identifying the positive distributions as
positive measures (Theorem \ref{structure_positivedistribsarepositivemeasures}). We can also
characterize the distributions of point support as finite linear combinations of certain
``derivatives'' of Dirac masses that can be explicitly described
(Corollaries~\ref{distribsatapoint_identifyingdistribsatjunctionpoint}
and~\ref{distribsatapoint_identifydistribsatgenericpoints}), provided we make certain assumptions
about the point in question.  These assumptions are needed in order to understand the local
behavior of smooth functions at the point, and are related to work done
in~\cite{MR1761364,MR1761365,MR2219003,pelanderteplyaev07,MR2417415}. The reader should be warned
that many of our proofs are quite technical in nature; we have tried to explain in advance the
strategies behind the proofs, which are more conceptual.

At the end of this paper we suggest several interesting questions related to the hypoellipticity of
differential operators that are natural to consider in the context of distribution theory. It
should also be noted that there are a number of results on local solvability of differential
equations~\cite{MR2177187,MR2346562} that could be reformulated in this context. We expect that
this work will provide the foundation for many subsequent investigations.


\section{Setting}\label{setting_section}

We begin by describing the basic elements of analysis on a post-critically finite self-similar set
$X$, as laid out in the monograph of Kigami~\cite{Kigamibook}; in this section all unreferenced
results may be found in~\cite{Kigamibook}, which also includes proofs and references to the
original literature. The reader who prefers to have a concrete example of a p.c.f. set in mind may
choose to think of $X$ as the Sierpinski Gasket, in which case an more elementary exposition of the
material that follows may be found in \cite{Strichartzbook}.

\subsection*{P.C.F. Fractals}


Let $X$ be a self-similar subset of $\mathbb{R}^{d}$ (or more generally a compact metric space). By
this we mean that there are contractive similarities $\{F_{j}\}_{j=1}^{N}$ of $\mathbb{R}^{d}$, and
$X$ is the unique compact set satisfying $X=\cup_{j=1}^{N}F_{j}(X)$.  Then $X$ has a natural cell
structure in which we associate to a word $w=w_{1}w_{2}\dotsc w_{m}$ of length $m$ the map
$F_{w}=F_{w_{1}}\circ\dotsm\circ F_{w_{m}}$, and call $F_{w}(X)$ an $m$-cell.  If $w$ is an
infinite word then we let $[w]_{m}$ be its length $m$ truncation and note that
$F_{w}(X)=\bigcap_{m}F_{[w]_{m}}(X)$ is a point in $X$.

We say $F_{j}(x)$ is a critical value of $X=\cup_{l=1}^{N}F_{l}(X)$ if there is $y\in X$ and $k\neq
j$ such that $F_{j}(x)=F_{k}(y)$. An infinite word $w$ is critical if $F_{w}(X)$ is a critical
value, and $\tilde{w}$ is post-critical if there is $j\in\{1,\dotsc,N\}$ such that $jw$ is
critical. We always assume that the set of post-critical words is finite, in which case the fractal
is said to be {\em post-critically finite (p.c.f.)}. The {\em boundary} of $X$ is then defined to
be the finite set $V_0$ consisting of all points $F_{\tilde{w}}(X)$ for which $\tilde{w}$ is
post-critical; this set is assumed to contain at least two points.  We also let
$V_{m}=\cup_{w}F_{w}(V_{0})$, where the union is over all words of length $m$.  Points in
$V_\ast=\cup_{m\geq 0}V_{m}$ that are not in $V_{0}$ are called {\em junction points}, and a key
property of p.c.f. fractals is that cells intersect only at junction points.

We fix a probability measure $\mu$ on $X$ that is self-similar in the sense that there are
$\mu_{1},\dotsc,\mu_{N}$ such that the cell corresponding to $w=w_{1}\dotsc w_{m}$ has measure
$\mu(F_{w}(X))=\prod_{j=1}^{m}\mu_{w_{j}}$.  The usual Bernoulli measure in which each
$\mu_{j}=\frac{1}{N}$ is one example.

\subsection*{Dirichlet Form}
Our analysis on $X$ will be constructed from a self-similar Dirichlet form. A closed quadratic form
\DF\ on $L^{2}(\mu)$ is called Dirichlet if it has the (Markov) property that if $u\in\dom(\DF)$
then so is $\tilde{u}=u\chi_{0<u<1} +\chi_{u\geq1}$ and $\DF(\tilde{u},\tilde{u})\leq\DF(u,u)$,
where $\chi_{A}$ is the characteristic function of $A$. Self-similarity of \DF\ means that there
are renormalization factors $r_{1},\dotsc,r_{N}$ such that
\begin{equation}\label{Setting-DFisselfsimilar}
    \DF(u,v) = \sum_{j=1}^{N} r_{j}^{-1} \DF(u\circ F_{j},v\circ F_{j}).
    \end{equation}
It follows immediately that $\DF(u,v)$ can also be expressed as the sum over $m$-words of
$r_{w}^{-1} \DF(u\circ F_{w},v\circ F_{w})$ where $r_{w}=r_{w_{1}}\dotsm r_{w_{m}}$. In order to
use results from \cite{RST} we assume that $0<r_{j}<1$ for all $j$, in which case $\DF$ is {\em
regular}, meaning that that $C(X)\bigcap\dom(\DF)$ is dense both in $\dom(\DF)$ with \DF-norm and
in the space of continuous functions $C(X)$ with supremum norm.

It is far from obvious that interesting fractals should support such Dirichlet forms, but in fact
the conditions described so far are satisfied by many p.c.f. self-similar sets that have sufficient
symmetry. In particular, if $X$ is a nested fractal in the sense of Lindstr\o m \cite{MR988082}
then a Dirichlet form of the above type may be constructed using a diffusion or a harmonic
structure \cite{MR1442498,MR1301625,MR1474807}. Some other approaches may be found in
\cite{MR1769995,MR2017320,MR2105771,MR2359927,MR2267882,MR2254554,PeironePreprint}.

\subsection*{Harmonic Functions}
Given a function on $V_{0}$ (usually thought of as an assignment of boundary values) there is a
unique continuous function on $X$ that has these boundary values and minimizes the energy.  Such
functions are called {\em harmonic}, and form a finite dimensional space containing the constants.
It is easy to see that there are {\em harmonic extension matrices} $A_{j}$, $j=1,\dotsc,N$ with the
property that if $h$ is harmonic then $A_{j}$ maps the values of $h$ on $V_{0}$ to its values on
$F_{j}(V_{0})$. The largest eigenvalue of each $A_{j}$ is $1$, corresponding to the constant
functions; it is useful to know that the second eigenvalue is $r_{j}$, and that all other
eigenvalues (which may be complex) have strictly smaller absolute value (\cite{Kigamibook},
Appendix~A).

\subsection*{The Laplacian and Normal Derivatives}
Using the energy and measure we produce a weak Laplacian by defining $f=\Delta u$ if $\DF(u,v)=
-\int fv\,d\mu$ for all $v\in\dom(\DF)$ that vanish on $V_0$.  Our assumptions so far are
sufficient to conclude that $-\Delta$ is a non-negative self-adjoint operator on $L^{2}(\mu)$ with
compact resolvent (see Theorem 2.4.2 of \cite{Kigamibook}).  We denote its eigenvalues by
$\lambda_j$ and the corresponding eigenvectors by $\psi_j$.  When $\Delta u\in C(X)$ we write
$u\in\dom(\Delta)$ and think of these as the (continuously) differentiable functions on $X$.
Inductively define $\dom(\Delta^{k})$ for each $k$ and then $\dom(\Delta^{\infty})=\cap_{k}
\dom(\Delta^{k})$. We say $f$ is smooth if $f\in\dom(\Delta^{\infty})$.  Harmonic functions have
zero Laplacian.

By introducing a normal derivative $\partial_{n}$ at boundary points the defining equation for the
Laplacian can be extended to functions that do not vanish on $V_{0}$. As a result we have the
Gauss-Green formula $\DF(u,v)= -\int (\Delta u)v\,d\mu + \sum_{x\in V_{0}}v(x)\partial_{n}u(x)$
when $v\in\dom(\DF)$, as in Theorem~3.7.8 of~\cite{Kigamibook}.  This formula may be localized to a
cell $F_{w}(X)$, in which case $\partial_{n}^{w}u(q)=\lim_{m}\DF(u,v_{m})$ at the boundary point
$q=\bigcap_{m} F_{wr_{j}^{m}}(X)$, where $v_{m}$ is the harmonic function on $F_{wr_{j}^{m}}(X)$
with all boundary values equal to $0$ other than $v_{m}(q)=1$.  The superscript $w$ in
$\partial_{n}^{w}u(q)$ indicates which cell the normal derivative is taken with respect to, as
there is one for each cell that intersects at $q$.  In general the normal derivatives exist once
$\Delta u$ exists as a measure. If $u\in\dom(\Delta)$ then the normal derivatives at a point sum to
zero. Conversely, if $u$ is defined piecewise by giving functions $u_{j}\in\dom(\Delta)$ each
supported on one of the cells that share the boundary point $x$, then $u\in\dom(\Delta)$ if and
only if all $u_{j}(x)$ are equal, all $\Delta u_{j}(x)$ are equal, and the normal derivatives of
the $u_{j}$ at $x$ sum to zero. We call these constraints the {\em matching conditions} for the
Laplacian.

\subsection*{Resistance Metric}

In addition to the Laplacian and other derviatives, the Dirichlet form also provides us with a
metric intrinsic to the fractal. We define the resistance metric $R$ by
\begin{equation*}
    R(x,y) = \min  \{E(u)^{-1}: u\in\dom(E),\ u(x)=0,\ u(y)=1\}.
    \end{equation*}
In Sections 2.3 and 3.3 of \cite{Kigamibook}  it is proven that under our assumptions this minimum
exists and defines a metric, and that the $R$-topology coincides with the topology induced from the
embedding of $X$ into $\mathbb{R}^{d}$.  Of particular importance for us is the fact that
continuity may be treated using the resistance metric, for which purpose the following
H\"{o}lder-$\frac{1}{2}$ estimate is very useful:
\begin{equation}\label{setting_resistmetricestimate}
    |u(x)-u(y)|^{2}
    \leq E(u)R(x,y)
    \leq \|u\|_{2}\|\Delta u\|_{2} R(x,y).
    \end{equation}
If $u\in\dom(\Delta)$ vanishes on $V_{0}$ then we obtain the first inequality trivially from the
definition and the second by applying the Cauchy-Schwartz inequality to $E(u)=-\int u\Delta u\,
d\mu$.  For general $u\in\dom{\Delta}$ we can simply subtract the harmonic function with the same
boundary values and apply the same estimate.  In particular, this shows that the $L^{2}$ domain of
the Laplacian embeds in the continuous functions.

\subsection*{Fractafolds}
Since the results in this paper are primarily local in nature, we will be able to work on a
connected fractafold based on $X$ with a restricted cellular construction, which we denote by
$\domain$. Some results on fractafolds and their spectra may be found in \cite{MR1990573}. As with
a manifold based on Euclidean space, a fractafold based on $X$ is just a connected Hausdorff space
in which each point has a neighborhood homeomorphic to a neighborhood of a point in $X$. One way to
construct a fractafold is by suitably gluing together copies of $X$, for example by identifying
appropriate boundary points. This leads us to the idea of a cellular construction, which is the
analogue of a triangulation of a manifold. A restricted cellular construction consists of a finite
or countably infinite collection of copies $X_{j}$ of $X$, together with an admissible
identification of their boundary points. Admissibility expresses the requirement that the result of
the gluing be a fractafold; more precisely, it means that if $\{x_{1},\dotsc,x_{J}\}$ are
identified then there is a junction point $x\in X$ and a neighborhood $U$ of $x$ such that each of
the components $U_{1},\dotsc,U_{J}$ of $U\setminus \{x\}$ is homeomorphic to a neighborhood of the
corresponding point $x_{j}$ in $X_{j}$. We call any such point $x$ a {\em gluing point}, and make
the obvious definition that a neighborhood of $x$ is a union of neighborhoods of $x$ in each of the
cells $X_{j}$ that meet at $x$ in the manner previously described.

It should be noted that the above is not the most general kind of cellular construction (hence the
term {\em restricted} in the definition), because some fractals have non-boundary points (called
terminal points) at which cells may be glued (see \cite{MR1990573}, Section 2). Dealing with such
points introduces certain technicalities that, while not insurmountable, cause complications in
defining the Green's operator (see below) that we will need for proving Theorem
\ref{structure_structuretheorem}.  It is worth noting that if $X$ has some topological rigidity
then all fractafolds have restricted cellular structure.  This is true, for example, for
fractafolds based on the Sierpinski Gasket (\cite{MR1990573} Theorem 2.1).

Thus far our fractafold has only topological structure; however if $\domain$ has a restricted
cellular construction then a smooth structure may be introduced in the same manner as it was on $X$
itself, specifically by defining a Dirichlet energy and a measure and thus a weak Laplacian. We can
take the energy on $\domain$ to be the sum of the energies on the cells of the cellular
construction, and the measure (which is not necessarily finite, but is finite on compacta) to be
the sum of the measures on the cells:
\begin{align*}
    E(u,v) &= \sum_{j} E_{X_{j}}(u\evald{X_{j}},v\evald{X_{j}})
    = \sum_{j} a_{j} E_{X}(u\evald{X_{j}}\circ\iota_{j},v\evald{X_{j}}\circ\iota_{j})\\
    \mu(A) &= \sum_{j} \mu_{X_{j}}(A\cap X_{j})=\sum_{j} b_{j} \mu_{X}(\iota_{j}^{-1}(A\cap X_{j}))
    \end{align*}
where $\iota_{j}:X\to X_{j}$ is the map from the cellular construction. In the same way that the
angle sum at a vertex of a triangulation of a manifold determines the curvature at the vertex, the
choice of the weights $a_{j}$ and $b_{j}$ amount to a choice of metric on $\domain$, with
$a_{j}=b_{j}=1$ for all $j$ being the flat case (see \cite{MR1990573}, Section 6).  As all of the
computations made later in the paper may be made on one cell at a time, we will henceforth suppress
the weights $a_{j}$ and $b_{j}$.

Well-known results about p.c.f. fractals imply the existence of a Green's function (for which there
is an explicit formula) on finite unions of cells in a fractafold with cellular construction.
\begin{lemma}\label{setting_greensfnexsts}
Let $K=\cup_{1}^{J} X_{j}$ be a connected finite union of cells in $\domain$ and such that $K\neq
\domain$. Then there is a Green's operator $G_{K}$ with the property that if $\nu$ is a Radon
measure on $K$ (i.e. a Borel measure that is finite on compacta, outer regular on Borel sets and
inner regular on open sets), then $G_{K}\nu$ is continuous, $-\Delta G_{K}\nu = \nu$ on the
interior of $K$, and $G_{K}\nu\evald{\partial K}=0$.  The same conclusion holds in the case
$K=\domain$ under the additional assumption $\int d\nu=0$.
\end{lemma}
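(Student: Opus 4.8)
The plan is to reduce to the single-cell Green's function, which is classical, and then patch across junction and gluing points. On one copy of $X$, Kigami \cite{Kigamibook} constructs a jointly continuous Green's function $g(x,y)$ on $X\times X$, given by an explicit uniformly convergent series built from the harmonic structure, such that $G_X\nu(x)=\int g(x,y)\,d\nu(y)$ sends a Radon measure $\nu$ (necessarily finite, as $X$ is compact) to a continuous function with $G_X\nu\evald{V_0}=0$ and $E(G_X\nu,v)=\int v\,d\nu$ for every $v\in\dom(E)$ vanishing on $V_0$; joint continuity of $g$ together with finiteness of $\nu$ give continuity of $G_X\nu$. This is precisely the Lemma when $K$ is a single cell.

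For $K=\bigcup_{j=1}^{J}X_j$ I would first solve cell-by-cell. Write $\nu=\sum_j\nu_j$ with $\nu_j$ a Radon measure supported on $X_j$ (splitting any mass of $\nu$ at a junction or gluing point arbitrarily among the cells meeting there, so that $\sum_j\int_{X_j}v\,d\nu_j=\int_K v\,d\nu$), put $u^0_j=G_{X_j}\nu_j$, and let $u^0$ be the function on $K$ with $u^0\evald{X_j}=u^0_j$. Since each $u^0_j$ vanishes on $\partial X_j$, the pieces agree at every junction and gluing point of $K$, so $u^0$ is continuous on $K$, lies in $\dom(E)$, vanishes on $\partial K$, and $E(u^0,v)=\sum_j E_{X_j}(u^0_j,v\evald{X_j})$. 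Now correct by a function $h$ that is harmonic on each $X_j$ and continuous on $K$; such an $h$ is determined by the finitely many numbers $h(p)$, where $p$ ranges over the interior junction/gluing points of $K$ and over $\partial K$. Applying the cell-wise Gauss--Green formula, for $v\in\dom(E)$ vanishing on $\partial K$ one computes $E(u^0+h,v)=\int_K v\,d\nu+\sum_p v(p)\sum_{j:\,p\in\partial X_j}\partial_n^{X_j}(u^0_j+h\evald{X_j})(p)$, the outer sum over junction/gluing points $p$. Since $v(p)=0$ on $\partial K$, the identity $E(u^0+h,v)=\int_K v\,d\nu$ holds exactly when, at every interior junction/gluing point $p$, the normal derivatives of $u^0+h$ from the cells meeting at $p$ sum to zero. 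Imposing also $h=0$ on $\partial K$ and using linearity of normal derivatives of harmonic functions in their boundary values, this becomes a finite symmetric linear system $M\mathbf h=\mathbf b$ for $\mathbf h=(h(p))_p$ over the interior junction/gluing points, with $b_p=-\sum_j\partial_n^{X_j}u^0_j(p)$ explicitly determined by $\nu$.

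It remains to solve this system, which is the crux. A solution of the homogeneous system corresponds, via the matching conditions recalled in Section~\ref{setting_section}, to a continuous piecewise-harmonic $h\in\dom(\Delta)$ on $K$ with $\Delta h=0$ and $h\evald{\partial K}=0$; then $E(h)=0$ by Gauss--Green, so $h$ is constant because $E$ is a Dirichlet form on the connected set $K$, and hence $h\equiv0$ provided $\partial K\neq\emptyset$. When $\emptyset\neq K\neq\domain$ this holds, since connectedness of $\domain$ forces a cell of $K$ to meet a cell not belonging to $K$ at a point of $\partial K$; thus $M$ is invertible, $h$ exists and is unique, and $u=u^0+h$ is the desired $G_K\nu$ (in particular independent of the splitting, by the uniqueness just shown). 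Solving $M\mathbf h=\mathbf b$ by Cramer's rule and recombining with the kernels $g_{X_j}$ exhibits $G_K\nu(x)=\int_K G_K(x,y)\,d\nu(y)$ with a jointly continuous, explicitly given kernel $G_K(x,y)$, so $G_K\nu$ is continuous for every Radon $\nu$. In the remaining case $K=\domain$ (so $\domain$ is compact) with $\partial\domain=\emptyset$, the operator $M$, now indexed by all junction/gluing points, is symmetric with one-dimensional kernel consisting of the constants, so $M\mathbf h=\mathbf b$ is solvable iff $\sum_p b_p=0$; summing the cell-wise Gauss--Green identities gives $\sum_p b_p=\int d\nu$, which is why the hypothesis $\int d\nu=0$ is exactly the compatibility condition, and one then pins down the remaining constant by normalizing $\int G_K\nu\,d\mu=0$. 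The hard part throughout is this solvability/Fredholm step: it is the only place where connectedness of $K$ and the dichotomy ``$K\neq\domain$'' versus ``$\int d\nu=0$'' enter essentially.
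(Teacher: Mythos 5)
Your proof is correct and follows essentially the same route as the paper: sum the single-cell Dirichlet Green's operators and then correct by a piecewise harmonic function determined by a finite linear system at the interior gluing points, with the dichotomy $\partial K\neq\emptyset$ versus $\int d\nu=0$ entering exactly as the solvability condition for that system. The only difference is that where the paper cites Lemma~3.5.1 of \cite{Kigamibook} for the invertibility of that system, you supply a self-contained energy argument (homogeneous solutions have zero energy, hence are constant, hence vanish when $\partial K\neq\emptyset$), which is a perfectly valid substitute.
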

\begin{remark}
It is clear that $\partial K$ is a subset of the boundary points of the cells $X_{j}$, specifically
consisting of those gluing points at which not all glued cells are included in $K$.
\end{remark}

\begin{proof}
We recall from Sections 3.6 and 3.7 of \cite{Kigamibook} that our assumptions on $X$ imply there is
a Green's operator $G$ on $X$ with continuous kernel $g(x,y)$, such that $-\Delta G\nu = \nu$ and
$G\nu\evald{\partial X}=0$ for all Radon measures $\nu$.  There is an explicit formula giving
$g(x,y)$ as a series.

If $G_{j}$ is the Green's operator for the cell $X_{j}$ it is easy to verify that $-\Delta\sum
G_{j}\nu = \nu$, except at the gluing points where the Laplacian can differ from $\nu$ by Dirac
masses, the size of which may be computed explicitly by summing the normal derivatives of the
$G_{j}\nu$ at the points that are glued.  However it is also apparent that by assigning values at
each of the gluing points and extending harmonically on the cells we obtain a continuous and
piecewise harmonic function, the Laplacian of which is a sum of Dirac masses at the gluing points.

Provided the boundary $\partial K$ is non-empty (which is obvious if $K\neq\domain$), a linear
algebra argument (Lemma 3.5.1 in \cite{Kigamibook}) shows that for any prescribed set of weights
for Dirac masses of the Laplacian at interior gluing points, there is a unique piecewise harmonic
function that vanishes on $\partial K$ and has this Laplacian. Subtracting this piecewise harmonic
function from $\sum G_{j}\nu$ gives the required $G_{K}\nu$.

On the other hand, if $K=\domain$ then $\domain$ is compact and the kernel of $\Delta$ is precisely
the constant functions.  We can therefore invert $-\Delta$ on the measures that annihilate
constants, that is, those for which $\int d\nu=0$.  This can be done explicitly in the same manner
as in the previous case, except that the linear algebra step now shows the Laplacians of piecewise
harmonics span the space of mean-zero linear combinations of Dirac masses at the gluing points.  In
this case the choice of piecewise harmonic function is unique up to adding a constant; our
convention is to choose this constant so $\int G_{K}\nu (x) d\mu(x)=0$.
\end{proof}

Throughout this paper we will assume that $\domain$ has no boundary.  In some examples it is
possible to deal with boundary points by passing to an appropriate cover, but relatively little is
known in terms of covering theory for general fractafolds. Elementary examples to keep in mind
include non-compact cases like open subsets of $X\setminus V_{0}$ or the infinite Sierpinski Gasket
treated in \cite{MR1658094}, and compact fractafolds like the double cover of the Sierpinski Gasket
$SG$, which consists of two copies of $SG$ with each boundary point from one copy identified with
exactly one boundary point of the other (see \cite{MR1990573} for more details).

\subsection*{Smooth Cutoffs and Partitioning}
As mentioned earlier, the structure theorems we shall prove for distributions rest heavily on
results from \cite{RST}. In what follows we assume that $\domain$ is a fractafold with restricted
cellular structure and is based on a fractal $X$ with regular harmonic structure.

Recall that $x\in X$ is a junction point if and only if there is a neighborhood $U\ni X$ such that
$U\setminus\{x\}$ is disconnected into a finite number of components $U_{j}$.  For a smooth
function $u$ the quantities $\Delta^{k}u(q)$ and $\partial_{n}^{j}\Delta^{k}u(q)$ exist for all
$k\in\mathbb{N}$; the superscript $j$ on $\partial_{n}^{j}$ indicates the normal derivative with
respect to the cell $U_{j}$.  For a fixed $j$, the two sequences $\Delta^{k}u(q)$ and
$\partial_{n}^{j}\Delta^{k}u(q)$ make up the {\em jet} of $u$ at $q$ in $U_{j}$.  The first result
we need from \cite{RST} is a Borel-type theorem on the existence of smooth functions with
prescribed jets.

\begin{theorem}[\protect{\cite{RST}}, Theorem 4.3 and Equation 4.8]\label{setting_Borelthm}
Given values $\rho_{0},\rho_{1},\dotsc$ and $\sigma_{0},\sigma_{1},\dotsc$ there is a smooth
function $f$ on $U_{j}$ that vanishes in a neighborhood of all boundary points except $q$, where
the jet is given by $\Delta^{k}f(q)=\rho_{k}$ and $\partial_{n}^{j}\Delta^{k}f(q)=\sigma_{k}$ for
all $k$. If we write $U_{j}$ as $U_{j}=F_{w}(X)$ for a word $w$, and fix a number $L$ of jet terms,
then for any $\epsilon>0$ we may construct $f$ so that for $0\leq k\leq L$, we have the estimate
\begin{equation}\label{setting_jetestimate}
    \|\Delta^{k}f\|_{\infty}
    \leq C(k) (r_{w}\mu_{w})^{-k} \biggl( \sum_{l=0}^{L} r_{w}^{l}\mu_{w}^{l} |\rho_{l}| + \sum_{l=0}^{L-1}
    r_{w}^{l+1}\mu_{w}^{l} |\sigma_{l}| \biggr) +\epsilon
\end{equation}
where $C(k)$ depends only on $k$ and the harmonic structure on $X$.
\end{theorem}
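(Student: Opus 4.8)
The plan is to reduce the statement, via the defining similarity $F_w\colon X\to U_j=F_w(X)$, to the corresponding Borel-type statement on the model fractal $X$ with the jet prescribed at a single boundary vertex $q\in V_0$, and then to build the function on $X$ by a telescoping construction over the cells shrinking to $q$. The reduction is clean because $\Delta$ and $\partial_n$ have exact scaling under $F_w$: writing $\tilde u=u\circ F_w$ one checks from \eqref{Setting-DFisselfsimilar} and the Gauss--Green formula that $\Delta_X^k\tilde u=(r_w\mu_w)^k(\Delta^k u)\circ F_w$ and $\partial_n^X\Delta_X^k\tilde u(\tilde q)=r_w^{k+1}\mu_w^k\,(\partial_n^j\Delta^k u)(q)$. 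Hence prescribing $\Delta^k f(q)=\rho_k$ and $\partial_n^j\Delta^k f(q)=\sigma_k$ on $U_j$ amounts to prescribing $\tilde\rho_k=(r_w\mu_w)^k\rho_k$ and $\tilde\sigma_k=r_w^{k+1}\mu_w^k\sigma_k$ on $X$; combining a model estimate $\|\Delta_X^L\tilde f\|_\infty\le C(L)\bigl(\sum_{l\le L}|\tilde\rho_l|+\sum_{l<L}|\tilde\sigma_l|\bigr)+\tilde\epsilon$ with $\|\Delta^k f\|_\infty=(r_w\mu_w)^{-k}\|\Delta_X^k\tilde f\|_\infty$ and choosing $\tilde\epsilon=(r_w\mu_w)^k\epsilon$ produces exactly \eqref{setting_jetestimate}, with $C(k)$ depending only on $k$ and the harmonic structure of $X$.

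The real content is then on $X$: given sequences $a_0,a_1,\dots$ and $b_0,b_1,\dots$, produce $f\in\dom(\Delta^\infty)$ on $X$ vanishing in a neighborhood of $V_0\setminus\{q\}$ with $\Delta^k f(q)=a_k$, $\partial_n\Delta^k f(q)=b_k$ for all $k$, and with the displayed bound on $\|\Delta^k f\|_\infty$ for $0\le k\le L$. After relabeling I would take $q$ to be the fixed point of $F_N$, so that the cells $F_{N^m}(X)$ form a decreasing neighborhood basis of $q$ and the shells $S_m=F_{N^m}(X)\setminus F_{N^{m+1}}(X)$ exhaust a punctured neighborhood. First I would assemble a supply of local \emph{monomials} on one copy of $X$: for each $n$, multiharmonic functions (iterated images under the Green's operator $G$ of harmonic functions) whose jet at $q$ is a prescribed finite string of the $\Delta^k$ and $\partial_n\Delta^k$ data and which vanish appropriately at the remaining boundary vertices. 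Invertibility of the relevant finite linear system, together with bounds on the sizes of its solutions, comes from the eigenvalue structure of the harmonic extension matrices $A_j$ recalled in Section~\ref{setting_section} (largest eigenvalue $1$, second $r_j$, the rest strictly smaller), and from the matching conditions for the Laplacian.

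Then I would build $f$ shell by shell, inward. On the outermost region set $f\equiv 0$ near $V_0\setminus\{q\}$; on each shell $S_m$ add a correction assembled from suitably rescaled copies of the monomials above, chosen so that (i) the matching conditions for $\Delta^k$ and $\partial_n\Delta^k$ at every interface vertex of $\partial F_{N^m}(X)$ hold for every $k$, and (ii) after the $m$-th step the jet of $f$ at $q$ agrees with the target through order about $m$, the remaining discrepancy being pushed onto $F_{N^{m+1}}(X)$. Because each shell dilates $\Delta$ by $(r_N\mu_N)^{-1}$ while the monomials inserted deep in the tower carry only high-order data, the contribution of $S_m$ to $\|\Delta^k f\|_\infty$ decays geometrically once $m>L$; summing this series yields smoothness and the stated estimate, with the $\epsilon$ absorbing the tail from the jet terms of order $>L$, whose monomials one is free to rescale as small as desired before inserting them.

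The main obstacle is step three: making the inward induction produce an honest element of $\dom(\Delta^\infty)$. One cannot simply write a smooth function and multiply by a cutoff, since products of smooth functions need not be smooth, so both the vanishing near $V_0\setminus\{q\}$ and the infinite jet at $q$ must be engineered directly into the telescoping scheme. This forces one to control all of the conditions on $\Delta^k$ and $\partial_n\Delta^k$ simultaneously at the growing set of interface vertices while keeping the shell-by-shell estimates summable, and then to verify that the partial sums converge in every $\dom(\Delta^k)$-norm and that the limit has exactly the prescribed jet rather than merely an asymptotic one. That convergence-and-exactness check is the technical heart of the argument; everything else is the scaling bookkeeping of the first paragraph.
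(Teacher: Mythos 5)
This theorem is quoted from \cite{RST} rather than proved in the present paper, but your outline --- reduction to the model cell via the exact scalings $\Delta^k(u\circ F_w)=(r_w\mu_w)^k(\Delta^k u)\circ F_w$ and $\partial_n\Delta^k(u\circ F_w)=r_w^{k+1}\mu_w^k\,\partial_n\Delta^k u$, followed by a sum of piecewise-multiharmonic jet monomials pushed into ever-smaller cells at $q$ so that the $(r\mu)^{(l-k)m}$ scaling forces convergence of every $\|\Delta^k\cdot\|_\infty$ --- is essentially the strategy of the cited proof (compare the paper's own summary of that construction in the proof of Theorem~\ref{productBoreltheorem}). The scaling bookkeeping yielding \eqref{setting_jetestimate} is correct, and the convergence-and-exactness verification you defer is precisely the technical content of Theorem~4.3 of \cite{RST}, so the sketch is sound and follows the same route.
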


\begin{remark}
Of course, it follows immediately that we can construct a smooth function with prescribed jets at
each of the boundary points of a cell $K$ and an estimate like \eqref{setting_jetestimate}, just by
applying the theorem separately to each of the boundary points and summing the result.
\end{remark}

\begin{corollary}\label{setting_bumpfunctionwithestimates}
If $K$ is a cell in $\domain$ and $U$ is an open neighborhood of $K$, then there is a smooth
function $f$ such that $f=1$ on $K$, $f=0$ outside $U$, and $\|f\|_{\infty}\leq C$, where $C$ is a
constant that does not depend on $K$ or $U$.
\end{corollary}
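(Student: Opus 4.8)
The plan is to build $f$ by hand from finitely many cells at a sufficiently fine level: set $f\equiv 1$ on $K$, set $f\equiv 0$ on every cell of that level that is disjoint from $K$, and interpolate on the cells in between using the Borel-type theorem. First I would fix the level. Working with the resistance metric, which induces the topology of $\domain$, and using that $K$ is compact while $\domain\setminus U$ is closed and disjoint from $K$, the distance $\delta=\dist(K,\domain\setminus U)$ is positive (take $\delta=\infty$ if $\domain\setminus U=\emptyset$). Since the resistance diameter of a cell $F_w(X)$ is at most $r_w\diam X\le(\max_j r_j)^{|w|}\diam X$, it tends to $0$ uniformly as the cell level grows, so I may fix a level $m'$ so large that every $m'$-cell meeting $K$ has diameter less than $\delta$ and hence lies in $U$; note only finitely many $m'$-cells meet $K$. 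Every $m'$-cell then falls into one of three classes: those contained in $K$, those meeting $K$ but not contained in it (call these the \emph{collar cells}, all lying in $U$), and those disjoint from $K$. Every point of $\domain\setminus U$ lies in an $m'$-cell of the third class.

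Next I would define $f$ cell by cell. Put $f\equiv 1$ on $K$ and $f\equiv 0$ on every $m'$-cell disjoint from $K$. On a collar cell $C$, apply Theorem~\ref{setting_Borelthm} and the remark following it to produce a smooth function on $C$ whose jet at each boundary point of $C$ lying in $K$ equals the jet of the constant function $1$ (Laplacian values $1,0,0,\dots$ and all normal derivatives zero), whose jet at each boundary point of $C$ not lying in $K$ is zero, and — crucially — with all prescribed normal derivatives $\partial_n^{C}\Delta^{k}f$ at every boundary point of $C$ taken to be zero. Then $f=1$ on $K$ and $f=0$ off $U$ by construction. Applying the estimate \eqref{setting_jetestimate} with $L=0$ (and, say, $\epsilon=1$) to the contribution of each boundary point of $C$ and summing over the at most $|V_0|$ boundary points bounds $\|f\|_{\infty}$ on $C$ by $|V_0|\bigl(C(0)+1\bigr)$, a constant independent of $K$, $U$, $m'$ and $C$; together with $\|f\|_{\infty}=1$ on $K$ this yields the asserted uniform bound.

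Finally I would verify that the piecewise-defined $f$ is smooth by checking the matching conditions for the Laplacian, and for all its iterates, at each junction point $q$ of the $m'$-cell decomposition; away from these points $f$ is smooth on each cell by the choices above. If $q\in K$, every $m'$-cell containing $q$ meets $K$, hence is contained in $K$ (where $f\equiv 1$ gives value $1$, all higher Laplacians $0$, all normal derivatives $0$) or is a collar cell (where the prescribed jet at $q$ is exactly the constant-$1$ jet, again with all normal derivatives $0$); if $q\notin K$, every $m'$-cell containing $q$ is either a collar cell with zero jet at $q$ or a cell on which $f\equiv 0$. In all cases, at $q$ all values agree, all Laplacian values agree, and all normal derivatives vanish, so in particular sum to zero, at every order. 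Hence the matching conditions hold for $f$; applying them in turn to $\Delta f,\Delta^{2}f,\dots$ shows $f\in\dom(\Delta^{k})$ for every $k$, so $f$ is smooth. The step demanding the most care is exactly this bookkeeping at the junction points — one must arrange the jet data on the collar cells, and in particular the normal derivatives, so that the matching conditions hold simultaneously at every shared point and at every order of $\Delta$; everything else is a routine application of Theorem~\ref{setting_Borelthm} together with the fact that fine cells shrink to points.
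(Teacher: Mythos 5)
Your proposal is correct and follows essentially the same strategy as the paper's proof: define $f$ piecewise as $1$ on $K$ and $0$ away from it, use Theorem~\ref{setting_Borelthm} on a collar of small cells to prescribe the constant-$1$ jet at the boundary points of $K$ and zero jets (including zero normal derivatives) elsewhere so that the matching conditions hold at every order, and read off the uniform bound from the scale-invariant $k=l=0$ part of \eqref{setting_jetestimate}. The only cosmetic difference is that the paper hand-picks the collar cells $V_{j,k}$ at each boundary point $q_j$ of $K$ rather than taking all cells of a single fine level $m'$.
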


\begin{proof}
Let $\{q_{j}\}$ be the boundary points of $K$ and at each $q_{j}$ take cells $V_{j,k}\subset U$
such that $\bigcup_{k} V_{j,k} \cup K$ contains a neighborhood of $q_{j}$. By making all of these
cells sufficiently small and removing any inside $K$ we may further assume that the $V_{j,k}$
intersect $K$ only at $q_{j}$, intersect each other only at $q_{j}$ and do not intersect
$V_{j',k'}$ for any $j'\neq j$.

On each $V_{j,k}$ construct the smooth function $f_{j,k}$ guaranteed by Theorem
\ref{setting_Borelthm} with $f_{j,k}=1$ at $q_{j}$ and all other jet terms at $q$ equal to zero,
and taking $\epsilon=1$. Then the piecewise function
\begin{equation*}
    f(x) = \begin{cases}
        1 &\quad\text{for $x\in K$}\\
        f_{j,k} &\quad\text{for $x\in V_{j,k}$}\\
        0 &\quad\text{otherwise}
        \end{cases}
    \end{equation*}
is equal $1$ on $K$ and $0$ off $U$ by construction.  It is also smooth, simply because the pieces
are smooth and the matching conditions for $\Delta^{l}$ apply at each of the boundary points of the
$V_{j,k}$ for all $l$.  The bound $\|f\|_{\infty}\leq C$ independent of $K$ and $U$ now follows
from \eqref{setting_jetestimate} because the scale-dependent terms are all raised to the power
zero, so are constant.
\end{proof}

A more difficult task than that in Corollary~\ref{setting_bumpfunctionwithestimates} is to
construct a {\em positive} bump function that is equal to $1$ on $K$ and to zero outside the
neighborhood $U$ of $K$.  A result of this type was proven in \cite{RST} under certain assumptions
on the diffusion $Y_{t}$ corresponding to the Laplacian.  A sufficient assumption is that the heat
kernel $p_{t}(x,y)$  (i.e. the transition density of of $Y_{t}$) satisfies an estimate of the form
\begin{equation}\label{setting-subGaussianbound}
    p_{t}(x,y)
    \leq \frac{\gamma_{1}}{t^{\alpha/\beta}} \exp\, \biggl( -\gamma_{2} \Bigl( \frac{R(x,y)^{\beta}}{t} \Bigr)^{1/(\beta-1)}
    \biggr)
    \end{equation}
where $\alpha$, $\beta$, $\gamma_{1}$ and $\gamma_{2}$ are constants. The
estimate~\ref{setting-subGaussianbound} is known to be valid in great generality on p.c.f. fractals
(Corollary~1.2 of \cite{MR1665249}).
\begin{theorem}[\protect{\cite{RST} Corollary~2.9}]\label{setting_positivebumpfunction}
Under the assumption~\eqref{setting-subGaussianbound}, for a cell $K$ and an open neighborhood
$U\supset K$, there is a smooth function $f$ such that $f=1$ on $K$, $f=0$ outside $U$, and
$f(x)\geq 0$ for all $x$.
\end{theorem}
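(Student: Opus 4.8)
The plan is to reduce the theorem to a local statement at a single boundary point of a single cell, and then to build the required nonnegative function out of the heat semigroup. First I would replace $U$ by a finite union of cells whose interior still contains $K$ (always possible). Then, imitating the piecewise construction in the proof of Corollary~\ref{setting_bumpfunctionwithestimates}, it is enough to prove the following: for a cell $V$ in $\domain$ and a boundary point $q$ of $V$ there is a smooth $f\ge 0$ on $V$ that vanishes in a neighbourhood of $\partial V\setminus\{q\}$ and has jet $f(q)=1$, $\Delta^{l}f(q)=0$ for $l\ge 1$, and $\partial_{n}^{V}\Delta^{l}f(q)=0$ for all $l$. Granting this, put $f\equiv 1$ on $K$, let $f$ equal such a function on each of the collar cells $V_{j,k}\subset U$ chosen around the boundary points $q_{j}$ of $K$ exactly as in that proof, and set $f\equiv 0$ elsewhere; the prescribed jet is precisely what forces every matching condition for $\Delta^{l}$ at the boundary points of the $V_{j,k}$, so $f$ is smooth, it is $1$ on $K$ and $0$ off $U$ by construction, and it is nonnegative because each piece is.

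For the local statement I would draw positivity from the heat semigroup $P_{s}=e^{s\Delta}$. Fix a small cell-neighbourhood $B$ of $q$ inside $V$ and set $u_{s}(x)=P_{s}\chi_{B}(x)=\int_{B}p_{s}(x,y)\,d\mu(y)$. Then $u_{s}\ge 0$, $u_{s}\le 1$, and $u_{s}$ is smooth on $\domain$: $P_{s}$ carries $\chi_{B}\in L^{2}(\mu)$ into $\dom(\Delta^{k})$ for every $k$, and by~\eqref{setting_resistmetricestimate} these $L^{2}$-Laplacians are continuous, so $\Delta^{k}u_{s}\in C(\domain)$ for all $k$. By the upper bound~\eqref{setting-subGaussianbound}, $u_{s}$ is super-polynomially small in $s$ away from $B$, in particular just outside $V$, while the companion near-diagonal heat-kernel estimates give $u_{s}\ge\tfrac12$ on a fixed sub-neighbourhood of $q$ once $s$ is small. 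Thus $u_{s}$ is a positive smooth ``approximate'' bump at $q$, failing only in that its jet at $q$ is not exactly $(1,0,\dots;0,\dots)$ and it is not exactly $0$ near $\partial V\setminus\{q\}$.

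To upgrade the approximate object to an exact one without losing $f\ge 0$, I would build $f$ as an infinite series of rescaled, recentred copies of building blocks of this type at scales $s_{m}\downarrow 0$ approaching $q$, choosing the coefficients so that the jets telescope to the target $(1,0,\dots;0,\dots)$ and so that, by the scale estimate~\eqref{setting_jetestimate}---whose scale-zero terms are constants and hence harmless---the series and every series $\sum_{m}\Delta^{k}(\cdot)$ converge uniformly; this is the fractal analogue of Borel's construction of a $C^{\infty}$ function with prescribed Taylor coefficients. At each stage the residual error, both in the value at $q$ and in the failure to vanish near $\partial V$, is supported on an ever thinner shell around $q$ and is super-polynomially small by~\eqref{setting-subGaussianbound}, so it can be cancelled at the next stage without disturbing nonnegativity. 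Assembling these pieces and extending by $1$ on $K$ and $0$ off $U$ as in the reduction completes the argument.

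The step I expect to be the real obstacle is exactly this simultaneous control of sign, of the exact jet, and of exact vanishing near $\partial V$: the oscillations that necessarily accompany any function carrying a prescribed nontrivial jet---such as the functions produced by Theorem~\ref{setting_Borelthm}---must be dominated scale by scale by the positive heat-kernel contributions, and it is precisely this domination that forces us to assume the sub-Gaussian bound~\eqref{setting-subGaussianbound}; note that Corollary~\ref{setting_bumpfunctionwithestimates}, where sign played no role, needed no such hypothesis. Two subsidiary technical points also require care: that $u_{s}$ really is smooth on a possibly non-compact fractafold, and that its lower bound near $q$, and more generally near $\partial K$, is uniform in the relevant data; both follow from standard heat-kernel estimates together with polynomial volume growth of $(\domain,R,\mu)$.
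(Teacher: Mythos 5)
First, note that the paper does not actually prove this statement: it is imported verbatim from \cite{RST}, Corollary~2.9, so there is no internal proof to compare against. Your overall architecture --- positivity from the heat semigroup under \eqref{setting-subGaussianbound}, exactness from a piecewise Borel-type correction --- is the same architecture as the cited source, so the plan is reasonable in outline. The difficulty is that the one step you yourself identify as ``the real obstacle'' is asserted rather than carried out, and as written it does not go through.

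Concretely, three things break. (i) The corrections supplied by Theorem~\ref{setting_Borelthm} carry no sign information; \eqref{setting_jetestimate} bounds only $\|\Delta^k f\|_\infty$, and such functions will in general be negative somewhere on their supports. To conclude that $u_s + (\text{correction}) \ge 0$ you need a pointwise \emph{lower} bound on $u_s$ on the support of each correction. On the collar where the function must be cut down to exactly $0$ --- i.e.\ away from $B$ --- the only control on $u_s=e^{s\Delta}\chi_B$ coming from \eqref{setting-subGaussianbound} is a super-exponentially small \emph{upper} bound; there is no lower bound there to dominate the (comparably small, but possibly negative) correction. ``Nonnegative plus small'' is not ``nonnegative'', and no mechanism for the scale-by-scale domination is given. (ii) Your building block fails at the matching point itself: you take $B\subset V$ a cell-neighbourhood of $q$ with $q\in\partial V$, hence $q\in\partial B$, and then $e^{s\Delta}\chi_B(q)=\int_B p_s(q,y)\,d\mu(y)$ stays bounded away from $1$ as $s\to 0$ (the heat mass splits between $B$ and its complement at a boundary point). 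So the jet discrepancy at $q$ that your telescoping series must absorb is $O(1)$, not super-polynomially small, exactly where the matching conditions of the piecewise assembly are imposed. (iii) The bound $u_s\ge \tfrac12$ near $q$ is drawn from ``companion near-diagonal estimates'', which are not among the hypotheses; the theorem assumes only the upper bound \eqref{setting-subGaussianbound}. The legitimate route to a lower bound is stochastic completeness, $e^{s\Delta}\chi_B = 1 - e^{s\Delta}\chi_{B^c}$, together with \eqref{setting-subGaussianbound} applied to $\chi_{B^c}$ --- and this only helps at points at positive resistance distance from $\partial B$, so it does not rescue (ii). Until the sign of the corrected function is controlled pointwise on every collar, and the $O(1)$ mismatch at $q$ is removed (for instance by letting the heat bump straddle $q$ and moving the gluing locus elsewhere), the argument establishes only the non-positive version already contained in Corollary~\ref{setting_bumpfunctionwithestimates}.
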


The final theorem from \cite{RST} that we will use extensively is concerned with the smooth
partitioning of a smooth function.

\begin{theorem}[\protect{\cite{RST}}, Theorem 5.1]\label{setting_partitionthm}
Let $K\subset X$ be compact and fix $\bigcup U_{\alpha}\supset K$ an open cover.   If $f\in
\dom(\Delta^{\infty})$ then there is a decomposition $f=\sum_{1}^{J}f_j$ in which each $f_j$ is in
$\dom(\Delta^{\infty})$ and has support in some $U_{\alpha_{j}}$.
\end{theorem}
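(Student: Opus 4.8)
The plan is to reduce the partitioning problem to the local model on a single copy of $X$ and then to a finite combinatorial argument over the cells of a fine-enough subdivision, using the cutoff functions from Corollary~\ref{setting_bumpfunctionwithestimates} to transfer smoothness across gluing/junction points. First I would invoke compactness of $K$ to pass to a finite subcover $U_{\alpha_1},\dots,U_{\alpha_n}$, and then use the fact that cells form a neighborhood basis to choose, for each point of $K$, a cell containing it inside one of the $U_{\alpha_i}$; compactness again gives a finite collection of cells $K_1,\dots,K_M$ covering $K$, each $K_m$ contained in some $U_{\alpha_{i(m)}}$. After refining (replacing the $K_m$ by $m$-cells for a common large $m$, and discarding redundant ones), we may assume the $K_m$ have pairwise disjoint interiors and meet only along junction points, so that $\bigcup K_m$ is a finite union of cells of the cellular construction.

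Next I would build the partition inductively over the cells. The naive attempt $f_m = f\cdot(\text{cutoff on }K_m)$ fails because products of smooth functions are not smooth, so instead I would peel off one piece at a time using jets. Concretely: order the cells and the junction points of the configuration $\bigcup K_m$; at each junction point $q$ the function $f$ has a jet $\{\Delta^k f(q),\partial_n^j\Delta^k f(q)\}_k$ in each of the finitely many cells meeting at $q$. I would use Theorem~\ref{setting_Borelthm} (and the Remark following it, allowing prescribed jets at all boundary points of a cell) to construct, on each cell $K_m$, a smooth function $g_m$ supported near the boundary of $K_m$ whose jet at each boundary point $q$ of $K_m$ agrees with that of $f$ there; then $f - \sum_m g_m$ has vanishing jets at every junction point of the configuration. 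Restricting this corrected function to an individual cell $K_m$ and extending it by zero produces a smooth function supported in $K_m$: smoothness across $\partial K_m$ holds because all jet terms vanish, so the matching conditions for $\Delta^l$ are trivially satisfied for every $l$. Thus $f - \sum g_m = \sum_m h_m$ with $h_m$ smooth and supported in $K_m \subset U_{\alpha_{i(m)}}$.

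It remains to distribute the correction terms $g_m$ among the $U_\alpha$'s. Since each $g_m$ is supported in a small neighborhood of $\partial K_m$, which in turn lies inside the open set $\bigcup\{U_\alpha : U_\alpha \text{ meets } \partial K_m\}$, I would cut each $g_m$ further into pieces supported near individual boundary points of $K_m$ — this is already how Theorem~\ref{setting_Borelthm} produces it — and absorb each such piece into whichever $U_\alpha$ contains the relevant small cell. Collecting all contributions, $f = \sum_j f_j$ with each $f_j$ smooth and supported in some $U_{\alpha_j}$, which is the assertion.

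The main obstacle is the smoothness of the pieces at the junction points: one cannot simply multiply by cutoffs, and the correction-by-jets must be done so that after subtracting $\sum g_m$ the remainder has \emph{all} jet terms vanishing at \emph{every} relevant junction point simultaneously — this is what forces the use of the Borel-type Theorem~\ref{setting_Borelthm} rather than the finite-order cutoffs of Corollary~\ref{setting_bumpfunctionwithestimates}, and it is also why the argument is inherently local (finitely many cells, finitely many junction points) rather than a single global construction. A secondary technical point is checking that the inductive peeling does not disturb jets already arranged to vanish at previously-handled junction points; this is handled by choosing the supports of the successive $g_m$ to shrink appropriately, or equivalently by working junction point by junction point rather than cell by cell.
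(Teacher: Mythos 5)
The paper does not actually prove this statement: it is imported verbatim from Theorem 5.1 of \cite{RST}, and the only indication of its proof here is Remark~\ref{setting_partitionremarktwo} together with the product analogue (Theorem~\ref{productparitioning}, via Theorem~\ref{cutoffonproductcell}), which proceeds by an inductive \emph{outward} extension: at step $l$ one extends the current remainder $f-\sum_{i<l}v_{i}$ from the cell $K_{l}$ by a smooth collar in small adjacent cells, built with the Borel theorem so that its jet matches at each boundary point of $K_{l}$, and then subtracts. Your scheme is organized differently: you correct \emph{inward}, subtracting from $f$ on each $K_{m}$ a function $g_{m}$ supported near $\partial K_{m}$ inside $K_{m}$ whose jet matches that of $f$, so that the corrected function splits by restriction and zero-extension, and you then redistribute the corrections. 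Both routes rest on the same two pillars (Theorem~\ref{setting_Borelthm} and the matching conditions for all powers of $\Delta$), and yours avoids the induction entirely, which is a genuine simplification of the bookkeeping.

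However, the redistribution step as written has a gap. An individual piece $g_{m,q}$ (supported in a small cell of $K_{m}$ at the junction point $q$) is \emph{not} smooth on $X$: extended by zero it carries the jet $\Delta^{k}f(q)$, $\partial_{n}^{K_{m}}\Delta^{k}f(q)$ from inside $K_{m}$ and the zero jet from every other cell at $q$, so the matching conditions fail unless the entire jet of $f$ vanishes at $q$. The only smooth object is the grouped sum $G_{q}=\sum_{m:\,q\in\partial K_{m}}g_{m,q}$ over \emph{all} cells meeting at $q$, which is smooth precisely because the values $\Delta^{k}f(q)$ agree across cells and the normal derivatives $\partial_{n}^{K_{m}}\Delta^{k}f(q)$ sum to zero, $f$ being smooth. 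You must therefore assign each whole $G_{q}$ to a single $U_{\alpha}$ containing $q$ (shrinking the supporting cells so they fit inside it); ``absorb each such piece into whichever $U_{\alpha}$ contains the relevant small cell'' would in general split the pieces at one $q$ among several $U_{\alpha}$'s and yield non-smooth summands. Relatedly, $G_{q}$ is smooth only if every cell meeting at $q$ belongs to your configuration, which fails at junction points on the outer boundary of $\bigcup K_{m}$. You need either to take the cells to cover all of $X$ (the only reading under which the conclusion can hold for $f$ not supported in $\bigcup U_{\alpha}$), or, when $f$ is supported in $K$, to choose the configuration so that $\bigcup K_{m}$ contains an open neighborhood of $K$; then the jet of $f$ vanishes identically at the outer boundary and those corrections are zero. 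With these two repairs your argument goes through.
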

\begin{remark}\label{setting_partitionremarkone}
Compactness of $K$ is used only to obtain finiteness of the decomposition, and may be omitted for
finite covers.  An analogous countable (and locally finite) decomposition is then valid in the
$\sigma$-compact case; in particular it is valid on $\domain$, because of the existence of a
cellular structure.
\end{remark}

\begin{remark}\label{setting_partitionremarktwo}
The proof uses a result on the existence of smooth functions with prescribed jet at a point
(Theorem 4.3 of \cite{RST}) to smoothly join cutoffs to a piece of the original function as in the
proof of Theorem \ref{setting_bumpfunctionwithestimates}. This is very different from the Euclidean
case where one simply multiplies the smooth function by a smooth bump. In particular, the
construction of the cutoff depends explicitly on the growth rate of the jet of $f$ at the boundary
points under consideration, so for a collection of sets indexed by $j$, the mapping $f\mapsto
\{f_{j}\}$ to a sequence of smooth functions supported on these sets is nonlinear.

Although the non-linearity will make some later proofs more complicated, this method does provide
good estimates. From~\eqref{setting_jetestimate} and standard arguments for controlling the normal
derivative $\partial_n\Delta^{k}f$ at a point by the norms $\|\Delta^{j}\|_{\infty}$,
$j=0\dotsc,k+1$, over a neighborhood of the point (like those in Section
\ref{distribsatpoint_section} below) we find that $f\mapsto f_{j}$ can be arranged to satisfy
\begin{equation}\label{setting_extensionestimate}
    \|\Delta^{k} f_{j}\|_{\infty}\leq C \sum_{l=0}^{k} \|\Delta^{l}f\|_{\infty}
    \end{equation}
where $C$ is a constant depending only on $k$ and $K$.
\end{remark}

\section{Test Functions} \label{testfunction_section}
We define test functions on $\domain$ in the usual way, and provide notation for the space of
smooth functions on $\domain$ topologized by uniform convergence on compacta.
\begin{definition} \label{testfunction_testfndefn}%
The space of {\em test functions} $\testfns$ consists of all $\testfn\in\dom(\Delta^{\infty})$ such
that $\sppt(\testfn)$ is compact.  We endow it with the topology in which $\testfn_i \to \testfn$
iff there is a compact set $K\subset\domain$ containing the supports of all the $\testfn_i$, and
$\Delta^k \testfn_i \to \Delta^{k}\testfn$ uniformly on $K$ for each $k\in\mathbb{N}$. There is a
corresponding family of seminorms defined by
\begin{equation}
    \label{testfunction_seminormeqn}
    |\testfn|_m = \sup \{|\Delta^k \testfn(x)| : x \in \Omega, k \leq m\}
  \end{equation}
though it should be noted that the topology on $\testfns$ is not the usual metric topology produced
by this family.  For a discussion of the topology on $\testfns$ and its relation to these
seminorms, see Chapter~6 of \cite{Rudin}.
\end{definition}
\begin{definition} \label{testfunction_smoothfndefn}%
$\smoothfns=\dom(\Delta^{\infty})$ with the topology $\testfn_i \to \testfn$ iff for every compact
$K\subset\Omega$ we have $\Delta^k \testfn_i \to \Delta^{k}\testfn$ uniformly on $K$ for each
$k\in\mathbb{N}$. There is a corresponding family of seminorms defined by
\begin{equation}
    \label{testfunction_seminormeqnforsmoothfns}
    |\testfn|_{m,K} = \sup \{|\Delta^k \testfn(x)| : x \in K, k \leq m\}.
  \end{equation}
\end{definition}
The following result is immediate from Theorem \ref{setting_partitionthm} and
\eqref{setting_extensionestimate}. It will be used frequently in the results proved below.
\begin{lemma}
  \label{testfunction_partition}
  If $\testfn \in \mathcal{D}(\domain_1 \cup \domain_2)$, then $\testfn = \testfn_1 + \testfn_2$
  for some $\testfn_j \in \mathcal{D}(\domain_j)$.  For each $m$ there is $C=C(M,\Omega_1,\Omega_2)$ so
  $|\testfn_{j}|_{m}\leq C|\testfn|_{m}$, $j=1,2$.
\end{lemma}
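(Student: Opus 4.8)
The plan is to apply Theorem~\ref{setting_partitionthm} (in the locally-finite form noted in Remark~\ref{setting_partitionremarkone}) to the open cover $\{\domain_1,\domain_2\}$ of $\domain_1\cup\domain_2$, together with the estimate~\eqref{setting_extensionestimate} from Remark~\ref{setting_partitionremarktwo} to control the seminorms. First I would observe that $\sppt(\testfn)$ is a compact subset of $\domain_1\cup\domain_2$, so the partition theorem applied to the finite cover $\{\domain_1,\domain_2\}$ yields a \emph{finite} decomposition $\testfn=\sum_{k}g_k$ with each $g_k\in\dom(\Delta^\infty)$ supported in $\domain_1$ or $\domain_2$. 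Grouping the $g_k$ according to which of the two sets contains their support and summing within each group gives $\testfn=\testfn_1+\testfn_2$ with $\testfn_j\in\dom(\Delta^\infty)$ and $\sppt(\testfn_j)\subset\domain_j$; since each $\testfn_j$ is a finite sum of compactly supported pieces, $\sppt(\testfn_j)$ is compact, so $\testfn_j\in\mathcal D(\domain_j)$.

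Next I would address the seminorm bound. The construction underlying Theorem~\ref{setting_partitionthm} produces each piece $g_k$ by cutting off $\testfn$ near the relevant boundary points and smoothly joining in a correction built from Theorem~\ref{setting_Borelthm}; Remark~\ref{setting_partitionremarktwo} records that this can be arranged so that, for each $m$, $\|\Delta^m g_k\|_\infty\leq C\sum_{l=0}^m\|\Delta^l\testfn\|_\infty$ with $C$ depending only on $m$ and the geometry of the cover. Taking the supremum over $k\leq m$ and over $x\in\domain$, and recalling the definition~\eqref{testfunction_seminormeqn} of $|\cdot|_m$, this reads $|g_k|_m\leq C\,|\testfn|_m$. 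Since $\testfn_j$ is a sum of at most $J$ of the $g_k$, where $J$ is the number of pieces in the decomposition (which depends only on the cover, not on $\testfn$), the triangle inequality gives $|\testfn_j|_m\leq JC\,|\testfn|_m$, which is the claimed estimate with $C=C(m,\domain_1,\domain_2)$ renamed appropriately. (The ``$M$'' in the statement should read ``$m$''.)

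The main obstacle is purely bookkeeping: one must make sure the constant in~\eqref{setting_extensionestimate}, and the number of pieces $J$, genuinely depend only on $m$ and on the pair $(\domain_1,\domain_2)$ and not on $\testfn$. This is exactly what Remark~\ref{setting_partitionremarktwo} asserts, so the only real content here is to invoke it correctly and to note that grouping and summing finitely many pieces costs at most a factor of $J$. A minor point to check is that when $\domain_1$ and $\domain_2$ overlap, a piece supported in the overlap may be assigned to either group; the estimate is insensitive to this choice, so we may make it arbitrarily. No nonlinearity issue arises because we are not claiming linearity of $\testfn\mapsto(\testfn_1,\testfn_2)$, only the seminorm bound, which survives the nonlinear construction.
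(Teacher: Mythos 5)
Your proposal is correct and is exactly the argument the paper intends: the paper offers no separate proof, stating only that the lemma "is immediate from Theorem~\ref{setting_partitionthm} and \eqref{setting_extensionestimate}," which is precisely the combination you spell out (finite partition over the cover $\{\domain_1,\domain_2\}$ of the compact support, grouping of pieces, and the seminorm bound from Remark~\ref{setting_partitionremarktwo}). Your observations about the typo $C(M,\dotsc)$ versus $C(m,\dotsc)$ and the irrelevance of the nonlinearity of the decomposition map are both apt.
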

One consequence is that $\testfns$ is dense in $\smoothfns$, because we may fix an increasing
compact exhaustion $\cup_{j}K_{j}=\domain$ of our domain and for arbitrary $\testfn\in\smoothfns$
write $\testfn=\testfn_{j}+\tilde{\testfn}_{j}$, where $\testfn_{j}$ is supported in $K_{j+1}$ and
$\tilde{\testfn}_{j}$ is supported in $K_{j}^{c}$, so that
$\testfn\evald{K_{j}}=\testfn_{j}\evald{K_{j}}$. The functions $\testfn_{j}$ are in $\testfns$ and
it is clear that $\Delta^{k}\testfn_{j}\to \Delta^{k}\testfn$ uniformly on compacta, hence
$\testfn_{j}\to\testfn$ in $\smoothfns$.  Another density result that follows from
Lemma~\ref{testfunction_partition} is as follows.

\begin{theorem}\label{testfunction_testfnsdenseincts}
\testfns\ is dense in $C_{c}(\domain)$, the space of continuous functions with compact support,
with supremum norm.
\end{theorem}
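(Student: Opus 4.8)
The plan is to approximate an arbitrary $f\in C_c(\domain)$ uniformly by test functions using the Green's operator from Lemma~\ref{setting_greensfnexsts} together with the partitioning result of Lemma~\ref{testfunction_partition}. First I would reduce to a local problem: since $\sppt(f)$ is compact it is contained in a finite union of cells $K=\cup_1^J X_j$ with $K\neq\domain$, and using Lemma~\ref{testfunction_partition} it suffices to approximate a continuous function supported well inside such a $K$ by test functions (one then sums the finitely many local approximations, and the uniform $C$ in Lemma~\ref{testfunction_partition} keeps control). Enlarging $K$ slightly if necessary, we may assume $\sppt(f)$ lies in the interior of $K$.

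Next I would use spectral theory on $K$ to smooth $f$. Working with $-\Delta$ with Dirichlet boundary conditions on $K$ (which has discrete spectrum with eigenfunctions $\psi_j$ spanning $L^2$), form the heat-semigroup regularization $f_t = e^{t\Delta}f = \sum_j e^{-t\lambda_j}\langle f,\psi_j\rangle\psi_j$, or equivalently iterate the Green's operator. For each $t>0$ the function $f_t$ lies in $\dom(\Delta^\infty)$ because applying $\Delta^k$ only inserts the rapidly decaying factors $\lambda_j^k e^{-t\lambda_j}$, and $f_t\to f$ in $L^2$ as $t\to 0$. To upgrade $L^2$-convergence to uniform convergence I would invoke the H\"older-$\frac12$ estimate \eqref{setting_resistmetricestimate}: one can show $\|f_t - f\|_\infty\to 0$ for $f$ continuous by a standard argument combining the $L^2$ convergence with equicontinuity coming from \eqref{setting_resistmetricestimate} and the uniform continuity of $f$ (alternatively, approximate $f$ first in sup-norm by a finite linear combination of eigenfunctions or by $G_K\nu$ for suitable $\nu$).

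The remaining issue is that $f_t$ is supported on all of $K$ rather than having support in $\domain$ away from $\partial K$; but since $\sppt(f)$ is a compact subset of $\interior K$, I would pick a slightly larger cell-complex $K'$ with $\sppt(f)\subset\interior K\subset K\subset\interior K'$, do the smoothing on $K'$, and then cut off using Corollary~\ref{setting_bumpfunctionwithestimates} together with the partitioning Lemma~\ref{testfunction_partition} applied to the open cover $\{\interior K',\, \domain\setminus\sppt(f)\}$ of $\domain$: write $f_t = g_t + h_t$ with $g_t\in\mathcal D(\interior K')$ and $h_t\in\mathcal D(\domain\setminus\sppt(f))$. On $\sppt(f)$ we then have $h_t = f_t - f_t = 0$ away from $f$'s support... more carefully, $f_t - f$ is small in sup-norm, and $g_t\in\testfns$ approximates $f$ because $h_t$ is supported off $\sppt(f)$ and $g_t = f_t$ near $\sppt(f)$. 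Thus $g_t\to f$ uniformly as $t\to0$, giving the density.

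The main obstacle I anticipate is the passage from $L^2$ to uniform approximation and the bookkeeping with supports: one must be careful that the smoothing via $e^{t\Delta}$ does not spoil the support condition before the cutoff is applied, and that \eqref{setting_resistmetricestimate} genuinely yields uniform convergence $f_t\to f$ for merely continuous $f$ (which requires either an equicontinuity argument or first approximating $f$ in $C(K)$ by functions in $\dom(\Delta)$, using regularity of the Dirichlet form). Everything else—the localization, the cutoff, and the final partition—is routine given the tools already assembled in Section~\ref{setting_section}.
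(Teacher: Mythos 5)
Your strategy is viable but genuinely different from the paper's. The paper proves this by duality: since the dual of $C_{c}(\domain)$ is the space of Radon measures, the Hahn--Banach theorem reduces the claim to showing that any Radon measure $\nu$ with $\int\testfn\,d\nu=0$ for all $\testfn\in\testfns$ vanishes, and this follows by testing $\nu$ against the uniformly bounded bump functions of Corollary~\ref{setting_bumpfunctionwithestimates} on shrinking neighborhoods of an arbitrary cell. That argument is three lines long and needs no smoothing or support bookkeeping; your constructive route is closer in spirit to the alternative the authors themselves mention after Theorem~\ref{structure-zeroorderimpiesmeasure}, namely Fourier truncation as in Theorem~\ref{testfunctions_compactcase}, localized via a Dirichlet spectral decomposition on a finite union of cells. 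Your plan for upgrading $L^{2}$ to uniform convergence is sound in outline (regularity of $\DF$ gives sup-norm density of $\dom(\DF)\cap C$; for functions vanishing on $\partial K$ the H\"older estimate \eqref{setting_resistmetricestimate} converts energy convergence of $e^{t\Delta}u$ into uniform convergence), and you correctly flag this as the point needing care. Note also that your opening reduction via Lemma~\ref{testfunction_partition} cannot be applied to $f$ itself, since $f$ is merely continuous; it is also unnecessary, as one may simply take $K$ to be a finite union of cells containing $\sppt(f)$ in its interior.

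The one step that would fail as written is the cutoff. You cannot use Corollary~\ref{setting_bumpfunctionwithestimates} to truncate $f_{t}$, because products of smooth functions are not smooth on these fractals, so multiplying by a bump is not available; and Lemma~\ref{testfunction_partition} applies only to functions that are already globally smooth with compact support, whereas your $f_{t}$ is defined and smooth only on $K'$ (extending it by zero is not smooth, since its normal derivatives and higher jet terms at $\partial K'$ need not vanish). The correct tool is Theorem~\ref{setting_Borelthm} applied directly at the finitely many boundary points of $K$: attach to each $q\in\partial K$ small cells carrying smooth functions whose jets match those of $f_{t}$ at $q$ (with normal derivatives summing to zero), exactly as in the proofs of Corollary~\ref{setting_bumpfunctionwithestimates} and Theorem~\ref{distribs_testfnsdenseindistribs}. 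One must then check that these attached pieces are uniformly small: by \eqref{setting_jetestimate} their sup norms are controlled by $|f_{t}(q)|$ (small because $q\notin\sppt(f)$ and $f_{t}\to f$ uniformly) plus the higher jet terms $|\Delta^{l}f_{t}(q)|$, $|\partial_{n}\Delta^{l}f_{t}(q)|$ weighted by powers of $r_{w}\mu_{w}$; the latter blow up as $t\to0$, so for each fixed $t$ you must shrink the attached cells to make these contributions small. This two-parameter argument (first $t$, then the scale of the attached cells) closes the gap, but it is precisely the bookkeeping your sketch elides.
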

\begin{proof}
The dual of $C_{c}(\domain)$ is the space of Radon measures, so by the Hahn-Banach Theorem, it
suffices to show that if such a measure $\nu$ satisfies
\begin{equation}
    \label{testfunction_int-test-fn-zero}
    \int \testfn \,d\nu = 0, \quad \text{for all $\testfn \in \mathcal{D}(\domain)$},
    \end{equation}
then $\nu \equiv 0$.

Let $K$ be a cell and $\{U_i\}$ a sequence of open sets containing $K$ so that $\nu(U_i \setminus
K) \to 0$. Using Corollary \ref{setting_bumpfunctionwithestimates} we see that for each $j$ we can
take $\testfn_{i}\in\testfns$ with $\testfn_{i}\equiv1$ on $K$, the bound
$\|\testfn_{i}\|_{\infty}\leq C$ for all $i$, and $\sppt(\testfn_{i})\subset(U_{i})$. Then for
$\nu$ satisfying \eqref{testfunction_int-test-fn-zero} we compute
\begin{equation*}
    \nu (K)
    = \left| \int \testfn_i \,d\nu - \mu(K) \right|
    \leq \|\testfn_j\|_\infty \nu(U_j \setminus K)
    \leq C \nu(U_j \setminus K) \to 0.
    \end{equation*}
As $\nu$ vanishes on all cells it is the zero measure, and the result follows.
\end{proof}

Since $\domain$ is locally compact and Hausdorff, it is a standard result that $C_{c}(\domain)$ is
supremum-norm dense in $C_{0}(\domain)$, where the latter consists of those continuous functions
$f$ for which the set $\{x:|f(x)|\geq \epsilon\}$ is compact for all $\epsilon>0$.  Hence
$\testfns$ is also dense in $C_{0}(\domain)$.

 In the special case where $\Omega$ is compact we may
also characterize $\testfns=\smoothfns$ by the decay of the Fourier coefficients obtained when
$\testfn$ is written with respect to a basis of Laplacian eigenfunctions.  This provides an
alternate proof of the density of $\testfns$ in $C_{c}(\domain)$, which of course coincides with
$C(\Omega)$ in this case.

\begin{theorem}\label{testfunctions_compactcase}
If $\Omega$ is compact then $\testfns=\smoothfns$ is the space of smooth functions with Fourier
coefficients that have faster than polynomial decay, and hence is dense in $C(\Omega)$.
\end{theorem}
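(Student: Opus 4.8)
The plan is to characterize smoothness by the decay of Fourier coefficients with respect to the orthonormal basis $\{\psi_j\}$ of Laplacian eigenfunctions, which exists because $-\Delta$ is self-adjoint with compact resolvent on $L^2(\mu)$ when $\Omega$ is compact. Write $\testfn = \sum_j c_j \psi_j$ with $c_j = \int \testfn \psi_j \, d\mu$. The key observation is that since $\Delta \psi_j = -\lambda_j \psi_j$ and $\Delta$ is self-adjoint, integration against $\psi_j$ gives $\int (\Delta^k \testfn)\psi_j \, d\mu = (-\lambda_j)^k c_j$; this is legitimate because $\testfn \in \dom(\Delta^k)$ for all $k$ and $\Delta^k\testfn \in C(\Omega) \subset L^2(\mu)$ since $\mu$ is finite. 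First I would show that if $\testfn$ is smooth then $|\lambda_j^k c_j| = |\int (\Delta^k\testfn)\psi_j \, d\mu| \leq \|\Delta^k\testfn\|_2 \leq \|\Delta^k\testfn\|_\infty < \infty$ for every $k$, so $|c_j| = O(\lambda_j^{-k})$ for all $k$; since the eigenvalue counting function grows polynomially (Weyl-type asymptotics for p.c.f. fractals, a standard fact from \cite{Kigamibook}), $\lambda_j$ grows like a positive power of $j$, and hence $c_j$ decays faster than any polynomial in $j$.

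For the converse, suppose $\testfn = \sum_j c_j\psi_j$ with $(c_j)$ of faster-than-polynomial decay. I would show the series $\sum_j (-\lambda_j)^k c_j \psi_j$ converges uniformly for each $k$, which identifies its sum as a continuous function and, by a standard closedness argument for $\Delta$, shows it equals $\Delta^k\testfn$, so $\testfn \in \dom(\Delta^\infty)$. Uniform convergence follows from the resistance-metric H\"older estimate \eqref{setting_resistmetricestimate}, or more simply from controlling $\|\psi_j\|_\infty$: one has $\|\psi_j\|_\infty^2 \leq \|\psi_j\|_2 \|\Delta\psi_j\|_2 \, \diam_R(\Omega) = \lambda_j \diam_R(\Omega)$ after adjusting for boundary values (but $\Omega$ has no boundary, so this needs the eigenfunction to be orthogonal to constants, i.e., handled by subtracting its mean, or one uses the cruder bound $\|\psi_j\|_\infty \leq c\lambda_j^{\alpha/2}$ available on p.c.f. fractals). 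Then $\|(-\lambda_j)^k c_j \psi_j\|_\infty \leq \lambda_j^k |c_j| \cdot c\lambda_j^{\alpha/2}$, and since $|c_j|$ beats any power of $\lambda_j$ (again using polynomial growth of $\lambda_j$ in $j$), the series converges absolutely and uniformly.

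Having established the characterization $\testfns = \smoothfns = \{$smooth functions with rapidly decaying Fourier coefficients$\}$, density in $C(\Omega)$ is then almost immediate: given $g \in C(\Omega) \subset L^2(\mu)$ with Fourier expansion $g = \sum c_j\psi_j$ (converging in $L^2$), the partial sums are smooth functions (finite linear combinations of eigenfunctions trivially have rapidly — indeed finitely — supported coefficients), but partial sums need not converge uniformly. To fix this I would instead note that $\testfns \supset$ the finite linear combinations of eigenfunctions, whose uniform closure contains $C(\Omega)$: this follows either from the already-proven Theorem~\ref{testfunction_testfnsdenseincts} (density of $\testfns$ in $C_c(\Omega) = C(\Omega)$), giving the density conclusion for free, or independently by a Stone–Weierstrass-type / Fej\'er-averaging argument on the eigenfunction expansion. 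In fact the cleanest route is to simply invoke Theorem~\ref{testfunction_testfnsdenseincts}, which already gives density of $\testfns$ in $C(\Omega)$ when $\Omega$ is compact, so the "hence" in the statement requires no new work at all.

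The main obstacle I anticipate is the converse direction — showing rapidly decaying coefficients produce a genuinely smooth function — which hinges on having a polynomial (in $j$) lower bound for $\lambda_j$ together with a polynomial upper bound for $\|\psi_j\|_\infty$. Both are standard consequences of the heat kernel / Weyl asymptotics for p.c.f. fractals available under our hypotheses (see \cite{Kigamibook} and the estimate \eqref{setting-subGaussianbound}), but one must be slightly careful that $\Omega$ being a boundaryless fractafold rather than $X$ itself does not disrupt these; since $\Omega$ is compact with a finite cellular construction, the same spectral asymptotics apply. The interchange of $\Delta^k$ with the infinite sum is routine given uniform convergence and the closedness of $\Delta$ as an operator from $C(\Omega)$ to $C(\Omega)$.
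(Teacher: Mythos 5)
Your characterization of $\testfns$ by Fourier coefficient decay follows the paper's proof essentially verbatim: the forward direction by pairing $\Delta^{k}\testfn$ with $\psi_{j}$ in $L^{2}$, and the converse by showing every power of the Laplacian of the sum lies in $L^{2}$ and upgrading to continuity via \eqref{setting_resistmetricestimate}. Note that the paper needs neither Weyl asymptotics, polynomial growth of $\lambda_{j}$ in $j$, nor sup-norm bounds on eigenfunctions: decay faster than any polynomial \emph{in the $\lambda_{j}$} together with iteration of the H\"older estimate suffices, so the machinery you flag as the "main obstacle" can be dispensed with (your primary route through \eqref{setting_resistmetricestimate} is the one the paper takes; the route through $\|\psi_{j}\|_{\infty}\leq c\lambda_{j}^{\alpha/2}$ imports heat-kernel input the paper deliberately avoids). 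Where you genuinely diverge is the density step. You invoke Theorem~\ref{testfunction_testfnsdenseincts}, which is logically correct but somewhat defeats the stated purpose: the paper advertises this theorem as an \emph{alternate} (Hahn--Banach-free, constructive) proof of density, and so approximates instead by truncations of the eigenfunction expansion. Your reservation about that route is well founded --- the paper's displayed estimate requires $\bigl\|\sum_{i\geq j}a_{i}\lambda_{i}^{k}\psi_{i}\bigr\|_{2}\to 0$, which holds only when $u$ is already smooth, so as written it shows convergence of the partial sums in $\testfns$ for smooth $u$ but does not by itself give sup-norm approximation of an arbitrary continuous $u$; one must either fall back on Theorem~\ref{testfunction_testfnsdenseincts} as you do (losing constructivity) or insert an extra smoothing step before truncating. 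Your proof is correct; it trades the paper's slightly gapped constructive density argument for the earlier duality-based one.
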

\begin{proof}
Clearly $\testfn\in\testfns$ is in $L^2$, so can be written $\testfn=\sum_{i=0}^{\infty}
a_{i}\psi_{i}$, where $\psi_{i}$ is the Laplacian eigenfunction with eigenvalue $-\lambda_{i}$.  It
follows that $(-\Delta)^{k}\testfn=\sum_{i} a_{i}\lambda_{i}^{k}\psi_{i}$ with convergence in
$L^{2}$. Since $\Delta^{k}\testfn$ is in $C(\Omega)\subset L^{2}$ for all $k$ we see that the
sequence $a_{i}$ must decay faster than any polynomial in the $\lambda_{i}$.  Conversely any such
sequence converges to a function for which every power of the Laplacian is in $L^{2}$, whereupon
the function is smooth by iteration of~\eqref{setting_resistmetricestimate}. In addition, any $u\in
C(\domain)$ can be explicitly approximated by functions from $\testfns$ by taking successive
truncations of the Fourier series $u=\sum_{i=0}^{\infty}a_{i}\psi_{i}$.  To see this gives
convergence in $\testfns$ write $(-\Delta)^{k}\sum_{i=j}^{\infty}a_{i}\psi_{i} =
\sum_{i=j}^{\infty}a_{i}\lambda_{i}^{k}\psi_{i}$ and note this converges to zero in $L^{2}$ and
therefore almost everywhere.  Now from~\eqref{setting_resistmetricestimate}
\begin{equation*}
    \sup_{\Omega} \Bigl| \sum_{i=j}^{\infty}a_{i}\lambda_{i}^{k}\psi_{i} \Bigr|^{2}
    \leq C \Bigl\| \sum_{i=j}^{\infty}a_{i}\lambda_{i}^{k}\psi_{i} \Bigr\|_{2}
    \Bigl\| \sum_{i=j}^{\infty}a_{i}\lambda_{i}^{k+1}\psi_{i} \Bigr\|_{2}
    \end{equation*}
and both terms on the right converge to zero.
\end{proof}

In~\cite{MR1962353} there is a definition of Sobolev spaces on p.c.f. fractals of the type studied
here. These spaces may be defined by applying the Bessel potential $(I-\Delta)^{-s}$ (for the
Dirichlet or Neumann Laplacian) or the Riesz potential $(-\Delta)^{-s}$ (for the Dirichlet
Laplacian) to $L^{p}$ functions on the fractal, and adding on an appropriate space of harmonic
functions.  In particular, the space of $L^{2}$ functions with $\Delta^{k}u\in L^{2}$ for $0\leq
k\leq m$ may be identified with a particular $L^{2}$ Sobolev space (\cite{MR1962353} Theorem~3.7).
Writing $W^{s,2}$ for the $L^{2}$ Sobolev space arising from $(I-\Delta)^{-s}$, we have in
consequence of the preceding:
\begin{corollary}\label{IntersectofSobolevspaces}
If $\Omega$ is compact then $\testfns=\cap_{s>0}W^{s,2}$.
\end{corollary}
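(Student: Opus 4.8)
The plan is to deduce the corollary from Theorem~\ref{testfunctions_compactcase} together with the identification of iterated $L^{2}$-domains of $\Delta$ with Sobolev spaces recalled from \cite{MR1962353}. Since a larger Bessel exponent forces more smoothness we have $W^{t,2}\subseteq W^{s,2}$ whenever $0<s\leq t$, so $\bigcap_{s>0}W^{s,2}=\bigcap_{m\in\mathbb{N}}W^{m,2}$; thus it suffices to prove $\testfns=\bigcap_{m\in\mathbb{N}}W^{m,2}$. By Theorem~3.7 of \cite{MR1962353}, for each integer $m$ the space $W^{m,2}$ coincides with the set of $u\in L^{2}(\mu)$ such that $\Delta^{k}u\in L^{2}(\mu)$ for $0\leq k\leq m$; hence $\bigcap_{m}W^{m,2}$ is exactly the space of $L^{2}(\mu)$ functions all of whose Laplacian powers lie in $L^{2}(\mu)$.

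The inclusion $\testfns\subseteq\bigcap_{m}W^{m,2}$ is then immediate: if $\testfn$ is smooth then each $\Delta^{k}\testfn$ is continuous on the compact space $\domain$, hence lies in $L^{2}(\mu)$, so $\testfn\in W^{m,2}$ for every $m$.

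For the reverse inclusion I would argue exactly as in the proof of Theorem~\ref{testfunctions_compactcase}. Suppose $u\in L^{2}(\mu)$ has $\Delta^{k}u\in L^{2}(\mu)$ for all $k$. Fixing $k$, the function $\Delta^{k}u$ then lies in the $L^{2}$-domain of $\Delta$ (since $\Delta^{k+1}u\in L^{2}(\mu)$), and applying \eqref{setting_resistmetricestimate} to it gives $|\Delta^{k}u(x)-\Delta^{k}u(y)|^{2}\leq\|\Delta^{k}u\|_{2}\|\Delta^{k+1}u\|_{2}R(x,y)$, so $\Delta^{k}u$ is H\"{o}lder-$\tfrac12$ and in particular continuous in the resistance metric, hence continuous. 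Since this holds for every $k$, the function $u$ is smooth, i.e.\ $u\in\smoothfns=\testfns$.

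The only delicate point -- what I would flag as the main obstacle, although it is really a bookkeeping matter -- is pinning down the normalization of the Sobolev scale: one has to be sure that the exponent $s$ in $W^{s,2}$, arising from the Bessel potential $(I-\Delta)^{-s}$, counts \emph{powers} of the Laplacian, so that the $L^{2}$-domain of $\Delta^{m}$ really is $W^{m,2}$ and the intersection over integer $m$ recaptures the intersection over all real $s>0$. With the conventions of \cite{MR1962353} this is precisely the content of their Theorem~3.7, so once that is invoked no further computation is required.
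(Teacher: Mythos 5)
Your proposal is correct and follows exactly the route the paper intends: the corollary is stated as an immediate consequence of Theorem~\ref{testfunctions_compactcase} together with the identification of $W^{m,2}$ with the iterated $L^{2}$-domain of $\Delta$ from Theorem~3.7 of \cite{MR1962353}, and your two inclusions (continuity on a compact space giving $L^{2}$ membership in one direction, and iteration of the H\"{o}lder estimate \eqref{setting_resistmetricestimate} to upgrade $L^{2}$ Laplacian powers to continuity in the other) are precisely the argument carried out in the proof of that theorem.
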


\section{Distributions}\label{distributions_section}
\begin{definition}\label{distributions_defnofdistrib}
The space of {\em distributions} on $\domain$ is the dual space $\distribs$ of $\testfns$ with the
weak-star topology, so $\distrib_{i}\to\distrib$ if and only if
$\distrib_{i}\testfn\to\distrib\testfn$ for all $\testfn\in\testfns$.
\end{definition}
As usual, the most familiar examples of distributions are the Radon measures. If $d\nu$ is such a
measure then we define $\distrib_{\nu}$ by $\distrib_{\nu}\testfn=\int \testfn d\nu$. Theorem
\ref{testfunction_testfnsdenseincts} ensures that the mapping $\nu\mapsto\distrib_{\nu}$ is
injective, so we may identify $\nu$ and $T_{\nu}$.  One way to obtain further examples is to take
the adjoint of the Laplacian on distributions, which clearly produces another distribution.

\begin{definition}
If $T\in\distribs$ we define $\Delta T\in\distribs$ by $(\Delta T)\testfn = T(\Delta\testfn)$ for
all $\testfn\in\testfns$.
\end{definition}

It is clear that powers of the Laplacian applied to the Radon measures provide a rich collection of
examples of distributions.  Later we prove that all distributions arise in essentially this way
(Theorem \ref{structure_structuretheorem}), but we first need to establish some more elementary
properties.

\begin{theorem}
A linear functional $\distrib$ on $\testfns$ is a distribution if and only if for each compact
$K\subset\domain$ there are $m$ and $M$ such that
\begin{equation}\label{distributions_conditiontobedist}
    | \distrib \testfn |
    \leq M |\testfn|_{m}
    \end{equation}
\end{theorem}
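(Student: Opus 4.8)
The plan is to prove both directions of the equivalence, with the ``if'' direction being routine and the ``only if'' direction carrying the real content.

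First I would dispose of the easy direction. Suppose $\distrib$ satisfies the stated bound on every compact set. Given a sequence $\testfn_i\to\testfn$ in $\testfns$, there is by definition a single compact $K$ containing the supports of all $\testfn_i$ (and of $\testfn$), and $\Delta^k\testfn_i\to\Delta^k\testfn$ uniformly on $K$ for each $k$. Applying \eqref{distributions_conditiontobedist} for this $K$, with its associated $m$ and $M$, to $\testfn_i-\testfn$ gives $|\distrib\testfn_i - \distrib\testfn|\leq M|\testfn_i-\testfn|_m\to 0$, since $|\testfn_i-\testfn|_m = \sup\{|\Delta^k(\testfn_i-\testfn)(x)|: x\in\Omega, k\leq m\}$ is a supremum over the finitely many $k\leq m$ of quantities tending to zero (the sup over $x$ may be taken over $K$ as all functions vanish outside $K$). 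Hence $\distrib$ is sequentially continuous; since the argument in fact shows continuity in the topology of $\testfns$ (any net, or by the standard characterization of the $\mathcal D$-topology via the seminorms $|\cdot|_m$ restricted to $\mathcal D_K$), $\distrib\in\distribs$.

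The substantive direction is the converse: a continuous linear functional $\distrib$ satisfies the bound. Here I would argue by contradiction following the classical Euclidean template. Suppose there is a compact $K$ for which no such $m,M$ work. Then for every $n$ there is $\testfn_n\in\testfns$ with $\sppt(\testfn_n)\subset K'$ — actually I must be careful: the failure of the inequality should be arranged with supports in a \emph{fixed} compact set, so I would first replace $K$ by a slightly larger cell-complex neighborhood (or simply note that the test functions witnessing the failure can be taken supported in any fixed compact containing $K$ in its interior, since $|\testfn|_m$ already controls the sup over all of $\Omega$). For each $n$, pick $\testfn_n$ supported in this fixed compact with $|\distrib\testfn_n| > n\,|\testfn_n|_n$; rescaling, we may assume $|\distrib\testfn_n| = 1$ and $|\testfn_n|_n < 1/n$. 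The key point is then that $\testfn_n\to 0$ in $\testfns$: the supports lie in one fixed compact set, and for any fixed $k$, once $n\geq k$ we have $\sup_\Omega|\Delta^k\testfn_n|\leq |\testfn_n|_n < 1/n\to 0$, so $\Delta^k\testfn_n\to 0$ uniformly for every $k$. By continuity of $\distrib$, $\distrib\testfn_n\to 0$, contradicting $|\distrib\testfn_n|=1$.

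The main obstacle — really the only delicate point — is the handling of supports in the contradiction argument: one needs the witnessing test functions to live in a single fixed compact set so that $\testfn_n\to 0$ genuinely holds in the topology of $\testfns$ (convergence there \emph{requires} a common compact support). This is the fractal analogue of the familiar subtlety in $\mathbb{R}^n$, and it is resolved exactly as there, by noting that the seminorm $|\testfn|_m$ in \eqref{testfunction_seminormeqn} is a supremum over all of $\Omega$, so the inequality \eqref{distributions_conditiontobedist}, if it fails for $K$, fails already when we only consider test functions supported in a fixed cell-neighborhood of $K$ (or, alternatively, one invokes the compact exhaustion $\cup_j K_j=\domain$ and runs the argument on one $K_j\supset K$). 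No use of the partitioning machinery of Section~\ref{setting_section} is needed for this particular theorem; it is purely a statement about the topology of $\testfns$ as a strict inductive limit of the Fr\'echet spaces $\mathcal D_K$.
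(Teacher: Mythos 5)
Your proposal is correct and follows essentially the same route as the paper: the forward direction is immediate, and the converse is the standard contradiction argument choosing $\testfn_j$ with $|\distrib\testfn_j|>j|\testfn_j|_j$, normalizing so $\distrib\testfn_j=1$, and observing that the normalized sequence tends to $0$ in $\testfns$. The support issue you flag is a non-issue since the negation of the statement already yields a single compact $K$ containing all the witnessing supports, exactly as in the paper's proof.
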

\begin{proof}
It is clear that the existence of such an estimate ensures continuity of $\distrib$.  To prove the
converse we assume no such estimate exists, so there is $K$ compact and a sequence $\testfn_{j}$
such that $| \distrib\testfn_{j}|\geq j|\testfn_{j}|_{j}$.  Then the support of
$\tilde{\testfn}_{j}=\testfn_{j}/\distrib \testfn_{j}$ is in $K$ for all $j$ and
\begin{equation*}
    \Bigl\| \Delta^{k} \tilde{\testfn}_{j} \Bigr\|_{\infty}
    \leq \frac{ |\testfn_{j} |_{k} } {|\distrib \testfn_{j}|}
    \leq \frac{1}{j} \quad \text{once $j\geq k$.}
    \end{equation*}
Therefore $\tilde{\testfn}_{j}\to 0$ in $\testfns$ but has $\distrib \tilde{\testfn}_{j}=1$ for all
$j$, contradicting the continuity of $\distrib$.
\end{proof}

In the special case that $\domain$ is compact we saw in Theorem \ref{testfunctions_compactcase}
that $\testfns$ consists of smooth functions having Fourier coefficients that decay faster than
polynomially. This allows us to identify its dual with coefficient sequences having at most
polynomial growth.
\begin{lemma}\label{polygrowthlemma}
If $\Omega$ is compact and $T\in\distribs$ then the sequence $T\psi_{i}$ has at most polynomial
growth. Conversely, any sequence $\{b_{i}\}$ of polynomial growth defines a distribution via
$\testfn=\sum_{i}a_{i}\psi_{i}\mapsto\sum_{i}a_{i}b_{i}$.
\end{lemma}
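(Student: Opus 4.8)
The plan is to prove both directions using the characterization of $\testfns=\smoothfns$ from Theorem~\ref{testfunctions_compactcase}: a smooth function on compact $\domain$ is exactly one whose Fourier coefficients decay faster than any polynomial in the eigenvalues $\lambda_i$. First I would handle the forward direction. Given $T\in\distribs$, apply the continuity estimate \eqref{distributions_conditiontobedist} with $K=\domain$ (which is compact), obtaining $m$ and $M$ with $|T\testfn|\leq M|\testfn|_m$ for all $\testfn$. Taking $\testfn=\psi_i$, I need to bound $|\psi_i|_m=\sup\{|\Delta^k\psi_i(x)|:x\in\domain,\ k\leq m\}=\sup_{k\leq m}\lambda_i^k\|\psi_i\|_\infty$. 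The key input is that $\|\psi_i\|_\infty$ grows at most polynomially in $\lambda_i$: this follows from the resistance-metric estimate \eqref{setting_resistmetricestimate}, since $\|\psi_i\|_\infty^2\leq C\|\psi_i\|_2\|\Delta\psi_i\|_2=C\lambda_i$ (normalizing $\|\psi_i\|_2=1$), after first subtracting the mean to reduce to the case of a function controlled by the estimate — actually on a compact fractafold without boundary one can apply \eqref{setting_resistmetricestimate} directly to $\psi_i$ minus its average. Hence $|T\psi_i|\leq M\lambda_i^m\|\psi_i\|_\infty\leq C\lambda_i^{m+1/2}$, which is polynomial growth in $\lambda_i$, and since the $\lambda_i$ themselves grow polynomially in $i$ by Weyl asymptotics (or simply by $\lambda_i\to\infty$, which suffices if we phrase "polynomial growth" relative to $\lambda_i$), we are done.

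For the converse, suppose $\{b_i\}$ has polynomial growth, say $|b_i|\leq C\lambda_i^p$ for large $i$. Define $T$ on $\testfn=\sum_i a_i\psi_i$ by $T\testfn=\sum_i a_i b_i$. First I must check the series converges: since $\testfn$ is smooth, its coefficients satisfy $|a_i|\leq C_N\lambda_i^{-N}$ for every $N$, so choosing $N=p+2$ makes $\sum_i|a_i b_i|\leq C\sum_i\lambda_i^{-2}$, which converges because $\lambda_i\to\infty$ fast enough (again Weyl's law, or one can invoke that $\sum\lambda_i^{-s}<\infty$ for $s$ large, a standard fact for these spectral asymptotics). Linearity is immediate. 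For continuity it suffices by \eqref{distributions_conditiontobedist} to produce, for $K=\domain$, constants $m,M$ with $|T\testfn|\leq M|\testfn|_m$; the natural choice is to bound $|a_i|$ in terms of a seminorm of $\testfn$. Indeed $a_i=\int\testfn\psi_i\,d\mu$ so $|a_i|\leq\|\testfn\|_\infty\mu(\domain)\|\psi_i\|_\infty\leq C\|\testfn\|_\infty\lambda_i^{1/2}$; combining with $|a_i|\leq\lambda_i^{-k}\|\Delta^k\testfn\|_\infty\|\psi_i\|_2^{-1}$ wait — more cleanly, $|a_i|=\lambda_i^{-k}|\int(\Delta^k\testfn)\psi_i\,d\mu|\leq\lambda_i^{-k}\|\Delta^k\testfn\|_2\leq\lambda_i^{-k}|\testfn|_0\,\mu(\domain)^{1/2}$ for any $k$. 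Then $|T\testfn|\leq\sum_i|a_ib_i|\leq C|\testfn|_m\sum_i\lambda_i^{-k+p}$ for suitable fixed $k$ chosen so the sum converges, where $m$ can be taken as that same $k$; this gives the required estimate with $M$ absorbing the convergent sum.

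The main obstacle is the bookkeeping around $\|\psi_i\|_\infty$ and the summability $\sum\lambda_i^{-s}<\infty$: strictly one needs the eigenfunction sup-norm bound (polynomial in $\lambda_i$) and a polynomial lower bound on the counting function's growth, both of which are standard in this setting but deserve a sentence of justification via \eqref{setting_resistmetricestimate} and the known Weyl-type asymptotics for p.c.f. fractals. A minor subtlety is that on a compact fractafold the constant function is an eigenfunction with eigenvalue $0$; the resistance estimate must be applied to $\testfn$ minus its mean (or one simply treats the $i=0$ term separately, since it contributes only the single finite quantity $a_0 b_0$). Once those facts are in hand, both implications are short estimates; I would present the forward direction first since it is essentially a restatement of \eqref{distributions_conditiontobedist} combined with Theorem~\ref{testfunctions_compactcase}, and then the converse as the slightly longer verification above.
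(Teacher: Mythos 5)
Your proof is correct, but your forward direction takes a genuinely different route from the paper's. The paper argues softly by duality: since $\sum_i a_i T\psi_i$ must converge for every rapidly decreasing coefficient sequence $\{a_i\}$ (because $\sum_{i\le j}a_i\psi_i\to\testfn$ in $\testfns$ by Theorem~\ref{testfunctions_compactcase}), the sequence $T\psi_i$ cannot grow faster than polynomially in $\lambda_i$. You instead apply the continuity bound \eqref{distributions_conditiontobedist} directly to $\testfn=\psi_i$ and control $|\psi_i|_m$ via the eigenfunction sup-norm bound $\|\psi_i\|_\infty\le C(1+\lambda_i^{1/2})$ obtained from \eqref{setting_resistmetricestimate}. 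Your version is more quantitative (it yields an explicit exponent $m+\tfrac12$) at the cost of needing the extra eigenfunction estimate; the paper's version uses no information about $\|\psi_i\|_\infty$ at all. For the converse you and the paper follow the same strategy (a direct bound $|T\testfn|\le M|\testfn|_m$), but your execution is actually the more careful one: the paper writes $\sum_i|a_i|\lambda_i^m=C\|\Delta^m\testfn\|_2$, which conflates an $\ell^1$ sum with an $\ell^2$ norm, and repairing it requires exactly the Cauchy--Schwarz step together with the summability $\sum_i\lambda_i^{-s}<\infty$ that you invoke. That summability (equivalently, polynomial growth of the eigenvalue counting function) is a standard Weyl-type fact for these Laplacians and is implicitly needed in either write-up, so flagging it explicitly is appropriate; likewise your separate treatment of the $i=0$ term (constant eigenfunction, $\lambda_0=0$) is the right precaution.
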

\begin{proof}
We saw in Theorem~\ref{testfunctions_compactcase} that $\sum_{i=0}^{j}a_{i}\psi_{i}$ converges to
$\testfn$ in $\testfns$ if and only if $\{a_{i}\}$ has faster than polynomial decay in
$\lambda_{i}$.  It follows that for any $T\in\distribs$,
$\sum_{i=0}^{j}a_{i}T\psi_{i}=T\sum_{i=0}^{j}a_{i}\psi_{i}\rightarrow T\testfn$, from which the
sequence $T\psi_{i}$ has at most polynomial growth.

Conversely suppose that $\{b_{i}\}$ has polynomial growth, $|b_{i}|\leq C\lambda_{i}^{m}$, and
consider the map $\{a_{i}\}\mapsto \sum_{i}a_{i}b_{i}$.  This is a well defined linear map $T$ on
sequences $\{a_{i}\}$ with faster than polynomial decay, hence on $\testfns$, with the estimate
\begin{equation*}
    |T\testfn|
    \leq \sum_{i} |a_{i} b_{i}|
    \leq C \sum_{i} |a_{i}| \lambda_{i}^{m}
    = C\| \Delta^{m}\testfn\|_{2}
    \leq C \| \Delta^{m}\testfn \|_{\text{sup}}
    \leq C |\testfn |_{m}
    \end{equation*}
which shows that $T$ is a distribution.
\end{proof}

In particular, if we identify $\testfns$ as a subset of $\distribs$ by letting
$\testfn'\in\testfns$ act on $\testfns$ via $\testfn\mapsto \langle \testfn,\testfn'\rangle$, where
$\langle,\rangle$ is the $L^{2}$ inner product, then this implies that the test functions are dense
in the distributions when $\Omega$ is compact.  To see this, define $T_{j}$ by
\begin{equation*}
    T_{j}\psi_{i}
    =\begin{cases}
        T\psi_{i} &\text{if $i\leq j$}\\
        0 &\text{if $i>j$}.
        \end{cases}
    \end{equation*}
We see that $(T-T_{j})\testfn = \sum_{i=j}^{\infty} a_{i}b_{i}\rightarrow0$ for any
$\testfn\in\testfns$, so $T_{j}\rightarrow T$.  Since $T_{j}$ is the inner product with the
function $\sum_{i=0}^{j}\bar{b_{i}}\psi_{i}$, it is in $\testfns$. This is true more generally.

\begin{theorem}\label{distribs_testfnsdenseindistribs}
$\testfns$ is dense in $\distribs$.
\end{theorem}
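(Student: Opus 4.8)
The plan is to reduce the general (non-compact) case to the compact case treated in the preceding discussion, using a locally finite partition of unity adapted to a compact exhaustion, together with the observation that $\distribs$ is a projective-type limit of its local pieces. First I would fix an increasing compact exhaustion $\cup_j K_j = \domain$ and, using the partitioning result (Theorem \ref{setting_partitionthm} and Remark \ref{setting_partitionremarkone}), a locally finite collection of smooth cutoffs subordinate to a cover of $\domain$ by finite unions of cells. The key point is that, since convergence in $\distribs$ is the weak-star topology, it suffices to approximate an arbitrary $T\in\distribs$ on each test function $\testfn$ separately; and each $\testfn\in\testfns$ has compact support, hence is supported in some finite union of cells $K$, where we may hope to run the compact argument.

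The main steps, in order, are as follows. (1) Reduce to a single ``cell block'': given $T\in\distribs$ and a finite union of cells $K\subset\domain$, I would show that $T$ restricted to test functions supported in the interior of a slightly larger block $K'\supset K$ can be approximated in the weak-star sense by (restrictions of) test functions. Here one uses the Green's operator $G_{K'}$ from Lemma \ref{setting_greensfnexsts} and the Laplacian eigenfunction expansion on $K'$ (with Dirichlet conditions on $\partial K'$), which plays the role that the compact eigenfunction basis played in Lemma \ref{polygrowthlemma}: the continuity estimate \eqref{distributions_conditiontobedist} forces the coefficients $T\psi_i^{K'}$ to grow at most polynomially in $\lambda_i^{K'}$, and truncating the expansion $\sum_{i\le n}\overline{T\psi_i^{K'}}\,\psi_i^{K'}$ gives smooth functions (in fact test functions on the interior of $K'$) converging to $T$ against every test function supported in $K$. (2) Globalize via the exhaustion: for each $j$ pick such an approximation on $K_{j+1}$ that agrees with $T$ to within $1/j$ on all test functions $\testfn$ with $\sppt(\testfn)\subset K_j$ and $|\testfn|_j\le 1$; multiply it by a smooth cutoff equal to $1$ on $K_j$ and supported in $K_{j+1}$ (Corollary \ref{setting_bumpfunctionwithestimates}) to obtain a genuine element $T_j\in\testfns$. (3) Check $T_j\to T$ weak-star: for fixed $\testfn$, once $j$ is large enough that $\sppt(\testfn)\subset K_j$, the difference $(T-T_j)\testfn$ is controlled by the $1/j$ bound from step (1) together with the estimate \eqref{setting_extensionestimate} on the cutoff, so it tends to $0$.

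The hard part will be step (1): making precise the claim that a distribution localized to a finite union of cells is captured by the Dirichlet eigenfunction expansion there, and that the truncations land in $\testfns$ rather than merely in $L^2$. One subtlety is that the eigenfunctions $\psi_i^{K'}$ are smooth on the interior of $K'$ and vanish on $\partial K'$ but need not be globally smooth across $\partial K'$; this is why I truncate against test functions supported well inside $K'$ (in $K$, say) and then re-cut off, so that only the interior behavior matters. A second subtlety is boundedness/compatibility of the local approximations as $j$ grows — this is where the uniform estimates in Corollary \ref{setting_bumpfunctionwithestimates} and \eqref{setting_extensionestimate}, both independent of the cell, are essential, and where the nonlinearity of the partitioning map (Remark \ref{setting_partitionremarktwo}) must be handled with care, since we only ever apply it to the fixed compact blocks $K_j$ and so the constants depend only on $j$. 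Everything else is a routine diagonal-exhaustion argument.
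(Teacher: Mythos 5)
Your overall strategy is the same as the paper's: exhaust $\domain$ by compact finite unions of cells $K_{j}$, use the eigenfunction-expansion argument (Lemma~\ref{polygrowthlemma} and the discussion following it) to approximate the restriction of $T$ to $\testfnson(K_{j})$ by local test functions, diagonalize, and then repair the local approximants so that they become genuine elements of $\testfns$. The repair step is where your proposal has a genuine gap. In step~(2) you propose to ``multiply by a smooth cutoff equal to $1$ on $K_{j}$ and supported in $K_{j+1}$'' to convert the local approximant into a global test function. On p.c.f.\ fractals the product of two smooth functions is not smooth (this is the point of \cite{MR1707752}, stressed in the introduction, and it is exactly why the paper replaces partitions of unity by the partitioning theorem~\ref{setting_partitionthm} and the Borel theorem~\ref{setting_Borelthm}). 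So multiplying by the bump from Corollary~\ref{setting_bumpfunctionwithestimates} does not produce an element of $\testfns$, and this step fails as written. The paper's fix is different: the local test function $\testfn_{j}$ on $K_{j}$ is continued across the finitely many boundary points of $K_{j}$ by applying Theorem~\ref{setting_Borelthm} to attach smooth pieces on cells in $K_{j+1}\setminus K_{j}$ whose jets match those of $\testfn_{j}$ at $\partial K_{j}$ and vanish further out; the matching conditions then make the piecewise-defined extension $\testfn_{j}'$ globally smooth with support in $K_{j+1}$, and since $\testfn_{j}'=\testfn_{j}$ on $K_{j}$ the associated distributions agree on $\testfnson(K_{j})$. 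Your awareness of the nonlinearity issue in Remark~\ref{setting_partitionremarktwo} does not rescue a literal multiplication; you must replace it by a jet-matching extension of this kind (or an application of Theorem~\ref{setting_partitionthm}).

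A secondary, fixable issue is in step~(3): requiring the local approximant to be within $1/j$ of $T$ on the set of $\testfn$ with $\sppt(\testfn)\subset K_{j}$ and $|\testfn|_{j}\leq 1$ only gives $|(T-T_{j})\testfn|\leq |\testfn|_{j}/j$ for a general fixed $\testfn$, and $|\testfn|_{j}$ may grow without bound in $j$, so this does not by itself yield convergence. You should either calibrate the accuracy against the seminorm $|\cdot|_{m_{j}}$ where $m_{j}$ is the order of $T$ on $K_{j}$, or argue directly, as the paper does, that the diagonal sequence converges against each fixed test function.
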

\begin{proof}
Let $T\in\distribs$.  Take an increasing exhaustion $\cup K_{j}$ of $\Omega$ by compact sets
$K_{j}$ with the property that $K_{j}$ is contained in the interior of $K_{j+1}$, and each $K_{j}$
is a finite union of cells. For each $j$, the action of $T$ on $\testfnson(K_{j})$ identifies it as
an element of $\distribson(K_{j})$ so by the preceding there is a sequence
$\{T_{j,k}\}_{k=0}^{\infty}\subset\testfnson(K_{j})$ for which $T_{j,k}\rightarrow T$ in
$\distribson(K_{j})$, and hence in $\distribson(K_{l})$ for all $l\leq j$.

Now consider the diagonal sequence $T_{j,j}$. For any test function $\testfn$ there is some $j_{0}$
such that $K_{j_{0}}$ contains the support of $\testfn$, and hence $T_{j,j}\testfn$ is defined for
$j\geq j_{0}$ and converges to $T\testfn$.  So $T_{j,j}\rightarrow T$ in $\distribs$. Of course,
$T_{j,j}$ only corresponds to a test function $\testfn_{j}$ on $K_{j}$, not to an element of
$\testfns$. To remedy this, note that for the test function $\testfn_{j}$ corresponding to
$T_{j,j}$ on $K_{j}$ we may apply Theorem~\ref{setting_Borelthm} to each of the (finite number of)
boundary points of $K_{j}$ and thereby continue $\testfn_{j}$ smoothly to a function $\testfn_{j}'$
on $\Omega$  with support in $K_{j+1}$.  Denote by $T_{j}'$ the distribution corresponding to this
new test function $\testfn_{j}'$.  Since $\testfn_{j}$ and $\testfn_{j}'$ coincide on $K_{j}$ we
see that $T_{j}\testfn=T_{j}'\testfn$ for all $\testfn$ having support in $K_{j}$.  It follows that
$T_{j}'$ converges to $T$ in $\distribs$, and since each $T_{j}'$ corresponds to a test function,
the proof is complete.
\end{proof}

\begin{definition}
If $\domain_{1}\subset\domain$ is open, we say the distribution $\distrib$ {\em vanishes on
$\domain_{1}$} if $\distrib\testfn=0$ for all $\testfn$ supported on $\domain_{1}$.  This is
written $\distrib\evald{\domain_{1}}=0$,
\end{definition}

To make a meaningful definition of the support of a distribution we again need the partitioning
property. By Lemma~\ref{testfunction_partition} we know that any
$\testfn\in\testfnson(\domain_{1}\cup\domain_{2})$ can be written as
$\testfn=\testfn_{1}+\testfn_{2}$ for $\testfn_{j}\in\testfnson(\domain_{j})$. If both
$\distrib\testfn_{1}=0$ and $\distrib\testfn_{2}=0$ it follows that $\distrib\testfn=0$.  We record
this as a lemma, and note that it establishes the existence of a maximal open set on which
$\distrib$ vanishes.

\begin{lemma}
If $\distrib\evald{\domain_{1}}=0$ and $\distrib\evald{\domain_{2}}=0$ then
$\distrib\evald{\domain_{1}\cup\domain_{2}}=0$.
\end{lemma}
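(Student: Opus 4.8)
The plan is to reduce the statement to the partitioning lemma (Lemma~\ref{testfunction_partition}) and a straightforward cancellation argument. Let $\testfn\in\testfnson(\domain_{1}\cup\domain_{2})$ be arbitrary. By Lemma~\ref{testfunction_partition} we may write $\testfn=\testfn_{1}+\testfn_{2}$ with $\testfn_{j}\in\testfnson(\domain_{j})$. Since $\distrib\evald{\domain_{1}}=0$ means $\distrib\testfn'=0$ for every $\testfn'$ supported in $\domain_{1}$, we have $\distrib\testfn_{1}=0$, and likewise $\distrib\testfn_{2}=0$. Linearity of $\distrib$ then gives $\distrib\testfn=\distrib\testfn_{1}+\distrib\testfn_{2}=0$. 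As $\testfn$ was an arbitrary test function supported in $\domain_{1}\cup\domain_{2}$, this is exactly the assertion $\distrib\evald{\domain_{1}\cup\domain_{2}}=0$.

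The only subtle point — and the reason this needs a proof at all rather than being trivial — is that the decomposition $\testfn=\testfn_{1}+\testfn_{2}$ is genuinely available; in the Euclidean setting one would multiply by a partition of unity, but here products of smooth functions need not be smooth, so one must invoke the nonlinear partitioning of Theorem~\ref{setting_partitionthm} as packaged in Lemma~\ref{testfunction_partition}. Note that the nonlinearity of the decomposition map $\testfn\mapsto\{\testfn_{1},\testfn_{2}\}$ is irrelevant here: we only use that for the given single $\testfn$ some such splitting exists, and we never need to control the splitting uniformly or respect any linear structure. This is why the argument goes through despite the warnings in Remark~\ref{setting_partitionremarktwo}.

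One small bookkeeping remark: to apply Lemma~\ref{testfunction_partition} we are implicitly using that $\domain_{1}\cup\domain_{2}$ is again an open subset of $\domain$ (so that $\testfnson(\domain_{1}\cup\domain_{2})$ makes sense and the ambient fractafold structure restricts correctly), which is immediate since $\domain_{1}$ and $\domain_{2}$ are open. I do not anticipate any real obstacle; the content of the lemma is entirely carried by the already-established partitioning result, and the proof is a two-line verification. As the excerpt already observes just before the lemma statement, this also yields the existence of a unique maximal open set on which $\distrib$ vanishes, by taking the union of all open sets on which $\distrib$ vanishes and noting that vanishing is preserved under the finite unions handled here and hence, by the usual argument using that each test function has compact support, under arbitrary unions.
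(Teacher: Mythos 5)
Your proof is correct and is exactly the argument the paper gives (in the paragraph immediately preceding the lemma): decompose $\testfn=\testfn_{1}+\testfn_{2}$ via Lemma~\ref{testfunction_partition} and apply linearity of $\distrib$. Your remarks that the nonlinearity of the decomposition is harmless here and that the Euclidean partition-of-unity trick must be replaced by the fractal partitioning theorem match the paper's intent.
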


\begin{definition}
The {\em support} of $\distrib$ is the complement of the maximal open set on which $\distrib$
vanishes, and is denoted $\sppt(\distrib)$. In the special case where $\sppt(\distrib)$ is compact
we call $\distrib$ a {\em compactly supported distribution}.
\end{definition}

\begin{theorem}
The space of compactly supported distributions is (naturally isomorphic to) the dual
$\compactdistribs$ of the smooth functions on $\domain$.
\end{theorem}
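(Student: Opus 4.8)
The plan is to construct mutually inverse continuous linear maps between $\compactdistribs$ and the space of compactly supported distributions, using the density of $\testfns$ in $\smoothfns$ (established via Lemma~\ref{testfunction_partition} just after that lemma) together with the localization machinery built above. First I would define the map from compactly supported distributions to $\compactdistribs$: given $\distrib\in\distribs$ with $K=\sppt(\distrib)$ compact, I want to extend $\distrib$ from $\testfns$ to all of $\smoothfns$. To do this, fix (using Corollary~\ref{setting_bumpfunctionwithestimates} or the partitioning of Remark~\ref{setting_partitionremarkone}) a single cutoff-type function $\chi\in\testfns$ with $\chi\equiv 1$ on a neighborhood of $K$; the subtlety is that $\chi u$ need not be smooth, so instead I partition: for $u\in\smoothfns$, apply Lemma~\ref{testfunction_partition} with the open cover consisting of a relatively compact neighborhood $U\supset K$ and the complement of $K$, writing $u=u_1+u_2$ with $\sppt(u_1)$ compact in $U$ and $\sppt(u_2)\subset K^c$. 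Then set $\langle\distrib,u\rangle:=\distrib u_1$. One must check this is independent of the decomposition: if $u_1+u_2=u_1'+u_2'$ are two such, then $u_1-u_1'=u_2'-u_2$ is a test function supported in $K^c$, hence annihilated by $\distrib$ since $\distrib$ vanishes on the maximal open set $K^c$; so $\distrib u_1=\distrib u_1'$. Linearity in $u$ then follows (adding decompositions of $u$ and $v$ gives a decomposition of $u+v$), and continuity from $\smoothfns\to\mathbb{C}$ follows from the distribution estimate \eqref{distributions_conditiontobedist} applied on the fixed compact set $\overline{U}$, combined with the norm control $|u_1|_m\leq C|u|_{m,\overline{U}}$ furnished by Lemma~\ref{testfunction_partition}. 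Thus $\langle\distrib,\cdot\rangle\in\compactdistribs$, and it plainly restricts to $\distrib$ on $\testfns$ (take $u_1=u$, $u_2=0$ when $\sppt(u)$ is already compact in $U$, then use decomposition-independence in general).

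Conversely, given $S\in\compactdistribs$, its restriction $\distrib=S|_{\testfns}$ is a distribution, and I must show it is compactly supported. Since $S$ is continuous on $\smoothfns$, whose topology is generated by the seminorms $|\cdot|_{m,K}$, there is a single compact $K$, an integer $m$, and a constant $M$ with $|S u|\leq M|u|_{m,K}$ for all $u\in\smoothfns$. I then claim $\sppt(\distrib)\subset K$: if $\testfn\in\testfns$ has $\sppt(\testfn)\subset \domain\setminus K$, then $|\testfn|_{m,K}=0$, so $\distrib\testfn=S\testfn=0$; hence $\distrib$ vanishes on the open set $\domain\setminus K$, forcing its support into $K$, which is compact.

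The two constructions are inverse to each other: starting from a compactly supported $\distrib$, extending to $\compactdistribs$, and restricting back to $\testfns$ returns $\distrib$ by the last sentence of the first paragraph; starting from $S\in\compactdistribs$, restricting to get $\distrib$, and re-extending gives back $S$ because both are continuous functionals on $\smoothfns$ agreeing on the dense subspace $\testfns$ (density being the consequence of Lemma~\ref{testfunction_partition} noted in the text). Finally I would remark that these maps are continuous for the weak-$\ast$ topologies: convergence $\distrib_i\to\distrib$ against all test functions, together with uniform control of the extension via the fixed seminorm bound, upgrades to convergence against all of $\smoothfns$ by a standard $3\epsilon$-argument using density of $\testfns$ and equicontinuity (the latter from the uniform-boundedness principle, since $\smoothfns$ is a Fr\'echet space).

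I expect the main obstacle to be the well-definedness and continuity of the extension map in the first paragraph: because products of smooth and compactly supported functions need not be smooth, one cannot use the naive "multiply by a bump" definition, and one must instead route everything through the nonlinear partitioning Theorem~\ref{setting_partitionthm} while still extracting a \emph{linear} functional. The resolution is exactly the decomposition-independence check: although the choice of $u_1$ is nonlinear, the \emph{value} $\distrib u_1$ is canonical because any two choices differ by a test function supported off $K$. Once that is in hand, the estimate \eqref{setting_extensionestimate}/Lemma~\ref{testfunction_partition} gives continuity essentially for free.
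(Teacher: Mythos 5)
Your proposal is correct and follows essentially the same route as the paper: the extension of a compactly supported $\distrib$ to $\smoothfns$ via the partition $u=u_1+u_2$, with well-definedness and linearity rescued from the nonlinearity of $u\mapsto u_1$ by the support condition, and injectivity of the restriction map from the density of $\testfns$ in $\smoothfns$. The only (harmless) variation is in the converse direction, where you read off a single seminorm bound $|Su|\le M|u|_{m,K}$ from continuity on $\smoothfns$ rather than running the paper's contradiction argument with a compact exhaustion; just note that decomposing a general (non-compactly-supported) $u\in\smoothfns$ requires Theorem~\ref{setting_partitionthm} together with Remark~\ref{setting_partitionremarkone}, not Lemma~\ref{testfunction_partition} verbatim.
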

\begin{proof}
The inclusion $\testfns\subset\smoothfns$ defines a natural map from $\compactdistribs$ to
$\distribs$. We have seen (after Lemma~\ref{testfunction_partition} above) that $\testfns$ is dense
in $\smoothfns$, from which it follows that the kernel of this map is trivial.  For convenience we
identify $\compactdistribs$ with its isomorphic image in $\distribs$, so we need only verify it is
the space of distributions with compact support.

Fix an increasing sequence of compacta $K_{j}$ with $\domain=\cup_{j}K_{j}$.  If
$\distrib\in\compactdistribs$ fails to be compactly supported then for each $j$ there is
$\testfn_{j}$ supported in $\domain\setminus K_{j}$ such that $\distrib\testfn_{j}\neq0$, and by
renormalizing we may assume $\distrib\testfn_{j}=1$ for all $j$.  However for any compact $K$ there
is $j$ such that $K\subset K_{j}$ and thus $\testfn_{l}\equiv0$ on $K$ once $l\geq j$.  This
implies $\testfn_{j}\to0$ in $\smoothfns$ and $\distrib\testfn_{j}=1$ for all $j$, a contradiction.

Conversely, let $\distrib\in\distribs$ be supported on the compact $K\subset\domain$.  Fix a
strictly larger compact $K_{1}$ (so that $K$ is contained in the interior of $K_{1}$) and an open
neighborhood $\domain_{1}$ of $K_{1}$. By Remark \ref{setting_partitionremarkone} the conclusion of
Theorem \ref{setting_partitionthm} is valid for the cover by $\domain\setminus K_{1}$ and
$\Omega_{1}$, even though $\domain$ is noncompact.  In particular we can fix a decomposition
mapping in which $f\in\smoothfns$ is written as $f=f_{1}+f_{2}$, with $f_{2}$ supported on
$\domain\setminus K_{1}$ and therefore $f_{1}\evald{K_{1}}\equiv f\evald{K_{1}}$.  Now let
$\distrib_{1}$ on $\smoothfns$ be given by $\distrib_{1}f=\distrib f_{1}$.  This is well defined,
because if $f,g\in\smoothfns$ and $f_{1}=g_{1}$, then $f-g\equiv0$ in a neighborhood of $K$ and the
support condition ensures $\distrib f=\distrib g$.  It is also linear, even though the mapping
$f\mapsto f_{1}$ is nonlinear (see Remark \ref{setting_partitionremarktwo}), because
$(f+g)_{1}=f_{1}+g_{1}$ on $K_{1}$, which contains a neighborhood of $K$.  Lastly, $\distrib_{1}$
is continuous, as may be seen from the fact that a sequence $\{\testfn_{j}\}\subset\smoothfns$ such
that $\Delta^{k}\testfn_{j}\to0$ uniformly on compacta will have $\Delta^{k}(\testfn_{j})_{1}\to0$
on $K_{1}\supset K$, or from \eqref{setting_extensionestimate}.  We conclude that every compactly
supported distribution is in $\compactdistribs$.
\end{proof}


\section{Structure Theory}\label{structuresection}

\begin{definition}
A distribution $T$ has {\em finite order} $m$ if for each compact $K$ there is $M=M(K)$ such that
$|T\phi|\leq M|\phi|_{m}$ for all $\phi\in\testfnson(K)$.
    \end{definition}

The following theorem indicates the importance of the finite order distributions.

\begin{theorem}\label{structure_compactimpliesfiniteorder}
Compactly supported distributions have finite order.
\end{theorem}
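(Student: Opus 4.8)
The plan is to show that a compactly supported distribution $\distrib$ satisfies a single global estimate of the form $|\distrib\testfn|\leq M|\testfn|_{m}$ with $m$ and $M$ independent of the support of $\testfn$, which is exactly the statement that $\distrib$ has finite order. First I would use the characterization of compactly supported distributions as elements of $\compactdistribs$: since $\distrib$ is continuous on $\smoothfns$, there is a compact set $K_0$ and constants $m$, $M_0$ such that $|\distrib f|\leq M_0|f|_{m,K_0}$ for all $f\in\smoothfns$. (This is the standard fact that continuity of a linear functional on a Fr\'echet space whose topology is given by the increasing family of seminorms $|\cdot|_{m,K_j}$ means it is bounded by one of them; here it follows because if no such bound held we could extract a sequence $f_j$ with $|f_j|_{j,K_j}\to 0$ but $\distrib f_j=1$, and $f_j\to 0$ in $\smoothfns$, contradicting continuity.) In particular this already bounds $|\distrib\testfn|$ for test functions $\testfn$, but by the seminorm $|\testfn|_{m,K_0}$ rather than the global $|\testfn|_m$; however since $|\testfn|_{m,K_0}\leq|\testfn|_m$ trivially, we are essentially done --- the point is just to observe that $m$ and $M_0$ do not depend on $\sppt(\testfn)$.

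To be careful about what "finite order" requires, I would then unwind the definition: it asks that for each compact $K$ there be $M(K)$ with $|\distrib\testfn|\leq M(K)|\testfn|_m$ for $\testfn\in\testfnson(K)$, with $m$ the \emph{same} for all $K$. Taking $M(K)=M_0$ for every $K$ and $m$ as above meets this, using $|\testfn|_{m,K_0}\leq|\testfn|_m$. So the content of the theorem is precisely that the order $m$ obtained locally (from the basic characterization of distributions, Theorem stated after Definition~\ref{distributions_defnofdistrib}) can be taken uniform once the support of $\distrib$ is compact, and this uniformity is handed to us directly by viewing $\distrib$ as a functional on $\smoothfns$.

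The one genuine step requiring the machinery of the paper is justifying the passage from "continuous on $\smoothfns$" to "bounded by a single seminorm $|\cdot|_{m,K_0}$." This rests on $\smoothfns$ being a Fr\'echet space under the countable family $|\cdot|_{m,K_j}$ (with $K_j$ a compact exhaustion), which is standard, together with the identification of compactly supported distributions with $\compactdistribs$ proved in the previous section. Alternatively, and perhaps more cleanly in the spirit of this paper, I would avoid even invoking the $\compactdistribs$ identification and instead argue directly using the partitioning estimate~\eqref{setting_extensionestimate}: fix $K_1$ a compact neighborhood of $\sppt(\distrib)$ and a decomposition map $f=f_1+f_2$ with $f_1$ supported near $K_1$ and $f_1\equiv f$ on $\sppt(\distrib)$; apply the \emph{local} distribution estimate on the fixed compact $K_1$ to get $|\distrib\testfn|=|\distrib\testfn_1|\leq M(K_1)|\testfn_1|_{m}$, and then use~\eqref{setting_extensionestimate} to bound $|\testfn_1|_m\leq C|\testfn|_m$ with $C$ depending only on $m$ and $K_1$; composing gives $|\distrib\testfn|\leq M(K_1)C|\testfn|_m$ with constants independent of $\sppt(\testfn)$. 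I expect the main (very mild) obstacle to be bookkeeping the nonlinearity of $f\mapsto f_1$ --- but since $\distrib$ only sees $f_1$ on a neighborhood of $\sppt(\distrib)$, where the decomposition is additive, this causes no real trouble, exactly as in the proof that compactly supported distributions lie in $\compactdistribs$.
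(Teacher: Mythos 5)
Your proposal is correct, and your second (``more cleanly in the spirit of this paper'') argument is essentially the paper's own proof: fix a compact $K_1$ with $\sppt(\distrib)\subset\interior(K_1)$, use Lemma~\ref{testfunction_partition} to write $\testfn=\testfn_1+\testfn_2$ with $\testfn_1$ supported in $K_1$ and $\testfn_2$ supported off $\sppt(\distrib)$, observe $\distrib\testfn=\distrib\testfn_1$, and combine the local estimate \eqref{distributions_conditiontobedist} on $K_1$ with $|\testfn_1|_m\leq C_m|\testfn|_m$. Your remark that the nonlinearity of $f\mapsto f_1$ is harmless is right, and in fact for this theorem it is irrelevant: one only needs the decomposition and the seminorm bound for each $\testfn$ separately, not linearity of the assignment. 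Your first argument, via the identification of compactly supported distributions with $\compactdistribs$ and the standard fact that a continuous functional on the Fr\'echet space $\smoothfns$ is bounded by a single seminorm $|\cdot|_{m,K_0}\leq|\cdot|_m$, is also valid; but it is not really more economical, since the proof that compactly supported distributions lie in $\compactdistribs$ already invokes the same partitioning machinery, so you are routing the same input through an extra layer of abstraction.
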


\begin{proof}
Let $T$ be a distribution with compact support $K$ and let $K_1$ be a compact set such that
$K\subset\interior(K_{1})$. By Lemma~\ref{testfunction_partition} we may decompose any
$\testfn\in\testfns$ as $\testfn=\testfn_{1}+\testfn_{2}$ where $\testfn_{1}$ is supported on
$K_1$, $\testfn_2$ is supported in $K^{c}$, and $|\testfn_{j}|_{m}\leq C_{m} |\testfn|_{m}$ for
$j=1,2$. Clearly $T\testfn=T\testfn_{1}$, but there are $m$ and $M$ so that
\eqref{distributions_conditiontobedist} holds on $K_1$, from which we conclude that
\begin{equation*}
    |T\phi|=|T\phi_1|\leq M|\phi_1|_{m}\leq C_m M |\phi|_{m}.\qedhere
    \end{equation*}
\end{proof}

It is easy to see that the Radon measures on $\domain$ are examples of distributions of finite
order.  In fact they have order zero, because the action of $\nu$ on $\testfns$ via
$\nu\testfn=\int \testfn\nu$ trivially satisfies the bound
$|\nu\testfn|\leq\|\testfn\|_{\infty}=|\testfn|_{0}$. The converse is also true.

\begin{theorem}\label{structure-zeroorderimpiesmeasure}
If $T$ is a distribution of order zero then there is a Radon measure $\nu$ such that $T\testfn =
\int \testfn d\nu$ for all $\testfn\in\testfns$.
\end{theorem}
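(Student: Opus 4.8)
The plan is to reduce this to the classical Riesz representation theorem by extending $\distrib$ from $\testfns$ to a bounded linear functional on $C_{c}(\domain)$. Since $\distrib$ has order zero, for each compact $K\subset\domain$ there is $M=M(K)$ with $|\distrib\testfn|\leq M|\testfn|_{0}=M\|\testfn\|_{\infty}$ for all $\testfn\in\testfnson(K)$. First I would fix an increasing exhaustion $\cup_{j}K_{j}=\domain$ by compacta, each a finite union of cells and with $K_{j}$ in the interior of $K_{j+1}$. By Theorem~\ref{testfunction_testfnsdenseincts}, $\testfnson(\domain)$ is dense in $C_{c}(\domain)$ with the supremum norm; more precisely, for $f\in C_{c}(\domain)$ supported in $K_{j}$, one approximates $f$ uniformly by test functions supported in $K_{j+1}$ (using Lemma~\ref{testfunction_partition} to control supports, as in the density argument following that lemma). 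The order-zero bound on $K_{j+1}$ then shows $\testfn\mapsto\distrib\testfn$ is uniformly continuous in the sup norm on test functions supported in $K_{j}$, so it extends uniquely to a linear functional $\tilde{\distrib}_{j}$ on $\{f\in C_{c}(\domain):\sppt f\subset K_{j}\}$ satisfying $|\tilde{\distrib}_{j}f|\leq M(K_{j+1})\|f\|_{\infty}$.

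Next I would check these extensions are consistent: if $f$ is supported in $K_{j}\subset K_{l}$, then $\tilde{\distrib}_{j}f=\tilde{\distrib}_{l}f$, because both are limits of $\distrib\testfn_{i}$ for the same approximating sequence $\testfn_{i}\to f$ in sup norm. Hence there is a well-defined linear functional $\tilde{\distrib}$ on all of $C_{c}(\domain)$, and on each piece $\{\sppt f\subset K_{j}\}$ it is bounded by $M(K_{j+1})\|f\|_{\infty}$. This is exactly the statement that $\tilde{\distrib}$ is a (not necessarily bounded) Radon functional on the locally compact Hausdorff space $\domain$: it is a linear functional on $C_{c}(\domain)$ whose restriction to each $C_{K}(\domain)=\{f:\sppt f\subset K\}$ is continuous. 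I would then invoke the Riesz representation theorem in its general locally-compact form (as in Rudin, \emph{Real and Complex Analysis}) — but since $\tilde\distrib$ need not be positive, I should first either assume $\domain$ is $\sigma$-compact (which holds here, by the cellular structure) and decompose $\tilde\distrib$ into positive and negative parts via a Hahn–Jordan-type argument on each $K_{j}$, or simply apply the version of Riesz representation for complex/signed Radon measures directly. Either way one obtains a Radon measure $\nu$ (in the sense defined in Lemma~\ref{setting_greensfnexsts}: Borel, finite on compacta, outer regular on Borel sets, inner regular on opens) with $\tilde{\distrib}f=\int f\,d\nu$ for all $f\in C_{c}(\domain)$, and in particular $\distrib\testfn=\int\testfn\,d\nu$ for all $\testfn\in\testfns$.

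The main obstacle is the bookkeeping around local versus global boundedness: $\distrib$ is not uniformly bounded on $C_{c}(\domain)$, only on each $C_{K_{j}}(\domain)$, so the naive Hahn–Banach extension to $C_{0}(\domain)$ is unavailable and one must instead assemble the measure from its restrictions to the $K_{j}$ and check compatibility of the pieces. The one genuinely fractal-specific input is the density of $\testfns$ in $C_{c}(\domain)$ together with the support-preserving partition of Lemma~\ref{testfunction_partition}, which lets the approximating test functions be kept inside $K_{j+1}$ so that the order-zero estimate $|\distrib\testfn|\leq M(K_{j+1})\|\testfn\|_{\infty}$ actually applies; everything after that is standard measure theory on a locally compact Hausdorff space. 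Uniqueness of $\nu$ follows from Theorem~\ref{testfunction_testfnsdenseincts} exactly as in the remark identifying $\nu$ with $\distrib_{\nu}$ earlier in the paper.
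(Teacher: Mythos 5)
Your proposal is correct and follows essentially the same route as the paper: use the order-zero bound together with the density of test functions (Theorem~\ref{testfunction_testfnsdenseincts}) to extend $T$ to a bounded functional on continuous functions supported in each compact set, represent each such functional by a Radon measure via Riesz representation, and patch these together along a compact exhaustion. Your extra care in keeping the approximating test functions supported in $K_{j+1}$ (via Lemma~\ref{testfunction_partition}) and in checking consistency of the local extensions is a point the paper's proof glosses over, but it is the same argument.
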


\begin{proof}
Let $K$ be compact.  Since $T$ has order zero there is $M=M(K)$ so that $|T\testfn|\leq
M\|\testfn\|_{\infty}$ whenever $\testfn\in\testfns$ has support in $K$.  Theorem
\ref{testfunction_testfnsdenseincts} shows that these functions are dense in $C(K)$, so we may
extend $T$ to a bounded linear operator on $C(K)$.  Such operators are represented by Radon
measures, so there is $\nu_{K}$ with $T\testfn=\int \testfn d\nu_{K}$ for all test functions
$\testfn$ with support in $K$.  Now let $\bigcup K_{j}$ be a compact exhaustion of $\domain$ and
consider the measures $\nu_{K_{j}}$.  These converge weak-star as elements of the dual of
$C_{c}(\domain)$ to a Radon measure $\nu$, but by construction $\int \testfn
d\nu_{K_{j}}\rightarrow T\testfn$ on $\testfns$ and the result follows.
\end{proof}

\begin{remark} As written, the preceding proof relies on Theorem \ref{testfunction_testfnsdenseincts} and hence
on the Hahn-Banch theorem.  Since each of the $K_{j}$ is compact, a constructive proof can be
obtained by instead using Theorem \ref{testfunctions_compactcase}.
\end{remark}

A well known application of the preceding is obtaining a characterization of the distributions that
have positive values on positive test functions.  To prove this we need Corollary
\ref{setting_positivebumpfunction}, and therefore must make the corresponding assumption
\eqref{setting-subGaussianbound} on the behavior of the heat kernel corresponding to the Laplacian.

\begin{definition}
$T$ is a positive distribution if $T\testfn\geq0$ whenever $\testfn\geq0$ is a positive test
function.
\end{definition}

\begin{theorem}\label{structure_positivedistribsarepositivemeasures}
Positive distributions have order zero.  If the Laplacian on $X$ is such that
\eqref{setting-subGaussianbound} holds and if $T$ is a positive distribution, then there is a
positive measure $\nu$ such that $Tf=\int fd\nu$.
\end{theorem}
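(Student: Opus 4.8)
The plan is to follow the classical pattern: first show that a positive distribution automatically has order zero, then invoke Theorem~\ref{structure-zeroorderimpiesmeasure} to represent it by a (signed) Radon measure $\nu$, and finally use positivity once more to see that $\nu$ is in fact a positive measure. Both the first and last steps depend on the availability of \emph{nonnegative} bump functions, which is why the hypothesis~\eqref{setting-subGaussianbound} and Theorem~\ref{setting_positivebumpfunction} are needed. The genuine difficulty, as opposed to the Euclidean argument, is that we cannot force a function to be nonnegative by multiplying it by a cutoff, since products of smooth functions need not be smooth; the workaround is an additive correction by a small multiple of a positive bump.

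For the order-zero claim, fix a compact $K\subset\domain$ and choose a nonnegative test function $\psi$ with $\psi\geq 1$ on a neighborhood of $K$. Such a $\psi$ is obtained by covering a neighborhood of $K$ by finitely many cells lying in some relatively compact open set, applying Theorem~\ref{setting_positivebumpfunction} to each cell with that common open set, and summing the resulting functions. For real-valued $\testfn\in\testfnson(K)$ both $\|\testfn\|_\infty\psi+\testfn$ and $\|\testfn\|_\infty\psi-\testfn$ are nonnegative test functions: on $\sppt\testfn$ because $\psi\geq 1$ and $|\testfn|\leq\|\testfn\|_\infty$, and elsewhere because $\testfn=0$ while $\psi\geq 0$. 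Applying $T$ yields $|T\testfn|\leq\|\testfn\|_\infty\,T\psi$, and general $\testfn$ is handled by taking real and imaginary parts. Since $M(K):=T\psi$ depends only on $K$, the distribution $T$ has finite order zero, so Theorem~\ref{structure-zeroorderimpiesmeasure} produces a Radon measure $\nu$ with $T\testfn=\int\testfn\,d\nu$ for all $\testfn\in\testfns$.

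It remains to prove $\nu\geq 0$. By the Riesz representation theorem it suffices to show $\int f\,d\nu\geq 0$ for every $f\in C_c(\domain)$ with $f\geq 0$, since then $f\mapsto\int f\,d\nu$ is a positive functional on $C_c(\domain)$, hence integration against a positive Radon measure, which must equal $\nu$ by uniqueness in the representation. Fix such an $f$ and choose relatively compact open sets with $\sppt f\subset V\subset\overline V\subset W$. As $\testfnson(V)$ is dense in $C_c(V)$ (by the argument of Theorem~\ref{testfunction_testfnsdenseincts} carried out inside $V$), pick test functions $\eta_j$ supported in $V$ with $\delta_j:=\|\eta_j-f\|_\infty\to 0$. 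Let $\psi$ be a nonnegative test function with $\psi\equiv 1$ on $\overline V$ and $\sppt\psi\subset W$, and set $\testfn_j=\eta_j+\delta_j\psi$. Then $\testfn_j\geq 0$ everywhere: on $\sppt f$ we have $\psi=1$ and $\testfn_j\geq f-\delta_j+\delta_j=f\geq 0$; on $\overline V\setminus\sppt f$ we have $\psi=1$ and $\eta_j\geq-\delta_j$, so $\testfn_j\geq 0$; on $W\setminus\overline V$ we have $\eta_j=0$ and $\delta_j\psi\geq 0$; and off $W$ both terms vanish. Moreover $\sppt\testfn_j\subset\overline W$ and $\testfn_j\to f$ uniformly. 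Since each $\testfn_j$ is a nonnegative test function, $\int\testfn_j\,d\nu=T\testfn_j\geq 0$, and as $|\nu|(\overline W)<\infty$ the uniform convergence $\testfn_j\to f$ gives $\int\testfn_j\,d\nu\to\int f\,d\nu$. Hence $\int f\,d\nu\geq 0$, which finishes the proof.

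I expect the last paragraph to be the main obstacle, specifically the construction of the nonnegative approximants $\testfn_j$: in the Euclidean setting one would multiply a smooth approximant of $f$ by a cutoff, but the absence of a product structure forces the additive device $\delta_j\psi$, and the positive bump $\psi$ appearing there (as well as the bump in the order-zero argument) is precisely what requires Theorem~\ref{setting_positivebumpfunction} and hence~\eqref{setting-subGaussianbound}. A smaller point to verify is that the support-controlled density of test functions in $C_c(V)$ is a mild localization of Theorem~\ref{testfunction_testfnsdenseincts} rather than a literal quotation of it.
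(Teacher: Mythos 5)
Your proposal is correct, and the first half (order zero via $\|\testfn\|_\infty\psi\pm\testfn\geq0$ for a bump $\psi$, then Theorem~\ref{structure-zeroorderimpiesmeasure}) is essentially the paper's argument. The second half takes a genuinely different route. The paper argues by contradiction on cells: if $\nu(K)<0$ for some cell $K$, it applies $T$ to the positive bumps $f_j$ of Theorem~\ref{setting_positivebumpfunction} equal to $1$ on $K$ and supported in neighborhoods $U_j$ with $\nu(U_j\setminus K)<1/j$, obtaining $Tf_j\leq\nu(K)+\tfrac1j\|f_j\|_\infty<0$ for large $j$; positivity of $\nu$ on cells then gives positivity of the measure. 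You instead show directly that $\int f\,d\nu\geq0$ for every nonnegative $f\in C_c(\domain)$ by manufacturing nonnegative test-function approximants $\eta_j+\delta_j\psi$, the additive correction $\delta_j\psi$ being the right substitute for the unavailable multiplicative cutoff. Both proofs lean on Theorem~\ref{setting_positivebumpfunction} in the same essential way; yours avoids the measure-theoretic step of passing from positivity on cells to positivity of $\nu$, at the cost of the (correctly flagged, and easily supplied) localization of the density Theorem~\ref{testfunction_testfnsdenseincts} to $C_c(V)$. One small difference worth noting: you also invoke the positive bump of Theorem~\ref{setting_positivebumpfunction} in the order-zero step, so your proof of the first sentence uses hypothesis~\eqref{setting-subGaussianbound}, whereas the theorem states that claim unconditionally and the paper cites only Corollary~\ref{setting_bumpfunctionwithestimates} there. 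Since that corollary does not assert nonnegativity of the bump off $K$, the paper's own order-zero argument tacitly needs what you make explicit, so your more conservative use of hypotheses is defensible rather than a defect.
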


\begin{proof}
Let $K$ be compact.  Using Theorem \ref{setting_bumpfunctionwithestimates} there is
$\psi_{K}\in\testfns$ such that $\psi\equiv1$ on $K$.  If $\testfn\in C^{\infty}$ with support in
$K$ then the functions $\|\testfn\|_{\infty}\psi_{K}\pm\testfn$ are both positive, whence
\begin{equation*}
    -\|\testfn\|_{\infty} T \psi_{K}
    \leq T\testfn
    \leq \|\testfn\|_{\infty}\psi_{K}.
    \end{equation*}
We conclude that $T$ has order zero, so by Corollary~\ref{structure-zeroorderimpiesmeasure} it is
represented by integration against a measure $\nu$.  If there is a cell $K$ for which $\nu(K)<0$
then we can take $U_{j}$ to be a neighborhood of $K$ for which $\nu(U_{j}\setminus K)<1/j$ and let
$f_{j}$ be as in Theorem~\ref{setting_positivebumpfunction}.  It follows that
\begin{equation*}
    Tf_{j} =\int f\, d\nu \leq \nu (K) + \frac{1}{j}\|f\|_{\infty}
    \end{equation*}
and for a sufficiently large $j$ this is negative, in contradiction to the positivity of $T$.  We
conclude that $\nu(K)\geq 0$ for all cells $K$, and therefore that $\nu$ is a positive measure.
\end{proof}

We noted at the beginning of Section \ref{distributions_section} that the adjoint of the Laplacian
maps $\distribs$ to itself.  In particular, if $\nu$ is a Radon measure, hence a distribution of
order zero, then for each compact $K$ there is $M(K)$ such that
\begin{equation*}
    |(\Delta^{m}\nu) \testfn|
    =|\nu (\Delta^{m}\testfn)|
    \leq M(K) |\Delta^{m}\testfn|_{0}
    \leq M(K) |\testfn|_{m}
    \end{equation*}
so $\Delta^{m}\nu$ is a distribution of order $m$.  This result has a converse, which we prove
using a modification of the Green's function introduced in Lemma \ref{setting_greensfnexsts}.  The
basic idea is to produce a Green's operator that inverts the Laplacian on test functions, so that
the adjoint of this operator lowers the order of a finite-order distribution. Iterating to produce
a distribution of zero order then produces a measure by
Theorem~\ref{structure-zeroorderimpiesmeasure}.

\begin{lemma}\label{structure_greensfnforKexists}
Let $K$ be a connected finite union of cells in $\domain$.  Then
$\Delta:\testfnson(K)\rightarrow\testfnson(K)$ and its image consists of all test functions that
are orthogonal to the harmonic functions on $K$.  Moreover there is a linear operator
$\tilde{G}_{K}:\testfnson(K)\rightarrow\testfnson(K)$ such that
$-\Delta\tilde{G}_{K}(\Delta\testfn)=\Delta\testfn$ for all $\testfn\in\testfnson(K)$.
\end{lemma}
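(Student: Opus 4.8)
The plan is to prove the three assertions in order: first that $\Delta$ maps $\testfnson(K)$ into itself, then that its image is exactly the test functions orthogonal to harmonics on $K$, and finally to construct the right inverse $\tilde{G}_{K}$. For the first assertion, if $\testfn\in\testfnson(K)$ then $\testfn$ is smooth and supported on $K$; since $K$ is a finite union of cells, $\sppt(\testfn)$ is automatically compact, and $\Delta\testfn$ is again smooth with support contained in $\sppt(\testfn)\subset K$, so $\Delta\testfn\in\testfnson(K)$. (One should be slightly careful about what ``$\testfnson(K)$'' means when $K$ has nonempty boundary $\partial K$: a test function on $K$ must vanish, together with all its normal derivatives, in a neighborhood of $\partial K$. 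This is preserved by $\Delta$.)

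For the characterization of the image, one inclusion is the Gauss--Green formula: if $\testfn\in\testfnson(K)$ and $h$ is harmonic on $K$, then $\int (\Delta\testfn)\, h\, d\mu = -E(\testfn,h) + (\text{boundary terms})$, and both the energy term (since $h$ is harmonic, so $E(\testfn,h)=\int \testfn\,\Delta h = 0$ up to boundary terms) and the boundary terms vanish because $\testfn$ and its normal derivatives vanish near $\partial K$. Hence $\Delta\testfn\perp h$. For the reverse inclusion, suppose $\testfn\in\testfnson(K)$ is orthogonal to all harmonics on $K$; I want $u\in\testfnson(K)$ with $\Delta u=\testfn$. Apply the Green's operator $G_{K}$ of Lemma~\ref{setting_greensfnexsts} to the measure $\testfn\, d\mu$: this gives a continuous $v=G_{K}(\testfn\,d\mu)$ with $-\Delta v=\testfn$ on $\interior(K)$ and $v|_{\partial K}=0$. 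Since $\testfn$ is smooth, elliptic-type regularity (iterating: $\Delta v=-\testfn$ continuous gives $v\in\dom(\Delta)$, then $\Delta^{k}v=-\Delta^{k-1}\testfn$ continuous gives $v\in\dom(\Delta^{k})$) shows $v$ is smooth on $\interior(K)$. The problem is that $v$ need not vanish to all orders at $\partial K$ — it only satisfies $v|_{\partial K}=0$. This is where one uses the orthogonality hypothesis together with Theorem~\ref{setting_Borelthm}: the jet of $v$ at each boundary point $q\in\partial K$ is a prescribed sequence, and one must correct $v$ by subtracting a smooth function with the matching jet. The orthogonality to harmonics is exactly what guarantees the correction can be arranged so that the normal derivatives and values needed to kill the jet are consistent — concretely, $\sum_{q\in\partial K}\partial_n^{w}v(q)\cdot(\text{harmonic basis values})=\int(-\Delta v)h\,d\mu - E(v,h)$-type identities force the relevant sums to vanish. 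Use Theorem~\ref{setting_Borelthm} at each boundary point to build $\eta$ smooth, supported near $\partial K$, with the same jet as $v$ at every $q\in\partial K$; then $u=-(v-\eta)$ (suitably) lies in $\testfnson(K)$ and $\Delta u=\testfn$. I expect this jet-correction step — verifying that orthogonality to harmonics is precisely the obstruction-vanishing condition, and bookkeeping the values/normal-derivatives at the finitely many boundary points — to be the main obstacle.

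For the last assertion, I would define $\tilde{G}_{K}$ on the range of $\Delta$ first: given $\Delta\testfn$ with $\testfn\in\testfnson(K)$, the element $u\in\testfnson(K)$ produced above with $-\Delta u=\Delta\testfn$ is unique (if $-\Delta u_1=-\Delta u_2$ then $u_1-u_2$ is harmonic on $K$, smooth, and vanishes to all orders at $\partial K$, hence is zero by uniqueness of the harmonic function with given boundary data, or simply because a harmonic function vanishing at all boundary points is zero). Set $\tilde{G}_{K}(\Delta\testfn)=u$; this is well defined on the image of $\Delta$, and linearity follows from uniqueness. Then extend $\tilde{G}_{K}$ to all of $\testfnson(K)$ by composing with the orthogonal projection onto the range of $\Delta$ (equivalently, onto the orthocomplement of the harmonics): since the harmonics on $K$ form a finite-dimensional subspace, projecting a test function off them yields another test function (subtracting a finite linear combination of harmonics from $\psi$ — but harmonics are not compactly supported in $K$ unless we again apply the cutoff from Theorem~\ref{setting_Borelthm}; alternatively one simply does not insist $\tilde{G}_K$ be defined by a projection and instead sets $\tilde{G}_K$ arbitrarily, say zero, on a complement of the range, which still gives a linear operator on all of $\testfnson(K)$). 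Either way the defining identity $-\Delta\tilde{G}_{K}(\Delta\testfn)=\Delta\testfn$ holds for every $\testfn\in\testfnson(K)$ by construction, completing the proof.
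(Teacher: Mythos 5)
Your outline tracks the paper's proof up to the critical step, but the mechanism you propose there would fail. After setting $v=G_{K}(\testfn\,d\mu)$, the only possibly nonzero entry of the jet of $v$ at a point $q\in\partial K$ is the zeroth normal derivative $\partial_{n}v(q)$: indeed $v(q)=0$ by construction of $G_{K}$, and for $k\geq1$ one has $\Delta^{k}v=-\Delta^{k-1}\testfn$, which together with its normal derivatives vanishes at $\partial K$ because $\testfn\in\testfnson(K)$. You propose to kill the remaining jet entries by subtracting a function $\eta$ built from Theorem~\ref{setting_Borelthm} with the same jet as $v$. But any compactly supported $\eta$ with $\partial_{n}\eta(q)=\partial_{n}v(q)\neq0$ has $\Delta\eta\not\equiv0$, so $\Delta(v-\eta)=-\testfn-\Delta\eta\neq-\testfn$ on $\interior(K)$: the correction destroys the equation you are trying to solve. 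The role of the orthogonality hypothesis is not to make a correction possible but to make it unnecessary. Your own Gauss--Green identity, pushed to its conclusion, gives $\sum_{q\in\partial K}h(q)\,\partial_{n}v(q)=\int_{K}(\Delta v)h\,d\mu=-\int_{K}\testfn h\,d\mu=0$ for every harmonic $h$ on $K$; since the Dirichlet problem on $K$ is solvable for arbitrary boundary data (a consequence of Lemma~\ref{setting_greensfnexsts}, as the paper notes), this forces $\partial_{n}v(q)=0$ at each individual $q$. Hence $v$ extended by zero already satisfies all matching conditions and lies in $\testfnson(K)$, with no use of the Borel theorem at all. This is exactly how the paper argues.

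Two smaller points. You do not treat the case $K=\domain$ compact, where $\partial K=\emptyset$ but $G_{K}$ is defined only on mean-zero measures; orthogonality to harmonics (which include the constants) supplies precisely the condition $\int\testfn\,d\mu=0$ needed there. Also, your definition of $\tilde{G}_{K}$ --- invert $\Delta$ on its range and extend by zero on an algebraic complement --- does satisfy the lemma as literally stated, but the paper instead sets $\tilde{G}_{K}\testfn=G_{K}(\testfn-\tilde{\testfn})$, where $\tilde{\testfn}$ is a biorthogonal projection removing the harmonic components; that explicit formula and the seminorm estimates it carries are what Theorem~\ref{structure_Gondistribus} relies on, so the arbitrary extension would have to be replaced downstream.
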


\begin{proof}
For $\testfn\in\testfnson(K)$, the matching conditions for the Laplacian ensure that both $\testfn$
and $\partial_{n}\testfn$ vanish on $\partial K$ because $\testfn$ is identically zero outside $K$.
If $f\in\dom(\Delta)$, then the Gauss-Green formula reduces to
\begin{equation*}
    \int_{K} (\Delta \testfn) f \, d\mu  = \int_{K} \testfn (\Delta f) \, d\mu
    \end{equation*}
because there are no non-zero boundary terms.  We conclude that $f$ is orthogonal to the image
$\Delta\bigl(\testfnson(K)\bigr)$ if and only if $\Delta f$ is orthogonal to $\testfnson(K)$.  As
the latter is dense in $C(K)$ (Lemma \ref{testfunction_testfnsdenseincts}) the first result is
proven.

Let $h_{1}\dotsc,h_{i(K)}$ be an $L^{2}$--orthonormal basis for the finite dimensional space of
harmonic functions on $K$.  As $\testfnson(K)$ is dense in $C(K)$ in supremum norm and $K$ has
finite measure, $\testfnson(K)$ is also dense in both $C(K)$ and $L^{2}(K)$ in $L^{2}$ norm.  It
follows that there are $\testfn_{1},\dotsc,\testfn_{i(K)}$ in $\testfnson(K)$ such that
$\langle\testfn_{i},h_{j}\rangle=\delta_{ij}$, where $\langle,\rangle$ is the $L^{2}$ inner product
and $\delta_{ij}$ is Kronecker's delta.  Given $\testfn\in\testfnson(K)$ we let
\begin{equation}\label{structure_defnoftestfnK}
    \tilde{\testfn} = \sum_{i=1}^{i(K)} \langle\testfn,h_{i}\rangle \testfn_{i}
    \end{equation}
and define
\begin{equation*}
    \tilde{G}_{K} \testfn (x)=
    \begin{cases}
    G_{K} (\testfn-\tilde{\testfn})(x) &\quad\text{if $x\in K$}\\
    0 &\quad\text{if $x\not\in K$}
    \end{cases}
    \end{equation*}
where $G_{K}$ is the Green's operator defined in Lemma \ref{setting_greensfnexsts}.  It is then
clear that for $\psi\in\testfnson(K)$,
\begin{equation}\label{structure_laplacianofGtilde}
    -\Delta\tilde{G}_{K} \psi
    = -\Delta G_{K} (\psi -\tilde{\psi})
    =\psi -\tilde{\psi}
    \end{equation}
except perhaps at points of $\partial{K}$, where we must first verify that the matching conditions
for the Laplacian hold.  Since $\tilde{G}_{K}\psi$ vanishes outside $K$, the matching conditions
require that $\partial_{n}\tilde{G}_{K}\psi (q)=0$ whenever $q\in\partial K$.  One way to verify
this is from the Gauss-Green formula for a harmonic function $h$, which yields
\begin{equation*}
    0
    = \langle \psi-\tilde{\psi},h \rangle
    =\int_{K} (-\Delta\tilde{G}_{K}\psi) h
    = -\sum_{q\in\partial K} \bigl(-\partial_{n}\tilde{G}_{K}\psi(q)\bigr) h(q)
    \end{equation*}
from which we see that it suffices to know the solvability of the Dirichlet problem on $K$, that
is, for every assignment of boundary values on $\partial K$ there is a harmonic function $h$ with
those boundary values. This latter is true because of Lemma \ref{setting_greensfnexsts}; for
example it may be proven by taking a function that is piecewise harmonic on cells and has the
desired boundary data and subtracting the result of applying $G_{K}$ to its Laplacian (which is
simply a sum of Dirac masses at the interior gluing points).  We conclude that $\tilde{G}_{K}
\psi\in\testfnson(K)$ and that \eqref{structure_laplacianofGtilde} holds everywhere.

Finally, if $\psi=\Delta\testfn$ for some $\testfn\in\testfnson(K)$, then $\psi$ is orthogonal to
the harmonics, so $\tilde{\psi}=0$ and $-\Delta\tilde{G}_{K}\psi=\psi$ as desired.
\end{proof}

The adjoint of $\tilde{G}_{K}$ is defined on distributions by
$(\tilde{G}_{K}T)\testfn=T(G_{K}\testfn)$. This operator is really defined on the dual of
$\testfnson(K)$, which is a larger space, but we will not make use of this fact.

\begin{theorem}\label{structure_Gondistribus}
If $T$ is a distribution of order $m\geq 1$ then $\tilde{G}_{K}T$ is a distribution of order $m-1$,
and if $T$ is a distribution of order zero then $\tilde{G}_{K}T$ is integration with respect to a
continuous function on $K$.
\end{theorem}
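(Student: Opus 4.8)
The plan is to use the adjoint operator $\tilde G_K$ on $\testfnson(K)$ to reduce the order of $T$ by one, then iterate. First I would verify the order reduction: given $\testfn\in\testfnson(K)$, Lemma~\ref{structure_greensfnforKexists} provides $\tilde G_K\testfn\in\testfnson(K)$ with $-\Delta\tilde G_K(\Delta\testfn)=\Delta\testfn$. The key estimate needed is that $\tilde G_K$ maps $\testfnson(K)$ to itself continuously in the seminorms, i.e. $|\tilde G_K\testfn|_{m}\leq C|\testfn|_{m-1}$ (or something comparable). This should follow from the explicit structure of $\tilde G_K$: writing $\tilde G_K\testfn=G_K(\testfn-\tilde\testfn)$ on $K$, we have $\Delta^{j}\tilde G_K\testfn=-\Delta^{j-1}(\testfn-\tilde\testfn)$ for $j\geq 1$ (using $-\Delta G_K=\mathrm{id}$), so the top $m-1$ Laplacian powers of $\tilde G_K\testfn$ are controlled by the top $m-1$ powers of $\testfn$ plus the correction term $\tilde\testfn$, and $\tilde\testfn$ depends boundedly on $\langle\testfn,h_i\rangle$, hence on $\|\testfn\|_{\infty}=|\testfn|_0$. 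The remaining bound, on $\|\tilde G_K\testfn\|_{\infty}$ itself, comes from continuity of the Green's kernel $g(x,y)$: $\|G_K\nu\|_\infty\leq C\|\nu\|_\infty$ when $\nu$ is a (continuous function times) measure on the finite-measure set $K$.

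Granting this continuity, the order reduction is formal. Given $T$ of order $m\geq 1$, define $\tilde G_K T$ by $(\tilde G_K T)\testfn=T(\tilde G_K\testfn)$ for $\testfn\in\testfnson(K)$. For $\testfn$ supported in a compact $K'\subset K$ (or just working on $K$ itself), $|(\tilde G_K T)\testfn|=|T(\tilde G_K\testfn)|\leq M|\tilde G_K\testfn|_{m}\leq CM|\testfn|_{m-1}$, so $\tilde G_K T$ has order $m-1$ on $K$. One should note that $\tilde G_K T$ is a priori only defined as a functional on $\testfnson(K)$, which is what we want since the statement concerns the restriction to $K$; alternatively one extends by the partitioning of Lemma~\ref{testfunction_partition}.

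For the base case $m=0$: $T$ is then a Radon measure $\nu$ on $K$ by Theorem~\ref{structure-zeroorderimpiesmeasure}. Then $(\tilde G_K T)\testfn=\int_K(\tilde G_K\testfn)\,d\nu=\int_K\bigl(\int_K g(x,y)(\testfn(y)-\tilde\testfn(y))\,d\mu(y)\bigr)\,d\nu(x)$. By Fubini this equals $\int_K\testfn(y)\bigl(\int_K g(x,y)\,d\nu(x)\bigr)d\mu(y)$ minus the analogous term in $\tilde\testfn$; since $y\mapsto\tilde\testfn(y)$ is a finite linear combination $\sum_i\langle\testfn,h_i\rangle\testfn_i(y)$ with coefficients linear in $\testfn$, that correction also has the form $\int_K\testfn(y)\varphi_0(y)\,d\mu(y)$ for a fixed continuous $\varphi_0$ (built from the $h_i$, $\testfn_i$, and $\nu$). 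Hence $(\tilde G_K T)\testfn=\int_K\testfn\cdot\varphi\,d\mu$ where $\varphi(y)=\int_K g(x,y)\,d\nu(x)-\varphi_0(y)$, and $\varphi$ is continuous: continuity of $\int g(x,y)\,d\nu(x)$ in $y$ follows from continuity of the kernel $g$ together with dominated convergence (using that $g$ is bounded on $K\times K$ and $\nu$ is finite on the compact $K$), exactly as in the proof of Lemma~\ref{setting_greensfnexsts}.

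The main obstacle I expect is the seminorm estimate $|\tilde G_K\testfn|_{m}\leq C|\testfn|_{m-1}$, specifically controlling $\|\tilde G_K\testfn\|_\infty$ and the contribution of the harmonic-correction term $\tilde\testfn$ at the top order. The Laplacian powers $\Delta^j\tilde G_K\testfn$ for $j\geq 1$ telescope cleanly down to $-\Delta^{j-1}(\testfn-\tilde\testfn)$, so the only genuinely new estimates are the zeroth-order one (boundedness of $G_K$ from $C(K)$ to $C(K)$ via the bounded continuous kernel) and the bound $|\tilde\testfn|_{m-1}\leq C|\testfn|_0$ coming from $|\langle\testfn,h_i\rangle|\leq\|h_i\|_1\|\testfn\|_\infty$ and the fixed nature of the finitely many $\testfn_i$; both are routine once set up, but they are where all the real content sits. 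Everything after that—iterating $m$ times and invoking the $m=0$ case—is bookkeeping.
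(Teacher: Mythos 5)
Your proposal is correct and follows essentially the same route as the paper: the order-reduction step is identical (telescoping $\Delta^{j}\tilde{G}_{K}\phi=-\Delta^{j-1}(\phi-\tilde{\phi})$ for $j\geq1$, $L^{\infty}$-boundedness of $G_{K}$ for $j=0$, and the bound on $\tilde{\phi}$ via $|\langle\phi,h_{i}\rangle|\leq C\|\phi\|_{\infty}$), and your Fubini-on-the-kernel computation for the order-zero case is just the kernel-level form of the paper's argument, which writes $\nu=\Delta f$ with $f$ continuous and moves the Laplacian across by Gauss--Green. The one case you do not address is $K=\domain$ compact, where $G_{K}$ is only defined on mean-zero measures (so neither $\nu=\Delta f$ nor an unqualified kernel representation is immediately available); the paper handles this by splitting $\nu=t\mu+(\nu-t\mu)$ with $t=\int d\nu/\int d\mu$ and absorbing the $t\mu$ piece into the constants.
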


\begin{proof}
Let $T$ be a distribution of order $m\geq 1$, so that $\tilde{G}_{K}T\testfn$ is bounded by
\begin{equation*}
    |\tilde{G}_{K}\testfn|_{m}
    = \sup \{ \|\Delta^{k} \tilde{G}_{K}(\testfn) \|_{\infty} : k\leq m \}.
    \end{equation*}
When $k\geq 1$ we have $\Delta^{k} \tilde{G}_{K}(\testfn) =
-\Delta^{k-1}(\testfn-\tilde{\testfn})$, and when $k=0$ we see that $\|
\tilde{G}_{K}(\testfn)\|_{\infty}\leq C \|\testfn - \tilde{\testfn}\|_{\infty}$ because the
operator $G_{K}$ in Lemma \ref{setting_greensfnexsts} is clearly bounded on $L^{\infty}$.  Hence
$|\tilde{G}_{K}\testfn|_{m}\leq C|\testfn-\tilde{\testfn}|_{m-1}\leq C(m-1,K)|\testfn|_{m-1}$,
where the latter inequality is from \eqref{structure_defnoftestfnK} with a constant $C(m-1,K)$ that
may depend on the set of functions $\testfn_{j}$. Thus $\tilde{G}_{K}T$ has order $m-1$.

If $T$ has order zero then by Theorem \ref{structure-zeroorderimpiesmeasure} it is represented by
integration against a Radon measure $\nu$.  Provided $K\neq \domain$ we can apply Lemma
\ref{setting_greensfnexsts} directly to see $\nu=\Delta f$ for some $f$ that is continuous on $K$
and vanishes on $\partial K$, so can be extended continuously to be zero outside $K$.  This ensures
there are no boundary terms when we compute with the Gauss-Green formula:
\begin{align*}
    \tilde{G}_{K}T\testfn
    &=T\tilde{G}_{K}\testfn
    =\int_{K} \tilde{G}_{K} \testfn \, d\nu
    =\int_{K} (\tilde{G}_{K}\testfn)( \Delta f) \, d\mu\\
    &=\int_{K} (-\Delta\tilde{G}_{K}\testfn) f\, d\mu
    =\int_{K} (\testfn - \tilde{\testfn} ) f\, d\mu\\
    &=\int_{K} \Bigl( \testfn - \sum_{i=1}^{i(K)} \langle\testfn,h_{i}\rangle \testfn_{i} \Bigr)
    f\, d\mu\\
    &=\int_{K} \testfn \Bigl(f - \sum_{i=1}^{i(K)} \langle \testfn_{i},\bar{f} \rangle \bar{h}_{i}
    \Bigr) \, d\mu
    \end{align*}
and the bracketed term in the last line is continuous because $f$ is continuous and all of the
$h_{i}$ are harmonic.

The argument is slightly different if $\domain=K$.  We instead set $t=\int d\nu/(\int d\mu)$ so
$\int d(\nu -t\mu)=0$, at which point Lemma \ref{setting_greensfnexsts} applies to show $\nu-t\mu =
\Delta f$ for a continuous $f$, and we can compute as before
\begin{align*}
    \tilde{G}_{K}T\testfn
    &= \int_{K} \tilde{G}_{K}\testfn \, d\nu\\
    &= t\int_{K} \tilde{G}_{K}\testfn  \, d\mu + \int_{K} \tilde{G}_{K}\testfn ( \Delta f) \, d\mu\\
    &= \int_{K} (\testfn - \tilde{\testfn} ) f\, d\mu\\
    &= \int_{K} \testfn \Bigl( f-\int_{K} f\, d\mu \Bigr) \, d\mu
    \end{align*}
where we used that $\int_{K} \tilde{G}_{K}\testfn\, d\mu = 0$ (from the proof of Lemma
\ref{setting_greensfnexsts}) and that the harmonic functions are constants in this case.
\end{proof}

We now have all the necessary tools to prove the main structure theorem for distributions.

\begin{theorem}\label{structure_structuretheorem}
Any distribution $T$ may be written as a locally finite sum of the form $T=\sum
\Delta^{m_{j}}\nu_{j}$ or $T=\sum \Delta^{m_{j}+1} f_{j}$, where the $\nu_{j}$ are Radon measures
and the $f_{j}$ are continuous functions with compact support.
\end{theorem}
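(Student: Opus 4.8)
The plan is to exploit the local-to-global structure supplied by a cellular exhaustion of $\domain$ together with the order-lowering operator $\tilde{G}_K$ from Lemma~\ref{structure_greensfnforKexists} and Theorem~\ref{structure_Gondistribus}. First I would fix an increasing compact exhaustion $\domain=\bigcup_j K_j$ in which each $K_j$ is a finite union of cells and $K_j\subset\interior(K_{j+1})$, and choose a locally finite partition-type decomposition: using Theorem~\ref{setting_partitionthm} (in the $\sigma$-compact form of Remark~\ref{setting_partitionremarkone}) applied to the cover of $\domain$ by the open sets $\domain\setminus K_{j-1}$, I would write every test function $\testfn$ as a locally finite sum $\testfn=\sum_j\testfn_j$ with $\sppt(\testfn_j)$ contained in a fixed finite union of cells $L_j$ lying in the annular region between $K_{j-2}$ and $K_{j+1}$. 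Composing $T$ with the $j$-th piece of this decomposition gives distributions $T_j$ with compact support in $L_j$, so that $T=\sum_j T_j$ as a locally finite sum; the point of making the $L_j$ overlap only finitely is that this sum converges in $\distribs$ and that each $T_j$ is compactly supported, hence of finite order $m_j$ by Theorems~\ref{structure_compactimpliesfiniteorder}.

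The heart of the argument is then to express a single compactly supported distribution of finite order $m$ as $\Delta^{m}\nu$ or $\Delta^{m+1}f$ on a finite union of cells. For this I would iterate $\tilde{G}_K$: given $T_j$ of order $m_j$ supported in $\interior(L_j)$, Theorem~\ref{structure_Gondistribus} shows $\tilde{G}_{L_j}T_j$ has order $m_j-1$, and after $m_j$ applications we reach a distribution of order zero, which by Theorem~\ref{structure-zeroorderimpiesmeasure} is a Radon measure $\nu_j$; one more application of $\tilde{G}_{L_j}$ identifies it with a continuous function $f_j$ on $L_j$ vanishing on $\partial L_j$, hence extendable by zero to a compactly supported continuous function on $\domain$. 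The subtlety is that $\tilde{G}_{L_j}$ is only a right inverse to $-\Delta$ on the \emph{image} $\Delta(\testfnson(L_j))$, i.e.\ on test functions orthogonal to the harmonic functions on $L_j$ (this is exactly the $\tilde{\testfn}$-correction in~\eqref{structure_defnoftestfnK}), so reconstructing $T_j=(-\Delta)^{m_j}\tilde{G}_{L_j}^{m_j}T_j$ from $(-\Delta)\tilde{G}_{L_j}T_j=T_j$ requires checking at each stage that the adjoint relation $((-\Delta)\tilde{G}_{L_j}T_j)\testfn = T_j((-\Delta)\tilde{G}_{L_j}\testfn)=T_j(\testfn-\tilde{\testfn})$ agrees with $T_j\testfn$ for the test functions that actually matter, namely those supported in a fixed neighborhood of $\sppt(T_j)$ strictly inside $L_j$. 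Since on such $\testfn$ the correction term $\tilde{\testfn}$, though not zero, produces a function supported away from $\sppt(T_j)$ in the regime we care about, the discrepancy $T_j\tilde{\testfn}$ contributes only finitely many lower-order terms $\Delta^{k}(\langle\cdot,h_i\rangle$-type measures$)$, which can be absorbed into the final locally finite sum. Carrying this bookkeeping cleanly — tracking exactly which harmonic-correction terms appear at each of the $m_j$ iterations and verifying they too are of the claimed form — is the step I expect to be the main obstacle.

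Assembling the pieces, I would set $T=\sum_j T_j$ and replace each $T_j$ by its representation $\Delta^{m_j+1}f_j$ plus a finite collection of lower-order terms of the form $\Delta^{k}(\text{Radon measure})$, each again compactly supported in $L_j$; since the $L_j$ form a locally finite family, the total collection of summands is locally finite, and on any test function only finitely many contribute, so the sum converges in $\distribs$ to $T$. To get the alternative form $T=\sum\Delta^{m_j}\nu_j$ one simply stops the iteration one step earlier, using Theorem~\ref{structure-zeroorderimpiesmeasure} to land on a Radon measure rather than Theorem~\ref{structure_Gondistribus}'s continuous-function conclusion; conversely $\Delta^{m+1}f=\Delta^{m}(\Delta f)$ and $\Delta f$ is a measure when $f$ is continuous with compact support, so the two statements are interchangeable and it suffices to prove one of them. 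Finally I would remark that local finiteness is automatic from the construction because each summand is supported in one of the $L_j$ and each compact subset of $\domain$ meets only finitely many $L_j$.
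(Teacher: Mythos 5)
The overall strategy (exhaust $\domain$ by finite unions of cells, iterate $\tilde{G}_K$ to lower the order until you reach a measure or a continuous function) is the right one, but your very first step has a genuine gap. You propose to split $T$ itself into compactly supported pieces by setting $T_j\testfn := T\testfn_j$, where $\testfn=\sum_j\testfn_j$ is the decomposition from Theorem~\ref{setting_partitionthm}. But as Remark~\ref{setting_partitionremarktwo} emphasizes, the map $\testfn\mapsto\testfn_j$ is \emph{nonlinear} (the cutoff is built from the jet of $\testfn$ at the relevant boundary points), so $\testfn\mapsto T\testfn_j$ is not a linear functional, and since the decomposition involves choices it is not even well defined; the $T_j$ are therefore not distributions. (This device does work in the proof that compactly supported distributions lie in $\compactdistribs$, but only because there $T$ is supported in the region where $(f+g)_1=f_1+g_1$; a general $T$ sees all of $\testfn_j$.) The paper avoids partitioning $T$ altogether: it \emph{restricts} $T$ minus the previously constructed terms to $\testfnson(K_j)$ --- restriction is linear --- inverts $(-\Delta)^{m_j}$ there via $\tilde{G}_{K_j}^{m_j}$, and subtracts, so that the $j$-th summand is exactly what is needed to kill the residual on $K_j$; local finiteness is then automatic. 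To repair your argument you must replace the partition of $T$ by such a successive-correction scheme.

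Two smaller points. First, the harmonic-correction bookkeeping you anticipate is unnecessary: for $\testfn\in\testfnson(K)$ the Gauss--Green formula shows $\Delta\testfn$ is orthogonal to the harmonic functions on $K$, so $\tilde{G}_K(\Delta\testfn)=-\testfn$ exactly, and hence $\bigl((-\Delta)\tilde{G}_K S\bigr)\testfn=S\bigl(-\tilde{G}_K(\Delta\testfn)\bigr)=S\testfn$ with no discrepancy; note that the adjoint composes in the reverse order, so one applies $\tilde{G}_K$ to $\Delta\testfn$, not $\Delta$ to $\tilde{G}_K\testfn$ as in your formula $T_j(\testfn-\tilde{\testfn})$. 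Second, your closing claim that the two forms of the conclusion are interchangeable because ``$\Delta f$ is a measure when $f$ is continuous with compact support'' is false: the Laplacian of a continuous function is in general only a distribution. The paper obtains the continuous-function form by one extra application of $\tilde{G}_K$, and must then extend the resulting function --- which does not vanish on $\partial K_j$ because of the harmonic correction terms appearing in Theorem~\ref{structure_Gondistribus} --- continuously to $\domain$ by a piecewise harmonic extension, not by zero.
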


\begin{proof}
Suppose first that $\domain$ is non-compact and take $K_{1}, K_{2}, \dotsc$ a sequence of subsets
such that each $K_{j}$ is a connected finite union of cells, $K_{j}$ is contained in the interior
of $K_{j+1}$, and $\cup_{j} K_{j} = \domain$. Such a sequence exists because $\domain$ has a
restricted cellular construction. It will be convenient to also set $K_{0}=\emptyset$. For each $j$
let $\tilde{G}_{j}=\tilde{G}_{K_{j}}$ be the operator from
Lemma~\ref{structure_greensfnforKexists}. The key point of the proof is that for any distribution
$S$, we have $(-\Delta)^{m}\tilde{G}_{j}^{m}S=S$ as elements of $\distribson(K_{j})$ (though not as
elements of $\distribs$). This may be verified by direct computation.  For all
$\testfn\in\testfnson(K_{j})$,
\begin{equation}\label{structure_laplacianinvertsGondistribs}
    -\Delta \tilde{G}_{j} S \testfn
    = (-\tilde{G}_{j}S) (\Delta \testfn)
    = S \bigl(-\tilde{G}_{j} (\Delta \testfn) \bigr)
    = -S \testfn
    \end{equation}
where the final step uses that $-\Delta \tilde{G}_{j} (\Delta \testfn)=\Delta\testfn$ from Lemma
\ref{structure_greensfnforKexists}, so $\tilde{G}_{j} (\Delta \testfn)+\testfn$ is harmonic on $K$
and vanishes on $\partial K$, hence is identically zero.

Fix $T\in\distribs$.  Inductively suppose that for $i=0,\dotsc,j-1$ we have found $m_{i}$ and a
measure $\nu_{i}$ supported on $K_{i}$ such that $T-\sum_{0}^{j-1} \Delta^{m_{i}}\nu_{i}$ vanishes
on $\testfnson(K_{j-1})$.  The base case $j=0$ is trivial because $K_{0}=\emptyset$.  Now
$T-\sum_{0}^{j-1} \Delta^{m_{i}}\nu_{i}$ is in $\distribs$, hence its restriction to
$\testfnson(K_{j})$ is in $\distribson(K_{j})$.  We call this restriction $T_{j}$.  As $K_{j}$ is
compact, $T_{j}$ has finite order $m_{j}$.  It satisfies
$T_{j}=(-\Delta)^{m_{j}}\tilde{G}_{j}^{m_{j}}T_{j}$ by the argument already given, and by
Theorem~\ref{structure_Gondistribus} there is a measure $\nu_{j}$ supported on $K_{j}$ such that
$\nu_{j}=(-1)^{m_{j}}\tilde{G}_{j}^{m_{j}}T_{j}$.  Therefore $T_{j}=\Delta^{m_{j}}\nu_{j}$ in
$\distribson(K_{j})$, which is equivalent to saying that $T-\sum_{0}^{j} \Delta^{m_{i}}\nu_{i}$
vanishes on $\testfnson(K_{j})$.

It is immediate from the definition that $\sum_{j} \Delta^{m_{j}}\nu_{j}$ is a locally finite sum.
If we fix $\testfn\in\testfns$ then there is a $j$ such that $\testfn\in\testfnson(K_{j})$,
whereupon $\bigl(T-\sum_{i=1}^{l} \Delta^{m_{i}}\nu_{i}\bigr)\testfn=0$ for all $l\geq j$.  This
proves that $T=\sum_{j} \Delta^{m_{j}}\nu_{j}$.

The proof that $T=\sum_{j} \Delta^{m_{j}+1}f_{j}$ is similar.  Obviously we wish to use the latter
part of Theorem~\ref{structure_Gondistribus} to go from the measure $\nu_{j}$ to a continuous
function.  The only technicality is that the resulting $f_{j}$ is continuous on $K_{j}$ rather than
on all of $\domain$. We fix this at each step of the induction as follows.  Suppose we have
determined $T_{j}$ as the restriction of $T-\sum_{i=0}^{j-1}\Delta^{m_{i}+1}f_{i}$ and from
Theorem~\ref{structure_Gondistribus} a function $g_{j}$ continuous on $K_{j}$ such that
$\Delta^{m_{j}+1}g_{j}=T_{j}$ in $\distribson(K_{j})$. Let $f_{j}$ be a continuous extension of
$g_{j}$ to $\domain$ obtained by requiring $f_{j}=0$ on $\partial K_{j+1}$ and outside $K_{j+1}$,
and letting it be piecewise harmonic on the cells of the cellular structure on $K_{j+1}\setminus
K_{j}$ (here we use that $K_{j}$ is in the interior of $K_{j+1}$). Clearly
$\Delta^{m_{j}+1}f_{j}=T_{j}$ in $\distribson(K_{j})$, because we $f_{j}=g_{j}$ on $K_{j}$, so
$T-\sum_{i=0}^{j}\Delta^{m_{i}+1}f_{i}$ vanishes on $\testfnson(K_{j})$ and we may complete the
proof as before.

In the case when $\domain$ is compact the proof is somewhat more elementary because we need only a
single set $K=\domain$, but there is a small technical difference due to the fact that the final
equality of \eqref{structure_laplacianinvertsGondistribs} is no longer true.  Indeed,
$\tilde{G}_{j} (\Delta \testfn)+\testfn$ is harmonic by the same reasoning as for the non-compact
case, but now it is the possibly non-zero constant $\tilde{\testfn}=(\mu(K))^{-1}\int \testfn\,
d\mu$. The analogue of \eqref{structure_laplacianinvertsGondistribs} is therefore
\begin{equation*}
    \Delta \tilde{G}_{K} S \testfn
    = S \testfn - \tilde{\testfn} S 1
    \end{equation*}
where $1$ is the constant function.

The distribution $T$ has finite order $m$, and $T\testfn= T1\tilde{\testfn} +  \Delta\tilde{G}_{K}
T \testfn$.  Iterating, we have $\tilde{G}_{K}T\testfn = \tilde{G}_{K}T1 \tilde{\testfn} + \Delta
\tilde{G}_{K}^{2} T \testfn$, and then
\begin{equation*}
    T\testfn = \Delta^{L}\tilde{G}_{K}^{L}T\testfn + \biggl(\sum_{l=1}^{L} a_{l}\biggr) \tilde{\testfn}
    \end{equation*}
where $a_{l}=\tilde{G}_{K}^{l}T 1$.  If $L=m$ then Theorem \ref{structure_Gondistribus} implies the
first term is $\Delta^{m}\nu$, where $\nu$ is a measure, and if $L=m+1$ this term is
$\Delta^{m+1}f$, where $f$ is a continuous function.  In either case the second term is a constant
multiple of the measure $\mu$, or equivalently the constant (hence continuous) function $1$, so the
proof is complete.
\end{proof}

\section{Distributions supported at a point}\label{distribsatpoint_section}

A distribution with support a point $q$ is of finite order by Theorem
\ref{structure_compactimpliesfiniteorder}, and simple modifications of the arguments in Theorem
\ref{structure_structuretheorem} show that it is a power of the Laplacian applied to a measure with
support in a neighborhood of $q$.  The purpose of this section is to identify it more precisely as
a finite sum of certain derivatives of the Dirac mass at $q$; in general these derivatives are not
just powers of the Laplacian, but instead reflect the local structure of harmonic functions at $q$.

Identification of a distribution $T$ of order $m$ supported at $q$ is achieved by describing a
finite number of distributions $T_{j}$, $j=1,\dotsc,J$ with the following property: if
$\testfn\in\testfns$ has $T_{j}\testfn=0$ for all $j$ then for any $\epsilon>0$ there is a
neighborhood $U_{\epsilon}$ of $q$ and a decomposition $\testfn=\testfn_{q}+ (\testfn-\testfn_{q})$
into test functions such that $|\testfn_{q}|_{m}<\epsilon$ and $\testfn-\testfn_{q}$ vanishes on
$U_{\epsilon}$. The reason is that then $T\testfn=T\testfn_{q}$ because of the support condition,
and $|T\testfn_{q}|\leq C\epsilon$, from which we conclude that $T$ vanishes whenever all $T_{j}$
vanish.  It follows from a standard argument (for example, Lemma~3.9 of \cite{Rudin}) that $T$ is a
linear combination of the $T_{j}$.

The argument described in the previous paragraph motivates us to find conditions on a test function
$\testfn$  that ensure we can cut if off outside a small neighborhood of a point $q$ while keeping
the norm $|\testfn_{q}|_{m}$ of the cutoff small.  In order to proceed we will need some notation
for a neighborhood base of $q$.  If $q$ is a non-junction point then it lies in a single copy of
$X$ in the cellular structure, and within this copy there is a unique word $w$ such that
$F_{w}(X)=q$. The cells containing $q$ are then of the form $U_{i}=F_{[w]_{i}}(X)$.  For junction
points the situation is different, as $q$ can be the intersection point of several copies of $X$,
or can be in a single copy but be given by $F_{w_{j}}(X)=q$ for a finite number of words
$w_{1},\dotsc ,w_{J}$. We will not distinguish between these possibilities but will instead make
the convention that the distinct words determining $q$ may be used to distinguish copies of $X$ if
necessary.  With this assumed, let $U_{i,j}=F_{[w_{j}]_{i}}(X)$, and $U_{i}=\cup_{j}U_{i,j}$.

Fix $q$ and let $G_{i,j}$ denote the Dirichlet Green's operator on the cell $U_{i,j}$, omitting the
$j$ index if $q$ is a non-junction point.  If $\testfn\in\testfns$ we can then decompose $\testfn$
on $U_{i,j}$ into
\begin{equation*}
    \testfn\evald{U_{i,j}} = H_{i,j}\testfn + G_{i,j} \Delta\testfn
    \end{equation*}
where $H_{i,j}\testfn$ is the (unique) harmonic function on $U_{i,j}$ whose values on $\partial
U_{i,j}$ coincide with those of $\testfn\evald{U_{i,j}}$. By induction we obtain
\begin{equation}\label{distribsatapoint_localtaylor}
    \testfn\evald{U_{i,j}} =  \sum_{l=0}^{m-1} G_{i,j}^{l} H_{i,j} \Delta^{l} \testfn \ +
    G_{i,j}^{m}\Delta^{m}\testfn\evald{U_{i,j}}
    \end{equation}
and write $h_{i,j}^{l}= G_{i,j}^{l} H_{i,j} \Delta^{l}\testfn$.

\begin{lemma}\label{distribsatapoint_estsforhijl}
In the decomposition \eqref{distribsatapoint_localtaylor} we have at each $x\in U_{i,j}$ and
$p\in\partial U_{i,j}$ that $\Delta^{k}h_{i,j}^{l}(x)=\partial_{n}\Delta^{k}h_{i,j}^{l}(p)=0$ if
$k>l$, while for $k\leq l$,
\begin{align*}
    \bigl|\Delta^{k}h_{i,j}^{l}\bigr| &\leq c(k,l) r_{[w_{j}]_{i}}^{l-k} \mu_{[w_{j}]_{i}}^{l-k}
        \bigl\|H_{i,j}\Delta^{l}\testfn\bigr\|_{L^{\infty}(U_{i,j})} \\
    \bigl|\partial_{n}\Delta^{l}h_{i,j}^{l}(p) \bigr| &\leq c(k,l)  r_{[w_{j}]_{i}}^{l-k-1}
    \mu_{[w_{j}]_{i}}^{l-k} \bigl\|H_{i,j}\Delta^{l}\testfn \bigr\|_{L^{\infty}(U_{i,j})}
    \end{align*}
\end{lemma}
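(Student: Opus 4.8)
The plan is to exploit the scaling behaviour of the Green's operator $G_{i,j}$ on the cell $U_{i,j}=F_{[w_j]_i}(X)$ together with the fact that the Laplacian on $X$ scales by the factor $(r_w\mu_w)^{-1}$ under the map $F_w$. Writing $\Psi = H_{i,j}\Delta^l\testfn$ for the harmonic function appearing in $h_{i,j}^l = G_{i,j}^l\Psi$, I first want to reduce everything to a single scale-free estimate on $X$ itself. The key algebraic input is that if $u\circ F_w$ satisfies $-\Delta_X(u\circ F_w) = v\circ F_w$ on $X$, then on the cell $F_w(X)$ one has $-\Delta u = (r_w\mu_w)^{-1} v$, so that the Green's operator on the cell $F_w(X)$ satisfies $G_{F_w(X)} v = r_w\mu_w\, (G_X(v\circ F_w))\circ F_w^{-1}$, and normal derivatives at a boundary point pick up an extra factor of $r_w^{-1}$ relative to those of $G_X$. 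Iterating $l$ times gives that $h_{i,j}^l$ corresponds, under $F_{[w_j]_i}$, to $(r_{[w_j]_i}\mu_{[w_j]_i})^l\, G_X^l(\Psi\circ F_{[w_j]_i})$ on $X$.

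Next I would record the vanishing statement: since $\Psi$ is harmonic, $\Delta G_X \Psi = -\Psi$ and $\Delta^{k}G_X^l\Psi$ is, up to sign, $G_X^{l-k}\Psi$ for $k\le l$ and is $0$ for $k>l$ (and likewise its normal derivatives vanish for $k>l$, since $G_X^{l-k}\Psi \equiv 0$ then). Pulling back through the scaling identity, $\Delta^k h_{i,j}^l$ corresponds to $\pm (r_{[w_j]_i}\mu_{[w_j]_i})^{l-k} G_X^{l-k}(\Psi\circ F_{[w_j]_i})$, which is where the factor $r_{[w_j]_i}^{l-k}\mu_{[w_j]_i}^{l-k}$ comes from, and the normal derivative at $p\in\partial U_{i,j}$ picks up one further $r_{[w_j]_i}^{-1}$, giving the exponent $l-k-1$ in the second inequality. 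What remains is a purely scale-invariant bound on $X$: for $\Phi\in C(X)$ harmonic (or more generally $L^\infty$), $\|\Delta^k G_X^{l-k}\Phi\|_\infty \le c\,\|\Phi\|_\infty$ and $|\partial_n \Delta^{?}G_X^{l-k}\Phi(p)|\le c\,\|\Phi\|_\infty$ with $c = c(k,l)$ depending only on the harmonic structure. The first follows because $G_X$ is bounded on $L^\infty$ (Lemma~\ref{setting_greensfnexsts} gives $G_X\nu$ continuous, and boundedness on $L^\infty$ is the statement that $\sup_x\int g(x,y)\,d\mu(y)<\infty$, true since $g$ is continuous on the compact $X\times X$). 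The normal-derivative bound is the standard estimate — of the type referred to in Remark~\ref{setting_partitionremarktwo} — controlling $\partial_n\Delta^{k}u(p)$ by $\|\Delta^j u\|_\infty$, $j\le k+1$, over a neighbourhood of $p$; applied to $u=G_X^{l-k}\Phi$ it gives the result. Finally $\|\Psi\circ F_{[w_j]_i}\|_{L^\infty(X)} = \|H_{i,j}\Delta^l\testfn\|_{L^\infty(U_{i,j})}$ by the maximum principle, so the right-hand sides match exactly.

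The main obstacle I expect is bookkeeping the interplay of the two scaling factors $r_w$ and $\mu_w$ through the $l$-fold iteration, and in particular getting the normal-derivative exponent right: one must be careful that each application of $G$ on the cell contributes $r_w\mu_w$ to function values but that the normal derivative is a \emph{limit of energies} $\lim_m E(u,v_m)$ rather than a pointwise Laplacian, so it carries only one factor of $r_w$ less than the corresponding function-value scaling — hence $r_{[w_j]_i}^{l-k-1}\mu_{[w_j]_i}^{l-k}$ and not $r_{[w_j]_i}^{l-k}\mu_{[w_j]_i}^{l-k}$ or $r_{[w_j]_i}^{l-k-1}\mu_{[w_j]_i}^{l-k-1}$. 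A clean way to avoid slips is to verify the base case $l=1$ (a single $G_{i,j}$ applied to a harmonic function) directly from the localized Gauss--Green formula and the definition of $\partial_n^w$ given in Section~\ref{setting_section}, and then induct on $l$, at each step peeling off one $G_{i,j}$ and invoking the $l=1$ computation for the scaling of an additional Green's operator applied to $\Delta^{l-k}$ of what remains.
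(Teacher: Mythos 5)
Your proposal is correct and follows essentially the same route as the paper, whose entire proof is the two observations you elaborate: the $k>l$ cases follow because $\Delta^{l}h_{i,j}^{l}$ is harmonic, and the $k\leq l$ estimates follow because the Laplacian scales by $r_{w}\mu_{w}$ on $F_{w}(X)$ while the normal derivative scales by $r_{w}$. Your careful tracking of the Green's operator scaling and the extra $r_{w}^{-1}$ for the normal derivative is exactly the intended argument, just written out in full.
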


\begin{proof}
The conclusions for the cases $k>l$ are immediate from the fact that $\Delta^{l}h_{i,j}^{l}$ is
harmonic, while the remaining estimates are derived from the fact that the Laplacian scales by
$r_{w}\mu_{w}$ on a cell $F_{w}(X)$ while the normal derivative scales by $r_{w}$.
\end{proof}

Our purpose in making the above definitions is that estimates on the functions $h_{i,j}^{l}$ are
precisely what is needed to ensure we can cut off a smooth function in the manner previously
described.

\begin{theorem}\label{distribsatapoint_decayofHijtestfnimpliesgoodextension}
If $\testfn$ is such that $\Delta^{m}\testfn(q)=0$ and
$\|H_{i,j}\Delta^{l}\testfn\|_{L^{\infty}(U_{i,j})}=o\Bigl(r_{[w_{j}]_{i}}\mu_{[w_{j}]_{i}}\Bigr)^{m-l}$
for $0\leq l\leq m-1$ as $i\rightarrow\infty$, then for all $\epsilon>0$ there is $\testfn_{q}$
such that $|\testfn_{q}|_{m}\leq \epsilon$ and $\testfn-\testfn_{q}$ is supported away from $q$.
\end{theorem}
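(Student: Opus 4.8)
The plan is to build $\testfn_{q}$ by hand: let it agree with $\testfn$ on a small neighborhood $U_{i}$ of $q$, let it be supported in the slightly larger $U_{i-1}$, and define it on the intervening cells by gluing in the smooth functions furnished by Theorem~\ref{setting_Borelthm}. Precisely, for $i$ large (to be fixed at the end) I set $\testfn_{q}=\testfn$ on $U_{i}$ and $\testfn_{q}=0$ off $U_{i-1}$, and on each level--$i$ cell $C$ contained in $U_{i-1}$ but distinct from every $U_{i,j}$ I take the restriction of $\testfn_{q}$ to $C$ to be the function given by Theorem~\ref{setting_Borelthm} whose jet at each boundary point of $C$ meeting $U_{i}$ or another such cell coincides with the jet of $\testfn$ there, and whose jet vanishes at every boundary point of $C$ lying on the outer boundary of $U_{i-1}$. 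The first thing to check is that $\testfn_{q}$ is a test function. At the junction points interior to $U_{i-1}$ every piece carries the jet of the smooth function $\testfn$, so the matching conditions for every $\Delta^{k}$ hold there simply because they hold for $\testfn$; at the points of the outer boundary of $U_{i-1}$ the function $\testfn_{q}$ vanishes to infinite order from inside, so extension by zero is smooth and the matching conditions are vacuous. By the same token $\testfn-\testfn_{q}$ is a test function, and it vanishes on the neighborhood $U_{i}$ of $q$, hence is supported away from $q$. It remains only to make $|\testfn_{q}|_{m}\leq\epsilon$.

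Since $\testfn_{q}=\testfn$ on $U_{i}$, I first show $\sup_{k\leq m}\|\Delta^{k}\testfn\|_{L^{\infty}(U_{i})}\to0$ as $i\to\infty$. For $k=m$ this is immediate from continuity of $\Delta^{m}\testfn$ together with $\Delta^{m}\testfn(q)=0$. For $k<m$ I use the decomposition \eqref{distribsatapoint_localtaylor} on each $U_{i,j}$: by Lemma~\ref{distribsatapoint_estsforhijl} and the hypothesis $\|H_{i,j}\Delta^{l}\testfn\|_{L^{\infty}(U_{i,j})}=o\bigl((r_{[w_{j}]_{i}}\mu_{[w_{j}]_{i}})^{m-l}\bigr)$ one gets $\|\Delta^{k}h_{i,j}^{l}\|_{\infty}=o\bigl((r_{[w_{j}]_{i}}\mu_{[w_{j}]_{i}})^{m-k}\bigr)$ for $k\leq l$ and $\Delta^{k}h_{i,j}^{l}=0$ for $k>l$, while $\Delta^{k}G_{i,j}^{m}\Delta^{m}\testfn=(-1)^{k}G_{i,j}^{m-k}\Delta^{m}\testfn$ has sup norm at most $C(r_{[w_{j}]_{i}}\mu_{[w_{j}]_{i}})^{m-k}\|\Delta^{m}\testfn\|_{L^{\infty}(U_{i,j})}$ by the scaling of the Green's operator. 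Since $r_{[w_{j}]_{i}}\mu_{[w_{j}]_{i}}\leq1$ and $\|\Delta^{m}\testfn\|_{L^{\infty}(U_{i,j})}\to0$, summing the finitely many $l$ and $j$ gives the claim.

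The estimate on the glued cells is the heart of the matter. Fix one such $C=F_{v}(X)$; its scale $r_{v}\mu_{v}$ is comparable, up to the bounded factor contributed by the single extra letter, to $r_{[w_{j}]_{i}}\mu_{[w_{j}]_{i}}$ for the relevant $j$. Arguing as in the previous paragraph, but now also invoking the normal--derivative bounds of Lemma~\ref{distribsatapoint_estsforhijl} together with the standard scaling behaviour of the Green's operator and of the normal derivative (the type of estimate alluded to in Remark~\ref{setting_partitionremarktwo}), I obtain at each boundary point $p$ of $C$ at which the jet of $\testfn$ is prescribed that $|\Delta^{l}\testfn(p)|=o\bigl((r_{v}\mu_{v})^{m-l}\bigr)$ for $0\leq l\leq m$ and $|\partial_{n}\Delta^{l}\testfn(p)|=o\bigl(r_{v}^{m-l-1}\mu_{v}^{m-l}\bigr)$ for $0\leq l\leq m-1$. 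These are exactly the decay rates that make the parenthesised expression in \eqref{setting_jetestimate}, taken with $L=m$, go to zero after multiplication by $(r_{v}\mu_{v})^{-k}$ for every $k\leq m$: the value terms are $(r_{v}\mu_{v})^{l-k}|\Delta^{l}\testfn(p)|=o\bigl((r_{v}\mu_{v})^{m-k}\bigr)$, and the normal--derivative terms are $r_{v}^{l+1-k}\mu_{v}^{l-k}|\partial_{n}\Delta^{l}\testfn(p)|=o\bigl((r_{v}\mu_{v})^{m-k}\bigr)$ as well, since $m-k\geq0$. Hence, choosing the free error $\epsilon$ in Theorem~\ref{setting_Borelthm} small and $i$ large, $\sup_{k\leq m}\|\Delta^{k}\testfn_{q}\|_{L^{\infty}(C)}$ is as small as desired; as there are only finitely many such cells $C$, combining this with the previous paragraph gives $|\testfn_{q}|_{m}\leq\epsilon$.

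I expect the main obstacle to be organizational rather than analytic: one must verify that the cell--by--cell assignment of jets around $q$ is globally consistent and produces a genuine test function, which requires keeping track of which boundary points of the tiny cells near $q$ are junction points interior to $U_{i-1}$ and which lie on its outer boundary --- a distinction that behaves somewhat differently depending on whether $q$ is a junction point, which is what the multi--word convention $U_{i}=\bigcup_{j}U_{i,j}$ is there to handle. The single genuinely quantitative subtlety is the precise match between the decay rates forced by the hypotheses (through \eqref{distribsatapoint_localtaylor} and Lemma~\ref{distribsatapoint_estsforhijl}) and the homogeneities of the two sums in \eqref{setting_jetestimate}: the normal--derivative sum there carries the weight $r_{v}^{l+1}\mu_{v}^{l}$ rather than $(r_{v}\mu_{v})^{l+1}$, and the one missing power of $r_{v}$ is exactly compensated by the one missing power of $r_{[w_{j}]_{i}}$ in the normal--derivative estimate of Lemma~\ref{distribsatapoint_estsforhijl}.
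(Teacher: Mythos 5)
Your overall strategy --- glue Borel-theorem pieces onto the cells surrounding $U_{i}$ and control their size via \eqref{setting_jetestimate} using the jet decay forced by \eqref{distribsatapoint_localtaylor} and Lemma~\ref{distribsatapoint_estsforhijl} --- is the paper's, and your quantitative bookkeeping (including the remark that the missing power of $r_{v}$ in the normal-derivative sum of \eqref{setting_jetestimate} is compensated by the missing power of $r_{[w_{j}]_{i}}$ in the normal-derivative estimate) is correct. But the geometric construction has a genuine gap: $U_{i}$ is not contained in the interior of $U_{i-1}$, so there is no transition region at every boundary point of $U_{i}$. Concretely, $U_{i,j}=F_{[w_{j}]_{i}}(X)$ and its parent $U_{i-1,j}=F_{[w_{j}]_{i-1}}(X)$ typically share a boundary point $p=F_{[w_{j}]_{i-1}}(q_{k})$, namely whenever the $i$-th letter $a$ of $w_{j}$ satisfies $F_{a}(q_{k})=q_{k}$ (on the Sierpinski gasket this happens at exactly one vertex of every cell, since $F_{a}(q_{a})=q_{a}$). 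At such a $p$ the only level-$i$ cell of $U_{i-1}$ meeting $p$ is $U_{i,j}$ itself, so your $\testfn_{q}$ equals $\testfn$ on one side of $p$ and $0$ on the cells of the fractafold adjacent to $p$ from outside $U_{i-1}$; unless the entire jet of $\testfn$ at $p$ happens to vanish, this function is not even continuous, let alone smooth. The paper's construction avoids this by not confining the support to $U_{i-1}$: at \emph{each} boundary point $p$ of $U_{i,j}$ it adjoins finitely many cells of comparable scale, chosen only so that they meet $U_{i,j}$ at $p$ alone and, together with $U_{i,j}$, cover a neighborhood of $p$.

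A second, smaller issue is your prescription of ``the jet of $\testfn$'' on the transition cells $C$: this includes the normal derivatives $\partial_{n}^{C}\Delta^{k}\testfn(p)$ taken from within cells $C$ that do not contain $q$, and Lemma~\ref{distribsatapoint_estsforhijl} controls only the normal derivatives taken from within the cells $U_{i,j}$. You could recover the claimed decay $o\bigl(r_{v}^{m-k-1}\mu_{v}^{m-k}\bigr)$ from the standard bound $|\partial_{n}\Delta^{k}u(p)|\leq C\bigl(r_{v}^{-1}\|\Delta^{k}u\|_{L^{\infty}(C)}+\mu_{v}\|\Delta^{k+1}u\|_{L^{\infty}(C)}\bigr)$ applied with the level-$(i-1)$ instance of the hypothesis, but this is an extra argument you only gesture at. The paper sidesteps it entirely: the adjoined cell at $p$ is assigned the value jet $\Delta^{k}\testfn(p)$ but the normal derivative $-(1/n_{p})\,\partial_{n}^{U_{i,j}}\Delta^{k}\testfn(p)$, which simultaneously enforces the matching condition and keeps everything controlled by the one normal derivative that Lemma~\ref{distribsatapoint_estsforhijl} does bound.
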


\begin{proof}
We begin by constructing a neighborhood of $U_{i,j}$ by adjoining cells at each of the points
$p\in\partial U_{i,j}$.  At each $p$ we require finitely many such cells, and we choose them so as
to intersect $U_{i,j}$ only at $p$.  It will also be convenient to assume that these cells have
comparable scale to the $U_{i,j}$, in the sense that they have the form $F_{\tilde{w}}(X)$ for some
word with length $|\tilde{W}|\leq i+i_{0}$ for some constant $i_{0}$.  Let $K$ be one of the cells
adjoined at a point $p$, and let $n_{p}$ be the number of cells adjoined at $p$.  Using Theorem
\ref{setting_Borelthm} we define a smooth function $f_{K}$ on $K$ with jet
$\Delta^{k}f_{K}(p)=\Delta^{k}\testfn(p)$ and $\partial_{n}^{K}\Delta^{k}f_{K}(p) = -(1/n_{p})
\partial_{n}^{U_{i,j}}\Delta^{k}\testfn(p)$, and with vanishing jets at the other boundary points
of $K$.  Having done this for the set $\mathcal{K}$ of adjoined cells we see from the matching
conditions for the Laplacian that
\begin{equation*}
    \testfn_{q}(x) = \begin{cases}
        \testfn(x) &\quad\text{for $x\in U_{i}$}\\
        \sum_{K\in\mathcal{K}} f_{K} &\quad\text{for $x\in\bigcup_{K\in\mathcal{K}} K$}\\
        0 &\quad\text{otherwise}
        \end{cases}
    \end{equation*}
defines a test function with the property that $\testfn-\testfn_{q}=0$ on $U_{i}$.

We must estimate $|\testfn_{q}|_{m}$.   There is an easy estimate for $\Delta^{k}\testfn$ for
$k\leq m$ from Lemma \ref{distribsatapoint_estsforhijl}:
\begin{align}
    |\Delta^{k}\testfn|
    &\leq \sum_{l=0}^{m-1} \bigl| \Delta^{k} h_{i,j}^{l} \bigr|  + \bigl|G_{i,j}^{m-k}\Delta^{m}\testfn \bigr| \notag\\
    &\leq \sum_{l=k}^{m-1} c(k,l)  r_{[w_{j}]_{i}}^{l-k} \mu_{[w_{j}]_{i}}^{l-k} \bigl\| H_{i,j}\Delta^{l}\testfn
    \bigr\|_{L^{\infty}(U_{i,j})}
        + c(k,m)r_{[w_{j}]_{i}}^{m-k} \mu_{[w_{j}]_{i}}^{m-k} \bigl\|\Delta^{m}\testfn
    \bigr\|_{L^{\infty}(U_{i,j})}\notag\\
    &\leq \sum_{l=k}^{m} c(k,l) o\bigl( r_{[w_{j}]_{i}}^{m-k} \mu_{[w_{j}]_{i}}^{m-k} \bigr) \notag\\
    &= o\bigl( r_{[w_{j}]_{i}}^{m-k} \mu_{[w_{j}]_{i}}^{m-k} \bigr)
    \label{distribsatapoint_testfnbyhijl}
    \end{align}
where we used $\Delta^{m}\testfn(q)=0$ to obtain that $\Delta^{m}\testfn(q)=o(1)$ when
$i\rightarrow\infty$. As a result we have good control of $|\testfn_{q}|_{m}$ on $U_{i,j}$.

A similar calculation allows us to estimate the size of the normal derivative
$|\partial_{n}\Delta^{k}\testfn(p)|$ at any of the points $p$ where pieces $f_{K}$ are attached. We
compute
\begin{align}
    |\partial_{n}\Delta^{k}\testfn(p)|
    &\leq  \sum_{l=0}^{m-1} \bigl| \partial_{n}\Delta^{k} h_{i,j}^{l} \bigr|
        + \bigl|\partial_{n}G_{i,j}^{m-k}\Delta^{m}\testfn \bigr| \notag\\
    &\leq \sum_{l=k}^{m-1}  c(k,l)  r_{[w_{j}]_{i}}^{l-k-1} \mu_{[w_{j}]_{i}}^{l-k} \bigl\| H_{i,j}\Delta^{l}\testfn
    \bigr\|_{L^{\infty}(U_{i,j})} + c(k,m)r_{[w_{j}]_{i}}^{m-k-1} \mu_{[w_{j}]_{i}}^{m-k}
    \bigl\|\Delta^{m}\testfn \bigr\|_{L^{\infty}(U_{i,j})}\notag\\
    &\leq \sum_{l=k}^{m} c(k,l) o\bigl( r_{[w_{j}]_{i}}^{m-k-1} \mu_{[w_{j}]_{i}}^{m-k} \bigr) \notag\\
    &= o\bigl( r_{[w_{j}]_{i}}^{m-k-1} \mu_{[w_{j}]_{i}}^{m-k} \bigr).
    \label{distribsatapoint_normalderivoftestfnbyhijl}
    \end{align}

Fix $K\in\mathcal{K}$ and examine $f_{K}$.  By assumption $K=F_{\tilde{w}}(X)$, so by
\eqref{setting_jetestimate} with the fixed number of jet terms $m$ we know
\begin{equation}\label{distribsatapoint_estimateofjetoffK}
    \|\Delta^{k}f_{K}\|_{\infty}
    \leq C(k)
    \biggl(\sum_{k'=0}^{m} r_{\tilde{w}}^{k'-k}\mu_{\tilde{w}}^{k'-k} \bigl|\Delta^{k'}\testfn(p)\bigr|
    + \sum_{k'=0}^{m-1}
        r_{\tilde{w}}^{k'+1-k}\mu_{\tilde{w}}^{k'-k} \bigl|\partial_{n}^{U_{i,j}}\Delta^{k'}\testfn(p) \bigr|\biggr)
    +\epsilon
    \end{equation}
provided $0\leq k\leq m$.  The terms involving $\bigl|\Delta^{k'}\testfn(p)\bigr|$ may be replaced
by the estimate \eqref{distribsatapoint_testfnbyhijl}.  For the terms involving normal derivatives
we use that $\partial_{n}\Delta^{k'}h_{i,j}^{k'}(p)=(1/n_{p})\partial_{n}\Delta^{k'}\testfn$ and
\eqref{distribsatapoint_normalderivoftestfnbyhijl}. The result is
\begin{align}
    \|\Delta^{k}f_{K}\|_{\infty}
    &\leq C(k)
    \biggl( \sum_{k'=0}^{m} o\bigl( r_{\tilde{w}}^{k'-k}\mu_{\tilde{w}}^{k'-k}  r_{[w_{j}]_{i}}^{m-k'}
         \mu_{[w_{j}]_{i}}^{m-k'} \bigr)
    + \sum_{k'=0}^{m-1} o\bigl( r_{\tilde{w}}^{k'+1-k}\mu_{\tilde{w}}^{k'-k}
     r_{[w_{j}]_{i}}^{m-k'-1} \mu_{[w_{j}]_{i}}^{m-k'} \bigr)
      \biggr) +\epsilon\notag\\
    &\leq o\bigl( r_{[w_{j}]_{i}}^{m} r_{\tilde{w}}^{-k} \mu_{[w_{j}]_{i}}^{m} \mu_{\tilde{w}}^{-k}
    \bigr) \sum_{k'=0}^{m} \biggl( \frac{r_{\tilde{w}}\mu_{\tilde{w}}}{r_{[w_{j}]_{i}}\mu_{[w_{j}]_{i}}}\biggr)^{k'}
        \biggl( 1 + \frac{r_{\tilde{w}}}{r_{[w_{j}]_{i}}} \biggr) +\epsilon.
        \label{distribsatapoint_comparablesizedcellstep}
    \end{align}
However, $\tilde{w}$ and $[w_{j}]_{i}$ have comparable length and are adjacent, so they differ only
in the final $i_{0}$ letters and therefore the ratios $r_{\tilde{w}}r_{[w_{j}]_{i}}^{-1}$ and
$\mu_{\tilde{w}}\mu_{[w_{j}]_{i}}^{-1}$ are bounded by constants depending only on $i_{0}$ and the
harmonic structure and measure.  It follows that
\begin{equation*}
    \|\Delta^{k}f_{K}\|_{\infty}
    \leq C(m,k,r,\mu) o\bigl(  r_{[w_{j}]_{i-1}} \mu_{[w_{j}]_{i-1}} \bigr)^{m-k}
    \end{equation*}
and combining this estimate for each $K\in\mathcal{K}$ with \eqref{distribsatapoint_testfnbyhijl}
proves that
\begin{equation*}
    \| \Delta^{k} \testfn_{q} \|_{L^{\infty}} = o\bigl(  r_{[w_{j}]_{i-1}} \mu_{[w_{j}]_{i-1}} \bigr)^{m-k}
    \quad \text{as $i\rightarrow\infty$}
    \end{equation*}
for $0\leq k\leq m$. In particular we can make $|\testfn_{q}|_{m}<\epsilon$ by making $i$
sufficiently large.
\end{proof}

Theorem \ref{distribsatapoint_decayofHijtestfnimpliesgoodextension} suggests that the natural
candidates for the distributions supported at $q$ are appropriately scaled limits of the maps
$\testfn\mapsto H_{i,j}\Delta^{l}\testfn$ as $i\rightarrow\infty$.  The question of how to take
such limits has been considered by a number of authors
\cite{MR1025071,MR1761365,MR2333477,pelanderteplyaev07,MR2417415}, and is generally quite
complicated. At the heart of this complexity is the fact that the local behavior of smooth
functions in a neighborhood of a point $q$ depends strongly (in fact almost {\em entirely}) on the
point $q$ rather than the function itself.  This property -- often called ``geography is destiny''
-- contrasts sharply with the Euclidean situation where neighborhoods of points are analytically
indistinguishable.  Its immediate implication for the structure of distributions with point support
is that the nature of these distributions must depend on the point in question.  In order of
increasing complexity we consider three cases: junction points, periodic points and a class of
measure-theoretically generic points.

\subsection*{Junction Points}

As before, the junction point $q$ is $q=F_{w_j}(X)$ for words $w_1,\dotsc w_J$, each of which
terminates with an infinite repetition of a single letter. The distributions corresponding to
approaching $q$ through the sequence $[w_j]_{i}$ may be understood by examining the eigenstructure
of the harmonic extension matrices $A_{i_{j}}$, the definition of which appeared in the Harmonic
Functions part of Section~\ref{setting_section}.

For notational convenience we temporarily fix one contraction $F$, let $A$ be the corresponding
harmonic extension matrix, and suppose $q$ is $\cap F^{i}(X)$. Let $r$ and $\mu$ be the resistance
and measure scalings of $F$, and $\gamma_1,\dotsc,\gamma_{n}$ be the eigenvalues of $A$, ordered by
decreasing absolute value, with eigenspaces $E_{1},\dotsc,E_{n}$. Of course $\gamma_{1}=1$ and
$E_{1}$ is the constant functions. Let $H_{i}u$ be the harmonic function on $F^{i}(X)$ that equals
$u$ on $\partial F^{i}(X)$, and $P_{s}$ be the projection onto $E_{s}$.  In what follows, $G$ is
the Dirichlet Green's operator on $X$ and $G_{i}$ is the same on $F^{i}(X)$.

\begin{definition}\label{distribsatapoint_defnofdsatjunctionpt}
Inductively define derivatives $d_{s}$ and differentials $D^{k}$, $k\geq1$ at the point $q$ by
setting $D^{0}u=u(q)$, and for each $s$ such that $(r\mu)^{k}< \gamma_{s} \leq (r\mu)^{k-1}$
\begin{equation}
    d_{s}u = \lim_{i\rightarrow\infty} \gamma_{s}^{-i} P_{s} H_{i} \bigl( u - G
    D^{k-1} \Delta u  \bigr)
    \end{equation}
if these limits exist.  Note that $d_{s}$ always exists for harmonic functions as the sequence is
constant in this case. Provided the necessary $d_{s}u$ exist we then let
\begin{equation}\label{defnofdifferential}
    D^{k}u=h+ G D^{k-1}\Delta u
    \end{equation}
where $h$ is the unique harmonic function on $X$ with $d_{s}h=d_{s}u$ for those $s$ with
$\gamma_{s}>(r\mu)^{k}$ and $d_{s}h=0$ for all other $s$.  We will also make use of $\bar{D}^{k}u$,
where $\bar{D}^{0}=u(0)$ and
\begin{equation*}
    \bar{D}^{k}u=\bar{h}+ G \bar{D}^{k-1}\Delta u
    \end{equation*}
where $\bar{h}$ is harmonic on $X$ with $d_{s}\bar{h}=d_{s}u$ for those $s$ with
$\gamma_{s}\geq(r\mu)^{k}$ and $d_{s}\bar{h}=0$ for all other $s$.
\end{definition}

\begin{lemma}\label{distribsatapoint_existenceofdsatjunctionpoints}
For $u\in\dom(\Delta^{k})$ and each $s$ with $\gamma_{s}>(r\mu)^{k}$ the derivative $d_{s}u$
exists, and
\begin{equation}\label{esttoprovedsuisorderkdistrib}
    |d_{s}u |\leq C(k)\sum_{l=0}^{k} \|\Delta^{l} u\|_{\infty}.
    \end{equation}
The differential satisfies
\begin{equation}\label{differentialestimate}
    \bigl\| u- D^{k}u \bigr\|_{L^{\infty}(F^{i}(X))} \leq  C(k) i^{k}  (r\mu)^{ki}
    \|\Delta^{k}u\|_{\infty},
    \end{equation}
and if we further suppose that $\Delta^{k}u\in\dom(E)$ then
\begin{equation}\label{differentialestimatetwo}
    \bigl\| u- \bar{D}^{k}u \bigr\|_{L^{\infty}(F^{i}(X))}
    \leq C(k)  (r\mu)^{ki} r^{i/2} E^{1/2}(\Delta^{k}u).
    \end{equation}
\end{lemma}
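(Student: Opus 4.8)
The plan is to prove all three estimates simultaneously by induction on $k$. The case $k=0$ is immediate, since $D^0u=\bar D^0u=u(q)$: then \eqref{differentialestimate} is the trivial bound by $2\|u\|_\infty$, while \eqref{differentialestimatetwo} is exactly the first (energy) inequality of \eqref{setting_resistmetricestimate} applied on the cell $F^i(X)$, whose resistance diameter is $O(r^i)$. The inductive step rests on three scaling facts together with \eqref{distribsatapoint_localtaylor}. First, on $F^i(X)$ the Laplacian scales by $(r\mu)^{-i}$, so the Dirichlet Green operator satisfies $\|G_i^l g\|_{L^\infty(F^i(X))}\le C(l)(r\mu)^{li}\|g\|_{L^\infty(F^i(X))}$. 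Second, for any function $g$ on $F^i(X)$ the boundary vector of $H_ig$ (the restriction of $g$ to $F^i(V_0)$, read in $\mathbb{R}^{V_0}$ via $F^i$) is mapped to that of $H_{i+1}(H_ig)$ by the extension matrix $A$; hence for a \emph{harmonic} function $\gamma_s^{-i}P_s$ of the boundary vector of $H_i$ is independent of $i$ (in the diagonalisable case; otherwise one uses $A^i|_{E_s}$ and absorbs a polynomial-in-$i$ factor). Third, from the inductive hypothesis, \eqref{esttoprovedsuisorderkdistrib} and \eqref{differentialestimate} one gets the crude bound $\|D^{j}g\|_\infty\le C(j)\sum_{l=0}^j\|\Delta^lg\|_\infty$ for $g\in\dom(\Delta^j)$, which is used freely below.

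\emph{Existence of $d_su$ and \eqref{esttoprovedsuisorderkdistrib}.} Fix $s$ with $(r\mu)^k<\gamma_s\le(r\mu)^{k-1}$ and set $v=u-GD^{k-1}\Delta u$; this is legitimate because $\Delta u\in\dom(\Delta^{k-1})$, for which the inductive hypothesis provides the $d_t(\Delta u)$ with $\gamma_t>(r\mu)^{k-1}$ and hence $D^{k-1}\Delta u$. Expanding $v$ on $F^i(X)$ by \eqref{distribsatapoint_localtaylor} and restricting to $F^{i+1}(V_0)$ shows that the boundary vectors $b_i$ of the $H_iv$ obey the recursion $b_{i+1}=Ab_i+e_i$, where $\|e_i\|\le\|G_i\Delta v\|_{L^\infty(F^i(X))}\le C(r\mu)^i\|\Delta v\|_{L^\infty(F^i(X))}$. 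Since $\Delta v=\Delta u-D^{k-1}\Delta u$, the inductive estimate \eqref{differentialestimate} gives $\|\Delta v\|_{L^\infty(F^i(X))}\le Ci^{k-1}(r\mu)^{(k-1)i}\|\Delta^ku\|_\infty$, so $\|e_i\|\le Ci^{k-1}(r\mu)^{ki}\|\Delta^ku\|_\infty$. Applying $P_s$ and dividing by $\gamma_s^i$ turns the recursion into a telescoping identity whose increments are summable, because $\sum_i i^{k-1}\bigl((r\mu)^k/\gamma_s\bigr)^i<\infty$ whenever $\gamma_s>(r\mu)^k$; this gives the existence of $d_su$, and summing the telescoped series together with $\|v\|_\infty\le C\sum_{l=0}^k\|\Delta^lu\|_\infty$ yields \eqref{esttoprovedsuisorderkdistrib}.

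\emph{The differential estimate \eqref{differentialestimate}.} Once the required $d_s$ exist, $D^ku=h+GD^{k-1}\Delta u$ is defined, and unwinding the recursion gives $\Delta^j D^ku=D^{k-j}\Delta^ju$, so $\Delta^j(u-D^ku)=\Delta^ju-D^{k-j}\Delta^ju$ for $1\le j\le k$ (reading $\Delta^ku-(\Delta^ku)(q)$ when $j=k$), which the inductive \eqref{differentialestimate} bounds on $F^i(X)$ by $Ci^{k-j}(r\mu)^{(k-j)i}\|\Delta^ku\|_\infty$; moreover $h$ is chosen precisely so that $w:=u-D^ku=v-h$ carries no $d_s$-content for $\gamma_s>(r\mu)^k$. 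Expanding $w$ on $F^i(X)$ via \eqref{distribsatapoint_localtaylor}, the remainder $G_i^k\Delta^kw$ is $O\bigl((r\mu)^{ki}\|\Delta^ku\|_\infty\bigr)$ by the Green-operator scaling, while for each harmonic slab $G_i^lH_i\Delta^lw$ ($0\le l\le k-1$) one reruns the recursion for the boundary vectors of $H_i\Delta^lw$, whose increments are controlled by $\|\Delta^{l+1}w\|_{L^\infty(F^i(X))}\le Ci^{k-l-1}(r\mu)^{(k-l-1)i}\|\Delta^ku\|_\infty$; combining this with the vanishing of the $d_s$ for $\gamma_s>(r\mu)^{k-l}$, the surviving $P_s$-components are $O\bigl(i^{k-1}(r\mu)^{(k-l)i}\|\Delta^ku\|_\infty\bigr)$, and multiplying by the $(r\mu)^{li}$ gained from $G_i^l$ and summing over the finitely many $s$ and over $l$ yields \eqref{differentialestimate}. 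The factor $i^k$ is a deliberately generous bound on the polynomial-in-$i$ losses, which come from eigenvalues close to a band endpoint $(r\mu)^{k-l}$, from the telescoped tails, and from possible Jordan blocks of $A$.

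\emph{The refined estimate \eqref{differentialestimatetwo}, and the main obstacle.} For \eqref{differentialestimatetwo} one repeats the previous paragraph with $\bar D^k$ in place of $D^k$, and replaces every place where an oscillation of $\Delta^k(\cdot)$ over $F^i(X)$ was estimated crudely by $(r\mu)^i$ times a sup norm by the first (energy) inequality of \eqref{setting_resistmetricestimate}: on $F^i(X)$ this bounds the oscillation of $\Delta^ku$ by $Cr^{i/2}E^{1/2}(\Delta^ku)$, using $\Delta^ku\in\dom(E)$, that $E$ localized to $F^i(X)$ is no larger than $E$, and that the resistance diameter of $F^i(X)$ is $O(r^i)$. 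The extra factor $r^{i/2}$ makes the geometric ratios in the telescoped series strictly less than $1$ even for the band-endpoint eigenvalues $\gamma_s=(r\mu)^k$, which is exactly why one may use $\bar D^k$ (the variant keeping those modes) and why no polynomial factor $i^k$ survives. The main obstacle is precisely this accounting: tracking how the $i^{k-1}$ supplied by the inductive hypothesis for $\Delta u$ propagates through the various telescoping sums and interacts with the eigenstructure of the harmonic extension matrix to produce no worse than $i^k$, and verifying that the choice of $h$ (resp. $\bar h$) genuinely annihilates every $P_s$-component with $\gamma_s>(r\mu)^k$ (resp. $\ge(r\mu)^k$) uniformly across the slabs of \eqref{distribsatapoint_localtaylor}.
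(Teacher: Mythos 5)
Your proof is correct and follows essentially the same route as the paper: induction on $k$, the telescoping recursion $b_{i+1}=Ab_i+e_i$ for the boundary vectors with increments controlled by $\|G_i(\Delta u-D^{k-1}\Delta u)\|$, geometric summation of $P_s$-components split at the band $(r\mu)^k$, and the H\"older/energy estimate supplying the extra $r^{i/2}$ that tames the band-endpoint eigenvalues for $\bar{D}^k$. The only cosmetic difference is that you expand $u-D^ku$ via the full Taylor decomposition \eqref{distribsatapoint_localtaylor} where the paper uses the one-level split $H_i+G_i\Delta$ and feeds the rest to the inductive hypothesis; this changes nothing essential.
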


\begin{proof}
The proof is inductive.  When $k=0$ there are no $s$ with $\gamma_{s}>1=(r\mu)^{0}$, so the first
statement is vacuous and \eqref{differentialestimate} is immediate.  Suppose both hold up to $k-1$.

Write $u-GD^{k-1}\Delta u$ as $H_{0}u+G(\Delta u - D^{k-1}\Delta u)$, from which
\begin{equation}\label{decompofdsuviadifferential}
    d_{s}u
    =d_{s}H_{0}u + \lim_{i} \gamma_{s}^{-i}P_{s}H_{i} \bigl( G\Delta u - G D^{k-1}\Delta u \bigr)
    \end{equation}
provided the latter limit exists.   On the cell $F^{i}(X)$,
\begin{equation*}
    G (\Delta u- D^{k-1}\Delta u )
    = H_{i} G (\Delta u- D^{k-1}\Delta u ) + G_{i} (\Delta u- D^{k-1}\Delta u )
    \end{equation*}
thus
\begin{equation*}
    H_{i+1} G (\Delta u- D^{k-1}\Delta u )
    = A H_{i} G (\Delta u- D^{k-1}\Delta u ) + H_{i+1} G_{i} (\Delta u- D^{k-1}\Delta u ).
    \end{equation*}
In particular, if we project onto the eigenspace $E_{s}$ then the action of $A$ is multiplication
by $\gamma_{s}$.  Scaling implies $G_{i} (\Delta u- D^{k-1}\Delta u )$ is bounded by
\begin{equation}\label{estimateforGiappliedtouminusdifferential}
    \bigl| G_{i} (\Delta u- D^{k-1}\Delta u )\bigl|
    \leq C (r\mu)^{i} \bigl\| \Delta u- D^{k-1}\Delta u \|_{L^{\infty}(F^{i}(X))}
    \leq CC(k-1) i^{k-1}  (r\mu)^{ik} \|\Delta^{k}u\|_{\infty}
    \end{equation}
and the action of $H_{i+1}$ and $P_{s}$ can only improve this estimate, so
\begin{align}
    \lefteqn{\gamma_{s}^{-(i+1)} \Bigl| P_{s} H_{i+1} G (\Delta u- D^{k-1}\Delta u )
    - \gamma_{s} P_{s} H_{i} G (\Delta u- D^{k-1}\Delta u ) \Bigr|}\qquad& \notag\\
    &\leq \gamma_{s}^{-(i+1)} \Bigl| G_{i} (\Delta u- D^{k-1}\Delta u ) \Bigr|\notag\\
    &\leq C C(k-1) i^{k-1} \Bigl( \frac{r^k\mu^{k}}{\gamma_{s}}\Bigr)^{i}
    \|\Delta^{k}u\|_{\infty}
    \label{provingthesequenceforderivsisCauchy}
    \end{align}
This shows $\{\gamma_{s}^{-i} P_{s} H_{i} G (\Delta u- D^{k-1}\Delta u )\}$ is Cauchy when
$\gamma_{s}>(r\mu)^{k}$, and that its limit is bounded by $C(k) \|\Delta^{k}u\|_{\infty}$.  It
follows from \eqref{decompofdsuviadifferential} that $d_{s}u$ exists for these values of $s$, and
since $|d_{s}H_{0}u|\leq \|u\|_{\infty}$ we also obtain \eqref{esttoprovedsuisorderkdistrib}.

Summing the tail of \eqref{provingthesequenceforderivsisCauchy} establishes that
\begin{equation*}
    \bigl| d_{s}u - \gamma_{s}^{-i} P_{s}H_{i} (u-GD^{k-1}\Delta u) \bigr|
    \leq C(k) \Bigl(\frac{r^{k}\mu^{k}}{\gamma_{s}} \Bigr)^{i} \|\Delta^{k}u\|_{\infty}.
    \end{equation*}
Now let $h$ be the unique harmonic function with $d_{s}h=d_{s}u$ for those $s$ with
$\gamma_{s}>(r\mu)^{k}$ and $d_{s}h=0$ otherwise. Since $\gamma_{s}^{-i}P_{s}H_{i}h=d_{s}h$ is a
constant sequence, we find
\begin{equation}\label{orderestforprojofharmonicpieceofdifferential}
    \bigl| P_{s} H_{i} (u- h - G D^{k-1}\Delta u) \bigr|
    \leq C(k) (r\mu)^{ik} \|\Delta^{k}u\|_{\infty}
    \end{equation}
for those $s$ with $\gamma_{s}>(r\mu)^{k}$.  Recalling $D^{k}u=h+GD^{k-1}\Delta u$ from
\eqref{defnofdifferential} write
\begin{align}
    (u-D^{k}u)\evald{F^{i}(X)}
    &= H_{i} (u-D^{k}u) + G_{i} \bigl( \Delta (u-D^{k}u) \bigr) \notag\\
    &=  H_{i} (u-h - GD^{k-1}\Delta u ) + G_{i} \bigl( \Delta u- D^{k-1}\Delta u \bigr).
    \label{expansionofuminusDkuonFiX}
    \end{align}
We have estimated $G_{i} \bigl( \Delta u- D^{k-1}\Delta u \bigr)$ in
\eqref{estimateforGiappliedtouminusdifferential} and the terms $P_{s} H_{i} (u- h - G D^{k-1}\Delta
u)$ for $\gamma_{s}>(r\mu)^{k}$ in \eqref{orderestforprojofharmonicpieceofdifferential}.  What
remains are the terms $P_{s}H_{i} (u-h - GD^{k-1}\Delta u)$ for $\gamma_{s}\leq (r\mu)^{k}$. Each
of these is obtained as a sum, with
\begin{align}\label{estforlowerordertermsinexpansionwithdifferentialremoved}
    \bigl| P_{s}H_{i} (u-h - GD^{k-1}\Delta u) \bigr|
    &=\biggl| \sum_{j=0}^{i-1} \gamma_{s}^{i-j} P_{s} H_{j} G_{j} (u-h - GD^{k-1}\Delta u )
    \biggr| \notag\\
    &\leq CC(k-1) \sum_{j=0}^{i-1} \gamma_{s}^{i-j}  j^{k-1}  (r\mu)^{jk}
    \|\Delta^{k}u\|_{\infty} \notag\\
    &\leq CC(k-1) (r\mu)^{ik}   \|\Delta^{k}u\|_{\infty}
    \sum_{j=0}^{i-1} j^{k-1} \Bigl( \frac{\gamma_{s}}{(r\mu)^{k}} \Bigr)^{(i-j)k} \notag\\
    &\leq C(k) i^{k} (r\mu)^{ik} \|\Delta^{k}u\|_{\infty}
    \end{align}
because $\gamma_{s}\leq (r\mu)^{k}$.  This proves \eqref{differentialestimate} for $k$ and
completes the induction.

The proof of \eqref{differentialestimatetwo} uses essentially the same inductive argument with
$\bar{D}$ replacing $D$ and the estimate from \eqref{differentialestimatetwo} replacing that from
\eqref{differentialestimate} throughout. Note that in \eqref{provingthesequenceforderivsisCauchy}
we can have $\gamma_{s}\geq(r\mu)^{k}$ because there is an additional factor of $r^{i/2}$ so the
series still converges geometrically.  Also, in
\eqref{estforlowerordertermsinexpansionwithdifferentialremoved} the working is simplified because
for $\bar{D}$ we have these $\gamma_{s}<(r\mu)^{k}$ and the $r^{i/2}$ term is bounded, so the
convergence is geometric here also.  This allows us to remove the polynomial term in $i$.  The base
case $k=0$ is true because of the H\"{o}lder estimate~\eqref{setting_resistmetricestimate}.
\end{proof}

The map $d_{s}$ takes a smooth function to the eigenspace $E_{s}$.  We now fix orthonormal bases
for each of the $E_{s}$, and refer to the co-ordinates of $d_{s}$ with respect to the basis for
$E_{s}$ as the {\em components} of $d_{s}$; these components have values in $\mathbb{C}$.

\begin{corollary}\label{distribsatapoint_dsisadistrib}
Each component $d_{s,v}$ of a $d_{s}$ for which $(r\mu)^{k}<\gamma_{s}\leq (r\mu)^{k-1}$ is a
distribution supported at $q$ and of order at most $k$.  If $\gamma_{s}<(r\mu)^{k-1}$  then its
order is equal to $k$, and it is otherwise of order either $k-1$ or $k$. If $d_{s,v}$ is a
component that is a distribution of order $k$, then $\Delta^{l}d_{s,v}$ defined by
$\Delta^{l}d_{s,v}\phi=d_{s,v}\Delta^{l}\phi$ is also supported at $q$ and has order $k+l$.
\end{corollary}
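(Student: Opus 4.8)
I would split the statement into the order bound, the point-support property, the sharpness of the order, and the claim about $\Delta^{l}d_{s,v}$, working (as the paper does) in the reduced setting of a single contraction $F$ with $q=\bigcap_{i}F^{i}(X)$; the general junction point follows by treating each of the finitely many cells meeting at $q$ separately.

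\emph{Order at most $k$ and support at $q$.} Fix $k$ with $(r\mu)^{k}<\gamma_{s}\leq(r\mu)^{k-1}$. The estimate~\eqref{esttoprovedsuisorderkdistrib} gives $|d_{s}\phi|\leq C(k)\sum_{l\leq k}\|\Delta^{l}\phi\|_{\infty}\leq(k+1)C(k)\,|\phi|_{k}$ for all $\phi\in\testfns$, and a coordinate with respect to an orthonormal basis is dominated by the norm of the vector, so $|d_{s,v}\phi|\leq C'(k)|\phi|_{k}$; thus $d_{s,v}$ is a distribution of order at most $k$, with a bound uniform over compacta. For the support I would argue by induction on $k$ that every $d_{s}$ in the $k$-th band annihilates any $\phi\in\testfns$ that vanishes on a neighbourhood of $q$. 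Given such a $\phi$, each $\Delta^{j}\phi$ vanishes near $q$ as well, so by the inductive hypothesis $d_{s'}(\Delta^{j}\phi)=0$ for every $j$ and every $s'$ with $\gamma_{s'}>(r\mu)^{k-1}$; unwinding the recursion~\eqref{defnofdifferential}, every harmonic summand entering $D^{k-1}\Delta\phi$ is killed by all these $d_{s'}$ and hence is identically zero, so $D^{k-1}\Delta\phi=G^{k-1}\bigl(\Delta^{k}\phi(q)\bigr)=0$. The defining formula then collapses to $d_{s}\phi=\lim_{i}\gamma_{s}^{-i}P_{s}H_{i}\phi$, which is $0$ because $\phi\equiv0$, hence $H_{i}\phi=0$, on $F^{i}(X)$ once $i$ is large. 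In particular $d_{s}$, and each component $d_{s,v}$, depends only on the germ of its argument at $q$.

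\emph{Sharpness of the order.} If $d_{s,v}$ were of order $\leq m$ it would satisfy $|d_{s,v}\phi|\leq M|\phi|_{m}$ on test functions supported in some fixed compact set, so it suffices to produce such $\phi_{i}$ with $d_{s,v}\phi_{i}=1$ but $|\phi_{i}|_{m}\to0$. Regarding the component index $v$ as a unit vector in $E_{s}$, let $g_{i}$ be the harmonic function on $F^{i}(X)$ with boundary data $\gamma_{s}^{i}v$, and build $\phi_{i}$ by setting $\phi_{i}=g_{i}$ on $F^{i}(X)$, attaching finitely many cells of comparable scale $\asymp(r\mu)^{i}$ at each boundary point of $F^{i}(X)$, and using Theorem~\ref{setting_Borelthm} (with error of order $\gamma_{s}^{i}$) to run the value and normal-derivative data of $g_{i}$ there smoothly to $0$. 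On $F^{i}(X)$ the function $\phi_{i}$ coincides with the fixed harmonic function $Hv$ on $X$ whose boundary data is $v$, so by the germ property of the previous step $d_{s}\phi_{i}=d_{s}(Hv)=P_{s}v=v$ and $d_{s,v}\phi_{i}=1$. On an attached cell the jet of $g_{i}$ is $O(\gamma_{s}^{i})$ in the value terms and $O(r^{-i}\gamma_{s}^{i})$ in the normal-derivative term, so~\eqref{setting_jetestimate} gives $\|\Delta^{m}\phi_{i}\|_{\infty}\leq C(m)(r\mu)^{-mi}\gamma_{s}^{i}$ there (and the norms on $F^{i}(X)$ are even smaller), whence $|\phi_{i}|_{m}\to0$ exactly when $\gamma_{s}<(r\mu)^{m}$. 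Taking $m=k-1$ disposes of the case $\gamma_{s}<(r\mu)^{k-1}$ and forces the order to be exactly $k$; taking $m=k-2$ disposes of the borderline case $\gamma_{s}=(r\mu)^{k-1}$ and forces the order to be at least $k-1$. Together with the upper bound this is precisely the dichotomy asserted. This construction is the main obstacle: one must check simultaneously that $d_{s,v}\phi_{i}\not\to0$ and $|\phi_{i}|_{m}\to0$, which is exactly where the comparison of $\gamma_{s}$ with powers of $r\mu$ is forced and where the scale-dependent factors in~\eqref{setting_jetestimate} must be tracked carefully.

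\emph{The operator $\Delta^{l}d_{s,v}$.} If $\phi$ vanishes near $q$ then so does $\Delta^{l}\phi$, so $\Delta^{l}d_{s,v}$ is again supported at $q$; and $|(\Delta^{l}d_{s,v})\phi|=|d_{s,v}(\Delta^{l}\phi)|\leq C(k)|\Delta^{l}\phi|_{k}\leq C(k)|\phi|_{k+l}$ bounds its order by $k+l$. For the reverse inequality, using that $d_{s,v}$ has order exactly $k$ choose a fixed compact $K\ni q$ and test functions $\phi_{i}$ supported in $K$ with $d_{s,v}\phi_{i}=1$ and $|\phi_{i}|_{k-1}\to0$, and set $\psi_{i}=(-1)^{l}\tilde G_{K}^{l}\phi_{i}$, taking the auxiliary functions in Lemma~\ref{structure_greensfnforKexists} supported away from $q$. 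Then $\Delta^{l}\psi_{i}-\phi_{i}$ is supported away from $q$, so $(\Delta^{l}d_{s,v})\psi_{i}=d_{s,v}(\Delta^{l}\psi_{i})=d_{s,v}\phi_{i}=1$, while iterating the estimate $|\tilde G_{K}\psi|_{m}\leq C|\psi|_{m-1}$ from the proof of Theorem~\ref{structure_Gondistribus} gives $|\psi_{i}|_{k+l-1}\leq C^{l}|\phi_{i}|_{k-1}\to0$. Hence $\Delta^{l}d_{s,v}$ is not of order $k+l-1$, so its order is exactly $k+l$.
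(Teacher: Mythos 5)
Your proposal is correct and follows essentially the same route as the paper: the order bound and point support come from the estimate \eqref{esttoprovedsuisorderkdistrib} together with the fact that $d_{s}$ only sees the germ at $q$, and sharpness is obtained by cutting off the harmonic eigenfunction $u_{s,v}$ so that $d_{s,v}$ still returns $1$ while $|\cdot|_{k-1}$ (resp.\ $|\cdot|_{k-2}$ in the borderline case) is small — you build this cutoff by hand from Theorem~\ref{setting_Borelthm}, which is exactly the content of Theorem~\ref{distribsatapoint_decayofHijtestfnimpliesgoodextension} that the paper invokes. The only substantive addition is your $\tilde G_{K}$ argument for the order of $\Delta^{l}d_{s,v}$ (the paper calls this ``immediate''), which is a clean and valid way to supply the missing lower bound.
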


\begin{proof}
It is apparent from the definition that $d_{s}$ is linear on $\testfns$ and that $d_{s}\testfn=0$
if $\testfn\in\testfns$ is identically zero in a neighborhood of $q$, so it follows from
\eqref{esttoprovedsuisorderkdistrib} that the components of $d_{s}$ are distributions of order at
most $k$ and are supported at $q$.

Suppose $\gamma_{s}<(r\mu)^{k-1}$ and let $u_{s,v}$ denote the harmonic function determined by the
eigenvector corresponding to $d_{s,v}$.  Then the values of $u_{s,v}$ are
$O(\gamma_{s})=o(r\mu)^{k-1}$ and $\Delta^{l}u=0$ for $l\leq 1$, so
Theorem~\ref{distribsatapoint_decayofHijtestfnimpliesgoodextension} implies that for any
$\epsilon>0$ there is a function $\psi$ equal to $u_{s,v}$ in a neighborhood of $q$ but with
$|\psi|_{k-1}<\epsilon$.  Since $d_{s,v}u_{s,v}=1$ and $d_{s,v}u_{s,v}=d_{s,v}\psi$ because of the
support condition, it cannot be that $d_{s,v}$ is order $k-1$ or less, so it has order $k$.

In the case $\gamma_{s}=(r\mu)^{k-1}<(r\mu)^{k-2}$ the above argument says that $d_{s,v}$ has order
at least $k-1$.  Both of the values $k-1$ and $k$ occur in examples.  For instance, when $k=1$, the
derivative $d_{1}u=u(q)$ corresponding to the constant harmonic functions has order $0=k-1$. A case
where there is a $d_{s}$ of this type with order $k$ occurs on the Sierpinski Gasket, see
Example~\ref{distribs_commentaboutSGcase} below.  This shows that scaling alone cannot identify the
order of $d_{s}$ when $\gamma_{s}=(r\mu)^{k-1}$.

The statement regarding $\Delta^{l}d_{s}$ is immediate.
\end{proof}

We now return to using the index $j$ to distinguish the words $w_{j}$ for which $x=F_{w_{j}}(X)$,
and accordingly denote by $d^{j}_{s}$ the derivative $d_{s}$ corresponding to the approach through
cells $F_{[w_{j}]_{i}}$.

\begin{theorem}\label{distribsatapoint_identifyingdistribsatjunctionpoint}
Let $T$ be a distribution of order $k$ supported at the junction point $q$, where $q=F_{w_{j}}(X)$,
$j=1,\dotsc,n$. The word $w_{j}$ terminates with infinite repetition of a letter which, by a
suitable relabeling we assume is $j$. Then $T$ is a finite linear combination of the distributions
$\Delta^{l}d^{j}_{s,v}$, for which $\gamma_{s}\geq(r_{j}\mu_{j})^{k-l}$.  The linear combination
runs over all such $s$, all basis elements $v$ for $E_{s}$, and all cells $j=1,\dotsc,n$ that meet
at $q$.
\end{theorem}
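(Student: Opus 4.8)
The plan is to apply the abstract strategy laid out at the start of Section~\ref{distribsatpoint_section}: exhibit a finite family of distributions supported at $q$ so that any test function annihilated by all of them can be cut off near $q$ with arbitrarily small $|\cdot|_{k}$-norm, and then invoke the standard linear-algebra lemma (Lemma~3.9 of \cite{Rudin}) to conclude $T$ is a linear combination of that family. The family we propose is exactly $\{\Delta^{l}d^{j}_{s,v}\}$ ranging over cells $j$ meeting at $q$, eigenspaces $s$ with $\gamma_{s}\geq(r_{j}\mu_{j})^{k-l}$, and basis vectors $v$ of $E_{s}$; Corollary~\ref{distribsatapoint_dsisadistrib} already tells us each of these is a distribution supported at $q$ of order $\leq k$, so every member of the family is a legitimate candidate and the only thing left to prove is the cutoff property.

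First I would reduce, via Lemma~\ref{testfunction_partition} (or a junction-point partition of $\testfn$ into pieces supported near $q$ in each cell $U_{i,j}$), to estimating things cell-by-cell, so that it suffices to work in a single copy of $X$ with one contraction $F=F_{j}$, writing $\gamma_{s},r=r_{j},\mu=\mu_{j}$ as in the run-up to Definition~\ref{distribsatapoint_defnofdsatjunctionpt}. Next, given $\testfn$ with $\Delta^{l}d^{j}_{s,v}\testfn=0$ for all the listed $(l,s,v)$, the aim is to verify the hypotheses of Theorem~\ref{distribsatapoint_decayofHijtestfnimpliesgoodextension}, namely $\Delta^{k}\testfn(q)=0$ and $\|H_{i,j}\Delta^{l}\testfn\|_{L^{\infty}(U_{i,j})}=o\bigl((r_{[w_{j}]_{i}}\mu_{[w_{j}]_{i}})^{m-l}\bigr)$ for $0\le l\le k-1$ with $m=k$. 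The point is that $H_{i}\Delta^{l}\testfn$ decomposes, via the differential machinery of Lemma~\ref{distribsatapoint_existenceofdsatjunctionpoints} applied to $u=\Delta^{l}\testfn\in\dom(\Delta^{k-l})$, into a sum over eigenspaces $E_{s}$: the projection $P_{s}H_{i}\Delta^{l}\testfn$ behaves like $\gamma_{s}^{i}\,d_{s}\Delta^{l}\testfn$ up to the controlled error in \eqref{differentialestimate}/\eqref{orderestforprojofharmonicpieceofdifferential}. For the eigenspaces with $\gamma_{s}\geq(r\mu)^{k-l}$ the coefficient $d_{s}\Delta^{l}\testfn=d_{s,v}$-components of $\Delta^{l}\testfn$, which are precisely $\Delta^{l}d^{j}_{s,v}\testfn=0$ by hypothesis, so those terms vanish; for the eigenspaces with $\gamma_{s}<(r\mu)^{k-l}$ the factor $\gamma_{s}^{i}$ already decays faster than $(r\mu)^{(k-l)i}$, which up to the bounded-ratio comparison between $(r\mu)^{i}$ and $r_{[w_{j}]_{i}}\mu_{[w_{j}]_{i}}$ is exactly the required $o$-decay. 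The condition $\Delta^{k}\testfn(q)=0$ is handled the same way at $l$ playing the role that makes $\gamma_{1}=1$ the relevant eigenvalue, since $d_{1}\Delta^{k}\testfn=\Delta^{k}\testfn(q)$ appears among the hypotheses (this is the $l=k$, $s=1$ term, with $\gamma_{1}=1\geq(r\mu)^{0}$). Having verified the hypotheses, Theorem~\ref{distribsatapoint_decayofHijtestfnimpliesgoodextension} furnishes $\testfn_{q}$ with $|\testfn_{q}|_{k}<\epsilon$ and $\testfn-\testfn_{q}$ supported away from $q$; since $T$ has order $k$ and support $\{q\}$, $|T\testfn|=|T\testfn_{q}|\leq M|\testfn_{q}|_{k}\leq M\epsilon$, hence $T\testfn=0$ whenever all listed distributions annihilate $\testfn$, and Lemma~3.9 of \cite{Rudin} finishes.

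The main obstacle I expect is bookkeeping the indexing between the ``order'' stratification of the $d_{s,v}$ (the bands $(r\mu)^{k'}<\gamma_{s}\leq(r\mu)^{k'-1}$) and the power $\Delta^{l}$ one applies: one must check that $\Delta^{l}d^{j}_{s,v}$ with $\gamma_{s}\geq(r_{j}\mu_{j})^{k-l}$ really does enumerate all and only the distributions of order $\leq k$ supported at $q$ of this form, so that the spanning claim is tight and no hypothesis is wasted. Concretely, the delicate case is the boundary $\gamma_{s}=(r\mu)^{k-l}$, where—as Corollary~\ref{distribsatapoint_dsisadistrib} warns—scaling alone does not pin down whether the order is $k-l-1$ or $k-l$; one has to be careful to include these borderline $d^{j}_{s,v}$ in the family (they are still order $\leq k$ after applying $\Delta^{l}$, so no harm), and to confirm that Theorem~\ref{distribsatapoint_decayofHijtestfnimpliesgoodextension} with the $o(\cdot)$ rather than $O(\cdot)$ decay is exactly what licenses dropping the $\gamma_{s}=(r\mu)^{k-l}$ contribution from $H_{i}\Delta^{l}\testfn$ once the corresponding derivative vanishes. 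A secondary technical point is keeping the constant in the bounded-ratio comparison $r_{\tilde w}\sim r_{[w_{j}]_{i}}$ uniform, but that is already isolated in Theorem~\ref{distribsatapoint_decayofHijtestfnimpliesgoodextension} and needs only to be cited.
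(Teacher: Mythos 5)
Your overall architecture is exactly the paper's: exhibit the family $\{\Delta^{l}d^{j}_{s,v}\}$ with $\gamma_{s}\geq(r_{j}\mu_{j})^{k-l}$, show that a test function annihilated by all of them satisfies the hypotheses of Theorem~\ref{distribsatapoint_decayofHijtestfnimpliesgoodextension}, cut off, use the order-$k$ bound and the support condition to get $T\testfn=0$, and finish with Lemma~3.9 of \cite{Rudin}. The reduction to one cell at a time, the identification $\Delta^{k}\testfn(q)=\Delta^{k}d^{j}_{1,v}\testfn=0$, and the closing linear-algebra step are all correct and match the paper.

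There is, however, a genuine gap in the one quantitative step that carries the proof. You propose to get the decay $\|H_{i,j}\Delta^{l}\testfn\|_{\infty}=o\bigl(r_{[w_{j}]_{i}}\mu_{[w_{j}]_{i}}\bigr)^{k-l}$ from \eqref{differentialestimate} and \eqref{orderestforprojofharmonicpieceofdifferential}, but those estimates only give $O$-bounds, and \eqref{differentialestimate} in fact carries a polynomial loss: $\|u-D^{k}u\|_{L^{\infty}(F^{i}(X))}\leq C(k)\,i^{k}(r\mu)^{ki}\|\Delta^{k}u\|_{\infty}$, which is \emph{not} $o\bigl((r\mu)^{ki}\bigr)$ — it grows relative to $(r\mu)^{ki}$. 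Likewise, for the borderline eigenvalues $\gamma_{s}=(r\mu)^{k-l}$ the factor $\gamma_{s}^{i}$ is only $O\bigl((r\mu)^{(k-l)i}\bigr)$, so vanishing of the coefficient alone does not suffice; the error terms must also be $o$. You flag this as something ``to confirm,'' but the estimates you cite cannot confirm it. The paper's resolution is the barred differential $\bar{D}^{k}$ and estimate \eqref{differentialestimatetwo}, which (i) folds the eigenvalues with $\gamma_{s}=(r\mu)^{k-l}$ exactly into the harmonic part that is killed by the hypothesis $\Delta^{l}d^{j}_{s,v}\testfn=0$, and (ii) exploits $\Delta^{k}\testfn\in\dom(E)$ (automatic for test functions) to gain the geometric factor $r^{i/2}$, turning $i^{k}(r\mu)^{ki}$ into $(r\mu)^{ki}r^{i/2}=o\bigl((r\mu)^{ki}\bigr)$. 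Concretely: one deduces $\bar{D}^{k-l}\Delta^{l}\testfn=0$ from the hypotheses and then applies \eqref{differentialestimatetwo} to $u=\Delta^{l}\testfn$. Without invoking $\bar{D}$ and \eqref{differentialestimatetwo}, the cutoff theorem's hypothesis is not met and the argument as written does not close.
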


\begin{proof}
Suppose that $\testfn\in\testfns$ has the property that $\Delta^{l}d^{j}_{s,v}\testfn=0$ for all
$(r_{j}\mu_{j})^{k-l}\leq\gamma_{s}$.  It follows from
Definition~\ref{distribsatapoint_defnofdsatjunctionpt} that $\bar{D}^{k}\testfn=0$ and more
generally that $\bar{D}^{k-l}\Delta^{l}\testfn=0$ for all $l\leq k$.

However, the harmonic part of
$H_{i,j}\Delta^{l}\testfn=H_{i,j}\Delta^{l}(\testfn-\bar{D}^{k}\testfn)$ on the cell $U_{i,j}$ of
scale $i$ corresponding to the word $w_{j}$ is bounded by the maximum over the boundary vertices of
this cell, so from~\eqref{differentialestimatetwo}:
\begin{equation*}
    \bigl\| H_{i,j}\Delta^{l}\testfn \bigr\|_{L^{\infty}(U_{i,j})}
    =o(r_{j}\mu_{j})^{(k-l)i}
    =o\bigl(r_{[w_{j}]_{i}}\mu_{[w_{j}]_{i}}\bigr)^{k-l}
    \end{equation*}
for $0\leq l\leq k-1$.  We also have that $\Delta^{k}\testfn(q)=0$ because $\Delta^{l}\testfn(q)=
\Delta^{l}d^{j}_{1,v}\testfn=0$, so
Theorem~\ref{distribsatapoint_decayofHijtestfnimpliesgoodextension} shows that for any $\epsilon>0$
there is $\psi\in\testfns$ that is equal to $\testfn-\bar{D}^{k}\testfn$ in a neighborhood of $q$
and with $|\psi|_{k}<\epsilon$.

Using the support condition and the fact that $T$ has order $k$ yields
\begin{equation*}
    T\testfn
    = T\psi
    \leq M|\psi|_{k}
    < M\epsilon
    \end{equation*}
for some fixed $M$ depending only on $T$, and all $\epsilon>0$.  Thus $T\testfn=0$, and we have
shown that the kernel of $T$ contains the intersection of the kernels of the distributions
described. By a standard result (e.g. Lemma~3.9 of~\cite{Rudin}), $T$ is a linear combination of
these distributions.
\end{proof}

\begin{remark}\label{distribs_commentaboutSGcase}
Since $d_{1}$ corresponds to the eigenspace of constants, the distributions $d^{j}_{1}\Delta^{l}$
are independent of $j$ and are simply powers of the Laplacian applied to the Dirac mass at $x$.  It
should also be noted that for each $j$ the distribution $d^{j}_{2}$ corresponds to the largest
eigenvalue less than $1$, so gives the normal derivative at $x$ when approaching through the cells
$F_{[w_{j}]i}$, $i\rightarrow\infty$.  As a result $\sum_{j}d^{j}_{2}u=0$, and not all of these
distributions need appear in $T'$.

It should also be noted that the linear combination in
Theorem~\ref{distribsatapoint_identifyingdistribsatjunctionpoint} may include distributions of the
form $\Delta^{l}d^{j}_{s,v}$ having $\gamma_{s}=(r_{j}\mu_{j})^{k-l}$, and that it is possible for
these to be of order $k+1$. If this were to occur then we would have a non-trivial linear
combination of these $(k+1)$-order distributions such that the linear combination is of order only
$k$.  We do not know of an example in which this occurs, but cannot eliminate it as a possibility
because our arguments rely on scaling information.
\end{remark}

\begin{example}
The canonical example of a p.c.f.~self-similar fractal of the type we are describing is the
Sierpinski Gasket $SG$ with its usual symmetric harmonic structure (see \cite{Strichartzbook} for
details of all results described below).  In this case $r=3/5$ and $\mu=1/3$, so the Laplacian
scales by $r\mu=1/5$. Each of the harmonic extension matrices $A_{i}$ has eigenvalues $1$, $3/5$
and $1/5$ with one-dimensional eigenspaces. The corresponding derivatives at $q$ are
$d^{j}_{1}u=u(q)$ which is point evaluation, $d^{j}_{2}u(q)=\partial^{j}_{N}u(q)$ which is the
normal derivative at $q$ from the cell corresponding to $j$, and
$d^{j}_{3}u(q)=\partial^{j}_{T}u(q)$ which is the tangential derivative of $u$ at $q$ from this
cell.

There are two cells meeting at the junction point $q$.  Without loss of generality we suppose they
are indexed by $j=0,1$. The two corresponding normal derivatives $\partial^{j}_{N}u(q)$, $j=0,1$
satisfy the single linear relation that they sum to zero, and by
Corollary~\ref{distribsatapoint_dsisadistrib} they are of order $1$. The two tangential derivatives
$\partial^{j}_{T}u(q)$, $j=0,1$ are independent, and it is known that they cannot be controlled by
$\|u\|_{\infty}+\|\Delta u\|_{\infty}$ (see \cite{Strichartzbook}, page 60).  They are therefore of
order $2$.   It is also possible to see in this example that any non-trivial linear combination of
the $\partial^{j}_{T}u(q)$ has order less than $2$.  Writing $\delta_{q}$ for the Dirac mass at
$q$, we conclude from Theorem~\ref{distribsatapoint_identifyingdistribsatjunctionpoint}  that any
distribution $T$ of order $k$ at a junction point of $SG$ can be written as a linear combination of
the form
\begin{equation}
    T = \sum_{l=0}^{k} a_{l}\Delta^{l}\delta_{q}
        + \sum_{l=0}^{k-1} b_{l} \Delta^{l}\partial^{0}_{N}\delta_{q}
        + \sum_{l=0}^{k-2} \sum_{j=0,1} c_{l,j} \Delta^{l}\partial^{j}_{T}\delta_{q}.
    \end{equation}

This example also illustrates the issue described in the proof of
Corollary~\ref{distribsatapoint_dsisadistrib}, namely that there can be a $d_{s}$ with
$\gamma_{s}=(r\mu)^{k-1}$ and yet $d_{s}$ is order $k$.  In this case we have $\partial_{T}=d_{3}$
with $\gamma_{3}=1/5=r\mu$, so $k=2$, and $d_{3}$ is of order $2$.
\end{example}

\subsection*{Periodic and Eventually Periodic Points}

Periodic points are those $x=F_{w}(X)$ for which $w$ is a periodic word, meaning that $w$ is
composed of an infinite repetition of a fixed finite word $v$.  Eventually periodic points are
those for which the word $w$ is periodic after some finite number of letters. For these points
there is a theory similar to that used for junction points; we do not have to consider derivatives
corresponding to multiple cells, but instead of looking at the eigenstructure of a matrix $A_{i}$
we must examine that of $A_{v}$, which is a finite composition of the $A_{i}$ matrices. If
$\gamma_{s}$ is an eigenvalue of $A_{v}$ with eigenspace $E_{s}$, then we can define the derivative
$d_{s}$ as we did for junction points.  It is easy to see that the analogues of
Lemma~\ref{distribsatapoint_existenceofdsatjunctionpoints},
Corollary~\ref{distribsatapoint_dsisadistrib} and
Theorem~\ref{distribsatapoint_identifyingdistribsatjunctionpoint} all hold, simply by changing the
notation to refer to the infinitely repeated matrix being $A_{v}$, the eigenvalues $\gamma_{s}$
being those of $A_{v}$, and the Laplacian scaling factor to be $r_{v}\mu_{v}$ instead of
$r_{j}\mu_{j}$.

\subsection*{Generic Points}

We now consider a non-junction point $x=F_{w}(X)$, where $w=w_{1}w_{2}\ldots$ is an infinite word.
The behavior of harmonic functions on the cell $F_{[w]_{n}}(X)$ can be understood by considering
the product $A_{[w]_{n}}=\prod_{j=1}^{n}A_{w_{j}}$.  We need to understand their scaling
properties, for which we use the following approach from \cite{MR1761365}. Define for each unit
vector $\alpha$ the corresponding Lyupunov exponent
\begin{equation}\label{distribsatapoint_defnofLyupunov}
    \log \gamma(\alpha) = \lim \frac{1}{i} \log \| A_{[w]_{i}}\alpha \|
    \end{equation}
if the limit exists.  In this definition we may take $\|\cdot\|$ to be any norm on the $\#
V_{0}$-dimensional space containing $\alpha$; all such norms are equivalent, so $\gamma$ is
unaffected by this choice.

Let us suppose that these limits exist at $x$.  It is readily seen that
$\gamma(\alpha)\neq\gamma(\alpha')$ implies $\alpha$ and $\alpha'$ are orthogonal, so there are at
most $\# V_{0}$ distinct values $\gamma_{1}>\gamma_{2}\dotsm$ that occur. Corresponding to these is
a direct sum decomposition $E_{1}\oplus E_{2}\oplus\dotsm$ with the property that writing
$\alpha=\alpha_{1}+\alpha_{2}+\dotsm$ we have $\gamma(\alpha)=\gamma_{s}$ if and only if
$\alpha_{1}=\dotsm=\alpha_{s-1}=0$ and $\alpha_{s}\neq0$. Since the constant functions are harmonic
we actually know that $\gamma_{1}=1$ and $E_{1}$ is spanned by $(1,1,\dotsc,1)$.  We let $P_{s}$ be
the orthogonal projection onto $E_{s}$.

The subspaces $E_{s}$ provide the natural decomposition of harmonic functions into their scaling
components at $x$.  However we cannot expect to directly mimic
Definition~\ref{distribsatapoint_defnofdsatjunctionpt} because the estimate
\eqref{distribsatapoint_defnofLyupunov} does not imply the existence of a renormalized limit of the
form
\begin{equation}\label{distribsatapoint_LyupunovscalingofLaplacian}
    \lim_{i\rightarrow\infty} \gamma_{s}^{-i} P_{s} H_{i} \bigl( u - G D^{k-1} \Delta u  \bigr)
    \end{equation}
Indeed it is easy to see that \eqref{distribsatapoint_defnofLyupunov} does not even imply that
$A_{[w]_{i}}\alpha$ is $O(\gamma(\alpha))^{i}$.

A natural way to proceed was introduced in \cite{MR1025071,MR1761365} and further treated in
\cite{pelanderteplyaev07}. Let $u$ be the function we are considering, and $H_{i}u$ be the harmonic
function on $F_{[w]_{i}}(X)$ with boundary values equal to $u$ on $\partial F_{[w]_{i}}(X)$ as
usual. If we assume that the harmonic scaling matrices $A_{j}$ are all invertible we can unravel
the scaling structure for harmonic functions at $x$ by applying the inverse of $A_{[w]_{i}}$ to
$H_{i}u$.  For later use we record an elementary result about the scaling of the adjoint of
$A_{[w]_{i}}^{-1}$.

\begin{lemma}\label{distribsatpoint_scalingofAinverse}
If $\alpha\in E_{s}$ then $\lim \frac{1}{i}\log \bigl\| (A_{[w]_{i}}^{-1})^{\ast}
\alpha\bigr\|=-\log \gamma_{s}$.
\end{lemma}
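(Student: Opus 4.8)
The plan is to establish the two bounds $\liminf_{i}\tfrac1i\log\|(A_{[w]_i}^{-1})^{\ast}\alpha\|\ge-\log\gamma_s$ and $\limsup_{i}\tfrac1i\log\|(A_{[w]_i}^{-1})^{\ast}\alpha\|\le-\log\gamma_s$ separately. Throughout I will write $B_i=(A_{[w]_i}^{-1})^{\ast}=(A_{[w]_i}^{\ast})^{-1}$ and use freely that $A_{[w]_i}$ is invertible, so that $B_i^{-1}=A_{[w]_i}^{\ast}$ and $\langle B_i\alpha,A_{[w]_i}\eta\rangle=\langle\alpha,A_{[w]_i}^{-1}A_{[w]_i}\eta\rangle=\langle\alpha,\eta\rangle$ for every $\eta$.

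The lower bound is elementary. Taking $\eta=\alpha$ in the identity just displayed, Cauchy--Schwarz gives $\|\alpha\|^{2}=|\langle B_i\alpha,A_{[w]_i}\alpha\rangle|\le\|B_i\alpha\|\,\|A_{[w]_i}\alpha\|$, hence $\|B_i\alpha\|\ge\|\alpha\|^{2}/\|A_{[w]_i}\alpha\|$. Since $\alpha\in E_s$, the defining property of the Lyapunov decomposition gives $\tfrac1i\log\|A_{[w]_i}\alpha\|\to\log\gamma_s$, and the lower bound follows. This uses nothing beyond invertibility of $A_{[w]_i}$ and the existence of $\gamma(\alpha)$.

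For the upper bound I would combine duality with the orthogonality of the decomposition $\bigoplus_t E_t$. For a unit vector $\beta$ we have $\langle B_i\alpha,\beta\rangle=\langle\alpha,A_{[w]_i}^{-1}\beta\rangle=\langle\alpha,P_s A_{[w]_i}^{-1}\beta\rangle$, the last step because $\alpha\in E_s$ is orthogonal to $E_t$ for every $t\ne s$; taking the supremum over $\beta$ and then substituting $\eta=A_{[w]_i}^{-1}\beta$ yields $\|B_i\alpha\|\le\|\alpha\|\,\|P_s A_{[w]_i}^{-1}\|=\|\alpha\|\sup_{\eta\ne0}\|P_s\eta\|/\|A_{[w]_i}\eta\|$. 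So it suffices to prove the uniform estimate
\begin{equation*}
  \|A_{[w]_i}\eta\|\ \ge\ (\gamma_s-\epsilon)^{i}\,\|P_s\eta\|\qquad\text{for all $\eta$ and all $i\ge i_0(\epsilon)$.}
\end{equation*}

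This uniform estimate is where I expect the main difficulty to lie. Pointwise it is clear: if $P_s\eta\ne0$ then the smallest index $t$ with $P_t\eta\ne0$ is at most $s$, so $\gamma(\eta)\ge\gamma_s$. Upgrading this to a bound uniform in $\eta$ is precisely the assertion that $A_{[w]_i}$ does not contract the $E_s$--direction faster than $\gamma_s^{i}$ even after cancellation against the complementary directions, equivalently that the angle between $A_{[w]_i}(E_s)$ and $A_{[w]_i}\bigl(\bigoplus_{t\ne s}E_t\bigr)$ does not decay exponentially. This non-collapse of the orthogonal Lyapunov subspaces is part of the Lyapunov/Oseledets-type picture developed in \cite{MR1761365,pelanderteplyaev07}; concretely it follows from the stabilization of the normalized singular value decomposition $A_{[w]_i}=U_i\Sigma_iW_i^{\ast}$, whose right singular vectors, grouped according to the order of magnitude of the corresponding singular values, converge to an orthonormal basis adapted to the direct sum $\bigoplus_sE_s$ --- from which one in fact reads $\|B_i\alpha\|$ off $B_i=U_i\Sigma_i^{-1}W_i^{\ast}$ and sees that it agrees with $\gamma_s^{-i}\|\alpha\|$ up to subexponential factors, giving the upper bound directly. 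A more hands-on alternative is to estimate the norms of the exterior powers $\Lambda^{k}A_{[w]_i}$ and argue by volumes. In any case the substantive content of the lemma is this adaptedness of the orthogonal splitting $\bigoplus_sE_s$ to the cocycle; once it is in hand, combining it with the elementary lower bound of the second paragraph completes the proof.
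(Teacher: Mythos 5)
Your proposal takes essentially the same route as the paper's: both rest on the duality identity $\bigl\|(A_{[w]_{i}}^{-1})^{\ast}\alpha\bigr\|=\sup_{\alpha'}|\langle\alpha',\alpha\rangle|/\|A_{[w]_{i}}\alpha'\|$, obtain the lower bound by taking $\alpha'=\alpha$, and obtain the upper bound from the observation that $\langle\alpha',\alpha\rangle\neq0$ together with $\alpha\in E_{s}$ forces $P_{s}\alpha'\neq0$ and hence $\gamma(\alpha')\geq\gamma_{s}$. The uniformity-in-$\alpha'$ issue you rightly isolate as the crux is present in the paper's argument as well, where it is dispatched with the bare assertion that for each $i$ the supremum lies within $c/i$ of $-\log\gamma_{s}$; your explicit appeal to the Oseledets-type non-collapse of the splitting $\bigoplus_{s}E_{s}$ is, if anything, a more candid account of what is actually being used.
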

\begin{proof}
Writing $\langle\cdot,\cdot\rangle$ for the usual inner product,
\begin{align*}
     \bigl\| (A_{[w]_{i}}^{-1})^{\ast} \alpha\bigr\|
    &= \sup_{\alpha''} \frac{ \bigl| \langle \alpha'', (A_{[w]_{i}}^{-1})^{\ast} \alpha \rangle \bigr|}{\|\alpha''\|}\\
    &= \sup_{\alpha'} \frac{ \bigl| \langle A_{[w]_{i}}\alpha', (A_{[w]_{i}}^{-1})^{\ast} \alpha \rangle
    \bigr|}{\|A_{[w]_{i}}\alpha'\|}\\
    &= \sup_{\alpha'} \frac{ \bigl| \langle \alpha', \alpha \rangle \bigr|}{\|A_{[w]_{i}}\alpha'\|}.
    \end{align*}
Since the logarithm is monotone, this implies
\begin{align*}
     \frac{1}{i} \log \bigl\| (A_{[w]_{i}}^{-1})^{\ast} \alpha\bigr\|
     &= \sup_{\alpha'}
     \biggl( \frac{1}{i}  \log \frac{ \bigl| \langle \alpha', \alpha \rangle \bigr|}{\|\alpha'\|} -
     \frac{1}{i}  \log \frac{\|A_{[w]_{i}}\alpha'\|}{\|\alpha'\|} \biggr)
     \end{align*}
however we know that the second term inside the supremum converges to $-\gamma(\alpha')$, whereas
the first converges to zero provided $\langle \alpha', \alpha \rangle\neq 0$. The latter condition
and $\alpha\in E_{s}$ requires that $\alpha'$ have a non-zero component in $E_{s}$, from which we
deduce $\gamma(\alpha')\geq \gamma_{s}$, with equality provided $P_{t}\alpha'=0$ for each $t<s$.
Combining these observations it is easy to see that for each $i$ the supremum is between
$-\gamma_{s} - \frac{c}{i}$ and $-\gamma_{s}$ for a constant $c$ independent of $i$.  It follows
that the limit in the statement of the lemma exists and has the asserted value.
\end{proof}

In order to account for the scaling behavior of the Laplacian, we set
\begin{equation}\label{distribsatapoint_asymptoticLaplacianscaling}
    \log \beta_{w}=\lim_{i\rightarrow\infty} \frac{1}{i} \log r_{[w]_{i}}\mu_{[w]_{i}}
    \end{equation}
provided the limit exists.

\begin{definition}\label{distribsatapoint_defnofdsatgenericpt}
Assume that $x=F_{w}(X)$ is a point at which the limits in~\eqref{distribsatapoint_defnofLyupunov}
and~\eqref{distribsatapoint_asymptoticLaplacianscaling} exist, and that all $A_{j}$ are invertible.
Inductively define derivatives $d_{s}$ and differentials $D^{k}$, $k\geq1$, at the point $x$ by
setting $D^{0}u=u(q)$, and for each $s$ such that $\beta_{w}^{k}< \gamma_{s} \leq \beta_{w}^{k-1}$
\begin{equation}\label{distribsatapoint_genericdefnofderiv}
    d_{s}u = \lim_{i\rightarrow\infty}  P_{s} A_{[w]_{i}}^{-1} H_{i} \bigl( u - G D^{k-1} \Delta u  \bigr)
    \end{equation}
if these limits exist. Note that  $d_{s}$ always exists for harmonic functions because the sequence
is constant in this case. Provided the necessary $d_{s}u$ exist we then let
\begin{equation}\label{distribsatapoint_genericdefnofdifferential}
    D^{k}u=h+ G D^{k-1}\Delta u
    \end{equation}
where $h$ is the unique harmonic function on $X$ with $d_{s}h=d_{s}u$ for those $s$ with
$\gamma_{s}>\beta_{w}^{k}$ and $d_{s}h=0$ for all other $s$.  We will also make use of
$\bar{D}^{k}u$, where $\bar{D}^{0}=u(0)$ and
\begin{equation*}
    \bar{D}^{k}u=\bar{h}+ G \bar{D}^{k-1}\Delta u
    \end{equation*}
with $\bar{h}$ harmonic on $X$ with $d_{s}\bar{h}=d_{s}u$ for those $s$ with
$\gamma_{s}\geq\beta_{w}^{k}$ and $d_{s}\bar{h}=0$ for all other $s$.
\end{definition}

Observe that this generalizes Definition~\ref{distribsatapoint_defnofdsatjunctionpt}, because if
$x=F_{w}(X)$ is a junction point then $w$ ends with infinite repetition of a single letter $j$, the
Lyapunov exponents are the eigenvalues of $A_{j}$, and the action of $A_{[w]_{i}}^{-1}$ on the
eigenspace $E_{s}$ is just multiplication by $\gamma_{s}^{-i}$.

The following result may be seen as a generalization of Theorem 1 of \cite{MR1761365}, see also
Theorems~5 and~6 of \cite{pelanderteplyaev07}.  It is proved by essentially the same method as
Lemma~\ref{distribsatapoint_existenceofdsatjunctionpoints}. At several points in the proof we use
the observation that for a positive sequence $a_{i}$ satisfying $\lim i^{-1}\log a_{i} =\log a$ and
a value $\epsilon>0$ there is a constant $C(\epsilon)$ so $C(\epsilon)^{-1}e^{-\epsilon i}
a^{i}\leq a_{i}\leq C(\epsilon) e^{\epsilon i} a^{i}$.

\begin{lemma}\label{distribsatapoint_harmonictangenetexistsatgeneric}
Assume that all $A_{j}$ are invertible, and that $x=F_{w}(X)$ is a point at which the limits
in~\eqref{distribsatapoint_defnofLyupunov} and~\eqref{distribsatapoint_asymptoticLaplacianscaling}
exist. For $u\in \dom(\Delta^{k})$ and each $s$ such that $\gamma_{s}>\beta_{w}^{k}$, the
derivative $d_{s}$ exists, and
\begin{equation}\label{distribsatapoint_genericesttoprovedsuisorderkdistrib}
    |d_{s}u |\leq C(k)\sum_{l=0}^{k} \|\Delta^{l} u\|_{\infty}.
    \end{equation}
For all sufficiently small $\epsilon>0$, the differential satisfies
\begin{equation}\label{distribsatapoint_genericdifferentialestimate}
    \bigl\| u- D^{k}u \bigr\|_{L^{\infty}(F_{[w]_{i}}(X))}
    \leq  C(k,\epsilon) \beta_{w}^{ik} e^{\epsilon i} \|\Delta^{k} u\|_{\infty}.
    \end{equation}
If in addition we assume that $\Delta^{k}u\in\dom(E)$ then
\begin{equation}\label{distribsatapoint_genericbardiffestimate}
    \bigl\| u- \bar{D}^{k}u \bigr\|_{L^{\infty}(F_{[w]_{i}}(X))}
    \leq  C(k,\epsilon) r_{[w]_{i}}^{1/2} \beta_{w}^{ik} e^{\epsilon i} E^{1/2}\bigl( \Delta^{k} u\bigr).
    \end{equation}
\end{lemma}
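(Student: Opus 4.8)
The plan is to run the same induction on $k$ that proves Lemma~\ref{distribsatapoint_existenceofdsatjunctionpoints}, with three systematic substitutions: the single matrix $A$ is replaced by the products $A_{[w]_i}$; the operator $\gamma_s^{-i}P_s$, which in the junction case equals $P_sA^{-i}$ because spectral projections commute with $A$, is replaced by $P_sA_{[w]_i}^{-1}$; and the polynomial factors $i^k$ are replaced by factors $e^{\epsilon i}$, since the geometric scalings are now controlled only up to subexponential errors. Before starting I would fix a small $\epsilon>0$ and record, using the observation stated just before the lemma, constants $C(\epsilon)$ for which $r_{[w]_i}\mu_{[w]_i}\le C(\epsilon)e^{\epsilon i}\beta_w^i$, for which $\|A_{[w]_i}\alpha\|\le C(\epsilon)e^{\epsilon i}\gamma_s^i\|\alpha\|$ whenever $\alpha\in E_s$ (expand $\alpha$ in a fixed basis of $E_s$ and apply~\eqref{distribsatapoint_defnofLyupunov} to each basis vector), and --- this is the key preliminary --- for which $\|(A_{[w]_i}^{-1})^{\ast}\alpha\|\le C(\epsilon)e^{\epsilon i}\gamma_s^{-i}\|\alpha\|$ whenever $\alpha\in E_s$, which is Lemma~\ref{distribsatpoint_scalingofAinverse} made uniform over $E_s$ in the same basis-expansion way. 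Since $P_s$ is the \emph{orthogonal} projection onto $E_s$, this last bound gives, for any vector $v$,
\[
    \|P_sA_{[w]_i}^{-1}v\|=\sup_{\alpha\in E_s,\ \|\alpha\|\le1}\bigl|\langle v,(A_{[w]_i}^{-1})^{\ast}\alpha\rangle\bigr|\le C(\epsilon)e^{\epsilon i}\gamma_s^{-i}\|v\|,
\]
which will play the role of the identity $P_sA^{-i}=\gamma_s^{-i}P_s$ from the junction argument.

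The base case $k=0$ is handled exactly as there: the claim about $d_s$ is vacuous because $\gamma_1=1=\beta_w^0$ while $\gamma_s<1$ for $s\ge2$; estimate~\eqref{distribsatapoint_genericdifferentialestimate} is immediate from $\|u-u(q)\|_\infty\le2\|u\|_\infty$; and~\eqref{distribsatapoint_genericbardiffestimate} follows from the H\"older estimate~\eqref{setting_resistmetricestimate} together with the standard fact that the resistance diameter of $F_{[w]_i}(X)$ is at most a constant times $r_{[w]_i}$. For the inductive step I would assume all three statements at level $k-1$, set $g=\Delta u-D^{k-1}\Delta u$ and write $u-GD^{k-1}\Delta u=H_0u+Gg$, so that~\eqref{distribsatapoint_genericdefnofderiv} reads $d_su=d_sH_0u+\lim_iP_sA_{[w]_i}^{-1}H_iGg$ once the limit is shown to exist. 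On $F_{[w]_i}(X)$ one has $Gg=H_iGg+G_ig$ with $G_i$ the Green's operator of that cell; passing to scale $i+1$ and applying $A_{[w]_{i+1}}^{-1}$ (using $A_{[w]_{i+1}}^{-1}A_{w_{i+1}}=A_{[w]_i}^{-1}$) produces the telescoping identity $P_sA_{[w]_{i+1}}^{-1}H_{i+1}Gg-P_sA_{[w]_i}^{-1}H_iGg=P_sA_{[w]_{i+1}}^{-1}H_{i+1}G_ig$. The right-hand side is at most $C(\epsilon)e^{\epsilon i}\gamma_s^{-i}\|G_ig\|_{L^\infty(F_{[w]_i}(X))}$ by the display above; the Green's-operator scaling gives $\|G_ig\|_{L^\infty(F_{[w]_i}(X))}\le C\,r_{[w]_i}\mu_{[w]_i}\|g\|_{L^\infty(F_{[w]_i}(X))}$, and the inductive hypothesis~\eqref{distribsatapoint_genericdifferentialestimate} applied to $\Delta u$ bounds $\|g\|_{L^\infty(F_{[w]_i}(X))}$ by $C(k-1,\epsilon)\beta_w^{i(k-1)}e^{\epsilon i}\|\Delta^ku\|_\infty$. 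Collecting exponents, the $i$-th increment is $O\bigl((\beta_w^ke^{3\epsilon}/\gamma_s)^i\|\Delta^ku\|_\infty\bigr)$; since $\gamma_s>\beta_w^k$ I would now shrink $\epsilon$ so that $\beta_w^ke^{3\epsilon}/\gamma_s<1$, whence the sequence is Cauchy, $d_su$ exists, and summing the tail (together with $|d_sH_0u|\le C\|u\|_\infty$ and $\|Gg\|_\infty\le C\sum_{l=1}^k\|\Delta^lu\|_\infty$) yields~\eqref{distribsatapoint_genericesttoprovedsuisorderkdistrib}.

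For~\eqref{distribsatapoint_genericdifferentialestimate} I would mimic~\eqref{expansionofuminusDkuonFiX}: on $F_{[w]_i}(X)$, $u-D^ku=H_i(u-h-GD^{k-1}\Delta u)+G_ig$, with the Green's term estimated exactly as in the previous paragraph. For the harmonic term I would write $u-h-GD^{k-1}\Delta u=(H_0u-h)+Gg$ with $H_0u-h$ harmonic on $X$, decompose its boundary data over the $E_s$, and control each $E_s$-component of $A_{[w]_i}^{-1}$ applied to that data: for $\gamma_s>\beta_w^k$ the defining property of $h$ together with the tail bound from the previous paragraph gives a component of size $O((\beta_w^ke^{3\epsilon}/\gamma_s)^i\|\Delta^ku\|_\infty)$, while for $\gamma_s\le\beta_w^k$ the same telescoped expansion used in~\eqref{estforlowerordertermsinexpansionwithdifferentialremoved} gives $O(\beta_w^{ik}e^{\epsilon i}\|\Delta^ku\|_\infty)$ --- here the geometric ratio $(\gamma_s/\beta_w^k)^i\le1$ does the job performed by the explicit power of $i$ in the junction case. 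Re-applying $A_{[w]_i}$ to the $E_s$-component costs at most a factor $C(\epsilon)e^{\epsilon i}\gamma_s^i$, so in both ranges the contribution to $\|u-D^ku\|_{L^\infty(F_{[w]_i}(X))}$ is $O(\beta_w^{ik}e^{4\epsilon i}\|\Delta^ku\|_\infty)$; summing the finitely many $s$ and renaming $\epsilon$ finishes it. Finally~\eqref{distribsatapoint_genericbardiffestimate} is the same induction with $\bar D$ replacing $D$ and~\eqref{distribsatapoint_genericbardiffestimate} replacing~\eqref{distribsatapoint_genericdifferentialestimate} throughout; the extra factor $r_{[w]_i}^{1/2}$, which enters through the H\"older estimate in the base case and survives because $\Delta^ku\in\dom(E)$, makes every series converge geometrically even when $\gamma_s=\beta_w^k$ --- precisely why $\bar D^k$ is allowed to absorb the borderline derivatives --- and at the same time dispenses with the need for the $e^{\epsilon i}$ slack to beat a polynomial.

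The step I expect to be the main obstacle is replacing the junction-case identity $P_sA^{-1}=\gamma_s^{-1}P_s$: in the variable-matrix setting the Lyapunov decomposition $\bigoplus_sE_s$ need not be invariant under the individual extension matrices $A_{w_j}$, so $A_{[w]_i}^{-1}$ cannot simply be ``diagonalized''. The way around this is the preliminary display above, obtained from Lemma~\ref{distribsatpoint_scalingofAinverse} and the orthogonality of $P_s$ by a duality argument; once that is in hand the remaining difficulty is purely bookkeeping --- tracking how the $e^{\epsilon i}$ factors accumulate through the induction and checking at each stage that $\epsilon$ may be taken small enough for the relevant geometric series to converge, using $\gamma_s>\beta_w^k$ (or, for $\bar D$, the spare $r_{[w]_i}^{1/2}$).
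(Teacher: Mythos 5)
Your proposal follows essentially the same route as the paper's proof: the same induction on $k$, the same telescoping identity for $A_{[w]_{i+1}}^{-1}H_{i+1}Gg - A_{[w]_i}^{-1}H_iGg$, the same use of Lemma~\ref{distribsatpoint_scalingofAinverse} via duality and the orthogonality of $P_s$ to replace the junction-case eigenvalue identity (this is exactly how the paper handles what you correctly identify as the main obstacle), the same split into the ranges $\gamma_s>\beta_w^k$ and $\gamma_s\le\beta_w^k$ for the differential estimate, and the same modification for $\bar D$. The argument is correct as outlined.
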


\begin{proof}
The proof is inductive.  When $k=0$ there are no $s$ with $\gamma_{s}>1=\beta_{w}^{0}$, so the
first statement is vacuous and \eqref{distribsatapoint_genericdifferentialestimate} is immediate.
Suppose both hold up to $k-1$.

Write $u-GD^{k-1}\Delta u$ as $H_{0}u+G(\Delta u - D^{k-1}\Delta u)$, so
\begin{equation}\label{distribsatapoint_genericdecompofdsuviadifferential}
    d_{s}u
    =d_{s}H_{0}u + \lim_{i} P_{s} A_{[w]_{i}}^{-1} H_{i} \bigl( G\Delta u - G D^{k-1}\Delta u \bigr)
    \end{equation}
provided the latter limit exists.   Writing $G_{i}$ for the Dirichlet Green's operator on the cell
$F_{[w]_{i}}(X)$, we have on that cell
\begin{equation*}
    G (\Delta u- D^{k-1}\Delta u )
    = H_{i} G (\Delta u- D^{k-1}\Delta u ) + G_{i} (\Delta u- D^{k-1}\Delta u )
    \end{equation*}
from which
\begin{equation*}
    H_{i+1} G (\Delta u- D^{k-1}\Delta u )
    = A_{w_{i+1}} H_{i} G (\Delta u- D^{k-1}\Delta u ) + H_{i+1} G_{i} (\Delta u- D^{k-1}\Delta u),
    \end{equation*}
therefore
\begin{multline}
     A_{[w]_{i+1}}^{-1} H_{i+1} \bigl( G\Delta u - G D^{k-1}\Delta u \bigr)
         -  A_{[w]_{i}}^{-1} H_{i} \bigl( G\Delta u - G D^{k-1}\Delta u \bigr) \\
    =  A_{[w]_{i}}^{-1} H_{i+1} G_{i} (\Delta u- D^{k-1}\Delta u),
    \label{distribsatapoint_estimateforcauchybehavioroftailsinderivativelemma}
    \end{multline}
and by substitution into~\eqref{distribsatapoint_genericdecompofdsuviadifferential},
\begin{equation}\label{distribsatapoint_genericdsuasseries}
    d_{s}u
    =d_{s}H_{0}u + \sum_{0}^{\infty} P_{s} A_{[w]_{i}}^{-1} H_{i+1} G_{i} (\Delta u- D^{k-1}\Delta u)
    \end{equation}
provided that the series converges.

Since $G_{i}$ inverts the Laplacian with Dirichlet boundary conditions on $F_{[w]_{i}}(X)$, we have
for any sufficiently small $\epsilon>0$ the bound
\begin{align}
    \Bigl| G_{i} (\Delta u- D^{k-1}\Delta u ) \Bigr|
    &\leq C r_{[w]_{i}} \mu_{[w]_{i}} \bigl\| \Delta u -D^{k-1}\Delta u
    \bigr\|_{L^\infty(F_{[w]_{i}(X)})} \notag\\
    &\leq C(k-1,\epsilon) r_{[w]_{i}} \mu_{[w]_{i}} \beta_{w}^{(k-1)i} e^{(\epsilon/4) i}
        \bigl\| \Delta^{k} u \bigr\|_{\infty} \notag\\
    &\leq C(k-1,\epsilon) \beta_{w}^{ki} e^{(\epsilon/2) i} \bigl\| \Delta^{k} u \bigr\|_{\infty}
    \label{distribsatapoint_genericestimateforGiappliedtouminusdifferential}
    \end{align}
because of the inductive hypothesis~\eqref{distribsatapoint_genericdifferentialestimate} and the
Laplacian scaling estimate~\eqref{distribsatapoint_asymptoticLaplacianscaling}. This is also
applicable to $H_{i+1}G_{i} (\Delta u- D^{k-1}\Delta u )$ by the maximum principle. Using
Lemma~\ref{distribsatpoint_scalingofAinverse} to estimate the size of
$\|(A_{[w]_{i}}^{-1}\bigr)^{*}P_{s}\alpha\|$, it follows that for any sufficiently small
$\epsilon>0$, and any vector $\alpha$,
\begin{align*}
    \Bigl| \langle P_{s} A_{[w]_{i}}^{-1} H_{i+1} G_{i} (\Delta u- D^{k-1}\Delta u), \alpha \rangle
    \Bigr|
    &= \Bigl| \langle  H_{i+1} G_{i} (\Delta u- D^{k-1}\Delta u),
     \bigl(A_{[w]_{i}}^{-1}\bigr)^{*}P_{s}\alpha \rangle \Bigr| \\
    &\leq  C(k-1,\epsilon) \beta_{w}^{ki} \gamma_{s}^{-i} e^{(3\epsilon/4) i}
     \bigl\| \Delta u \bigr\|_{\infty},
    \end{align*}
This and the assumption $\gamma_{s}>\beta_{w}^{k}$ imply that if $\epsilon>0$ was chosen small
enough then the series in~\eqref{distribsatapoint_genericdsuasseries} converges, and is bounded by
$C\|\Delta^{k}u\|_{\infty}$.  The
estimate~\eqref{distribsatapoint_genericesttoprovedsuisorderkdistrib} follows because $d_{s}H_{0}u$
is bounded by $C\|u\|_{\infty}$.

Now $u-D^{k}u = u-h-GD^{k-1}\Delta u = H_{0}u - h + G(\Delta u - D^{k-1}\Delta u)$, where $h$ is
the harmonic function with $d_{s}u=P_{s}h$ for all $s$ satisfying $\gamma_{s}>\beta_{w}^{k}$ and
$P_{s}h=0$ otherwise.  An expression for $h$ can be obtained by
summing~\eqref{distribsatapoint_genericdsuasseries} over these values of $s$.  Comparing it to the
expression
\begin{equation*}
    A_{[w]_{i}}^{-1} H_{i} (u-D^{k}u)
    = H_{0}u - h + \sum_{0}^{i-1} A_{[w]_{l}}^{-1} H_{l+1} G_{l} (\Delta u - D^{k-1}\Delta u )
    \end{equation*}
from~\eqref{distribsatapoint_estimateforcauchybehavioroftailsinderivativelemma}, it is apparent
that for those $s$ with $\gamma_{s}>\beta_{w}^{k}$ we have
\begin{equation*}
    P_{s} A_{[w]_{i}}^{-1} H_{i}(u-D^{k}u)
    = - \sum_{i}^{\infty} P_{s} A_{[w]_{l}}^{-1} H_{l+1} G_{l} (\Delta u - D^{k-1} \Delta u)
    \end{equation*}
which we note satisfies for all $\|\alpha\|\leq 1$ and sufficiently small $\epsilon>0$
\begin{align}
    \bigl| \langle P_{s} A_{[w]_{i}}^{-1} H_{i}(u-D^{k}u), \alpha \rangle\bigr|
    &\leq \sum_{i}^{\infty} \bigl| \langle H_{l+1} G_{l} (\Delta u - D^{k-1} \Delta u),
    (A_{[w]_{l}}^{-1})^{\ast} P_{s} \alpha \rangle\bigr| \notag\\
    &\leq \sum_{i}^{\infty}
    C(k-1,\epsilon) \beta_{w}^{kl} \gamma_{s}^{-l} e^{(3\epsilon/4) l}
    \bigl\| \Delta^{k} u \bigr\|_{\infty} \notag\\
    &\leq C(k-1, \epsilon) \beta_{w}^{ki} \gamma_{s}^{-i} e^{(3\epsilon/4) i}
    \bigl\| \Delta^{k} u \bigr\|_{\infty}.
    \label{distribsatapoint_genericintermediateestonefordslemma}
    \end{align}
For those $s$ satisfying $\gamma_{s}\leq\beta_{w}^{k}$ we have instead
\begin{equation*}
    P_{s} A_{[w]_{i}}^{-1} H_{i}(u-D^{k}u)
    = \sum_{0}^{i} P_{s} A_{[w]_{l}}^{-1} H_{l+1} G_{l} (\Delta u - D^{k-1} \Delta u).
    \end{equation*}
and for all vectors $\alpha$ with $\|\alpha\|\leq 1$,
\begin{align}
    \bigl| \langle P_{s} A_{[w]_{i}}^{-1} H_{i}(u-D^{k}u), \alpha \rangle\bigr|
    &\leq \sum_{0}^{i}
    C(k-1,\epsilon) \beta_{w}^{kl} \gamma_{s}^{-l} e^{(3\epsilon/4) l}
    \bigl\| \Delta^{k} u \bigr\|_{\infty} \notag\\
    &\leq C(k-1,\epsilon) \beta_{w}^{ki} \gamma_{s}^{-i} e^{(3\epsilon/4) i}
    \bigl\| \Delta^{k} u \bigr\|_{\infty}.
    \label{distribsatapoint_genericintermediateesttwofordslemma}
    \end{align}
Equations~\eqref{distribsatapoint_genericintermediateestonefordslemma}
and~\eqref{distribsatapoint_genericintermediateesttwofordslemma} give the same estimate for each
$P_{s} A_{[w]_{i}}^{-1} H_{i}(u-D^{k}u)$.  Mapping forward again by $A_{[w]_{i}}$ increases each
term by a factor at most $C(\epsilon)\gamma_{s}^{i}e^{(\epsilon/4) i}$, so summing over all $s$ we
finally have
\begin{equation*}
    \bigl| H_{i}(u-D^{k}u) \bigr|
    \leq C \beta_{w}^{ki} e^{\epsilon i} \bigl\| \Delta^{k} u \bigr\|_{\infty}
    \end{equation*}
for some constant $C=C(k,\epsilon)$.  Now the restriction of $(u-D^{k}u)$ to $F_{[w]_{i}}(X)$ is
\begin{equation*}
    (u-D^{k}u) \evald{F{[w]_{i}}(X)}
    = H_{i}(u-D^{k}u) + G_{i}\bigl( \Delta (u- D^{k}u) \bigr)
    = H_{i}(u-D^{k}u) + G_{i}\bigl( \Delta u- D^{k-1}\Delta u) \bigr)
    \end{equation*}
the second term of which is bounded by $\beta_{w}^{ki}e^{\epsilon i} \bigl\| \Delta^{k} u
\bigr\|_{\infty}$ from~\eqref{distribsatapoint_genericestimateforGiappliedtouminusdifferential},
and the first term of which we have just estimated in the same way.  This
establishes~\eqref{distribsatapoint_genericdifferentialestimate} and completes the induction.

The proof of~\eqref{distribsatapoint_genericbardiffestimate} is the same, except
that~\eqref{distribsatapoint_genericbardiffestimate} is used in place
of~\eqref{distribsatapoint_genericdifferentialestimate} throughout.  The validity of the estimate
for $k=0$ is a consequence of the H\"{o}lder estimate~\eqref{setting_resistmetricestimate}.
\end{proof}

As previously, we fix orthonormal bases for the spaces $E_{s}$ and see that the components of
$d_{s}$ are distributions.

\begin{corollary}\label{distribsatapoint_genericordersofdistribs}
Suppose that $x$ satisfies the assumptions of
Lemma~\ref{distribsatapoint_harmonictangenetexistsatgeneric} and $\beta_{w}^{k}<\gamma_{s}$. Any
component $d_{s,v}$ of the derivative $d_{s}$ is a distribution of order at most $k$ supported at
$x$.  If also $\gamma_{s}<\beta_{w}^{k-1}$ then $d_{s,v}$ has order equal to $k$. If $d_{s,v}$ has
order $k$ then defining $\Delta^{l}d_{s,v}$ by $\Delta^{l}d_{s,v}\testfn =
d_{s,v}\Delta^{l}\testfn$ yields a distribution supported at $x$ and of order $k+l$.
\end{corollary}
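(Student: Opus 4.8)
The plan is to follow the proof of Corollary~\ref{distribsatapoint_dsisadistrib} for junction points essentially line by line, substituting the Lyapunov-exponent estimates of Lemma~\ref{distribsatapoint_harmonictangenetexistsatgeneric} for the eigenvalue estimates used there. That $d_{s}$ is linear on $\testfns$ is immediate from Definition~\ref{distribsatapoint_defnofdsatgenericpt}, and $d_{s}\testfn=0$ whenever $\testfn$ vanishes in a neighborhood of $x$ because each of the maps entering the definition of $d_{s}$ depends only on the germ of $\testfn$ at $x$; combining these facts with the bound~\eqref{distribsatapoint_genericesttoprovedsuisorderkdistrib} shows that every component $d_{s,v}$ is a distribution supported at $x$ and of order at most $k$.

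For the claim that $d_{s,v}$ has order exactly $k$ when $\gamma_{s}<\beta_{w}^{k-1}$, let $u_{s,v}$ denote the harmonic function on $X$ whose boundary vector on $V_{0}$ is the chosen unit basis vector $v\in E_{s}$. First I would note $d_{s,v}u_{s,v}=1$: since $\Delta u_{s,v}=0$ one has $D^{k-1}\Delta u_{s,v}=0$, so the sequence in~\eqref{distribsatapoint_genericdefnofderiv} reduces to $P_{s}A_{[w]_{i}}^{-1}H_{i}u_{s,v}$, and because $u_{s,v}\evald{F_{[w]_{i}}(X)}$ is harmonic with boundary vector $A_{[w]_{i}}v$, this sequence is the constant $P_{s}v=v$. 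The crux is the estimate $\|H_{i}u_{s,v}\|_{L^{\infty}(F_{[w]_{i}}(X))}\leq C\|A_{[w]_{i}}v\|$, together with the fact that $v\in E_{s}$ gives, for every $\epsilon>0$, a bound $\|A_{[w]_{i}}v\|\leq C(\epsilon)\gamma_{s}^{i}e^{\epsilon i}$ from~\eqref{distribsatapoint_defnofLyupunov}. Using $r_{[w]_{i}}\mu_{[w]_{i}}=\beta_{w}^{i}e^{o(i)}$ and the strict gap $\gamma_{s}<\beta_{w}^{k-1}$, a sufficiently small choice of $\epsilon$ yields $\|H_{i}u_{s,v}\|_{L^{\infty}(F_{[w]_{i}}(X))}=o\bigl(r_{[w]_{i}}\mu_{[w]_{i}}\bigr)^{k-1}$. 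Since also $\Delta^{l}u_{s,v}=0$ for $l\geq 1$ and $\Delta^{k-1}u_{s,v}(x)=0$ --- the latter being $u_{s,v}(x)=0$ when $k=1$, which holds because $\gamma_{s}<1$ forces $\|A_{[w]_{i}}v\|\to 0$ --- the hypotheses of Theorem~\ref{distribsatapoint_decayofHijtestfnimpliesgoodextension} are satisfied with $m=k-1$. That theorem then furnishes, for each $\epsilon>0$, a test function $\psi$ that equals $u_{s,v}$ near $x$ and has $|\psi|_{k-1}<\epsilon$; since $d_{s,v}\psi=d_{s,v}u_{s,v}=1$ by the support property, no estimate of the form $|d_{s,v}\testfn|\leq M|\testfn|_{k-1}$ can hold, so the order of $d_{s,v}$ is exactly $k$. (The borderline case $\gamma_{s}=\beta_{w}^{k-1}$ is deliberately not claimed, since scaling information alone cannot decide it, just as for junction points.)

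The final assertion is handled as in the junction case. That $\Delta^{l}d_{s,v}$ is supported at $x$ and has order at most $k+l$ is immediate from $\Delta^{l}d_{s,v}\testfn=d_{s,v}\Delta^{l}\testfn$ and~\eqref{distribsatapoint_genericesttoprovedsuisorderkdistrib}; for the reverse inequality one repeats the argument of the previous paragraph applied to test functions whose $l$-th iterate of the Laplacian agrees with $u_{s,v}$ near $x$, obtained by anti-differentiating the $\psi$ above with the Green's operator of Lemma~\ref{setting_greensfnexsts} and then smoothing near the boundary by Theorem~\ref{setting_Borelthm}. I expect the one genuinely new point to be the estimate $\|H_{i}u_{s,v}\|_{\infty}=o\bigl(r_{[w]_{i}}\mu_{[w]_{i}}\bigr)^{k-1}$: unlike the junction-point situation, where harmonic functions scale by the exact factor $\gamma_{s}^{i}$, here the Lyapunov exponent only controls $\|A_{[w]_{i}}v\|$ up to an $e^{o(i)}$ factor and $r_{[w]_{i}}\mu_{[w]_{i}}$ itself carries such a factor, so one must check that the strict inequality $\gamma_{s}<\beta_{w}^{k-1}$ absorbs both --- after which everything is parallel to Corollary~\ref{distribsatapoint_dsisadistrib}.
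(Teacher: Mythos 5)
Your proposal is correct and follows essentially the same route as the paper: linearity and the bound~\eqref{distribsatapoint_genericesttoprovedsuisorderkdistrib} give order at most $k$ and support at $x$, and for the sharpness one takes the harmonic function with boundary vector $v$, notes the defining sequence is constant so $d_{s,v}$ evaluates to $1$ on it, bounds $\|H_{i}h\|_{\infty}\leq C(\epsilon)\gamma_{s}^{i}e^{\epsilon i}=o\bigl(r_{[w]_{i}}\mu_{[w]_{i}}\bigr)^{k-1}$ using the strict gap $\gamma_{s}<\beta_{w}^{k-1}$, and invokes Theorem~\ref{distribsatapoint_decayofHijtestfnimpliesgoodextension} with $m=k-1$. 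Your extra care with the $k=1$ case of the hypothesis $\Delta^{k-1}u_{s,v}(x)=0$ and your sketch for the final statement (which the paper dismisses as obvious) are both sound additions rather than deviations.
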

\begin{proof}
Linearity of $d_{s,v}$ is immediate from Definition~\ref{distribsatapoint_defnofdsatgenericpt}, so
it is a distribution of order at most $k$
by~\eqref{distribsatapoint_genericesttoprovedsuisorderkdistrib}.  Again using
Definition~\ref{distribsatapoint_defnofdsatgenericpt} it is apparent that $d_{s,v}\testfn=0$ if
$\testfn\in\testfns$ vanishes in a neighborhood of $x$, so $d_{s,v}$ is supported at $x$.

To see that $d_{s,v}$ has order at least $k$, consider the harmonic function $h$ with boundary
values equal to the unit vector in the $v$ direction in $E_{s}$.  Then $H_{i}h=A_{[w]_{i}}H_{0}h$,
so the sequence in~\eqref{distribsatapoint_genericdefnofderiv} is constant equal to $H_{0}h$, and
$d_{s,v}h=1$.  Now for $\epsilon>0$ so small that $\gamma_{s}e^{3\epsilon}\leq \beta_{w}^{k-1}$ we
have
\begin{equation*}
    \bigl\| H_{i}h \bigr\|_{\infty}
    \leq C(\epsilon) \gamma_{s}^{i} e^{\epsilon i}
    \leq C(\epsilon) \beta_{w}^{(k-1)i} e^{-2\epsilon i}
    \leq C(\epsilon) \big( r_{[w]_{i}}\mu_{[w]_{i}} \bigr)^{k-1} e^{-\epsilon i}
    =o\big( r_{[w]_{i}}\mu_{[w]_{i}} \bigr)^{k-1}
    \end{equation*}
and of course $\Delta^{l}h\equiv0$ for all $l>0$, so
Theorem~\ref{distribsatapoint_decayofHijtestfnimpliesgoodextension} applies with $m=k-1$, and there
is a test function $\testfn$ such that $\testfn=h$ in a neighborhood of $x$ and $|\testfn|_{k-1}$
is as small as we desire.  Since $d_{s,v}h=d_{s,v}\testfn$ by the support condition, $d_{s,v}$
cannot be of order $k-1$ or less. The final statement of the lemma is obvious.
\end{proof}

\begin{theorem}\label{distribsatapoint_identifydistribsatgenericpoints}
Suppose that all of the matrices $A_{j}$ are invertible, and that $x=F_{w}(X)$ is a point at which
the limits in~\eqref{distribsatapoint_defnofLyupunov}
and~\eqref{distribsatapoint_asymptoticLaplacianscaling} exist.  Then all distributions of order at
most $k$ at $x$ are linear combinations of the distributions $\Delta^{l}d_{s,v}$, with
$\gamma_{s}\geq\beta_{w}^{k-l}$.
\end{theorem}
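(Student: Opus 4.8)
The plan is to mimic, almost line for line, the proof of Theorem~\ref{distribsatapoint_identifyingdistribsatjunctionpoint}, with the generic-point objects of Definition~\ref{distribsatapoint_defnofdsatgenericpt} and Lemma~\ref{distribsatapoint_harmonictangenetexistsatgeneric} replacing their junction-point analogues. Fix a distribution $T$ of order at most $k$ supported at $x$. Since $\gamma_{s}\leq\gamma_{1}=1$ while $\beta_{w}<1$, the condition $\gamma_{s}\geq\beta_{w}^{k-l}$ forces $l\leq k$, so the family of distributions $\Delta^{l}d_{s,v}$ appearing in the statement is finite. Hence it suffices to show that $T\testfn=0$ whenever $\testfn\in\testfns$ lies in the common kernel of all of them: by the standard fact that a linear functional annihilating the common kernel of finitely many functionals is a linear combination of them (Lemma~3.9 of~\cite{Rudin}), $T$ will then be the required linear combination.

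So fix such a $\testfn$. First I would unwind Definition~\ref{distribsatapoint_defnofdsatgenericpt}: since $\Delta^{l}d_{s,v}\testfn=d_{s,v}\Delta^{l}\testfn=0$ for every $0\leq l\leq k$ and every $s$ with $\gamma_{s}\geq\beta_{w}^{k-l}$, an induction through the recursion defining $\bar{D}$ shows that $\bar{D}^{k-l}\Delta^{l}\testfn=0$ for all $0\leq l\leq k$; in particular $\Delta^{k}\testfn(x)=0$, this being the bottom ($l=k$) case, legitimate because $\gamma_{1}=1\geq\beta_{w}^{0}$ and $d_{1,v}$ is evaluation at $x$. Then, applying the bound~\eqref{distribsatapoint_genericbardiffestimate} of Lemma~\ref{distribsatapoint_harmonictangenetexistsatgeneric} with $\Delta^{l}\testfn$ (which is smooth, hence in $\dom(E)$) in place of $u$ and $k-l$ in place of $k$, and using $\bar{D}^{k-l}\Delta^{l}\testfn=0$, I obtain for every sufficiently small $\epsilon>0$
\begin{equation*}
    \bigl\| \Delta^{l}\testfn \bigr\|_{L^{\infty}(F_{[w]_{i}}(X))}
    \leq C(k-l,\epsilon)\, r_{[w]_{i}}^{1/2}\,\beta_{w}^{(k-l)i}\,e^{\epsilon i}\,E^{1/2}\bigl(\Delta^{k}\testfn\bigr),
\end{equation*}
and, by the maximum principle, the harmonic part $H_{i}\Delta^{l}\testfn$ obeys the same estimate.

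The one genuinely new ingredient compared with the junction case — where the cell scaling was exactly $r_{j}\mu_{j}$ — is the passage from the asymptotic Laplacian scaling $\beta_{w}$ back to the true cell scaling $r_{[w]_{i}}\mu_{[w]_{i}}$. Using~\eqref{distribsatapoint_asymptoticLaplacianscaling} in the form $\beta_{w}^{i}\leq C(\epsilon)e^{\epsilon i}r_{[w]_{i}}\mu_{[w]_{i}}$ rewrites the displayed bound as a constant times $r_{[w]_{i}}^{1/2}\,e^{(k-l+1)\epsilon i}\bigl(r_{[w]_{i}}\mu_{[w]_{i}}\bigr)^{k-l}$; since $r_{[w]_{i}}^{1/2}\leq(\max_{j}r_{j})^{i/2}$ decays geometrically, choosing $\epsilon$ small enough (depending only on $k-l$ and the harmonic structure) makes $r_{[w]_{i}}^{1/2}e^{(k-l+1)\epsilon i}\to0$, so that $\|H_{i}\Delta^{l}\testfn\|_{L^{\infty}(F_{[w]_{i}}(X))}=o\bigl(r_{[w]_{i}}\mu_{[w]_{i}}\bigr)^{k-l}$ for $0\leq l\leq k-1$. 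I expect this $\epsilon$-bookkeeping — making sure the honest geometric decay of $r_{[w]_{i}}^{1/2}$ swamps the several $e^{\epsilon i}$ factors — to be the only real (and fairly minor) obstacle; the rest is formally identical to the junction argument.

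With these decay estimates and $\Delta^{k}\testfn(x)=0$ in hand, the hypotheses of Theorem~\ref{distribsatapoint_decayofHijtestfnimpliesgoodextension} are satisfied with $m=k$ (and with a single word $w$, since $x$ is a non-junction point), so for each $\epsilon>0$ there is $\testfn_{x}\in\testfns$ with $|\testfn_{x}|_{k}<\epsilon$ such that $\testfn-\testfn_{x}$ is supported away from $x$; one may arrange $\testfn_{x}$ to have support in a fixed compact neighborhood of $x$. Because $T$ is supported at $x$, $T\testfn=T\testfn_{x}$, and because $T$ has order at most $k$, $|T\testfn_{x}|\leq M|\testfn_{x}|_{k}<M\epsilon$ with $M$ depending only on $T$ and that neighborhood. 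Letting $\epsilon\to0$ gives $T\testfn=0$, completing the proof.
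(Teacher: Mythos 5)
Your proposal is correct and follows essentially the same route as the paper: reduce to showing $T$ annihilates the common kernel, deduce $\bar{D}^{k-l}\Delta^{l}\testfn=0$, apply~\eqref{distribsatapoint_genericbardiffestimate} and absorb the $e^{\epsilon i}$ factors into the geometric decay of $r_{[w]_{i}}^{1/2}$ to get the $o(r_{[w]_{i}}\mu_{[w]_{i}})^{k-l}$ bound, then invoke Theorem~\ref{distribsatapoint_decayofHijtestfnimpliesgoodextension} and the order/support conditions. The extra care you take with the $\epsilon$-bookkeeping and the finiteness of the family of distributions is exactly what the paper's chain of inequalities does implicitly.
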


\begin{proof}
As in the proof of Theorem~\ref{distribsatapoint_identifyingdistribsatjunctionpoint}, it suffices
to show that $T$ vanishes whenever the distributions $\Delta^{l}d_{s,v}$, with
$\gamma_{s}\geq\beta_{w}^{k-l}$ vanish.

Suppose $\testfn\in\testfns$ satisfies $\Delta^{l}d_{s,v}\testfn=0$ for those
$\gamma_{s}\geq\beta_{w}^{k-l}$. Then the differential $\bar{D}^{k}\testfn$ (which exists by
Lemma~\ref{distribsatapoint_harmonictangenetexistsatgeneric}) must be zero, as must
$\bar{D}^{k-l}\Delta^{l}\testfn$ for each $0\leq l\leq k$.
From~\eqref{distribsatapoint_genericbardiffestimate} we then see that for all sufficiently small
$\epsilon>0$,
\begin{align*}
    \bigl\| H_{i} \Delta^{l}\testfn \bigr\|_{L^{\infty}(F_{[w]_{i}}(X))}
    \leq \bigl\| \Delta^{l}\testfn \bigr\|_{L^{\infty}(F_{[w]_{i}}(X))}
    &\leq  C(k,\epsilon) r_{[w]_{i}}^{1/2} \beta_{w}^{i(k-l)} e^{\epsilon i} E^{1/2}\bigl( \Delta^{k}
    u\bigr)\\
    &\leq  C(k,\epsilon) r_{[w]_{i}}^{1/2} (r_{[w]_{i}} \mu_{[w]_{i}})^{k-l} e^{2\epsilon i} E^{1/2}\bigl( \Delta^{k}
    u\bigr)\\
    &=o(r_{[w]_{i}} \mu_{[w]_{i}})^{k-l}.
    \end{align*}

Applying Theorem~\ref{distribsatapoint_decayofHijtestfnimpliesgoodextension} we find that for any
$\delta>0$ there is $\psi$ equal to $\testfn$ in a neighborhood of $x$ and such that
$|\psi|_{k}<\delta$. In particular, since $T$ is order $k$ and supported at $x$, there is $M$
independent of $\testfn$ such that
\begin{equation*}
    |T\testfn|=|T\psi| \leq M|\psi| \leq \delta
    \end{equation*}
and thus $T\testfn=0$.
\end{proof}

In concluding this section it seems appropriate to say something about the set of points $x$
satisfying the conditions in Definition~\ref{distribsatapoint_defnofdsatgenericpt}.  The set at
which the limit $\beta_{w}$ exists has full $\mu$-measure by the law of large numbers, and in fact
\begin{equation*}
    \beta_{w}= \sum_{j}\mu_{j}\log r_{j}\mu_{j}
    \end{equation*}
at $\mu$-a.e. point.  The set on which the Lyupunov exponents exist may be treated by the theory of
random matrices introduced by Furstenberg  and Kesten \cite{MR0121828}.  In particular, it is
possible to make certain assumptions on the matrices $A_{i}$ that guarantee that this set is also
of full $\mu$-measure. This topic is discussed quite thoroughly in the paper
\cite{pelanderteplyaev07} of Pelander and Teplyaev, so we will not cover it here.  One consequence
of their work, however, is that there are conditions that imply the spaces $E_{s}$ are independent
of the choice of point $x$. For example, if the semigroup generated by the $A_{i}$ is strongly
irreducible and contracting then there is a single vector $\alpha_{1}$ such that at $\mu$-almost
every $x$, the space $E_{1}$ is spanned by $\alpha_{1}$ and has scaling $\gamma_{1}$. If the same
strong irreducibility and contraction holds after taking the quotient to remove $E_{1}$ then
$E_{2}$ is also one-dimensional and independent of $x$ on a full measure set.  For a fractal where
the irreducibility and contraction properties are true for the semigroup generated by the $A_{i}$
on each of the subspaces found by removing $E_{1}, E_{2},..,E_{s-1}$ in turn, we could conclude
that all of the distributions of the form $d_{s}$ are independent of $x$ on a set of full
$\mu$-measure.  Hence in this situation any distribution of order $m$ with point support in a fixed
set of full $\mu$-measure would be a finite linear combination of distributions $\Delta^{l}d_{s}$
for suitable values of $l$, where the $d_{s}$ are independent of $x$.  This generic behavior is
very different from that seen at junction points and eventually periodic points, where the
structure of point-supported distributions can vary substantially from point to point.

\section{Distributions on products}

In this section we give a theory of distributions on finite products of post-critically finite
self-similar fractals, using the analytic theory for such products developed in~\cite{MR2095624}.
This gives genuinely new examples, because products of p.c.f. self-similar sets are not usually
themselves p.c.f. Since there is no essential difference between a product $X=X'\times X''$ with
two factors and a general finite product, we state our results only for the two factor case.

Following the notational conventions of~\cite{MR2095624}, points are $x=(x',x'')$, functions on $X$
are called $u$ or $f$, on $X'$ they are $u'$ or $f'$, while on $X''$ they are $u''$ or $f''$. The
energies on $X'$ and $X''$ are $\DF'$ and $\DF''$ and the Laplacians are $\Delta'$ and $\Delta''$.
They come from a regular harmonic structure as in Section~\ref{setting_section} and have energy and
measure scaling factors $r'$, $\mu'$, $r''$ and $\mu''$.  The corresponding Laplacians $\Delta'$
and $\Delta''$ are defined componentwise, so $u\in\dom(\Delta')$ with $\Delta' u=f$ if $u$ and $f$
are continuous on $X$ and have the property that for each fixed $x''\in X''$ we have
$\Delta'u(\cdot,x'')=f(\cdot,x'')$.  A similar definition is used for $\Delta''$. By Lemma~11.2
of~\cite{MR2095624}, $\Delta'$ and $\Delta''$ commute on $\dom(\Delta')\cap\dom(\Delta'')$.

\begin{definition}
A function $u$ on $X$ is smooth if for all $j,k\in\mathbb{N}$, $(\Delta')^{j}(\Delta'')^{k}u$ is a
continuous function on $X$.  The definition extends to a finite union of cells in the obvious
manner, and $u$ is smooth on a domain in $X$ if it is smooth on every finite union of cells in the
domain.
\end{definition}

We define the test functions on a domain $\domain$ to be the smooth functions of compact support
with the usual topology and the corresponding seminorms
\begin{equation*}
    |\testfn|_{m}
    = \sup \Bigl\{ \bigl|(\Delta')^{j}(\Delta'')^{k}\testfn(x)\bigr|: x\in\domain,\, j+k\leq m''
    \Bigr\}.
    \end{equation*}
The distributions form the dual space with weak-star topology.  A distribution $T$ has order $m$ if
on any compact $K$ there is $M=M(K)$ so that $|T\testfn|\leq M|\testfn|_{m}$ for all test functions
supported on $K$

The goal of this section is to provide conditions under which analogues of our main results for
distributions on p.c.f. fractals are also valid on products of these fractals. In order to avoid
duplicating a great deal of work, we only give details of the proofs where they differ
significantly from those for the case of a single p.c.f. fractal.  In particular it is fairly easy
to verify that all of the results of Section~\ref{testfunction_section} (except
Corollary~\ref{IntersectofSobolevspaces}), Section~\ref{distributions_section}, and
Section~\ref{structuresection} prior to
Theorem~\ref{structure_positivedistribsarepositivemeasures}, depend only on the partitioning
property of Theorem~\ref{setting_partitionthm} and the estimate~\eqref{setting_extensionestimate}
(either directly or through Lemma~\ref{testfunction_partition}) as well as the fact that for
compact $\Omega$ there is an orthonormal basis of $L^2$ consisting of eigenfunctions.  The latter
is obviously true for the product $X'\times X''$ because it is true for the factors, so the
original proofs transfer to the product setting once we know the partitioning property and the
corresponding estimate for product spaces.  These are proved in Theorem~\ref{productparitioning}
and~\eqref{productpartitioningestimate} below, so all of the aforementioned results are also true
for products of p.c.f. fractals with regular harmonic structure, and connected fractafolds with
restricted cellular structure based on such products.

Only small changes are needed to obtain analogues of the remaining results from
Section~\ref{structuresection}.  The proof of
Theorem~\ref{structure_positivedistribsarepositivemeasures} required that on any cell there was a
positive smooth function equal to $1$ on the cell and vanishing outside a specified neighborhood:
such a function may be obtained on the product space as a product of functions of this type on the
factors, so the theorem is true for products in which each factor has the
estimate~\eqref{setting-subGaussianbound} for the heat kernel.  The other results are used to prove
the structure theorem (Theorem~\ref{structure_structuretheorem}).  Of these,
Lemma~\ref{structure_greensfnforKexists} remains true with the same proof if it is modified to say
that $\Delta'$ maps $\testfnson(K)$ to itself with image orthogonal to those $\testfn$ having
$\Delta'\testfn=0$ and there is $\tilde{G}'_{K}$  such that
$-\Delta'\tilde{G}'_{K}(\Delta'\testfn)=\Delta'\testfn$; there is a corresponding result for
$\Delta''$.  A version of Theorem~\ref{structure_Gondistribus} is then true with
$\tilde{G}''_{K}\tilde{G}'_{K}$ replacing $\tilde{G}_{K}$ throughout.  The original proof shows
that for $m\geq1$, $\tilde{G}''_{K}\tilde{G}'_{K}$ takes a distribution of order $m$ to one of
order at most $m-1$. To show that $\tilde{G}''_{K}\tilde{G}'_{K}$ takes a distribution of order
zero to a continuous function it suffices to approximate the corresponding measure $\nu$ by a
sequence of linear combinations of product measures. Applying $\tilde{G}''_{K}\tilde{G}'_{K}$ to a
product measure gives a continuous function by the original proof of
Theorem~\ref{structure_Gondistribus}, so applying it to the sequence gives a uniformly convergent
sequence of continuous functions whose limit represents the distribution
$\tilde{G}''_{K}\tilde{G}'_{K}\nu$.  The proof of Theorem~\ref{structure_structuretheorem} needs no
further changes.

At this point we have essentially all of the results of Sections~\ref{testfunction_section},
\ref{distributions_section}, and~\ref{structuresection} in the product setting (the only exception
is Corollary~\ref{IntersectofSobolevspaces}). In addition there are some things that can be said
about distributions with point support that generalize the results of
Section~\ref{distribsatpoint_section}.  We will return to these after giving the details of the
partitioning argument, because some aspects of the procedure for cutting off a smooth function will
be important for the proofs.

\subsection*{Partitioning on products}

We prove analogues of the partitioning property in Theorem~\ref{setting_partitionthm} and the
estimate~\eqref{setting_extensionestimate} in the product setting.  As in the single variable case,
the proof relies on a cell-by-cell construction of a smooth function, for which the following
matching condition is essential.  Note that a cell in $X$ is a product of cells from $X'$ and
$X''$, so has the form $K=F'_{w'}(X')\times F''_{w''}(X'')$, where $w'$ and $w''$ are finite words.
Its boundary consists of faces $\{q'_{i}\}\times F''_{w''}(X'')$ and $F'_{w'}(X')\times\{q''_{j}\}$
for $q'_{i}\in V'_{0}$ and $q''_{j}\in V''_{0}$.

\begin{lemma}\label{productmatchinglemma}
Suppose the cells $K_{1},\dotsc,K_{k}$ all contain the face $L=\{q'\}\times F''_{w''}(X'')$, and
that the union $\cup_{1}^{k} K_{l}$ contains a neighborhood of every point in $L$ except those of
the form $\bigl(q',F''_{w''}q''\bigr)$ with $q''\in V_{0}$.  If $u_{j}$ is smooth on $K_{j}$ for
each $j$, then the piecewise defined function $u=u_{j}$ on $K_{j}$ is smooth on $\cup_{1}^{k}
K_{l}$ if and only if for each $x''\in X''$, both\\
(a) The functions $(\Delta')^{l}(\Delta'')^{m}u_{j}(q',x'')$ are independent of $j$ for each $l$ and $m$, and\\
(b) For each $x''$, $\sum_{j}(\partial'_{n})_{j}(\Delta')^{l}(\Delta'')^{m}u_{j}(q',x'')=0$, where
$(\partial'_{n})_{j}$ indicates the normal derivative in the $x'$ variable from within $K_{j}$.
\end{lemma}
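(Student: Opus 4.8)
The statement is the product-fractal analogue of the ordinary matching conditions for the Laplacian recalled in Section~\ref{setting_section}, but now at a face $L=\{q'\}\times F''_{w''}(X'')$ rather than a single boundary point. The plan is to reduce the problem, for each fixed $x''\in X''$, to the single-variable matching condition in the $x'$ direction, and then to check that the resulting conditions --- which a priori hold pointwise in $x''$ --- are exactly conditions (a) and (b) as stated, together with the (automatic) smoothness of each piece in the $x''$ variable.

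First I would recall what ``smooth on $\cup_l K_l$'' means: for all $j,k$ the function $(\Delta')^{j}(\Delta'')^{k}u$ is continuous on the union, where $\Delta'$ and $\Delta''$ are the componentwise Laplacians from \cite{MR2095624}. Since $\Delta'$ and $\Delta''$ commute on the intersection of their domains (Lemma~11.2 of \cite{MR2095624}), I may apply them in either order. The key observation is that $u\in\dom(\Delta')$ on the union if and only if, for each fixed $x''$, the slice $u(\cdot,x'')$ lies in $\dom(\Delta')$ on $\cup_l F'_{w'_l}(X')$ as a function on a union of cells meeting along $q'$ --- and this last condition is governed precisely by the ordinary matching conditions at $q'$: the values $u_j(q',x'')$ agree across $j$, the values $\Delta' u_j(q',x'')$ agree across $j$, and the normal derivatives $(\partial'_n)_j u_j(q',x'')$ sum to zero. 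Applying this to $(\Delta')^{l}(\Delta'')^{m}u$ in place of $u$ (each of which is a legitimate smooth piece on each $K_j$ by hypothesis) gives exactly (a) and (b). Conversely, if (a) and (b) hold for all $l,m$, then for each $x''$ every iterate $(\Delta')^l(\Delta'')^m u(\cdot,x'')$ satisfies the matching condition at $q'$, hence is in $\dom((\Delta')^\infty)$ across the $x'$-faces, and one then has to promote this ``for each fixed $x''$'' statement to joint continuity on the union in $X$.

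The main obstacle is precisely that last promotion: knowing a function is in $\dom(\Delta')$ on each slice and is separately smooth in $x''$ on each $K_j$ is not literally the same as being in $\dom(\Delta')$ for the union as a subset of $X'\times X''$, because the definition of $\Delta'$ on the product requires joint continuity of $\Delta' u$ on $X$. The way to handle this is to note that the hypothesis ``$u_j$ smooth on $K_j$'' already gives joint continuity of every $(\Delta')^l(\Delta'')^m u_j$ on the closed cell $K_j$, and the matching conditions (a)--(b) force these to agree on overlaps, so the glued function $(\Delta')^l(\Delta'')^m u$ is well-defined and continuous on $\cup_l K_l$ for every $l,m$; then one verifies that this glued continuous function really is $(\Delta')^l(\Delta'')^m$ of the glued $u$ by testing against the weak (Gauss--Green) definition of the componentwise Laplacian, where the face terms cancel exactly because of (b) and the values match because of (a). I would also remark that the exceptional points $(q',F''_{w''}q'')$ with $q''\in V_0$ are genuinely exceptional: near such a corner point of $L$ the union need not be a neighborhood, so no matching is imposed there, exactly as in the single-variable statement one only imposes matching at interior gluing points. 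This mirrors the role of $\partial K$ in Lemma~\ref{setting_greensfnexsts}.

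Finally I would note that by symmetry the analogous statement holds for faces of the form $F'_{w'}(X')\times\{q''\}$ with the roles of $\Delta'$, $\Delta''$, $\partial'_n$, $\partial''_n$ interchanged, and that combining the two gives the full matching criterion for gluing smooth pieces along any face of a product cell --- which is what is needed for the cell-by-cell construction in the proof of Theorem~\ref{productparitioning}.
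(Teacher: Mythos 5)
Your proposal is correct and follows essentially the same route as the paper: for each fixed $x''$ one reduces to the one-variable matching conditions at $q'$, with (b) ensuring that $(\Delta')^{l}(\Delta'')^{m}u(\cdot,x'')$ exists as a function (no Dirac atom at $q'$) and (a) ensuring its continuity. The paper's proof is a two-sentence version of this; your additional care about promoting the slice-wise statement to joint continuity on the union is a reasonable elaboration of the same argument rather than a different approach.
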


\begin{proof}
For fixed $x''$, (b) is the necessary and sufficient matching condition in the first variable for
$(\Delta')^{l}(\Delta'')^{m}u(\cdot,x'')$ to exist (as a function rather than a measure with atom
at $q'$). Condition (a) is then equivalent to continuity of $(\Delta')^{l}(\Delta'')^{m}u$.
\end{proof}

Our construction uses an analogue of the Borel theorem from~\cite{RST}.  That result yields the
existence of a smooth function with a prescribed jet at a junction point of a pcf fractal, whereas
we need existence of a smooth function with prescribed smooth jet on the face of a cell in the
product $X$.

\begin{theorem}\label{productBoreltheorem}
Fix a face $\{q'\}\times X''$ and a neighborhood $U\subset X'$ of $q'$.  Given two sequences
$\{\rho_{l}(x'')\}_{l=0}^{\infty}$ and $\{\sigma_{l}(x'')\}_{l=0}^{\infty}$ of functions that are
smooth in $x''$, there is a smooth function $u$ with support in $U\times X''$ such that for each
$x''\in X''$, $(\Delta')^{k}(\Delta'')^{m}u(q',x'')=(\Delta'')^{m}\rho_{k}(x'')$ and
$\partial'_{n}(\Delta')^{k}(\Delta'')^{m}u(q',x'')=(\Delta'')^{m}\sigma_{k}(x'')$.
\end{theorem}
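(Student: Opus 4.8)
The plan is to build $u$ by separating variables. Since $\Delta'$ and $\Delta''$ act componentwise and commute on $\dom(\Delta')\cap\dom(\Delta'')$ (Lemma~11.2 of~\cite{MR2095624}), any product $\phi(x')\psi(x'')$ with $\phi$ smooth on $X'$ and $\psi$ smooth on $X''$ is smooth on $X$, with $(\Delta')^{j}(\Delta'')^{k}\bigl(\phi(x')\psi(x'')\bigr)=\bigl((\Delta')^{j}\phi\bigr)(x')\,\bigl((\Delta'')^{k}\psi\bigr)(x'')$, and a sum of such products whose term-by-term $(\Delta')^{j}(\Delta'')^{k}$-derivatives converge uniformly for every $j,k$ is again smooth. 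So I will use the single-variable Borel theorem (Theorem~\ref{setting_Borelthm}) on $X'$ to produce, for each $l\ge0$, a smooth $\phi_{l}$ on $X'$ with $(\Delta')^{k}\phi_{l}(q')=\delta_{kl}$ and $\partial'_{n}(\Delta')^{k}\phi_{l}(q')=0$ for all $k$, together with a smooth $\psi_{l}$ with $(\Delta')^{k}\psi_{l}(q')=0$ and $\partial'_{n}(\Delta')^{k}\psi_{l}(q')=\delta_{kl}$ for all $k$. Because $q'\in V'_{0}$ is not a junction point, the cells $F'_{[w']_{i}}(X')$ along the word $w'$ with $q'=F'_{w'}(X')$ form a neighbourhood basis at $q'$, so for any choice of integers $i_{l}$ I may take $\phi_{l},\psi_{l}$ produced on the cell $F'_{[w']_{i_{l}}}(X')\subset U$, vanishing to all orders at its remaining boundary points, and extend them by zero to $X'$ (the matching conditions at those other boundary points then hold trivially). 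Set
\begin{equation*}
    u(x',x'') = \sum_{l\ge0}\phi_{l}(x')\,\rho_{l}(x'') + \sum_{l\ge0}\psi_{l}(x')\,\sigma_{l}(x'').
\end{equation*}
Granting convergence, the asserted identities are immediate: applying $(\Delta'')^{m}$ differentiates only the $x''$-factors, applying $(\Delta')^{k}$ only the $x'$-factors, and evaluating at $x'=q'$ selects the $l=k$ term of each sum by the Kronecker normalisation, giving $(\Delta')^{k}(\Delta'')^{m}u(q',x'')=(\Delta'')^{m}\rho_{k}(x'')$; likewise $\partial'_{n}(\Delta')^{k}(\Delta'')^{m}u(q',x'')=(\Delta'')^{m}\sigma_{k}(x'')$, the $\phi_{l}$-terms dropping out since $\partial'_{n}(\Delta')^{k}\phi_{l}(q')=0$. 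Also $\sppt(u)\subset F'_{[w']_{i_{0}}}(X')\times X''\subset U\times X''$ as soon as every $i_{l}\ge i_{0}$ with $F'_{[w']_{i_{0}}}(X')\subset U$.

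Everything thus comes down to choosing the depths $i_{l}$ (and the error parameters $\epsilon_{l}$ allowed in Theorem~\ref{setting_Borelthm}) so that, for every $j,k$, the series $\sum_{l}\|(\Delta')^{j}\phi_{l}\|_{\infty}\|(\Delta'')^{k}\rho_{l}\|_{\infty}$ and $\sum_{l}\|(\Delta')^{j}\psi_{l}\|_{\infty}\|(\Delta'')^{k}\sigma_{l}\|_{\infty}$ are finite; then the series for $(\Delta')^{j}(\Delta'')^{k}u$ converges uniformly on the compact space $X=X'\times X''$. Here the estimate~\eqref{setting_jetestimate} does the work: applied on $F'_{[w']_{i_{l}}}(X')$ with $L=l$ (resp.\ $L=l+1$) retained jet terms, the only nonzero prescribed datum being $\rho_{l}=1$ (resp.\ $\sigma_{l}=1$), it yields for $0\le j\le l$ (resp.\ $0\le j\le l+1$) a bound of the form $C(j)\,\theta_{i_{l}}^{\,l-j}+\epsilon_{l}$ (resp.\ $C(j)\,r'_{[w']_{i_{l}}}\theta_{i_{l}}^{\,l-j}+\epsilon_{l}$), where $\theta_{i}=r'_{[w']_{i}}\mu'_{[w']_{i}}\to0$ as $i\to\infty$. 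Put $N_{l}=1+\max_{0\le k\le l}\bigl(\|(\Delta'')^{k}\rho_{l}\|_{\infty}+\|(\Delta'')^{k}\sigma_{l}\|_{\infty}\bigr)$, a finite quantity depending only on the given data, and choose $i_{l}$ so large and $\epsilon_{l}$ so small that $\theta_{i_{l}}N_{l}\le2^{-l}$ and $\epsilon_{l}N_{l}\le2^{-l}$. Then, fixing $j,k$, for every $l>\max(j,k)$ we have $l-j\ge1$ and $k\le l$, so the $l$-th term of the first series is at most $\bigl(C(j)\theta_{i_{l}}+\epsilon_{l}\bigr)\|(\Delta'')^{k}\rho_{l}\|_{\infty}\le(C(j)+1)2^{-l}$, and the second series is bounded in the same way (the extra factor $r'_{[w']_{i_{l}}}\le1$ only helps), while the finitely many terms with $l\le\max(j,k)$ are each finite since $\phi_{l},\psi_{l},\rho_{l},\sigma_{l}$ are smooth on compact spaces. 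Hence all the differentiated series converge uniformly on $X$; iterating the fact that $\Delta'$ and $\Delta''$ are closed operators on the continuous functions of each factor shows $u\in\dom\bigl((\Delta')^{j}(\Delta'')^{k}\bigr)$ for all $j,k$ with the term-by-term expression for $(\Delta')^{j}(\Delta'')^{k}u$, so $u$ is smooth; and the standard bound of $\partial'_{n}(\Delta')^{k}f$ by $\{\|(\Delta')^{i}f\|_{\infty}\}_{i\le k+1}$ over a neighbourhood (cf.\ Remark~\ref{setting_partitionremarktwo} and Section~\ref{distribsatpoint_section}) legitimises differentiating the normal-derivative identity term by term, completing the verification.

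The main obstacle is exactly the matching of scales in this last step: the high-order $\Delta'$-derivatives of the building blocks $\phi_{l},\psi_{l}$ necessarily blow up as their supporting cells $F'_{[w']_{i_{l}}}(X')$ shrink, so one cannot choose the depths $i_{l}$ independently of the growth of the $\Delta''$-norms of $\rho_{l}$ and $\sigma_{l}$. Once $i_{l}$ is chosen in tandem with $N_{l}$, however, the factor $\theta_{i_{l}}^{\,l-j}$ supplied by~\eqref{setting_jetestimate} makes each tail geometric and the construction goes through; no new ideas beyond Theorem~\ref{setting_Borelthm} and the componentwise behaviour of $\Delta',\Delta''$ are needed.
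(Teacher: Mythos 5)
Your proof is correct and takes essentially the same route as the paper's: both form the separated series $\sum_l \rho_l(x'')\,g_l(x') + \sigma_l(x'')\,f_l(x')$ with building blocks given by the one-variable Borel theorem, and both diagonalize by choosing the support scales of the blocks against $\max_{k\le l}\|(\Delta'')^{k}\rho_l\|_\infty$ and $\max_{k\le l}\|(\Delta'')^{k}\sigma_l\|_\infty$ so that every differentiated series converges uniformly. The only cosmetic difference is that you reconstruct the blocks and the tail bounds from the statement of Theorem~\ref{setting_Borelthm} and the estimate~\eqref{setting_jetestimate}, whereas the paper invokes the internals of the proof of Theorem~4.3 of~\cite{RST} directly.
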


\begin{proof}
The proof is almost the same as that for Theorem~4.3 of~\cite{RST}.  Specifically we form the
series
\begin{equation}\label{seriesdefnforBorelonproduct}
    u(x',x'')=\sum_{l} \rho_{l}(x'')g_{l,m_{l}}(x') + \sigma_{l}(x'') f_{l,n_{l}}(x')
    \end{equation}
where the functions $g_{l,m_{l}}$ and $f_{l,n_{l}}$ are as defined in that proof, so they satisfy
\begin{align*}
    (\Delta')^{k}g_{l,m_{l}}(q)&=\delta_{kl}&\partial'_{n}(\Delta')^{k}g_{l,m_{l}}(q)&=0\\
    (\Delta')^{k}f_{l,n_{l}}(q)&=0          &\partial'_{n}(\Delta')^{k}f_{l,n_{l}}(q)&=\delta_{kl}
    \end{align*}
and have supports in cells of scale $m_{l}$ and $n_{l}$ respectively.  Convergence of the
series~\eqref{seriesdefnforBorelonproduct} is achieved by making an appropriate choice of $m_{l}$
and $n_{l}$.  In particular, it follows from the cited proof that if $|\rho_{l}(x'')|\leq R_{l}$
and $|\sigma_{l}(x'')|\leq S_{l}$ for all $x''$, then one can choose $m_{l}$ and $n_{l}$ depending
only on $R_{l}$ and $S_{l}$ such that for each $x''$ the series converges to a function that is
smooth in $x'$, supported in $U\times X''$, and has $(\Delta')^{k}u(q,x'')=\rho_{k}(x'')$ and
$\partial'_{n}(\Delta')^{k}u(q,x'')=\sigma_{k}(x'')$.

Now we require convergence not only of the series for $u(x',x'')$, but also that for
$(\Delta'')^{m}u(x',x'')$ for each $m$, so we must diagonalize.  Set
\begin{align*}
    R_{l} &= \max_{0\leq l''\leq l} \max_{x''\in X''} \bigl| (\Delta'')^{l''}\rho_{l}(x'') \bigr|\\
    S_{l} &= \max_{0\leq l''\leq l} \max_{x''\in X''} \bigl| (\Delta'')^{l''}\sigma_{l}(x'') \bigr|
    \end{align*}
which are finite by the assumed smoothness and the compactness of $X''$, and let $m_{l}$ and
$n_{l}$ be chosen as described above. For fixed $x''$, all terms after the $m$-th in the partial
sum
\begin{equation*}
    (\Delta'')^{m} \sum_{l}^{L} \rho_{l}(x'')g_{l,m_{l}}(x') + \sigma_{l}(x'') f_{l,n_{l}}(x')
    \end{equation*}
have coefficients bounded by $R_{l}$ and $S_{l}$, so the above reasoning implies that the partial
sums converge to a function that is smooth in $x'$, and has
$(\Delta')^{k}(\Delta'')^{m}u(q',x'')=(\Delta'')^{m}\rho_{k}(x'')$ and
$\partial'_{n}(\Delta')^{k}(\Delta'')^{m}u(q',x'')=(\Delta'')^{m}\sigma_{k}(x'')$ for all $m$.

Finally, it will be useful later to have estimated the contribution of each term to the
$L^{\infty}$ norm of $(\Delta')^{k}(\Delta'')^{m}u$.  It is convenient to write $w'(m_{l})$ and
$w'(n_{l})$ for the words such that $F_{w'(m_{l})}(X')$ is the support of $g_{l,n_{l}}$ and
$F'_{w'(n_{l})}(X)$ is the support of $f_{l,n_{l}}$.  Note that scaling then implies (see
equations~4.4 and~4.5 of~\cite{RST}) that
\begin{align*}
    \bigl\|(\Delta')^{k} (\Delta'')^{m} \rho_{l}(x'')g_{l,m_{l}}(x')\bigr\|
    &\leq c(k,l) \bigl(r'_{w'(m_{l})}\mu'_{w'(m_{l})}\bigr)^{l-k} \bigl\|(\Delta'')^{m}
    \rho_{l}(x'')\bigr\|_{\infty}\\
    \bigl\|(\Delta')^{k} (\Delta'')^{m} \sigma_{l}(x'')f_{l,n_{l}}(x')\bigr\|
    &\leq c(k,l) \bigl(r'_{w'(n_{l})}\mu'_{w'(n_{l})}\bigr)^{l-k} r'_{w'(n_{l})} \bigl\|(\Delta'')^{m}
    \sigma_{l}(x'')\bigr\|_{\infty}
    \end{align*}
and in the construction in~\cite{RST} it is noted that the contributions of terms with $l>k$ may be
made smaller than any prescribed $\epsilon>0$, so taking $\epsilon$ to be a small multiple of
$\|\rho_{0}\|_{\infty}$ we obtain
\begin{align}\label{Boreltheoremestimateforderivs}
    \bigl\| (\Delta')^{k} (\Delta'')^{m} u \bigr\|_{\infty}
    &\leq \sum_{l=0}^{k} c(k,l) \bigl(r'_{w'(m_{l})}\mu'_{w'(m_{l})}\bigr)^{l-k}
        \bigl\|(\Delta'')^{m} \rho_{l}(x'')\bigr\|_{\infty} \notag\\
    &\quad+ \sum_{l=0}^{k-1} c(k,l) \bigl(r'_{w'(n_{l})}\mu'_{w'(n_{l})}\bigr)^{l-k}
        r'_{w'(n_{l})} \bigl\|(\Delta'')^{m} \sigma_{l}(x'')\bigr\|_{\infty}.
    \end{align}
\end{proof}

\begin{remark}
This result may be localized to any cell in $X$ simply by rescaling the desired jet for the cell to
obtain a corresponding jet on $X$, applying the theorem, and then composing the resulting function
with the inverse of the map to the cell.  It may also be applied to a face in a finite union of
cells, so that the face is of the form $\{q'\}\times(\cup_{j=1}^{J}K''_{j})$ with each $K''_{j}$ a
cell in $X''$, because $\cup_{j=1}^{J}K''_{j}$ is compact.
\end{remark}

In order to make use of the preceding result we require a small lemma.
\begin{lemma}\label{traceofnormalderivativeissmooth}
If $u$ is smooth on $X$ and $q'\in V'_{0}$ then $\partial'_{n}u(q',x'')$ is smooth with respect to
$x''$ and $(\Delta'')^{l}\partial'_{n}u(q',x'')=\partial'_{n}(\Delta'')^{l}u(q',x'')$.  There is a
bound
\begin{equation}\label{estimatefornormalderivativesinproductusinghigherLap}
    \bigl\|\partial'_{n}(\Delta'')^{l}u(q',x'')\|_{\infty}
    \leq C\Bigl(\bigl\|(\Delta'')^{l}u\bigr\|_{\infty} + \bigl\|\Delta'(\Delta'')^{l}u\bigr\|_{\infty}\Bigr)
    \end{equation}
\end{lemma}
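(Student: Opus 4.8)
The plan is to reduce everything to the one-variable Gauss--Green formula applied in the $x'$ variable, with $x''$ as a parameter, and then extract the smoothness in $x''$ from estimates that are uniform in $x''$. First I would recall that for fixed $x''$ the function $u(\cdot,x'')$ lies in $\dom(\Delta')$, so its normal derivative $\partial'_{n}u(q',x'')$ at the boundary point $q'\in V'_{0}$ is defined by the usual limit $\lim_{m}\DF'(u(\cdot,x''),v_{m})$ of energies against the harmonic spike functions $v_{m}$ supported on cells $F_{w'r_{j}^{m}}(X')$ shrinking to $q'$. Since these spike functions do not involve $x''$ at all, and since $\Delta'$ and $\Delta''$ commute on $\dom(\Delta')\cap\dom(\Delta'')$ (Lemma~11.2 of~\cite{MR2095624}), applying $(\Delta'')^{l}$ to $u$ and then taking the $x'$-normal derivative gives the same thing as taking the normal derivative first and then differentiating in $x''$; this is the asserted identity $(\Delta'')^{l}\partial'_{n}u(q',x'')=\partial'_{n}(\Delta'')^{l}u(q',x'')$, valid once we know both sides exist and depend continuously on $x''$.

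The second and main step is the estimate~\eqref{estimatefornormalderivativesinproductusinghigherLap}, from which continuity (indeed smoothness) in $x''$ will follow. For a fixed $x''$ I would use the localized Gauss--Green formula on the cell $K'=F'_{w'}(X')$ (or just on $X'$ after rescaling): writing $w=u(\cdot,x'')$ and decomposing $w=H'w+G'(\Delta'w)$ into its harmonic part and a Green's potential, one has $\partial'_{n}w(q') = \partial'_{n}(H'w)(q') + \partial'_{n}(G'\Delta'w)(q')$. The harmonic part's normal derivative is a bounded linear functional of the boundary values of $w$, hence is bounded by $C\|w\|_{\infty}$; and the normal derivative of $G'$ applied to a continuous function is bounded by $C\|\Delta'w\|_{\infty}$ because the Green's kernel $g'(x,y)$ and its normal derivative in the first variable are bounded. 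Replacing $u$ by $(\Delta'')^{l}u$ throughout (legitimate since $(\Delta'')^{l}u$ is again continuous and, for fixed $x''$, in $\dom(\Delta')$, using commutativity of $\Delta'$ and $\Delta''$) yields exactly
\begin{equation*}
    \bigl\|\partial'_{n}(\Delta'')^{l}u(q',x'')\bigr\|_{\infty}
    \leq C\Bigl(\bigl\|(\Delta'')^{l}u\bigr\|_{\infty} + \bigl\|\Delta'(\Delta'')^{l}u\bigr\|_{\infty}\Bigr),
\end{equation*}
where the sup is over $x''\in X''$ and $C$ depends only on the harmonic structure and the cell.

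Finally, smoothness in $x''$: fix $l$ and note that by the identity already established, $(\Delta'')^{l}$ of the function $x''\mapsto\partial'_{n}u(q',x'')$ equals $x''\mapsto\partial'_{n}(\Delta'')^{l}u(q',x'')$, which by the displayed bound is in $L^{\infty}(X'')$ for every $l$. To upgrade ``all powers of $\Delta''$ bounded'' to genuine continuity (and hence smoothness in the sense required), I would apply the H\"older-$\tfrac12$ resistance-metric estimate~\eqref{setting_resistmetricestimate} in the $x''$ variable: for two points $x''_{1},x''_{2}\in X''$,
\begin{equation*}
    \bigl|\partial'_{n}u(q',x''_{1})-\partial'_{n}u(q',x''_{2})\bigr|^{2}
    \leq \bigl\|\partial'_{n}u(q',\cdot)\bigr\|_{2}\,\bigl\|\Delta''\partial'_{n}u(q',\cdot)\bigr\|_{2}\,R''(x''_{1},x''_{2}),
\end{equation*}
and both $L^{2}$ factors are finite by the previous step, so $\partial'_{n}u(q',\cdot)$ is continuous; iterating with $(\Delta'')^{l}\partial'_{n}u(q',\cdot)$ in place of $\partial'_{n}u(q',\cdot)$ shows every power of $\Delta''$ applied to it is continuous, which is precisely smoothness in $x''$. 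The main obstacle I anticipate is the justification that the normal-derivative limit defining $\partial'_{n}$ may be interchanged with $(\Delta'')^{l}$ and that the convergence is uniform in $x''$ — this is where one genuinely uses the commutativity of $\Delta'$ and $\Delta''$ from~\cite{MR2095624} together with the fact that the spike functions $v_{m}$ are independent of $x''$, so that the energy pairing $\DF'((\Delta'')^{l}u(\cdot,x''),v_{m})$ converges at a rate controlled uniformly by $\|(\Delta'')^{l}u\|_{\infty}+\|\Delta'(\Delta'')^{l}u\|_{\infty}$.
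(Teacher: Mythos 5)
Your treatment of the bound \eqref{estimatefornormalderivativesinproductusinghigherLap} is essentially the paper's: for fixed $x''$ decompose $(\Delta'')^{l}u(\cdot,x'')$ on a cell containing $q'$ into its harmonic part plus a Green's potential, bound the normal derivative of the harmonic part by the supremum of the boundary values, and bound the normal derivative of the Green's potential by $C\|\Delta'(\Delta'')^{l}u\|_{\infty}$ using boundedness of $\partial'_{n}G'(q',y')$. That step is correct.

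The gap is in the smoothness in $x''$ and the interchange identity, and your argument there is circular. You first assert $(\Delta'')^{l}\partial'_{n}u(q',x'')=\partial'_{n}(\Delta'')^{l}u(q',x'')$ ``once we know both sides exist,'' and then in the final step you invoke that identity to conclude that $(\Delta'')^{l}$ applied to the trace $x''\mapsto\partial'_{n}u(q',x'')$ is bounded, feeding this into the H\"{o}lder estimate \eqref{setting_resistmetricestimate}. But the existence of the left-hand side --- that the trace actually lies in $\dom\bigl((\Delta'')^{l}\bigr)$ with the claimed iterated Laplacian --- is precisely what must be proved; knowing that the candidate functions $\partial'_{n}(\Delta'')^{l}u(q',\cdot)$ are bounded for every $l$ does not show they are the $\Delta''$-Laplacians of the trace, and commutativity of $\Delta'$ and $\Delta''$ does not give this either, since $\partial'_{n}$ is a boundary limit rather than an interior operator. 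The missing mechanism, which is what the paper supplies, is an approximation of $\partial'_{n}$ by quantities manifestly smooth in $x''$: if $h_{m}(\cdot,x'')$ is the piecewise harmonic interpolant of $u(\cdot,x'')$ at scale $m$, then $\partial'_{n}h_{m}(q',x'')$ is a finite linear combination of values $u(p',x'')$, $p'\in V'_{m}$, hence smooth in $x''$ and trivially commuting with $(\Delta'')^{l}$, while the Gauss--Green identity on the scale-$m$ cell $K'_{m}$ gives
\begin{equation*}
    \partial'_{n}(\Delta'')^{l}u(q',x'')-\partial'_{n}(\Delta'')^{l}h_{m}(q',x'')
    =\int \bigl(\Delta'(\Delta'')^{l}u(y',x'')\bigr)\,\partial'_{n}G'_{m}(q',y')\,d\mu'(y')
    =O\bigl(\mu'(K'_{m})\bigr)
\end{equation*}
uniformly in $x''$. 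Uniform convergence of $(\Delta'')^{l}\partial'_{n}h_{m}(q',\cdot)$ for every $l$, combined with closedness of $\Delta''$ under uniform limits, is what simultaneously yields the smoothness of the trace and the interchange identity. Your last paragraph correctly names this obstacle but does not overcome it.
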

\begin{proof}
For each $x''$ and each scale $m$, let $h_{m}(x',x'')$ be the function that is piecewise harmonic
at scale $m$ in the $x'$ variable and coincides with $u$ on $V'_{m}\times\{x''\}$.  Then
$h_{m}(x',x'')$ is smooth in $x''$, because its values are obtained as uniform limits of linear
combinations of the values from $V'_{m}\times\{x''\}$.  Moreover, the normal derivative
$\partial'_{n}h_{m}(q',x'')$ is a linear combination (with coefficients depending on $m$) of the
differences $\bigl(h_{m}(p_{1}',x'')-h_{m}(p_{2}',x'')\bigr)$, where $p'_{1}$ and $p'_{2}$ are
neighbors of $q'$ at scale $m$.  Thus $\partial'_{n}h_{m}(q',x'')$ is smooth in $x''$ and
$(\Delta'')^{l}\partial'_{n}h_{m}(q',x'')=\partial'_{n}(\Delta'')^{l}h_{m}(q',x'')$.

For each fixed $x''$, we may express $(\Delta'')^{l}u(x',x'')$ on a cell $K'_{m}$ of scale $m$
containing $q'$ as the sum of $(\Delta'')^{l}h_{m}$ and an integral involving the Dirichlet Green
kernel $G'_{m}$ for $\Delta'$ on $K'_{m}$.  Taking the normal derivative we obtain
\begin{equation}\label{togetestimatefornormalderivonproductusinghigherlap}
    \partial'_{n} (\Delta'')^{l} u(q',x'')
    = \partial'_{n} (\Delta'')^{l} h_{m}(q',x'')
        + \int \bigl(\Delta' (\Delta'')^{l} u(y',x'')\bigr) \partial'_{n}G'_{m}(q',y')\,d\mu'(y').
    \end{equation}
However an easy scaling argument shows that $\partial'_{n}G'_{m}(q',y')$ is bounded independent of
$m$ and $y'$, so the integral term is bounded by a constant multiple of $\bigl\|\Delta'
(\Delta'')^{l} u\bigr\|_{\infty}\mu'(K'_{m})$, independent of $m$ and $x''$.  Since
$\mu'(K'_{m})\rightarrow0$ as $m\rightarrow\infty$ we conclude that
$(\Delta'')^{l}\partial'_{n}h_{m}(q',x'')$ converges to $\partial'_{n} (\Delta'')^{l} u(q',x'')$
uniformly in $x''$ for each $l$. Then~\eqref{estimatefornormalderivativesinproductusinghigherLap}
is obtained by using~\eqref{togetestimatefornormalderivonproductusinghigherlap} with $m=0$.
\end{proof}

We may use the preceding results to smoothly cut off a smooth function on a neighborhood of a cell.

\begin{theorem}\label{cutoffonproductcell}
Let $u$ be smooth on a cell $K=F'_{w'}(X')\times F''_{w''}(X'')$, and $U\supset K$ be open.  There
is a function $v$ such that $v=u$ on $K$, $v=0$ on $X\setminus U$ and $v$ is smooth on $X$.
Moreover for each $k$,
\begin{equation}\label{cutoffestimateonproduct}
    \bigl\|(\Delta')^{k}(\Delta'')^{m} v \bigr\|_{\infty}
    \leq C(k,U) \sum_{l=0}^{k} \sum_{n=0}^{m} \bigl\|(\Delta')^{l}(\Delta'')^{n}u \bigr\|_{L^{\infty}(K)}.
    \end{equation}
\end{theorem}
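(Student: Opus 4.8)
The idea is to mimic the single-variable bump-function construction from Corollary~\ref{setting_bumpfunctionwithestimates}, but now gluing along faces rather than points and using the product Borel theorem (Theorem~\ref{productBoreltheorem}) in place of the original one. Write $K=F'_{w'}(X')\times F''_{w''}(X'')$. Its boundary consists of finitely many faces of the two types $\{q'_i\}\times F''_{w''}(X'')$ and $F'_{w'}(X')\times\{q''_j\}$. At each boundary face we will adjoin finitely many cells inside $U$, arranged so that their union together with $K$ covers a neighborhood of the face, and so that the adjoined cells meet $K$ (and each other, across different faces) only in the faces themselves. On each adjoined cell we place a smooth function produced by Theorem~\ref{productBoreltheorem} whose smooth jet along the shared face matches the jet of $u$ (and whose normal-derivative jet is $-1/n$ times that of $u$, where $n$ is the number of cells adjoined at that face, exactly as in the point case), with vanishing jets on the other faces of the adjoined cell. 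The piecewise function $v$ equal to $u$ on $K$, to the constructed pieces on the adjoined cells, and to $0$ elsewhere is then smooth on $X$ by the matching criterion of Lemma~\ref{productmatchinglemma}: condition~(a) holds because the jets agree along each face, and condition~(b) holds because the normal derivatives of the adjoined pieces sum to cancel that of $u$. By construction $v=u$ on $K$ and $v=0$ off $U$.

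\textbf{The estimate.} For the bound~\eqref{cutoffestimateonproduct} I would argue face by face. Along a face of the first type, $L=\{q'_i\}\times F''_{w''}(X'')$, the relevant jet data are the smooth functions $x''\mapsto (\Delta')^k u(q'_i,x'')$ and $x''\mapsto \partial'_n(\Delta')^k u(q'_i,x'')$ on $F''_{w''}(X'')$. Applying $(\Delta'')^m$ to these and taking sup norms, the first is controlled by $\|(\Delta')^k(\Delta'')^m u\|_{L^\infty(K)}$ directly, and the second is controlled by $\|(\Delta')^k(\Delta'')^m u\|_{L^\infty(K)}+\|(\Delta')^{k+1}(\Delta'')^m u\|_{L^\infty(K)}$ by Lemma~\ref{traceofnormalderivativeissmooth} (applied after rescaling the cell $K$ to $X'$, which changes constants but not the structure of the bound). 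Feeding these into estimate~\eqref{Boreltheoremestimateforderivs} for each adjoined cell $F_{w'(m_l)}(X')\times F''_{w''}(X'')$, and using that the adjoined cells can be taken of scale comparable to $K$ (so the scaling factors $r'_{w'(m_l)}\mu'_{w'(m_l)}$ are comparable to $r'_{w'}\mu'_{w'}$ up to constants depending only on $U$), the powers $l-k\le 0$ that appear are offset: since we only need $0\le l\le k$ in the sum, the worst scale factor is a bounded power, and one arrives at a bound of the form $C(k,U)\sum_{l=0}^k\sum_{n=0}^m\|(\Delta')^l(\Delta'')^n u\|_{L^\infty(K)}$. The faces of the second type are handled symmetrically with the roles of the two factors exchanged, and summing the finitely many face contributions gives~\eqref{cutoffestimateonproduct}.

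\textbf{Main obstacle.} The conceptual content is entirely in the single-variable arguments already established; the genuine work here is bookkeeping. The step I expect to be most delicate is verifying that the geometric arrangement of adjoined cells can simultaneously be made (i) contained in $U$, (ii) of scale comparable to $K$, (iii) covering a neighborhood of each face except at the ``corner'' points $(q'_i, F''_{w''}q''_j)$, and (iv) mutually non-overlapping except along the shared faces — all while the matching lemma applies cleanly at the corners. The corner points require a little care: a neighborhood of a corner in $X$ meets cells belonging to faces of both types, so one must check that the jets prescribed there are mutually consistent. This works because along a face of the first type the constructed piece has jet equal to that of $u$, which already satisfies all the matching conditions of $u$ as a smooth function on $K$; hence agreement at the corner is automatic, and condition~(a) of Lemma~\ref{productmatchinglemma} propagates there without extra hypotheses. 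Once this geometry is set up, the remaining estimates are routine applications of~\eqref{Boreltheoremestimateforderivs} and Lemma~\ref{traceofnormalderivativeissmooth}.
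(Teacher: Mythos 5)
Your construction has a genuine gap at the corner points, and the way you dismiss it does not work. You adjoin cells at the vertical faces $\{q'_i\}\times K''$ and at the horizontal faces $K'\times\{q''_j\}$ of $K$ \emph{simultaneously}, and set $v=0$ elsewhere. Near a corner $(q'_i,q''_j)$ there is then a ``missing quadrant'' of the form $K'_l\times K''_n$ (adjoined-in-$x'$ times adjoined-in-$x''$) on which $v=0$. The piece $u_l$ you place on a vertical-face cell $K'_l\times K''$ is built by Theorem~\ref{productBoreltheorem} from the jet data $\rho_k(x'')=(\Delta')^k u(q'_i,x'')$ and $\sigma_k(x'')$, which do not vanish at $x''=q''_j$ in general; hence $u_l$ does not vanish near the horizontal boundary $K'_l\times\{q''_j\}$, and the matching conditions of Lemma~\ref{productmatchinglemma} in the $x''$ variable fail there against the zero function on $K'_l\times K''_n$. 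Your proposed remedy --- prescribing ``vanishing jets on the other faces of the adjoined cell'' --- is self-contradictory: a function whose jet on $\{q'_i\}\times K''$ equals the (generally nonvanishing) $\rho_k(x'')$ cannot simultaneously have vanishing jet on $K'_l\times\{q''_j\}$, since the two jets must agree at the corner. And your claim that ``agreement at the corner is automatic'' because $u$ already satisfies the matching conditions on $K$ misses the point: $u$ satisfies no matching condition with the zero function across $\partial K$ --- that is precisely why a cutoff is needed.

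The fix is to perform the cutoff \emph{sequentially}, which is what the paper does. First treat all vertical faces of $K$, producing a function $v'$ that is smooth on the enlarged finite union of cells $L'\times K''$ with $L'=K'\cup(\cup_l K'_l)$ and vanishes near the vertical boundary of $L'\times K''$. Then apply the same construction to the horizontal faces $L'\times\{q''_j\}$ of this \emph{enlarged} set (the remark after Theorem~\ref{productBoreltheorem} allows faces over finite unions of cells). The second-stage adjoined cells $L'\times K''_n$ contain the corner quadrants $K'_l\times K''_n$, so they carry exactly the data needed to match $v'$ there, and the resulting function vanishes near the entire boundary of the final union, so it extends by zero to a smooth function on $X$. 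Your treatment of the estimate via Lemma~\ref{traceofnormalderivativeissmooth} and \eqref{Boreltheoremestimateforderivs} is essentially the paper's, but it must be applied twice (once per stage), with the second stage estimating the already-cut-off function; this is where the double sum over $l$ and $n$ in \eqref{cutoffestimateonproduct} comes from.
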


\begin{proof}
Let $K'=F'_{w'}(X')$ and $K''=F''_{w''}(X'')$.  Fix a face of $K$ having the form $\{q'\}\times
K''$ and let $\rho_{k}(x'')=(\Delta')^{k}u(q',x'')$ and
$\sigma_{k}(x'')=\partial'_{n}(\Delta')^{k}u(q',x'')$. The functions $\rho_{k}$ are smooth in $x''$
by the definition of smoothness of $u$, and the functions $\sigma_{k}$ are smooth in $x''$ by
Lemma~\ref{traceofnormalderivativeissmooth}.   Now take a finite number of small cells $K'_{j}$ in
$X'$ with the following properties: the intersection $K'\cap K'_{j}=\{q'\}$ for all $j$, the
intersection $K'_{j}\cap K'_{\tilde{j}}=\{q'\}$ for all $j\neq \tilde{j}$, the union
$K'\cup(\cup_{j}K'_{j})$ contains a neighborhood of $q'$ in $X'$, and
$\bigl(K'\cup(\cup_{j}K'_{j})\bigr)\times K''\subset U$. Let the number of $K'_{j}$ be $J$, and
apply Theorem~\ref{productBoreltheorem} to each $K'_{j}$ to obtain a smooth function $u_{j}$ that
has jets $\rho_{k}(x'')$ and $(-1/J)\sigma_{k}(x'')$ at $q'$ and is supported in a neighborhood of
$q'$ that is strictly contained in $K'_{j}$.  By construction, the matching conditions of
Lemma~\ref{productmatchinglemma} apply to the functions $u$ on $K$ and $u_{j}$ on $K'_{j}\times
K''$, so the piecewise defined function is smooth on the union of these cells.

Repeat the previous construction for each of the finite number of faces having the form
$\{q'_{i}\}\times K''$.  As these faces are disjoint we may choose the small cells in the
construction so that those used for $q'_{i}$ do not intersect those for $q'_{j}$ for $j\neq i$. The
result is a finite collection of cells $K'_{j}\times K''\subset U$ and functions $u_{j}$ such that
the piecewise function $u$ on $K$ and $u_{j}$ on $K'_{j}\times K''$ is smooth on the union of the
cells, and vanishes identically in a neighborhood of any boundary face of $\bigl(K'\cup(\cup
K'_{j})\bigr)\times K''$ that has the form $\{p'\}\times K''$.  We call this function $v'$.

Having treated the vertical faces $\{q'_{i}\}\times K''$, we then treat the horizontal faces
$\bigl(K'\cup(\cup K'_{j})\bigr)\times \{q''\}$ of the new function $v'$  in the same manner.  All
of the results we needed were valid on faces of finite unions of cells, so the same proof allows us
to piecewise extend to a smooth function $v$ on a larger finite union of cells, which we call $L$,
but with the additional condition that $v$ vanishes identically in a neighborhood of each
horizontal face of $L$.  Then $L\subset U$ and $v$ vanishes in a neighborhood of all faces of the
boundary of $L$, so Lemma~\ref{productmatchinglemma} ensures that extending $v$ to be identically
zero outside $L$ gives a smooth function on $X$.  By construction, $v=u$ on $K$.

For the estimate~\eqref{cutoffestimateonproduct} we note that
\begin{equation*}
    \bigl\|(\Delta'')^{m}\rho_{k}(x'')\bigr\|_{\infty}
    \leq \bigl\|(\Delta')^{k}(\Delta'')^{m}u \bigr\|_{L^{\infty}(K)}
    \end{equation*}
by definition, while rescaling~\eqref{estimatefornormalderivativesinproductusinghigherLap} to the
cell $K$ implies that
\begin{equation*}
    \bigl\|(\Delta'')^{m}\sigma_{k}(x'')\bigr\|_{\infty}
    \leq C \Bigl( (r'_{w'})^{-1} \bigl\|(\Delta')^{k}(\Delta'')^{m}u \bigr\|_{L^{\infty}(K)}
        + \mu'_{w'}\bigl\|(\Delta')^{k+1}(\Delta'')^{m}u \bigr\|_{L^{\infty}(K)}\Bigr).
    \end{equation*}
Substituting into~\eqref{Boreltheoremestimateforderivs} and using $r'_{w'(n_{l})}\leq r'_{w'}$ and
$\mu'_{w'}<1$ we have
\begin{align*}
    \bigl\|(\Delta')^{k}(\Delta'')^{m} \bigr\|_{\infty}
    &\leq \sum_{l=0}^{k} c(k,l) \bigl(r'_{w'(m_{l})}\mu'_{w'(m_{l})}\bigr)^{l-k}
        \bigl\|(\Delta')^{l}(\Delta'')^{m}u \bigr\|_{L^{\infty}(K)}\\
    &\leq C(k,U) \sum_{l=0}^{k} \bigl\|(\Delta')^{l}(\Delta'')^{m}u \bigr\|_{L^{\infty}(K)}.
    \end{align*}
This type of estimate deals with all of the vertical faces, and an analogous argument is valid for
the horizontal faces, so~\eqref{cutoffestimateonproduct} holds.
\end{proof}

\begin{theorem}\label{productparitioning}
If $u$ is smooth on $X$ and $\cup\Omega_{j}$ is an open cover of $X$ then there are constants
$C(k,m)$ and smooth functions $u_{j}$ such that $u_{j}$ is supported on $\Omega_{j}$,
$\sum_{j}u_{j}=u$, and
\begin{equation}\label{productpartitioningestimate}
    \| (\Delta')^{k}(\Delta'')^{m}u_{j} \|_{\infty}
    \leq C(k,m) \sum_{l=0}^{k} \sum_{n=0}^{m} \| (\Delta')^{l}(\Delta'')^{n}u \|_{\infty}.
    \end{equation}
\end{theorem}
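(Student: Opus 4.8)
The plan is to transcribe the single‑variable argument behind Theorem~\ref{setting_partitionthm}, replacing the scalar cutoff there by the product cutoff of Theorem~\ref{cutoffonproductcell}. First, since $X=X'\times X''$ is compact (a finite product of compact sets), the Lebesgue number lemma lets me fix a scale so fine that every product cell $K$ of that scale admits a neighbourhood $U_K$ --- obtained, as in Theorem~\ref{cutoffonproductcell}, by adjoining finitely many small cells along the faces of $K$ --- with $K\subset U_K\subset\Omega_{j(K)}$ for some index $j(K)$. Enumerate these cells $K_1,\dots,K_M$.

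Next I would peel off one cell at a time. Inductively suppose $v_1,\dots,v_{i-1}$ are smooth, each $v_l$ supported in $U_{K_l}$, and the partial remainder $r_{i-1}=u-\sum_{l<i}v_l$ vanishes identically on $K_1\cup\dots\cup K_{i-1}$. Apply Theorem~\ref{cutoffonproductcell} to $r_{i-1}$ on $K_i$ with neighbourhood $U_{K_i}$, producing a smooth $v_i$ with $v_i=r_{i-1}$ on $K_i$, $v_i=0$ off $U_{K_i}$, and the bound~\eqref{cutoffestimateonproduct} expressing $\|(\Delta')^k(\Delta'')^m v_i\|_\infty$ in terms of the mixed derivatives of $r_{i-1}$ on $K_i$. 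The crucial point is that $r_{i-1}$ has zero jet along every face of $K_i$ contained in $K_1\cup\dots\cup K_{i-1}$: a smooth function that vanishes on an adjacent cell has all its mixed powers $(\Delta')^a(\Delta'')^b$ vanishing on the interior of that cell, since the Laplacians act locally there, and its normal derivative from that cell is zero. Hence the construction of Theorem~\ref{cutoffonproductcell} attaches the zero function along those faces, and, choosing the cells adjoined along the other faces small enough to remain inside $U_{K_i}$ and off the earlier cells, $v_i$ vanishes on $K_1\cup\dots\cup K_{i-1}$. Since $v_i=r_{i-1}$ on $K_i$, the new remainder $r_i$ vanishes on $K_1\cup\dots\cup K_i$; after step $M$ it vanishes on all of $X$, so $u=\sum_{l=1}^{M} v_l$.

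To conclude I would regroup, setting $u_j=\sum_{l:\,j(K_l)=j} v_l$, which is smooth, supported in $\Omega_j$, and has $\sum_j u_j=u$. For~\eqref{productpartitioningestimate}, observe that~\eqref{cutoffestimateonproduct} also controls the mixed derivatives of $r_l$ by those of $r_{l-1}$, hence, iterating at most $M$ times with $M$ fixed by the cover, by the mixed derivatives of $u$; substituting this into the bound for $v_i$ gives $\|(\Delta')^k(\Delta'')^m v_i\|_\infty\le C(k,m)\sum_{l=0}^{k}\sum_{n=0}^{m}\|(\Delta')^l(\Delta'')^n u\|_\infty$ with $C(k,m)$ depending also on the cover, and summing the finitely many $v_i$ making up a given $u_j$ yields~\eqref{productpartitioningestimate}.

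I expect the main obstacle to be the bookkeeping in the inductive step --- guaranteeing that the cutoff $v_i$ of Theorem~\ref{cutoffonproductcell} can be arranged to vanish on the already‑processed cells. This rests on the (elementary but essential) fact that a smooth function vanishing on a cell has zero jet on that cell's faces, together with a careful choice of the adjoined cells and of the neighbourhoods $U_{K_i}$ so that the cutoff never intrudes into the interior of an earlier cell except across a shared face, where the zero‑jet data and the matching conditions of Lemma~\ref{productmatchinglemma} force the attached pieces to vanish. The remainder is a routine transcription of the one‑variable argument.
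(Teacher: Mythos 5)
Your proposal is correct and follows essentially the same route as the paper's proof: partition $X$ into finitely many cells subordinate to the (finite) cover, inductively cut off the remainder $u-\sum_{l<i}v_l$ on each cell via Theorem~\ref{cutoffonproductcell}, regroup the pieces by cover element, and iterate~\eqref{cutoffestimateonproduct} for the bound. The zero-jet observation you highlight (so that the cutoff attaches the zero function along faces shared with already-processed cells and the remainder keeps vanishing there) is exactly the detail the paper leaves implicit in its one-line claim that $u-\sum_1^l v_m$ vanishes on $\cup_{m\le l}K_m$.
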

\begin{proof}
The open cover is finite, say $\{\Omega_{j}\}_{1}^{J}$ because $X$ is compact.  Moreover we may
partition $X$ into a finite number of cells $K_{l}$ such that each $K_{l}$ is contained in some
$\Omega_{j}$. We proceed by induction on $l$, with the base case being that we apply
Theorem~\ref{cutoffonproductcell} to $u$ on $K_{1}$ to obtain a smooth function $v_{1}$ with
support in the open $\Omega_{j}$ that contains $K_{1}$.  At the $l$-th step we apply
Theorem~\ref{cutoffonproductcell} to $u-\sum_{1}^{l-1}v_{m}$ on $K_{l}$ to obtain a smooth function
$v_{l}$ with support in the open $\Omega_{j}$ that contains $K_{l}$. Note that
$u-\sum_{1}^{l}v_{m}$ vanishes on $\cup_{m=1}^{l} K_{m}$ so once we have exhausted the cells we
have $\sum_{l} v_{l}=u$.  By construction, each of the $v_{l}$ is smooth, supported on some
$\Omega_{j}$ and satisfies~\eqref{cutoffestimateonproduct}.  Setting $u_{j}$ to be the sum of those
$v_{l}$ that are supported on $\Omega_{j}$ completes the proof.
\end{proof}

\subsection*{Distributions with point support on products}

It is useful to begin with the observation that if $T'\in\distribson(X')$ and
$T''\in\distribson(X'')$ are distributions on the components of a product space $X'\times X''$ then
there is a tensor distribution $T'\times T''$ which is a distribution on the product.  This is not
entirely immediate, but follows readily from the structure theorem for the component spaces.
Specifically, the fact that $T'$ is locally $(-\Delta')^{k}f$ for a continuous $f$ implies that for
a $\phi\in\testfnson(X'\times X'')$ there are $k$ and $f$ such that
\begin{align*}
    \Delta'' T'\phi(x',x'')
    &= \Delta'' \int_{X'} f(x') (-\Delta')^{k}\phi(x',x'') \, d\mu'(x')\\
    &= \int_{X'} f(x') (-\Delta')^{k}\Delta''\phi(x',x'') \, d\mu'(x')\\
    &= T' \Delta''\phi(x',x'')
    \end{align*}
where we used that $\Delta'$ and $\Delta''$ commute.  In particular $T'\phi$ is smooth in the
second variable, so $T'\times T''\phi=T''(T'\phi)$ is well defined.  Repeating the calculation with
$T''$ in place of $\Delta''$ ensures that $T''(T'\phi)=T'(T''\phi)$, so the order in which the
distributions are applied is not important. Linearity of $T'\times T''$ is immediate and it is easy
to check the continuity condition that ensures it is a distribution on $X'\times X''$.

In the special case where $T'$ is supported at $x'$ and $T''$ is supported at $x''$ it is apparent
that $T'\times T''$ is supported at $(x',x'')$, so this construction and the results of
Section~\ref{distribsatpoint_section} supply a large number of distributions with point support. In
fact we can show that if $x'$ and $x''$ are either junction points or satisfy the conditions of
Theorem~\ref{distribsatapoint_identifydistribsatgenericpoints}, then the distributions with support
at $(x',x'')$ are of this type.  As in Section~\ref{distribsatpoint_section}, the key is to show
that if $\testfn\in\testfnson(X'\times X'')$ is annihilated by sufficiently large collection of
tensor distributions at $(x',x'')$ and if $\epsilon>0$ is given, then it is possible to cut off
$\testfn$ on a small neighborhood of $(x',x'')$ such that the the resulting function has
$\bigl\|(\Delta')^{j}(\Delta'')^{k}\testfn\bigr\|_{\infty}<\epsilon$ for all $j$ and $k$ such that
$j+k\leq m$.  It follows that all distributions of order at most $m$ and support $x'\times x''$ are
linear combinations of the given tensor distributions.

Our main tool is an adaptation of
Theorem~\ref{distribsatapoint_decayofHijtestfnimpliesgoodextension}.

\begin{theorem}\label{productcutoffwithdiameterestimatesonsmoothness}
Given a test function $\testfn$ and a cell $K=K'\times K''$ with $K'=F'_{w'}(X')$ and
$K''=F''_{w''}(X'')$, there is a test function $\psi$ such that $\psi=\testfn$ on $K$ and
\begin{equation}\label{products_estimateforpointsupportthm}
    \bigl\| (\Delta')^{j} (\Delta'')^{k} \psi \bigr\|_{\infty}
    \leq C(m,n) \sum_{l=0}^{m}\sum_{i=0}^{n} \bigl(r'_{w'} \mu'_{w'}\bigr)^{l-j} \bigl(r''_{w''} \mu''_{w''}\bigr)^{i-k}
    \bigl\|(\Delta')^{l}(\Delta'')^{i}\testfn\bigr\|_{L^{\infty}(K)} +\epsilon
    \end{equation}
for all $0\leq j\leq m$ and $0\leq k\leq n$.
\end{theorem}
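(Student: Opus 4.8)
The plan is to run the construction from the proof of Theorem~\ref{cutoffonproductcell}, but to build the support of $\psi$ out of cells whose scale is comparable to that of $K$ and to carry the Laplacian scaling factors through the estimates instead of discarding them. Write $K'=F'_{w'}(X')$ and $K''=F''_{w''}(X'')$. First I would treat the vertical faces $\{q'\}\times K''$ of $K$, where $q'$ runs over the boundary points of $K'$: at each such $q'$ I adjoin finitely many cells $K'_{p}\times K''$, each $K'_{p}=F'_{\tilde w'}(X')$ a cell in $X'$ of word length $|\tilde w'|\leq|w'|+i_{0}'$ for a fixed $i_{0}'$, chosen so the $K'_{p}$ meet $K'$ and each other only at $q'$, the families attached at distinct boundary points are disjoint, and $K'\cup(\cup_{p}K'_{p})$ contains a neighborhood of $q'$ in $X'$. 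On each $K'_{p}\times K''$ I invoke Theorem~\ref{productBoreltheorem}, rescaled to that cell, to produce a smooth $u_{p}$ whose jet along $\{q'\}\times K''$ matches $(\Delta')^{k}\testfn(q',\cdot)$ and $-(1/n_{q'})\partial'_{n}(\Delta')^{k}\testfn(q',\cdot)$, where $n_{q'}$ is the number of cells adjoined at $q'$, and whose jets along the other faces vanish; these prescribed data are smooth in $x''$ by Lemma~\ref{traceofnormalderivativeissmooth}, and by Lemma~\ref{productmatchinglemma} the piecewise function $v'$ equal to $\testfn$ on $K$ and to the $u_{p}$ on the adjoined cells is smooth on the enlarged cell union and vanishes identically near every boundary face of that union of the form $\{p'\}\times K''$.

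Next I estimate $\|(\Delta')^{j}(\Delta'')^{k}v'\|_{\infty}$ for $0\leq j\leq m$ by bounding each $u_{p}$ with \eqref{Boreltheoremestimateforderivs}. The jet data there satisfy $\|(\Delta'')^{k}(\Delta')^{l}\testfn(q',\cdot)\|_{\infty}\leq\|(\Delta')^{l}(\Delta'')^{k}\testfn\|_{L^{\infty}(K)}$ by the definition of smoothness, while rescaling \eqref{estimatefornormalderivativesinproductusinghigherLap} to the cell $K$ gives
\begin{equation*}
    \bigl\|(\Delta'')^{k}\partial'_{n}(\Delta')^{l}\testfn(q',\cdot)\bigr\|_{\infty}
    \leq C\Bigl((r'_{w'})^{-1}\bigl\|(\Delta')^{l}(\Delta'')^{k}\testfn\bigr\|_{L^{\infty}(K)}
    +\mu'_{w'}\bigl\|(\Delta')^{l+1}(\Delta'')^{k}\testfn\bigr\|_{L^{\infty}(K)}\Bigr).
\end{equation*}
Since $|\tilde w'|$ differs from $|w'|$ by at most $i_{0}'$, the ratios $r'_{\tilde w'}/r'_{w'}$ and $\mu'_{\tilde w'}/\mu'_{w'}$ are bounded by constants depending only on $i_{0}'$ and the harmonic structure of $X'$; feeding this and the two bounds above into \eqref{Boreltheoremestimateforderivs}, and noting that normal-derivative data enter there only at jet orders $\leq j-1$ (so the index $l+1$ produced by the rescaled lemma never exceeds $j$), I expect
\begin{equation*}
    \bigl\|(\Delta')^{j}(\Delta'')^{k}v'\bigr\|_{\infty}
    \leq C(m)\sum_{l=0}^{m}\bigl(r'_{w'}\mu'_{w'}\bigr)^{l-j}\bigl\|(\Delta')^{l}(\Delta'')^{k}\testfn\bigr\|_{L^{\infty}(K)}+\epsilon ,
\end{equation*}
where the $\epsilon$ is inherited from \eqref{Boreltheoremestimateforderivs} (apply it with free parameter $\epsilon$ divided by the number of adjoined pieces).

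Then I repeat the whole construction with the roles of $X'$ and $X''$ interchanged, applied now to $v'$ over its enlarged cell union: adjoin cells of word length $\leq|w''|+i_{0}''$ along the horizontal faces, glue in pieces from the $X''$-version of Theorem~\ref{productBoreltheorem} (all the tools remain valid on faces of finite unions of cells, by the remark after Theorem~\ref{productBoreltheorem} and the symmetric forms of Lemmas~\ref{productmatchinglemma} and~\ref{traceofnormalderivativeissmooth}), and obtain a function $\psi$ smooth on a finite union of cells $L$ of scale comparable to $K$, with $\psi=\testfn$ on $K$ and $\psi$ vanishing near every boundary face of $L$, so extending $\psi$ by zero off $L$ keeps it smooth on $X$ (Lemma~\ref{productmatchinglemma} again). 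Feeding the $x'$-scaled bound just obtained for $v'$ into the $X''$-analogue of the previous paragraph---so the jet data of $v'$ along horizontal faces of $L$ are controlled by $\|(\Delta')^{l}(\Delta'')^{i}v'\|$ and, via the $x''$-rescaled Lemma~\ref{traceofnormalderivativeissmooth}, by an additional factor $(r''_{w''})^{-1}$ or $\mu''_{w''}$---should yield the double-sum bound \eqref{products_estimateforpointsupportthm}.

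The step I expect to be the real obstacle is keeping the scaling honest through this two-stage (vertical, then horizontal) process: after the first stage $v'$ lives on a region strictly larger than $K$, so the second stage must control $(\Delta'')^{i}v'$ and $\partial''_{n}(\Delta'')^{i}v'$ over faces of that region using only the first-stage bound, which is phrased in terms of $\testfn|_{K}$; one must check that the comparable-scale choice makes the $(r'_{w'}\mu'_{w'})$ factors coming out of the first stage multiply cleanly with the $(r''_{w''}\mu''_{w''})$ factors from the second, that the normal-derivative estimate (costing $(r'_{w'})^{-1}$, resp.\ $(r''_{w''})^{-1}$, and bumping a Laplacian index by one) never pushes an index past $m$ or $n$, and that the $\epsilon$-errors from the two stages combine into a single $O(\epsilon)$. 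The remaining ingredients---the Borel-type extension, the matching conditions, and the smoothness of the boundary jets---are exactly those already established in Theorems~\ref{productBoreltheorem} and~\ref{cutoffonproductcell} and Lemmas~\ref{productmatchinglemma} and~\ref{traceofnormalderivativeissmooth}.
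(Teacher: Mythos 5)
Your proposal is correct and follows essentially the same route as the paper: a two-stage cutoff (first in $x'$, then in $x''$) built from comparable-scale adjoined cells via Theorem~\ref{productBoreltheorem}, with the boundary jet data bounded so that the Laplacian scaling factors $(r'_{w'}\mu'_{w'})^{l-j}$ and $(r''_{w''}\mu''_{w''})^{i-k}$ are retained rather than absorbed into a constant. The only cosmetic difference is that the paper bounds the jets by citing the slice-wise estimates \eqref{distribsatapoint_testfnbyhijl} and \eqref{distribsatapoint_normalderivoftestfnbyhijl} from the proof of Theorem~\ref{distribsatapoint_decayofHijtestfnimpliesgoodextension}, while you use the rescaled Lemma~\ref{traceofnormalderivativeissmooth}; both yield the same scaling, and your account of composing the two stages is exactly the ``little algebra'' the paper leaves implicit.
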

\begin{proof}
The method for cutting-off a smooth function on a cell has already been described in the proof of
Theorem~\ref{cutoffonproductcell}.  Since we cut off first in one variable and then in the other,
the estimates from the proof of Theorem~\ref{distribsatapoint_decayofHijtestfnimpliesgoodextension}
may be applied directly. Suppose that we cut off in the first variable and then in the second.
Taking~\eqref{distribsatapoint_estimateofjetoffK} for the Laplacian $(\Delta')^{k}$ in the first
variable on a fixed slice $U'\times \{y''\}$ and substituting from the second lines of both
of~\eqref{distribsatapoint_testfnbyhijl} and~\eqref{distribsatapoint_normalderivoftestfnbyhijl},
gives
\begin{equation*}
    \bigl\| (\Delta')^{j} \psi \bigr\|_{L^{\infty}(U'\times \{ y''\})}
    \leq C(m) \sum_{l=0}^{m} \bigl(r'_{w'} \mu'_{w'}\bigr)^{l-j} \bigl\|(\Delta')^{l}\testfn\bigr\|_{L^{\infty}(K'\times
    \{y''\})} +\epsilon
    \end{equation*}
provided $j\leq m$.  In this calculation we used that the harmonic part of a function (which was
denoted $H_{i,j}$ in the proof of
Theorem~\ref{distribsatapoint_decayofHijtestfnimpliesgoodextension}) is bounded by the $L^{\infty}$
norm of the function because of the maximum principle, and we extracted the scaling factor $r'_{w'}
\mu'_{w'}$ of the Laplacian on $K'=F'_{w'}(X')$ using the same argument as
in~\eqref{distribsatapoint_comparablesizedcellstep}.

The same estimate is true with the same proof when $\psi$ is replaced by $(\Delta'')^{k}\psi$ and
$\testfn$ by $(\Delta'')^{k}\testfn$.  We use this fact when we repeat the estimate in the second
variable, because in this case we are cutting off the function that was modified at the first step.
A little algebra then produces the desired estimate.
\end{proof}

\begin{theorem}
Let $T$ be a distribution supported at $(x',x'')\in X'\times X''$.  Suppose that $x'$ is such that
either Theorem~\ref{distribsatapoint_identifyingdistribsatjunctionpoint} or
Theorem~\ref{distribsatapoint_identifydistribsatgenericpoints} may be used to identify the
distributions with support at $x'$, and make the same assumption for $x''$.  Then $T$ is a finite
linear combination of tensor products $T'\times T''$ where $T'$ is supported at $x'$  and $T''$ is
supported at $x''$.
\end{theorem}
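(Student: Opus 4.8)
The plan is to mimic the single-variable arguments of Section~\ref{distribsatpoint_section}: exhibit a finite list of tensor distributions supported at $(x',x'')$ whose common kernel is contained in the kernel of $T$, and then invoke the standard fact (Lemma~3.9 of~\cite{Rudin}) that $T$ is a linear combination of them. Since $T$ is compactly supported it has finite order $m$ by Theorem~\ref{structure_compactimpliesfiniteorder}. For the list I would take all products $L'\times L''$ where $L'$ runs over the finite family $\mathcal{L}'$ of distributions at $x'$ produced by Theorem~\ref{distribsatapoint_identifyingdistribsatjunctionpoint} (if $x'$ is a junction point) or Theorem~\ref{distribsatapoint_identifydistribsatgenericpoints} --- the $(\Delta')^{l}d'_{s,v}$ with $l\leq m$ and $\gamma'_{s}$ at least the appropriate power of the Laplacian scaling factor at $x'$ --- and $L''$ runs over the analogous family $\mathcal{L}''$ at $x''$; by the tensor construction described just before the theorem each $L'\times L''$ is indeed a distribution supported at $(x',x'')$, and there are finitely many. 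Because $T$ is supported at $(x',x'')$ and has order $m$, it then suffices to prove the following: if $\testfn\in\testfnson(X'\times X'')$ is annihilated by every $L'\times L''$ on the list and $\epsilon>0$ is given, then there is a test function $\psi$ equal to $\testfn$ on a neighborhood of $(x',x'')$ with $|\psi|_{m}<\epsilon$; indeed $T\testfn=T\psi$ and $|T\psi|\leq M\epsilon$, so $T\testfn=0$.

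The function $\psi$ will be produced by Theorem~\ref{productcutoffwithdiameterestimatesonsmoothness}, applied on the shrinking product cells $K_{n}=F'_{[w']_{n}}(X')\times F''_{[w'']_{n}}(X'')$ containing $(x',x'')$, where $w'$ and $w''$ are the words determining $x'$ and $x''$. Inspecting the estimate~\eqref{products_estimateforpointsupportthm}, the dangerous negative powers of the cell scaling factors there are absorbed exactly when, for $0\leq l,i\leq m$,
\[
    \bigl\|(\Delta')^{l}(\Delta'')^{i}\testfn\bigr\|_{L^{\infty}(K_{n})}
    = o\Bigl(\bigl(r'_{[w']_{n}}\mu'_{[w']_{n}}\bigr)^{m-l}\bigl(r''_{[w'']_{n}}\mu''_{[w'']_{n}}\bigr)^{m-i}\Bigr)
    \quad\text{as $n\to\infty$.}
\]
Granting this decay, the cut-off $\psi$ has $|\psi|_{m}<\epsilon$ once $n$ is large. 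So everything comes down to deducing this decay from the annihilation hypothesis on $\testfn$.

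To obtain the decay I would set up a \emph{bivariate differential} at $(x',x'')$ by composing the single-variable ones. The first ingredient is the analogue of Lemma~\ref{traceofnormalderivativeissmooth}: for a test function $g$ on $X'\times X''$ the slice-derivatives $x''\mapsto d'_{s,v}\bigl(g(\cdot,x'')\bigr)$ are smooth in $x''$ and satisfy $(\Delta'')^{j}d'_{s,v}(g(\cdot,x''))=d'_{s,v}\bigl((\Delta'')^{j}g(\cdot,x'')\bigr)$, because $d'_{s,v}$ is assembled from harmonic extension, the Dirichlet Green operator and the lower $x'$-differentials, all of which commute with $\Delta''$, and because the limit defining $d'_{s,v}$ converges uniformly in $x''$ thanks to the estimates of Lemma~\ref{distribsatapoint_harmonictangenetexistsatgeneric} (resp.\ Lemma~\ref{distribsatapoint_existenceofdsatjunctionpoints}), which are all in terms of sup-norms of Laplacian powers of $g$. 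Granting this, one builds for each bidegree $(p,q)$ a bivariate bar-differential $\bar{D}^{q}_{x''}\bar{D}^{p}_{x'}$ whose vanishing on a function $f$ is, by Definition~\ref{distribsatapoint_defnofdsatgenericpt} and its counterpart for $X''$, equivalent to the vanishing of all the mixed values $(d'_{s,v}\times d''_{t,w})\bigl((\Delta')^{a}(\Delta'')^{b}f\bigr)$ for $a\leq p$, $b\leq q$ with the appropriate eigenvalue thresholds. Taking $f=(\Delta')^{l}(\Delta'')^{i}\testfn$ and $(p,q)=(m-l,m-i)$, these mixed values are precisely the numbers $(L'\times L'')\testfn$ for $L'\in\mathcal{L}'$, $L''\in\mathcal{L}''$, so by hypothesis they all vanish; hence the bivariate bar-differential of bidegree $(m-l,m-i)$ of $(\Delta')^{l}(\Delta'')^{i}\testfn$ is zero. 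Finally, applying the differential estimate~\eqref{distribsatapoint_genericbardiffestimate} (resp.~\eqref{differentialestimatetwo}) in the $x'$ variable uniformly over $x''$, and then a second time in the $x''$ variable to the smooth slice-coefficients, yields exactly the displayed $o$-decay of $(\Delta')^{l}(\Delta'')^{i}\testfn$ on $K_{n}$.

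I expect the main obstacle to be this construction of the bivariate differential: establishing the uniform-in-$x''$ smoothness of the slice-derivatives together with uniform versions of the differential estimates, and carrying out the bookkeeping that identifies ``annihilated by the list $\mathcal{L}'\times\mathcal{L}''$'' with ``all the relevant bivariate bar-differentials vanish'' (one must track which Laplacian powers and which eigenvalue ranges occur in each variable). Once these uniform estimates are in hand, the iteration in the two variables and the final cutting-off via Theorem~\ref{productcutoffwithdiameterestimatesonsmoothness} go through as in Section~\ref{distribsatpoint_section}. Two small points remain for the writeup: the periodic and eventually periodic cases for $x'$ or $x''$ need no new ideas --- one only replaces the harmonic extension matrix $A_{j}$ of the repeated letter by the matrix $A_{v}$ of the repeated block, as in the remarks following Theorem~\ref{distribsatapoint_identifyingdistribsatjunctionpoint} --- and although some of the tensor distributions $L'\times L''$ on the list may have order larger than $m$, this is harmless, since Lemma~3.9 of~\cite{Rudin} still produces $T$ as a finite linear combination of distributions of the required form $T'\times T''$.
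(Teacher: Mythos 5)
Your overall strategy (reduce to showing that annihilation by a finite list of tensor distributions forces the cutoff norm from Theorem~\ref{productcutoffwithdiameterestimatesonsmoothness} to be small, then invoke Lemma~3.9 of \cite{Rudin}) is the same as the paper's, but the decay estimate you claim to derive is too strong and does not follow from your hypotheses. You assert that annihilation by the listed tensor distributions yields
\begin{equation*}
\bigl\|(\Delta')^{l}(\Delta'')^{i}\testfn\bigr\|_{L^{\infty}(K_{n})}
=o\Bigl(\bigl(r'_{[w']_{n}}\mu'_{[w']_{n}}\bigr)^{m-l}\bigl(r''_{[w'']_{n}}\mu''_{[w'']_{n}}\bigr)^{m-i}\Bigr),
\end{equation*}
a \emph{product} of decays of orders $m-l$ and $m-i$ in the two variables separately, i.e.\ total order $2m-l-i$. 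This cannot be deduced from the vanishing of tensor distributions each of whose factors has order at most about $m$: the Euclidean model already shows it, since $f(x,y)=x^{m+1}$ has every mixed partial $\partial_{x}^{a}\partial_{y}^{b}f(0,0)$ with $a\leq m$ equal to zero (for any $b$), yet $f$ is not $o(|x|^{m}|y|^{m})$ near the origin. Carrying out your own two-step Taylor argument correctly --- expand first in one variable, then apply the second-variable differential estimates to the slice coefficients, which at the second stage are only \emph{small} rather than zero --- produces instead the weaker ``total degree'' bound
\begin{equation*}
\bigl\|(\Delta')^{l}(\Delta'')^{i}\testfn\bigr\|_{L^{\infty}(K)}
=o\biggl(\sum_{k=0}^{m-i-l}\bigl(r'_{w'}\mu'_{w'}\bigr)^{k}\bigl(r''_{w''}\mu''_{w''}\bigr)^{m-i-l-k}\biggr),
\end{equation*}
which is what the paper establishes.

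With this weaker bound, substituting into \eqref{products_estimateforpointsupportthm} leaves terms of the form $\bigl(r'_{w'}\mu'_{w'}\bigr)^{l-j+k}\bigl(r''_{w''}\mu''_{w''}\bigr)^{i-k'+m-i-l-k}$ in which one exponent can be negative, so the right-hand side is not automatically $o(1)$ as the cells shrink. The paper closes this by the further device of choosing $K'$ and $K''$ so that $r'_{w'}\mu'_{w'}$ and $r''_{w''}\mu''_{w''}$ are comparable, after which only the total exponent matters and it is nonnegative whenever $j+k\leq m$. Your proposal omits this step entirely, precisely because the unattainable product decay would have made it unnecessary. A secondary point: because of the scaling ambiguity of Remark~\ref{distribs_commentaboutSGcase}, the paper must pair first-variable distributions of order up to $k+1$ with second-variable ones of order up to $m-k+1$; your ``full rectangle'' list is a legitimate (larger) list for the Rudin lemma, but enlarging the list does not rescue the product-form decay.
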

\begin{proof}
In light of the preceding discussion and
Theorem~\ref{productcutoffwithdiameterestimatesonsmoothness}, it suffices to show that if the given
tensor distributions vanish on a test function $\testfn$ then the right side
of~\eqref{products_estimateforpointsupportthm} may be made less than $2\epsilon$ by taking $K$
sufficiently small.  The proof of this estimate is elementary: we simply go from $(x',x'')$ to
$(y',y'')$ by using two Taylor-like expansions, one in each variable.

Since $T$ has compact support it also has finite order $m$.  It then seems reasonable that each of
the terms $T'\times T''$ should be made up of a $T'$ of order $k\leq m$ and a $T''$ of order at
most $m-k$. Unfortunately we cannot prove this in general because our scaling estimates are
insufficiently refined, as was explained in Remark~\ref{distribs_commentaboutSGcase}. This result
is true if the distributions at $x'$ and $x''$ are such that none have scaling exactly equal to
that of the Laplacian (meaning that if they are as in Theorem~\ref{distribsatapoint_dsisadistrib}
then there is no $\gamma_{s}$ equal to a power of $r\mu$, and if they are as in
Corollary~\ref{distribsatapoint_genericordersofdistribs} then there is no $\gamma_{s}$ equal to a
power of $\beta_{w}$).  Given the limitations of our estimates we must instead allow the
possibility that $T'$ is order $k+1$ and $T''$ is order $m-k+1$.

Suppose then that  $T'\times T''\testfn=0$ for all $T'$ of order up to $k+1$ and $T''$ of order up
to $m-k+1$. It follows that the differential $(\bar{D}'')^{m-k}$ vanishes on the one-variable
smooth function $T'\testfn(x',\cdot)$. The same reasoning as was used at the beginning of the
proofs of Theorem~\ref{distribsatapoint_identifyingdistribsatjunctionpoint} and
Theorem~\ref{distribsatapoint_identifydistribsatgenericpoints} shows that then
$T'\testfn=o(r''_{w''}\mu''_{w''})^{m-k}$ on the set $\{x'\}\times K''$, so in particular at
$(x',y'')$.

We now wish to repeat the argument to go from $(x',y'')$ to $(y',y'')$.  Instead of having
vanishing distributions in the first variable at $(x',y'')$ we have only estimates on their size,
which we use to estimate the size of $(\bar{D}')^{m}\testfn$.   Recall from
Definitions~\ref{distribsatapoint_defnofdsatjunctionpt}
and~\ref{distribsatapoint_defnofdsatgenericpt} that the differential $(\bar{D}')^{m}\testfn$ for
the second variable on the cell $K'$ consists of a harmonic function with coefficients obtained
using distributions of order at most $m+1$, as well as $G'(\bar{D}')^{m-1}\Delta'\testfn$. where
$G'$ is the Green's operator for the cell $K'$.  The harmonic function is itself made up of pieces
(one for each $k\leq m$) with scaling bounded by $(r'_{w'}\mu'_{w'})^{k}$ (or an equivalent
quantity involving $\beta'_{w'}$) and coefficients obtained using distributions in the first
variable with order at most $k+1$. The estimate of the previous paragraph says that these
coefficients are $o(r''_{w''}\mu''_{w''})^{m-k}$, so each term of the harmonic functions is
$o\bigl((r'_{w'}\mu'_{w'})^{k}(r''_{w''}\mu''_{w''})^{m-k}\bigr)$ on $K'\times\{y''\}$.  A similar
argument applies to $G'(\bar{D}')^{m-1}\Delta'\testfn$, because the $G'$ produces an extra factor
of $r'_{w'}\mu'_{w'}$, and the harmonic piece of $(\bar{D}')^{m-1}\Delta'\testfn$ that has scaling
$(r'_{w'}\mu'_{w'})^{k-1}$ is obtained via distributions of order at most $k$ applied to
$\Delta'\testfn$, each of which is a distribution of order $k+1$ applied to $\testfn$.  We may
repeat this reasoning inductively across the terms of $(\bar{D}')^{m}\testfn$ to obtain a bound of
the form
\begin{equation}\label{products_taylorestimatestepone}
    \bigl| (\bar{D}')^{m}\testfn \bigr|
    =o \biggl( \sum_{k=0}^{m} (r'_{w'}\mu'_{w'})^{k}(r''_{w''}\mu''_{w''})^{m-k} \biggr).
    \end{equation}
on the set $K'\times\{y''\}$.  Since we also know (from~\eqref{differentialestimatetwo}
and~\eqref{distribsatapoint_genericbardiffestimate}) that
\begin{equation*}
    \bigl|\testfn- (\bar{D}')^{l}\testfn \bigr|
    = o (r'_{w'}\mu'_{w'})^{m}
    \end{equation*}
on $K'\times\{y''\}$ we conclude that the estimate~\eqref{products_taylorestimatestepone} is also
true for $\testfn$ itself.  The point $y''\in K''$ was arbitrary, so we have
\begin{equation}\label{product_taylorestfortestfn}
    \bigl\| \testfn \bigr\|_{L^{\infty}(K)}
    =o \biggl( \sum_{k=0}^{m} (r'_{w'}\mu'_{w'})^{k}(r''_{w''}\mu''_{w''})^{m-k} \biggr).
    \end{equation}

Our working thus far has shown that if $T'\times T''\testfn=0$ for all $T'$ of order up to $k+1$
and $T''$ of order up to $m-k+1$, then~\eqref{product_taylorestfortestfn} holds.  However, this
assumption obviously implies that $T'\times T''\Bigl((\Delta')^{l}(\Delta'')^{i}\testfn\Bigr)=0$
for all $T'$ of order up to $k+1-l$ and $T''$ of order up to $m-k+1-i$ if $l+i\leq m$ and $0\leq
k\leq (m-i-l)$. Thus~\eqref{product_taylorestfortestfn} improves to
\begin{equation}\label{product_taylorestfortestfnandderivs}
    \bigl\| (\Delta')^{l}(\Delta'')^{i} \testfn \bigr\|_{L^{\infty}(K)}
    =o \biggl( \sum_{k=0}^{m-i-l} (r'_{w'}\mu'_{w'})^{k}(r''_{w''}\mu''_{w''})^{m-i-l-k} \biggr).
    \end{equation}
Substituting into~\eqref{products_estimateforpointsupportthm} for the cutoff of $\testfn$ yields
\begin{equation*}
    \bigl\| (\Delta')^{j} (\Delta'')^{k} f \bigr\|_{\infty}
    \leq \epsilon + o\biggl( \sum_{l=0}^{m}\sum_{i=0}^{n}
    \bigl(r'_{w'} \mu'_{w'}\bigr)^{l-j} \bigl(r''_{w''}\mu''_{w''}\bigr)^{i-k}
    \sum_{s=0}^{m-i-l} (r'_{w'}\mu'_{w'})^{k}(r''_{w''}\mu''_{w''})^{m-i-l-k} \biggr).
    \end{equation*}
The simplest way to complete the argument is to choose $K'$ and $K''$ such that $r'_{w'}\mu'_{w'}$
and $r''_{w''}\mu''_{w''}$ are comparable, at which point all terms in the sum are bounded. It
follows that the sum term is $o(1)$ so can be made less than $\epsilon$ by requiring that $K'$ and
$K''$ are also sufficiently small.
\end{proof}

\section{Hypoellipticity}
An important question in the analysis of PDE is to identify conditions under which a distributional
solution of a PDE is actually a smooth function.  In Euclidean space, an archetypal example is
Weyl's proof that a weak solution of the Laplace equation is actually $C^{\infty}$.  In order to
study these questions one uses the notion of hypoellipticity, which we may now define in the
setting of fractafolds based on p.c.f. fractals and their products.  We will not settle any of the
questions about hypoellipticity here, but simply suggest some natural problems for which the
distribution theory we have introduced is the correct setting.

We first define the singular support of a distribution, which intuitively corresponds to those
points where the distribution is not locally smooth.

\begin{definition}
A distribution $T$ is smooth on the open set $\domain_{1}\subset\domain$ if there is
$u\in\smoothfnson(\domain_{1})$ such that
\begin{equation*}
    T\testfn = \int u\testfn\, d\mu \text{ for all $\testfn\in\testfnson(\domain_{1})$}
    \end{equation*}
\end{definition}

Using Lemma~\ref{testfunction_partition} for the case of a single p.c.f. fractal, or the analogous
result derived from Theorem~\ref{productparitioning} in the product setting, we see that if $T$ is
smooth on $\domain_{1}$ and on $\domain_{2}$ then it is smooth on $\domain_{1}\cup\domain_{2}$,
thus there is a maximal open set on which $T$ is smooth.

\begin{definition}
For a distribution $T$, Let $\domain_{T}$ be the maximal open set on which $T$ is smooth.  The
singular support of $T$ is the set
\begin{equation*}
    \singsppt(T)  = \sppt(T)\setminus\domain_{T}
    \end{equation*}
\end{definition}

Let $P$ be a polynomial of order $k$ on $\mathbb{R}^{m}$, so $P(\xi)=\sum_{|\kappa|\leq k}
a_{\kappa}\xi^{\kappa}$ where $\kappa=\kappa_{1}\dotsc\kappa_{m}$ is a multi-index, $|\kappa|=\sum
\kappa_{j}$ is its length, and $\xi^{\kappa}=\prod \xi_{j}^{\kappa_{j}}$. Consider the linear
differential operator $P(\Delta)=P(\Delta_{1},\dotsc,\Delta_{m})$ on a product $\prod X_{j}$ of
p.c.f. self-similar fractals $X_{j}$ with Laplacians $\Delta_{j}$. It is clear that for any
distribution $T$ we have $\singsppt\bigl(P(\Delta)T\bigr)\subseteq\singsppt(T)$, because when $T$
is represented by $u\in\smoothfnson(\domain_{T})$ then $P(\Delta)T$ is represented by $P(\Delta)u$.
By analogy with the Euclidean case, we define a class of constant coefficient hypoelliptic linear
differential operators.

\begin{definition}
$P(\Delta)$ is called hypoelliptic if $\singsppt\bigl( P(\Delta)T\bigr)=\singsppt(T)$ for all
$T\in\distribs$.
\end{definition}

Given the importance of hypoelliptic operators in the analysis of PDE on Euclidean spaces, it is
natural to seek conditions that imply hypoellipticity of an operator on a p.c.f. fractal or on
products of p.c.f. fractals. We expect that if $P(\Delta)$ is elliptic then it should  be
hypoelliptic; it also seems possible that the celebrated hypoellipticity criterion of H\"{o}rmander
\cite[Section~11.1]{MR705278} might imply hypoellipticity in the fractal case, though we do not
expect conditions of this type to be necessary because of examples like that motivating
Conjecture~\ref{quasiimplieshypoelliptic}.

\begin{definition}
For a polynomial $P(\xi)=\sum_{|\kappa|\leq k} a_{\kappa}\xi^{\kappa}$, the principal part of $P$
is $P_{0}=\sum_{|\kappa|=k} a_{\kappa}\xi^{\kappa}$. $P$ is called elliptic if $P_{0}(\xi)\neq 0$
for $\xi\neq0$; equivalently $P$ is elliptic if there is $c>0$ so $\bigl|P_{0}(\xi)\bigr|\geq
c|\xi|^{k}$.  We call $P(\Delta)$ elliptic if $P(\xi_{1}^{2},\dotsc,\xi_{m}^{2})$ is elliptic.
\end{definition}
\begin{remark}
The above definition is consistent with the usual one in the case that $X$ is a Euclidean interval
rather than a fractal set, but they do not coincide because we are dealing with a smaller class of
operators. Specifically, for such $X$ the Laplacian is $\partial^{2}/\partial x^{2}$, so our class
of operators $\{P(\Delta)\}$ is smaller than the usual collection of constant coefficient linear
partial differential operators $P(\partial/\partial x_{1},\dotsc,\partial/\partial x_{m})$.
Similarly our class of elliptic operators is a strict subset of the classical one.
\end{remark}

\begin{conjecture}
If $X$ is a p.c.f. fractal and $P(\Delta)$ is an elliptic operator on the product space $X^{m}$,
then $P(\Delta)$ is hypoelliptic.
\end{conjecture}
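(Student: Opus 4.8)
The plan is to imitate the classical Euclidean strategy for proving hypoellipticity of elliptic constant-coefficient operators: construct a parametrix for $P(\Delta)$ using the spectral resolution of the product Laplacian, show that the parametrix has a kernel that is smooth off the diagonal, and then conclude that a local solution of $P(\Delta)u = f$ with $f$ smooth on an open set must itself be smooth there. First I would fix an orthonormal basis of Laplacian eigenfunctions $\psi^{(j)}_{i_j}$ with eigenvalues $\lambda^{(j)}_{i_j}$ on each compact factor $X$, so that products $\Psi_{\mathbf{i}} = \prod_j \psi^{(j)}_{i_j}$ diagonalize all the $\Delta_j$ simultaneously on $X^m$; this is exactly the joint-eigenfunction structure invoked throughout Section~\ref{structuresection} for the product case. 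On this basis $P(\Delta)$ acts as multiplication by $P\bigl(-\lambda^{(1)}_{i_1},\dots,-\lambda^{(m)}_{i_m}\bigr)$, and ellipticity gives the key lower bound $\bigl|P(-\lambda^{(1)}_{i_1},\dots,-\lambda^{(m)}_{i_m})\bigr| \geq c\,\bigl(\sum_j \lambda^{(j)}_{i_j}\bigr)^{k}$ once $\sum_j \lambda^{(j)}_{i_j}$ is large, with at most finitely many exceptional tuples $\mathbf{i}$. The heart of the argument is that inverting $P(\Delta)$ on the orthogonal complement of those finitely many exceptional eigenfunctions is a smoothing operation: the resulting operator $Q$ satisfies $P(\Delta)Q = I - R$ where $R$ is the finite-rank (hence smoothing) projection onto the exceptional span, and $Q$ maps distributions of finite order to functions with faster-than-polynomial Fourier decay after any finite number of Laplacian applications — essentially by the estimate in Theorem~\ref{testfunctions_compactcase} that controls sup-norms of $\Delta^\ell$ of a Fourier series by $L^2$ norms via~\eqref{setting_resistmetricestimate}.

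The second ingredient is the purely local passage from "$Q$ is globally smoothing" to "$P(\Delta)$ is hypoelliptic," and here I would use the partitioning machinery of Theorem~\ref{productparitioning} and the cutoff estimate~\eqref{cutoffestimateonproduct} to localize. Given $T\in\distribs$ with $P(\Delta)T = S$ and $S$ smooth on an open set $\domain_1$, I would fix a point in $\domain_1$, a small cell $K$ around it, and a slightly larger cell $K_1$, and use Theorem~\ref{cutoffonproductcell} to write $T = T_0 + T_1$ near $K$ where $T_0$ has compact support and $T_1$ vanishes on a neighborhood of $K$; then $P(\Delta)T_0 = S - P(\Delta)T_1$ agrees with $S$ (a smooth function) on a neighborhood of $K$, while $P(\Delta)T_1$ is supported away from $K$. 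Applying the global parametrix $Q$ to $P(\Delta)T_0$ recovers $T_0$ up to the finite-rank smooth term $R T_0$, and the pieces of $Q(P(\Delta)T_0)$ coming from the "smooth near $K$" part contribute a smooth function near $K$, while the pieces coming from the part supported away from $K$ contribute a function that — because the parametrix kernel is smooth off the diagonal — is smooth on $K$. Summing, $T$ agrees near the chosen point with a smooth function, and since the point in $\domain_1$ was arbitrary this shows $\domain_1 \subseteq \domain_T$; the reverse containment $\singsppt(P(\Delta)T)\subseteq\singsppt(T)$ was already noted in the text before the definition of hypoellipticity.

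The main obstacle I anticipate is proving that the parametrix kernel is genuinely smooth off the diagonal — equivalently, that $Q$ applied to a distribution supported away from a cell $K$ produces a function smooth on $K$. In the Euclidean case this follows from an explicit Fourier-integral representation and stationary phase; here we only have the spectral series $\sum_{\mathbf{i}} P(-\boldsymbol\lambda_{\mathbf{i}})^{-1}\langle\cdot,\Psi_{\mathbf{i}}\rangle\Psi_{\mathbf{i}}$, and to control it near $K$ when the source is supported away from $K$ one needs off-diagonal decay of the heat kernel on the product, which is available (via~\eqref{setting-subGaussianbound} for each factor, giving a product sub-Gaussian estimate) together with the subordination identity $P(\Delta)^{-1} = \int_0^\infty p(t) e^{t\Delta}\,dt$ for a suitable Laplace-type weight $p$ when $P$ has appropriate sign; the elliptic hypothesis on $P(\xi_1^2,\dots,\xi_m^2)$ should let us write $Q$ as an absolutely convergent superposition of heat semigroup operators whose kernels decay sub-Gaussianly, yielding the needed smoothness. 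Making this subordination rigorous — in particular handling polynomials $P$ whose roots are not all on a ray, where one must deform contours or split $P$ into factors — is the technical crux, and is presumably why the statement is left as a conjecture rather than a theorem.
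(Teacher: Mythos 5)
The statement you are addressing is stated in the paper as a \emph{conjecture}; the authors offer no proof (they only remark that the case $m=1$ can be handled because the resolvent of $\Delta+c$ has a kernel smooth off the diagonal on small cells). So there is no paper proof to compare against, and the relevant question is whether your sketch closes the gap. It does not, and to your credit you essentially say so in your last sentence. Your outline of the localization step is sound and uses the right tools from the paper (Theorem~\ref{cutoffonproductcell}, Theorem~\ref{productparitioning}, and the one-sided inclusion $\singsppt(P(\Delta)T)\subseteq\singsppt(T)$ already noted in the text), but everything hinges on the off-diagonal smoothness of the parametrix kernel, and the route you propose for that step has two concrete problems beyond the contour-deformation issue you flag.

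First, the subordination identity $P(\Delta)^{-1}=\int_0^\infty p(t)e^{t\Delta}\,dt$ is a functional calculus for a \emph{single} self-adjoint operator, whereas $P(\Delta_1,\dots,\Delta_m)$ is a genuinely multivariable polynomial in $m$ commuting Laplacians; except when $P$ happens to be a polynomial in $\Delta_1+\dots+\Delta_m$, you would need to represent $1/P(-\lambda_1,\dots,-\lambda_m)$ as a multivariate Laplace transform $\int_{[0,\infty)^m}e^{-\sum t_j\lambda_j}\,d\rho(t)$ with $\rho$ of controlled growth, i.e.\ a complete-monotonicity property that general elliptic $P$ (even with real coefficients) need not satisfy. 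Second, even granting an absolutely convergent superposition of product heat kernels, sub-Gaussian upper bounds on $p_t(x,y)$ give off-diagonal \emph{size} decay, not smoothness: on the product, smoothness of the kernel in the sense of this paper means continuity of all mixed powers $(\Delta')^{j}(\Delta'')^{k}$ applied in the second variable, and the identification of ``rapid joint Fourier decay'' with smoothness is exactly the product analogue of Corollary~\ref{IntersectofSobolevspaces}, which the authors explicitly exclude from the list of results that transfer to products. So the passage from spectral decay of $Q$ to membership of $QS$ in $\dom((\Delta')^{j}(\Delta'')^{k})$ for all $j,k$ is itself unproven. These two points together are the content of the conjecture; your text is a reasonable research program, not a proof.
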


In the case that $m=1$, all operators $P(\Delta)$ are elliptic, and they can all be shown to be
hypoelliptic. Indeed, by factoring the polynomial we can reduce to the case of the linear
polynomial $\Delta+c$ for some complex constant $c$.  The hypoellipticity of $\Delta+c$ is readily
obtained from the fact that on small cells $(\Delta+c)$ has a resolvent kernel that is smooth away
from the diagonal, as may be seen by representing the resolvent as an integral with respect to the
heat kernel or by applying results from~\cite{IPRRS}.

\begin{conjecture}
A sufficient condition for the hypoellipticity of $P(\Delta)$ is that $D^{\kappa}P(\xi)/P(\xi)\to0$
as $\xi\to\infty$ for any partial derivative $D^{\kappa}$ with $|\kappa|>0$ (compare to
Theorems~11.1.1 and~11.1.3 of~\cite{MR705278}).
\end{conjecture}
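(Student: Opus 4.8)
The plan is to imitate the classical parametrix proof of Hörmander's criterion, with the localized analysis of this paper standing in for the Fourier transform. Since $\singsppt\bigl(P(\Delta)T\bigr)\subseteq\singsppt(T)$ always holds and the conclusion is local, it suffices to establish the reverse inclusion at a point: if $x_0\notin\singsppt\bigl(P(\Delta)T\bigr)$ then $x_0\notin\singsppt(T)$. Fix a finite union of cells $K$ with $x_0$ in its interior, small enough that $P(\Delta)T$ is represented on a neighbourhood of $x_0$ by a smooth function $g$; using Theorem~\ref{setting_partitionthm} (or Theorem~\ref{productparitioning} on a product) together with the jet-extension results of Corollary~\ref{setting_bumpfunctionwithestimates} and Theorem~\ref{setting_Borelthm}, split $P(\Delta)T=g_1+g_2$ on $\testfnson(K)$ with $g_1$ smooth on $K$ (equal to $g$ near $x_0$, all its Laplacian powers vanishing on $\partial K$) and $x_0\notin\sppt(g_2)$. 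On $K$ we work with the Dirichlet Laplacian, whose eigenfunctions $\psi_i$ — product eigenfunctions when $K$ is a product — give $P(\Delta)\psi_i=P(-\vec\lambda_i)\psi_i$ for the joint eigenvalue $\vec\lambda_i$.

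Next I would build a parametrix $Q$ as a spectral multiplier. For non-constant $P$ the hypothesis forces $|P(\xi)|\to\infty$ as $\xi\to\infty$ (for $|\kappa|=\deg P$ the derivative $D^{\kappa}P$ is a nonzero constant, while $D^{\kappa}P/P\to0$), so $P(-\vec\lambda_i)$ vanishes for only finitely many $i$; choose $\chi$ equal to $1$ off a bounded set containing those points and zero there, and set $Q\psi_i=\chi(\vec\lambda_i)\,P(-\vec\lambda_i)^{-1}\psi_i$. Then $QP(\Delta)=I-R$ on $\testfnson(K)$, where $R$ is the finite-rank orthogonal projection onto the excluded eigenspaces, and dualizing this operator identity against $T$, using that $Q$ commutes with $P(\Delta)$, gives $T=Q(P(\Delta)T)+RT=Qg_1+Qg_2+RT$ as functionals on $\testfnson(K)$. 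Here $RT$ is smooth since $R$ is finite rank with smooth range, and $Qg_1$ is smooth since $\chi/P$ is bounded and $Q$ preserves the rapidly decreasing Dirichlet eigenexpansion of $g_1$. Everything reduces to showing $Q$ is \emph{pseudolocal}, $\singsppt(Qg_2)\subseteq\sppt(g_2)$; as $x_0\notin\sppt(g_2)$ this yields $x_0\notin\singsppt(T)$, hence hypoellipticity.

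Pseudolocality of $Q$ is the crux, and it is here that $D^{\kappa}P/P\to0$ must be used quantitatively: it amounts to smoothness off the diagonal of the kernel $Q(x,y)=\sum_i\chi(\vec\lambda_i)P(-\vec\lambda_i)^{-1}\psi_i(x)\overline{\psi_i(y)}$, i.e.\ to convergence of the series for $(\Delta_x)^{\alpha}(\Delta_y)^{\beta}Q(x,y)$, uniformly for $x,y$ at a fixed positive resistance distance, for every pair of multi-indices $\alpha,\beta$. In $\mathbb{R}^n$ this is obtained by applying $P(\Delta)$ repeatedly to the parametrix and integrating by parts against the Newtonian kernel, each step yielding a gain of the shape $D^{\kappa}P/P\to0$. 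There being no such fundamental solution on a fractal, I would instead try to realize the multiplier $\chi(\vec\lambda)P(-\vec\lambda)^{-1}$ as a superposition of products of heat operators $\prod_j e^{-t_j\Delta_j}$ (in the one-factor case, of resolvents $(\Delta_j+c_j)^{-1}$, whose kernels are smooth off the diagonal by~\eqref{setting-subGaussianbound} or by~\cite{IPRRS}), against an integrable weight whose $\vec t$-moments are controlled by the ratios $D^{\kappa}P/P$. Since each $\Delta_j$ applied to a heat kernel costs a factor $t_j^{-1}$, the decay $D^{\kappa}P/P\to0$ is exactly what lets those factors be absorbed so that the differentiated superposition still converges, with uniform off-diagonal bounds coming from the sub-Gaussian estimate.

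The main obstacle is twofold. Producing that heat/Laplace representation in several variables is delicate: $P(-\vec\lambda)$ need not factor through one-variable pieces, it may change sign, and the complex zeros of $\vec\lambda\mapsto P(-\vec\lambda)$ must be routed around while preserving the moment bounds. More seriously, the usual first step of the Euclidean argument — replacing $T$ by a compactly supported distribution agreeing with it near $x_0$ — is unavailable, because products of a smooth function with a distribution need not be distributions, so there is no smooth cutoff $\psi$ making $\psi T$ a compactly supported distribution; one is forced to carry the cell-level distribution through the parametrix and localize only at the end, which requires the parametrix to interact well with the nonlinear test-function partitioning of Theorem~\ref{setting_partitionthm} rather than with multiplication by bump functions. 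These difficulties are presumably why the statement is posed only as a conjecture. An alternative worth trying, closer to Hörmander's own proof of Theorem~11.1.3 of~\cite{MR705278}, is to bootstrap $T$ near $x_0$ up the Sobolev scale $W^{s,2}$ of~\cite{MR1962353}, using $\testfns=\cap_{s>0}W^{s,2}$ on compact pieces (Corollary~\ref{IntersectofSobolevspaces}) and a local $W^{s,2}$-regularity gain for $Q$ quantified by $D^{\kappa}P/P\to0$; but this route meets the same cutoff difficulty.
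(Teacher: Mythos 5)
The statement you are addressing is posed in the paper only as a conjecture; the authors give no proof, and the surrounding discussion makes clear that establishing any hypoellipticity criterion in this setting is left open. Your proposal is therefore necessarily incomplete, and to your credit you say so yourself. The concrete gap is the step you call the crux: pseudolocality of the spectral multiplier $Q\psi_i=\chi(\vec\lambda_i)P(-\vec\lambda_i)^{-1}\psi_i$, i.e.\ smoothness off the diagonal of its kernel with bounds quantitatively driven by $D^{\kappa}P(\xi)/P(\xi)\to0$. You describe a plausible route --- realizing the multiplier as a superposition of products of heat semigroups or resolvents and invoking the sub-Gaussian bound \eqref{setting-subGaussianbound} or the resolvent analysis of \cite{IPRRS} --- but you do not construct that superposition, do not handle the sign changes and complex zeros of $\vec\lambda\mapsto P(-\vec\lambda)$, and do not verify that the moment conditions needed to differentiate the kernel series term by term actually follow from the hypothesis. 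Without this the argument is a scheme, not a proof; in the Euclidean case this is exactly where the real work of Theorems~11.1.1 and~11.1.3 of \cite{MR705278} lies.

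The second obstacle you identify is equally fatal as things stand: because distributions here are not a module over the smooth functions, there is no smooth cutoff $\psi$ making $\psi T$ a compactly supported distribution, so the standard Euclidean reduction to $\compactdistribs$ is unavailable and the identity $QP(\Delta)=I-R$ must be dualized against $T$ on $\testfnson(K)$ directly. Making that rigorous requires showing that $Q$ interacts controllably with the nonlinear partitioning of Theorem~\ref{setting_partitionthm} (or Theorem~\ref{productparitioning} on products), which you have not supplied. Both gaps are genuine and are presumably why the authors state this only as a conjecture. Your outline is a sensible research program, consistent with the heuristic the paper itself offers (inverting $P_0$ on the joint spectrum $\{(\lambda_1,\dotsc,\lambda_m)\}$), and the Sobolev bootstrap via Corollary~\ref{IntersectofSobolevspaces} that you mention at the end is a reasonable alternative to pursue; but neither route is carried out, so the conjecture remains unproved.
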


In the Euclidean setting the above condition is necessary as well as sufficient, but we do not
expect this to be the case on fractals. In essence, the idea is that hypoellipticity of $P(\Delta)$
should depend only on whether the principal part $P_{0}(\Delta)$ is hypoelliptic, and that this is
equivalent (on the Fourier transform side) to estimates when inverting the algebraic equation
\begin{equation*}
    P_{0}(\lambda_{1},\dotsc,\lambda_{m})\hat{u}(\lambda_{1},\dotsc,\lambda_{m})=\hat{f}(\lambda_{1},\dotsc,\lambda_{m})
    \end{equation*}
for any choice of $(\lambda_{1},\dotsc,\lambda_{m})$ with each $\lambda_{j}$ an eigenvalue of
$\Delta_{j}$.  Since all of these $\lambda_{j}$ are negative, ellipticity of $P(\Delta)$ says that
$|P_{0}(\lambda_{1},\dotsc,\lambda_{m})|\geq c|\lambda_{1}+\dotsm+\lambda_{m}|$ for such
$(\lambda_{1},\dotsc,\lambda_{m})$, and this is sufficient to show the Fourier transform $\hat{u}$
has faster decay than $\hat{f}$, so $u$ should be as smooth or smoother than $f$.  However the
ellipticity condition should only be necessary if the points $(\lambda_{1},\dotsc,\lambda_{m})$ are
dense in the positive orthant $\{\xi:\xi_{j}\geq0\}$.  In~\cite{MR2333476} it is shown that this is
not the case for the Sierpinski Gasket fractal; specifically it is shown that in the case $m=2$,
the points $(\lambda_{1},\lambda_{2})$ omit an open neighborhood of a ray in the positive orthant.
It follows that there is $a>0$ and $b<0$ such that $a\Delta_{1}+b\Delta_{2}$ is not elliptic but
$-a/b$ lies in the omitted neighborhood, so $|a\lambda_{1}+b\lambda_{2}|\geq
c|\lambda_{1}+\lambda_{2}|$ whenever $\lambda_{j}$ is an eigenvalue of $\Delta_{j}$.
Following~\cite{MR2333476} we call operators of this type quasielliptic.  Given that quasielliptic
operators satisfy elliptic-type estimates on the spectrum, it seems likely that they will have
similar smoothness properties to elliptic operators; Lp estimates for these operators may be found
in recent work of Sikora~\cite{Sikora}.

\begin{definition}
The operator $P(\Delta)$ is quasielliptic if there is $c>0$ such that $|P_{0}(\xi)|\geq c|\xi|$ for
all $\xi\in\bigl\{(\lambda_{1},\dotsc,\lambda_{m}):\lambda_{j}\text{ is an eigenvalue of
}\Delta_{j}\bigr\}$.
\end{definition}

\begin{conjecture}\label{quasiimplieshypoelliptic}
The quasielliptic operators of~\cite{MR2333476} are hypoelliptic.
\end{conjecture}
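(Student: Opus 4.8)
The plan is to supply the one nontrivial inclusion, $\singsppt(T)\subseteq\singsppt\bigl(P(\Delta)T\bigr)$, by adapting the classical parametrix proof of hypoellipticity to the fractal product setting. Fix a point $x_{0}\notin\singsppt\bigl(P(\Delta)T\bigr)$; we must produce a smooth function representing $T$ near $x_{0}$. Because this is a purely local assertion I would first replace $\domain$ by a compact model: choose a finite union of cells $K$ that is a neighbourhood of $x_{0}$ and on which $P(\Delta)T$ is a smooth function, embed $K$ in a compact boundaryless product fractafold $\hat\domain=\hat X_{1}\times\dots\times\hat X_{m}$ with each $\hat X_{j}$ a compact fractafold based on $X$, and extend $T|_{K}$ to a distribution $\hat T$ on $\hat\domain$ (using the product structure theorem, Theorem~\ref{structure_structuretheorem}, to write $T|_{K}$ as a finite sum of Laplacian powers applied to measures supported in $K$, then extending those measures by zero). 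Near $x_{0}$ one has $\hat T=T$ and $P(\Delta)\hat T=P(\Delta)T$, so it suffices to show $\hat T$ is smooth near $x_{0}$. On $\hat\domain$ there is a joint orthonormal eigenbasis $\Psi_{\mathbf i}=\psi^{(1)}_{i_{1}}\otimes\dots\otimes\psi^{(m)}_{i_{m}}$ with joint eigenvalue vector $\boldsymbol\lambda_{\mathbf i}$ (so $P(\Delta)\Psi_{\mathbf i}=P(\boldsymbol\lambda_{\mathbf i})\Psi_{\mathbf i}$), and by the product analogue of Lemma~\ref{polygrowthlemma} every distribution on $\hat\domain$ is a series $\sum_{\mathbf i}b_{\mathbf i}\Psi_{\mathbf i}$ with $\{b_{\mathbf i}\}$ of polynomial growth.

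Next I would build a parametrix $E$ on $\hat\domain$. For the operators of~\cite{MR2333476} the principal part $P_{0}=\sum_{j}a_{j}\xi_{j}$ is of degree one, and quasiellipticity gives $|P(\boldsymbol\lambda_{\mathbf i})|\ge c|\boldsymbol\lambda_{\mathbf i}|$ once $|\boldsymbol\lambda_{\mathbf i}|$ is large; in particular $P(\boldsymbol\lambda_{\mathbf i})\ne0$ for all $\mathbf i$ outside a finite set $\mathcal F$. Let $E$ be the operator whose spectral symbol equals $1/P(\boldsymbol\lambda_{\mathbf i})$ for $\mathbf i\notin\mathcal F$ and $0$ for $\mathbf i\in\mathcal F$. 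Then the symbol of $E$ is $O\bigl(1/(1+|\boldsymbol\lambda_{\mathbf i}|)\bigr)$, so $E$ is bounded on distributions and on test functions and ``gains one step'' in the $L^{2}$-scale, and $EP(\Delta)=I-R$, where $R$ is the orthogonal projection onto the finite-dimensional span of $\{\Psi_{\mathbf i}:\mathbf i\in\mathcal F\}$. In particular $R$ maps every distribution to a finite linear combination of eigenfunctions, hence to a smooth function.

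The decisive step is to show $E$ is pseudolocal, i.e. $\singsppt(ES)\subseteq\singsppt(S)$ for every distribution $S$. I would realise $E$ via heat semigroups: writing the $\lambda_{j}$ as the (negative) eigenvalues of $\Delta_{j}$, the identity $1/P(\boldsymbol\lambda_{\mathbf i})=-\int_{0}^{\infty}e^{tP(\boldsymbol\lambda_{\mathbf i})}\,dt$ is valid on the spectrum outside $\mathcal F$, and exhibits $E$, modulo the finite-rank correction, as $-\int_{0}^{\infty}\prod_{j}e^{a_{j}t\Delta_{j}}\,dt$. The kernel of this operator is $-\int_{0}^{\infty}\prod_{j}p^{(j)}_{a_{j}t}(x_{j},y_{j})\,dt$, and the sub-Gaussian estimate~\eqref{setting-subGaussianbound} for each factor — which forces $p^{(j)}_{a_{j}t}(x_{j},y_{j})$ to vanish faster than any power of $t$ as $t\to0$ whenever $x_{j}\ne y_{j}$ — combined with the smoothness of $p^{(j)}_{t}$ in the iterated-Laplacian sense for $t>0$ shows that this kernel is smooth off the diagonal $\{x=y\}$, with all mixed derivatives $\Delta_{1}^{\kappa_{1}}\dots\Delta_{m}^{\kappa_{m}}$ controlled there; single-factor smoothness near the diagonal is available from the resolvent bounds of~\cite{IPRRS}. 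Splitting the kernel into a part supported near the diagonal, which contributes a globally smooth function when applied to any distribution, and a part supported away from it then yields pseudolocality. Granting this, near $x_{0}$ we have $\hat T=E\bigl(P(\Delta)\hat T\bigr)+R\hat T$; the term $R\hat T$ is smooth on all of $\hat\domain$, and $E\bigl(P(\Delta)\hat T\bigr)$ is smooth near $x_{0}$ since $P(\Delta)\hat T$ is, by pseudolocality. Hence $\hat T$, and therefore $T$, is smooth near $x_{0}$; as $x_{0}\notin\singsppt\bigl(P(\Delta)T\bigr)$ was arbitrary this gives $\singsppt(T)\subseteq\singsppt\bigl(P(\Delta)T\bigr)$, and with the automatic reverse inclusion, $P(\Delta)$ is hypoelliptic.

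I expect the pseudolocality of $E$ to be the main obstacle: one must establish quantitative off-diagonal smoothness of the mixed heat-semigroup kernel and track all the mixed powers $\Delta_{1}^{\kappa_{1}}\dots\Delta_{m}^{\kappa_{m}}$ by hand, since the product setting is precisely the one in which the clean Sobolev characterisation of smoothness (Corollary~\ref{IntersectofSobolevspaces}) is unavailable. A secondary point is the passage from a single gain of regularity to $C^{\infty}$: since $E$ only gains one step, one iterates the argument with $P(\Delta)^{N}$ in place of $P(\Delta)$ (applying $E^{N}$) for every $N$, which requires uniform control of the iterated parametrix kernels. If one also wishes to treat quasielliptic operators of principal degree larger than one, the lower-order part of $P$ is no longer dominated by $|P_{0}|$ on the spectrum, and a finer symbol decomposition — or a reduction to the first-order case exploiting the spectral gap isolated in~\cite{MR2333476} — would be needed before the above scheme applies.
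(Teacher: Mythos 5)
The statement you are addressing is stated in the paper as Conjecture~\ref{quasiimplieshypoelliptic} and is left open: the authors say explicitly that they ``will not settle any of the questions about hypoellipticity here.'' There is therefore no proof in the paper to compare against, and your argument must stand entirely on its own. It does not, because the step you yourself identify as decisive fails for a concrete structural reason.

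The quasielliptic operators of~\cite{MR2333476} have principal part $a\Delta_{1}+b\Delta_{2}$ with $a>0$ and $b<0$, while the eigenvalues $\lambda_{j}$ of $\Delta_{j}$ are all negative. Hence $a\lambda_{1}+b\lambda_{2}$ takes \emph{both} signs on the joint spectrum: it tends to $-\infty$ as $\lambda_{1}\to-\infty$ with $\lambda_{2}$ fixed, and to $+\infty$ as $\lambda_{2}\to-\infty$ with $\lambda_{1}$ fixed; quasiellipticity only keeps the spectrum away from the transition ray, it does not confine it to one side. Consequently the identity $1/P(\lambda)=-\int_{0}^{\infty}e^{tP(\lambda)}\,dt$ is false on the part of the spectrum where $P>0$, and the operator $\prod_{j}e^{a_{j}t\Delta_{j}}$ you integrate is not a product of heat semigroups: the factor with $a_{j}<0$ is a backward heat flow, which is unbounded and has no kernel $p^{(j)}_{a_{j}t}$ for $t>0$. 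So the kernel representation of $E$, and with it the entire pseudolocality argument, collapses precisely because the operator is not elliptic --- this is the essential difficulty of the conjecture, not a technicality. (The $L^{p}$ multiplier results of~\cite{Sikora} cited in the paper concern boundedness, not off-diagonal kernel smoothness, so they do not fill this hole.) Even granting a pseudolocal parametrix, the remaining steps you flag --- off-diagonal smoothness of mixed kernels in all powers $\Delta_{1}^{\kappa_{1}}\Delta_{2}^{\kappa_{2}}$, and upgrading a one-step $L^{2}$ gain to smoothness in the product setting where no Sobolev characterization of $\testfns$ is available --- are each unproved and of difficulty comparable to the conjecture itself. As written, this is a reasonable research outline, not a proof.
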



\providecommand{\bysame}{\leavevmode\hbox to3em{\hrulefill}\thinspace}
\providecommand{\MR}{\relax\ifhmode\unskip\space\fi MR }
\providecommand{\MRhref}[2]{%
  \href{http://www.ams.org/mathscinet-getitem?mr=#1}{#2}
} \providecommand{\href}[2]{#2}

\end{document}